\documentclass[11pt]{amsart}

\usepackage{graphicx} 
\usepackage{url,tabularx,array,geometry}
\usepackage{amssymb}
\usepackage{fancyhdr}
\usepackage{wrapfig}
\usepackage{epstopdf}
\usepackage{amsmath}
\usepackage[labelfont=bf]{caption}
\usepackage[hidelinks]{hyperref}
\usepackage{amsthm}
\usepackage{bbm}
\usepackage[T1]{fontenc}
\usepackage{yfonts}
\usepackage{breqn}
\usepackage{harpoon}
\usepackage[all]{xy}
\usepackage{tikz}
\usepackage{tikz-cd}
\usepackage{hyperref}
\usepackage{comment}
\usepackage{mathtools}
\usepackage{quiver}
\usepackage{multicol}
\usepackage{float}
\usepackage{blkarray}

\theoremstyle{plain}
\newtheorem{theorem}{Theorem}

\newtheorem{theoremN}{Theorem}[section]
\newtheorem{propositionN}[theoremN]{Proposition}

\newtheorem{lemmaN}[theoremN]{Lemma}
\newtheorem{conjectureN}[theoremN]{Conjecture}
\newtheorem{exampleN}[theoremN]{Example}

\theoremstyle{definition}
\newtheorem{definitionN}[theoremN]{Definition}
\theoremstyle{remark}

\newtheorem*{note}{Note}

\newtheorem*{example}{Example}

\usepackage[maxbibnames = 50]{biblatex}
\renewbibmacro{in:}{}
\DeclareFieldFormat{pages}{#1}
\title[Geometric Approach to Links-Quivers Correspondence I: Rational Tangles]{A Geometric Approach to the Links-Quivers Correspondence I: Rational Tangles}
\author{Jonathan A. Higgins}
\thanks{The
author was partially supported by NSF grant DMS-2405402.
}

\begin{document}

\address{Jonathan Higgins, Department of Mathematics, University of Illinois Urbana-Champaign, 1409 W Green St, Urbana, IL 61801, USA}
\email{jh110@illinois.edu}

\begin{abstract}
   The Links-Quivers Correspondence of \cite{KRSS17,K19} predicts that all the symmetric (or antisymmetric) colored HOMFLY-PT polynomials of a link can be recovered from a finite amount of data (a quiver) associated to the link. We give a new geometric proof of the Links-Quivers Correspondence modified for rational tangles (first proved in \cite{SW21}) and explicitly describe the corresponding quivers in terms of winding numbers in the punctured plane and its second configuration space.
\end{abstract}

\keywords {colored HOMFLY-PT polynomial, rational tangles, Links-Quivers, configuration spaces}

\maketitle

\tableofcontents

\section{Introduction}
\label{introsec}


For any link $L\subset \mathbb{R}^3$, there is an infinite family of associated invariants called the colored HOMFLY-PT polynomials. The Links-Quivers Correspondence, originally formulated by Kucharski, Reineke, Sto\v si\'c, and Sułkowski in \cite{KRSS17,K19}, predicts that these infinitely many invariants can be described by a finite amount of data associated with a symmetric quiver. Despite being relatively new, the conjecture has already resulted in many developments within mathematical physics and knot theory. For example, the symmetric quivers predicted by the conjecture are non-unique, so a major question associated with the conjecture is how these quivers can be related to each other, which has been studied systematically in \cite{CKNPSS25,EKL20,JKLN21,KLNS26}. See also \cite{EGGKPSS22,EKL23,JGKM23,K20,KRSS20,PSS18,SS25} for further developments and applications. In \cite{SW21}, Sto\v si\'c and Wedrich proved the conjecture for rational tangles and links inductively (and then extended this to the arborescent case in \cite{SW21ii}), but this left unanswered how the associated quivers relate to the geometry of these tangles and links. In this paper we will address this for rational tangles, and we will do so for rational links in the sequel \cite{JHpII26}. Particularly, we will determine the finite data predicted by the Links-Quivers Correspondence by using intersection models of Lagrangians in configuration spaces of the punctured plane. Such intersection models have been studied previously for quantum invariants by Lawrence \cite{L93}, Bigelow \cite{B07,B02}, and Anghel \cite{A22}, among others. Most relevant for our situation is the work of Wedrich \cite{W16}, who expressed the  $j$-colored HOMFLY-PT polynomial of a rational tangle in terms of the the $j$th configuration space. An important new feature of our work is that we can describe all the colored HOMFLY-PT polynomials of a rational tangle using only the first and second configuration spaces of the punctured plane. 

In \cite{RT90}, Reshitikhin and Turaev showed that we can get invariants of tangles in $B^3$ by coloring the strands with irreducible representations of simple Lie algebras. The $j$-colored HOMFLY-PT polynomial $\langle-\rangle_j$ encapsulates the invariants obtained by coloring all components with the representation $\Lambda^jV$, where $V$ is the vector representation of $\mathfrak{sl}_N$. (The dependence on \(N\) is given by setting $a=q^N$). The result is a Laurent polynomial for links, but, for tangles, it is a linear combination of basis elements in the skein module, or MOY-type webs (see \cite{MOY98}). 

In this paper, we will focus on rational tangles, which are a family of 4-ended tangles in bijection with $\mathbb{Q}\cup\{\infty\}$ (see Section \ref{rtanglesec} or \cite{Ku96}). The tangle associated with $u/v$ will be denoted $\tau_{u/v}$. For example, 
\[
\begin{tikzpicture}
    \node at (0,0) {$\tau_{3/1}=$};
    \node at (2,0) {\includegraphics[height=2cm]{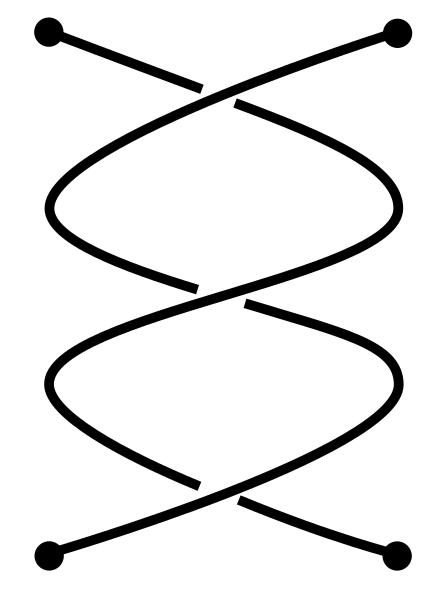},};
\end{tikzpicture}
\]
where the two strands are oriented upwards, and one can compute
\[
\begin{tikzpicture}
    \node at (0,0) {$\langle \tau_{3/1}\rangle_1=(-q^5+q^{3}-q)$};
    \node at (2.5,0) {\includegraphics[height=1cm]{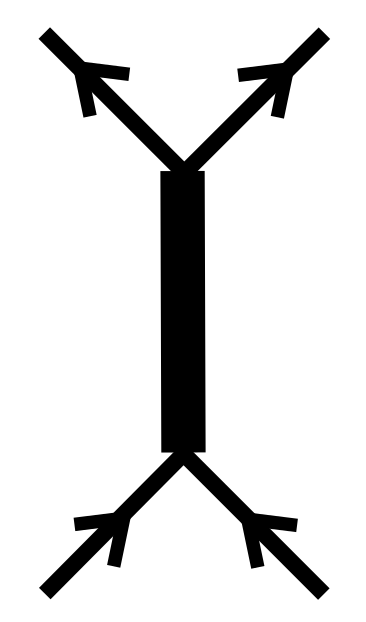}};
    \node at (3.2,0) {$+$};
    \node at (3.7,0) {\includegraphics[height=1cm]{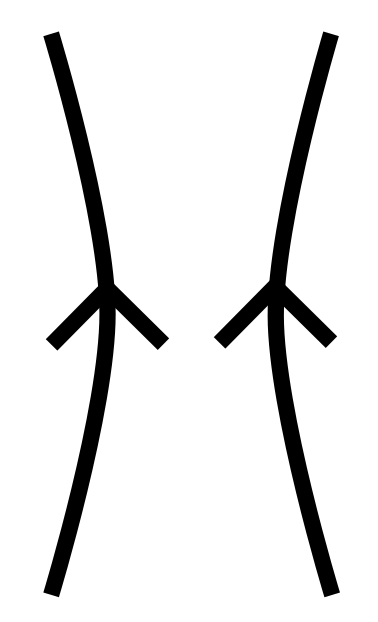}};
    
\end{tikzpicture}
\]
for the $1$-colored (or uncolored) HOMFLY-PT skein module evaluation of $\tau_{3/1}$.

Starting with the work of Khovanov \cite{Kh00}, much effort has gone into categorifying these invariants (see \cite{BN05,BN02,Kh07,KR08a,KR08b,MSV11,WW17}, among others). The categorified invariant of $\tau_{u/v}$ takes the form of a chain complex in a category whose generating objects are given by generating webs of the corresponding skein module. For example, the 1-colored (or uncolored) complex for $\tau_{3/1}$ is 
\begin{equation}
\label{t31cx}
\begin{tikzpicture}
    \node at (0,0) {$q^5a^0t^{-3}$};
    \node at (1,0) {\includegraphics[height=1cm]{web1.png}};
    \node at (1.7,0) {$\longrightarrow$};
    \node at (2.8,0) {$q^3a^0t^{-2}$};
    \node at (3.8,0) {\includegraphics[height=1cm]{web1.png}};
    \node at (4.5,0) {$\longrightarrow$};
    \node at (5.6,0) {$qa^0t^{-1}$};
    \node at (6.6,0) {\includegraphics[height=1cm]{web1.png}};
    \node at (7.3,0) {$\longrightarrow$};
    \node at (8.4,0) {$q^0a^0t^0$};
    \node at (9.4,0) {\includegraphics[height=1cm]{web0.png}};
\end{tikzpicture}
\end{equation}
where the objects shown represent the same basis webs appearing in the skein module evaluation for the tangle, and we have written the gradings multiplicatively in front of the objects in the complex. Note that ``$t$'' represents the homological grading.

These complexes are only defined up to homotopy and the category they are defined over depends on $N$. Nonetheless, Wedrich showed in \cite{W16} that the $j$-colored $\mathfrak{sl}_N$ homology of $\tau_{u/v}$ can be represented by a complex whose Poincar\'e polynomial stabilizes in $q,q^N,$ and $t$ for $N\gg 0$, so for large $N$, we can set $a=q^N$. For convenience, we will call these stabilized $\mathfrak{sl}_N$ complexes the colored HOMFLY-PT complexes, but we will only need the Poincar\'e polynomials, written in terms of $q,a,$ and $t$.

We use $\langle\langle\tau_{u/v}\rangle\rangle_j$ to denote the Poincar\'e polynomial of the $j$-colored HOMFLY-PT complex for $\tau_{u/v}$, which is the sum of all objects in the complex, written as shown in (\ref{t31cx}). Continuing our example with $\tau_{3/1}$, we get
\[
\begin{tikzpicture}
    \node at (0,0) {$\langle\langle \tau_{3/1}\rangle\rangle_1=(q^5t^{-3}+q^{3}t^{-2}+qt^{-1})$};
    \node at (3.3,0) {\includegraphics[height=1cm]{web1.png}};
    \node at (4,0) {$+$};
    \node at (4.5,0) {\includegraphics[height=1cm]{web0.png}.};
\end{tikzpicture}
\]
As should be expected with Poincar\'e polynomials, $\langle\tau_{u/v}\rangle_j$ and $\langle\langle \tau_{u/v}\rangle\rangle_j$ are related by
\[
\langle\tau_{u/v}\rangle_j=\langle\langle\tau_{u/v}\rangle\rangle_j\bigg|_{t\mapsto -1}.
\]

The Links-Quivers Correspondence (adjusted for tangles) says that 
the generating function $\mathcal{P}(\tau_{u/v})= \sum_{j\geq 0} \langle\langle \tau_{u/v}\rangle\rangle_j$ can be expressed in terms of a ``quiver,'' which we view as a quadratic form $Q$, along with three linear forms, all defined on a free $\mathbb{Z}$-module $\mathbb{Z}\mathcal{G}_{u/v}^1$ equipped with a preferred basis $\mathcal{G}_{u/v}^1$. Associated with $\tau_{u/v}$ is an arc Lagrangian $\alpha_{u/v}$ in the 3-punctured plane and two types of vertical Lagrangians passing between the punctures, as shown in Figure \ref{D52fig} for $\tau_{5/2}$. The preferred basis for $\mathbb{Z}\mathcal{G}_{u/v}^1$ is given by the intersections of $\alpha_{u/v}$ with the two vertical lines. 
If we use $\mathcal{D}(\tau_{u/v})$ to denote this picture in the 3-punctured plane consisting of the three Lagrangians for $\tau_{u/v}$, then we may state the main theorem as follows.

\begin{theorem}
\label{introtanglethm}
    Given a rational tangle $\tau_{u/v}$, $\mathcal{P}(\tau_{u/v})$ may be written as 
    \begin{multline}
    \label{calPtuvformula}
        \sum_{\bf{d}=(d_1,...,d_{u+v})\in \mathbb{N}^{u+v}}q^{S\cdot\textbf{d}+\textbf{d}\cdot Q\cdot \textbf{d}^t}a^{A\cdot \textbf{d}} t^{T\cdot\textbf{d}} \genfrac{[}{]}{0pt}{}{d_1+...+d_u}{d_1,...,d_u} \genfrac{[}{]}{0pt}{}{d_{u+1}+...+d_{u+v}}{d_{u+1},...,d_{u+v}}\\ \times X[d_1+...+d_{u+v},d_1+...+d_u],
    \end{multline}
    where $S,A,$ and $T$  may be computed via winding numbers about the three punctures in $\mathcal{D}(\tau_{u/v})$ of loops based at the Lagrangian intersections. Furthermore, $Q$ may be computed in a similar manner by passing to the second configuration space of the 3-punctured plane, where we also need to consider loops winding around the diagonal $\Delta$.
\end{theorem}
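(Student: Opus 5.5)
We will reduce the statement to Wedrich's intersection model \cite{W16} for the $j$-colored complex, and then organize the $j$-th configuration space data so that it only depends on pairs of points.

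Recall that in \cite{W16} the chain groups of the $j$-colored HOMFLY-PT complex of $\tau_{u/v}$ are indexed by intersection points of $\mathrm{Sym}^j\alpha_{u/v}$ with products of the vertical Lagrangians in the $j$-th configuration space of the 3-punctured plane. The $q,a,t$ gradings of each generator are computed from winding numbers of loops about the punctures and about the diagonal. Such a generator is an unordered $j$-tuple of points of $\alpha_{u/v}\cap(\beta_1\cup\beta_2)$, with $\beta_1,\beta_2$ the two vertical lines. Repetition is allowed with the appropriate quantum multiplicity. So it is recorded by a multiplicity vector $\textbf{d}\in\mathbb{N}^{u+v}$ with $|\textbf{d}|=j$. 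We will order $\mathcal{G}^1_{u/v}$ so that the first $u$ points lie on one vertical line and the last $v$ on the other. Then $d_1+\dots+d_u$ is the number of points on the first line, and the object (MOY web) in the complex is $X[|\textbf{d}|,d_1+\dots+d_u]$.

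The first main step is to show that the multinomial factors account exactly for the generators supported on a fixed set of intersection points. Points on the same vertical line collide in the symmetric product. The local contribution of $d_1,\dots,d_u$ points distributed along one line should be the graded dimension of the corresponding thick-edge cohomology, namely $\genfrac{[}{]}{0pt}{}{d_1+\dots+d_u}{d_1,\dots,d_u}$. We will prove this by a local computation near a single vertical line: on a line the configuration space of $k$ points is contractible, and the grading shifts give the quantum multinomial.

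The second step is the grading computation. For a generator $\textbf{x}=\{x_1,\dots,x_j\}$, the $q$, $a$, $t$ degrees in \cite{W16} are computed from a loop in $\mathrm{Conf}_j$ joining $\textbf{x}$ to a base generator. Its winding numbers about the divisors $\{z_i=p\}$ for each puncture $p$ are additive over the points. Its winding about the diagonal decomposes as a sum over pairs $\{i,k\}$ of windings of the projected loop in $\mathrm{Conf}_2$. Hence every grading is of the form
\[
\sum_i \ell(x_i)+\sum_{i<k} b(x_i,x_k),
\]
with $\ell$ computed in $\mathrm{Conf}_1$ and $b$ in $\mathrm{Conf}_2$. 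Collecting by multiplicities gives
\[
S\cdot\textbf{d}+\textbf{d}\cdot Q\cdot\textbf{d}^t .
\]
Here $Q_{gh}$ is the pairwise diagonal winding for $g\neq h$. For coinciding points, the diagonal term $Q_{gg}$ absorbs the part of $b(g,g)$ not already contained in the multinomial normalization, with the linear correction moved into $S$. The $a$ and $t$ gradings come out linear, with no pairwise term, because the diagonal contributes only to the $q$-grading in Wedrich's normalization.

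The main obstacle will be showing that the pairwise diagonal winding for two distinct intersection points is well defined and independent of the other $j-2$ points and of the chosen path. For this we will write an explicit path for each point along $\alpha_{u/v}$ and take product paths. This also requires a careful matching of normalizations for repeated points with the quantum multinomial conventions, which will be checked on $\tau_{n/1}$ and then propagated.
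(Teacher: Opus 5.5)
Your plan has a genuine gap. The gradings in Wedrich's model do not split as ``puncture windings, additive over points'' plus ``diagonal windings, pairwise,'' and the terms that break this split are exactly the nontrivial content of the theorem.

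\textbf{Weight changes.} The loop formula $\Theta_j([\gamma_{\overline x,\overline y}])$ only compares two generators of the \emph{same} weight, i.e.\ two points of $\mathbb{L}^j_{u/v}\cap\mathbb{L}^j_k$ for fixed $k$. Generators with different $d_1+\dots+d_u$ lie on different Lagrangians $\mathbb{L}^j_k$, so no loop of the kind you describe joins them to a common base generator. They are compared instead by the simple-slide rule, which shifts by $t/q$ or by $tq^{2j-1}/a$ according to whether the middle puncture $Z$ is $X^+$. This shift is not a winding number, and your proposal never treats it. It is the source of the $\delta_{i,A}\delta_{\omega,I}-\delta_{i,I}\delta_{\omega,A}$ terms in $S,T$ and of the $\delta_{Z,X^+}$ terms in $A$ and $Q$ (Case 1, Lemma \ref{case1bigpf}).

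\textbf{$j$-dependence of $X^+$.} A disk around $X^+$ contributes $a^2/(tq^{4j-2})$, so the $q$-difference contains $(1-2j)(\Psi^j_{X^+}-\Phi^j)$, and the $Z=X^+$ simple slide is also $j$-dependent. Since $j=|\textbf{d}|$, these give quadratic cross terms between distinct intersection points that have nothing to do with the diagonal. So your identification of $Q_{gh}$, $g\neq h$, with a pairwise diagonal winding is false whenever $\alpha$ winds around $X^+$. The correct formula is $Q_{ij}=(2\Phi^2-\Psi^2_{X^+})([\gamma^2_{j,i}])$ up to $\delta_{Z,X^+}$ corrections; in the $\tau_{10/3}$ example, $Q_{42}=Q_{44}+(\Phi-2\Psi_{X^+})([\gamma_{2,4}])$ with $\Psi_{X^+}([\gamma_{2,4}])=1$. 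The diagonal entries also pick up windings about every puncture, because two parallel copies of $\gamma_i$ cross once per turn around a puncture: $\Phi^2([\gamma^2_{i,i}])=\Psi_{\{X^\pm,Y\}}([\gamma_i])$.

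\textbf{The obstacle you flag.} This is a real gap, not a technicality. Wedrich's loops generally permute the points, so their restriction to a pair is not a loop in $\mathrm{Conf}^2(M)$. Pairwise crossing counts inside a $j$-strand braid are therefore not homotopy invariants. Choosing product paths along $\alpha$ does not fix this, since the return leg must run along the verticals of a fixed weight.

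\textbf{How the paper avoids this.} It never decomposes a $j$-point loop.
\begin{enumerate}
\item Lemma \ref{genblocks}: generators in a fiber of $\textbf{c}$ are words recording which parallel copy carries which intersection point. Adjacent transpositions are rectangle moves that shift $q$ by exactly $2$ and $a,t$ by $0$. Hence the fiber sums to $\sum q^{2\,\mathrm{inv}}$, the product of $q$-multinomials, and the sorted tuple $\overline d$ is the unique minimum. Contractibility of configurations on a line gives none of this grading information. Also, in Wedrich's model points never coincide, since the weighted verticals are distinct parallel copies.
\item It then suffices to compare $\overline d$ and $\overline{d'}$ with $\vec d-\vec d'=e_i-e_j$ for $\xi_i,\xi_j$ consecutive along $\alpha$.
\item Wedrich's rules give this difference explicitly: one passage past a puncture or one simple slide, plus a number of rectangle moves that depends on $\textbf{d}'$.
\item This is checked against $S_i-S_j+Q_{ii}+Q_{jj}-2Q_{ij}+2\sum_k d'_k(Q_{ik}-Q_{jk})$. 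The $\textbf{d}'$-dependent part is matched using $\gamma_{k,i}\sim\gamma_{k,j}$ or $\gamma_{k,i}\sim\widehat{\gamma_{k,j}}$ together with Lemma \ref{phihat}.
\end{enumerate}
This reduction to elementary moves between lowest-grading representatives is the missing idea. Without it, or a worked-out substitute that handles weight changes and the $j$-dependent $X^+$ terms, the proposal does not prove the theorem.
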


A precise statement about $S, A, T$, and $Q$ is given in Theorem \ref{bigthm}.

\begin{figure}
    \centering
    \includegraphics[height=4cm]{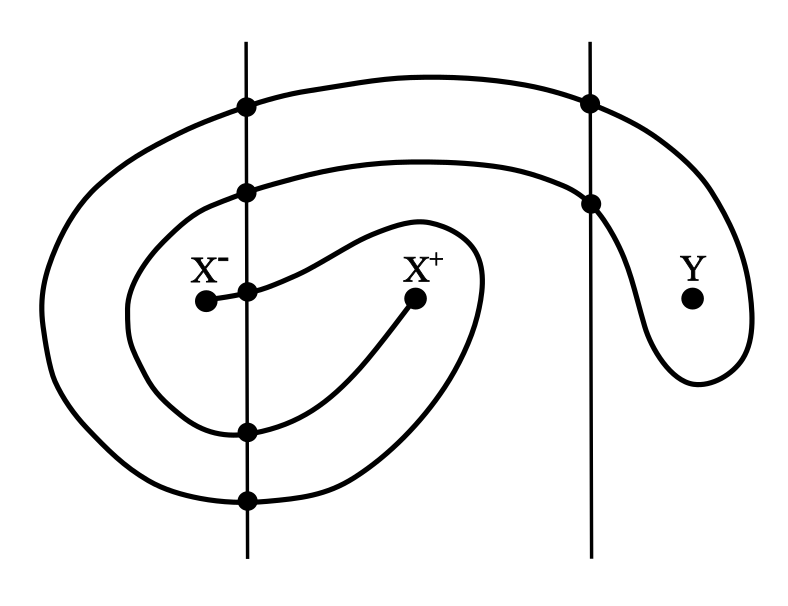}
    \caption{The diagram $\mathcal{D}(\tau_{5/2})$ used in Theorem \ref{introtanglethm} for the tangle $\tau_{5/2}$}
    \label{D52fig}
\end{figure}

One could also define $P(\tau_{u/v})=\mathcal{P}(\tau_{u/v})|_{t=-1}$, which is the generating function for tangles studied in \cite{SW21} by Sto\v si\'c and Wedrich (in a rescaled form). In particular, their rescaled form $P'(\tau_{u/v})$ involves multiplying $P(\tau_{u/v})$ by the quantum binomial that combines the two multinomials from (\ref{calPtuvformula}) into one, which is helpful for proving their theorem about rational knots, but it no longer relates to the geometry of $\tau_{u/v}$ in a clear way. Furthermore, this rescaling does not affect $Q$ or any of the linear forms. In \cite{SW21}, they prove by induction that $P'(\tau_{u/v})$ can be written in a way analogous to (\ref{calPtuvformula}), which they call its ``quiver form.'' The main difference is that, in $P'(\tau_{u/v})$ and $P(\tau_{u/v})$, one no longer has the linear form $T$ and $q^{S\cdot\textbf{d}}$ is replaced with $(-q)^{S\cdot \textbf{d}}$. Thus, Theorem \ref{introtanglethm} essentially provides a new way to show that $\mathcal{P}(\tau_{u/v})$ can be put in an appropriately defined quiver form by using the geometry of $\tau_{u/v}$.

\bigskip
\noindent
\textbf{Links-Quivers for Rational Links}
In \cite{JHpII26}, we will use Theorem \ref{introtanglethm} to give a new geometric interpretation of the Links-Quivers Correspondence for rational links. In its standard formulation, the Links-Quivers Correspondence claims that the generating function for the colored HOMFLY-PT polynomials of a link can be described in terms of the generating function for the cohomological Hall algebra of a symmetric quiver, after a change of variables. This generating function for a symmetric quiver of $m$ vertices given by an adjacency matrix $Q$ has the form
\begin{equation}
P_Q(x_1,...,x_m) = \sum_{\textbf{d}=(d_1,..,d_m)\in\mathbb{N}^m} (-q)^{-\langle \textbf{d},\textbf{d}\rangle_Q}\prod_{i=1}^m \frac{x_1^{d_1}...x_m^{d_m}}{(1-q^{-2})...(1-q^{-2d_i})},
\end{equation}
where $\langle \textbf{d},\textbf{e}\rangle_Q = \sum_{i,j}(\delta_{ij}- Q_{ij})d_ie_j$ is the Euler form for the quiver and $x_1,...,x_m$ are formal variables. The generating function encodes information about the representations of the quiver and is used to define its Donaldson-Thomas (DT) invariants. See \cite{R24} for a survey on the generating functions and DT invariants for symmetric quivers. If $P^{\bigwedge^j}_L(q,a)$ is the $\bigwedge^j$-colored HOMFLY-PT polynomial for a link $L$, then the Links-Quivers Correspondence could be phrased as follows.
\begin{conjectureN}[Links-Quivers Correspondence]
\label{LQCC}
   Given a link $L$ with $|L|$ components, if we set 
   \[
   P(L)=\sum_{j\geq 0}\left(\prod_{i=1}^j(1-q^{2i})\right)^{|L|-2}P^{\bigwedge^j}_L(q,a)x^j,
   \]
   then there is a symmetric quiver $Q_L$ such that $P(L)$ and $P_{Q_L}(\overline{x})$ are equal after a change of variables.
\end{conjectureN}

We will study $P(L)$ and this statement from a geometric perspective for rational links in \cite{JHpII26}, which will ultimately rely upon our work in this paper. The original proof of Conjecture \ref{LQCC} for rational links by Sto\v si\'c and Wedrich in \cite{SW21} consisted of showing that $P(L)$ can be put in a ``quiver form,'' similar to what was done for rational tangles. To do so, they applied a closure operation to the rescaled $P(\tau_{u/v})$, also put in quiver form. Thus, we see how showing that $\mathcal{P}(\tau_{u/v})$ can be written as in (\ref{calPtuvformula}) is essentially a reformulation of the Tangles-Quivers Correspondence for rational tangles.

\bigskip
\noindent
\textbf{Acknowledgments} This work was accomplished as part of the author's PhD research, so he would like to thank his advisor, Jacob Rasmussen, for his invaluable feedback and encouragement during the process of writing this paper. He is also grateful for helpful discussions with Ada Stelzer and Abbigail Price on related combinatorial problems. The author would also like to thank Marko Sto\v si\'c, Paul Wedrich, Piotr Kucharski, and Piotr Su\l{}kowski for helpful comments on an earlier draft.

\bigskip
\noindent
\textbf{Structure of Paper} Section \ref{bgrd} will contain some background information necessary for this paper. This will include some further details about quantum algebra, rational tangles, colored HOMFLY-PT skein module evaluations, prior geometric results for computing $\langle\langle\tau_{u/v}\rangle\rangle_j$, and quiver forms. In Section \ref{mainthmsec}, we set up and present the main theorem. We will also show that $Q$ is symmetric when computed by our formula. In section \ref{pfsec}, we will prove the theorem, and Section \ref{indreducesec} will be devoted to showing how we can reduce the number of indices used in the sum. Finally, we will provide some examples in Section \ref{exsec}.

\section{Background}
\label{bgrd}

We provide a review of the necessary definitions, notation, and known results relevant to this paper before introducing the new results.

\subsection{Some Quantum Algebra}
\label{qalgsec}

We will frequently use Pochhammer symbols to condense notation when performing calculations and stating results. In their general form, the \textit{Pochhammer symbols} are defined as $(x;y)_i=\prod_{j=0}^{i-1}(1-xy^j).$ In this paper, there are two particular types of Pochhammer symbols that we will encounter: the \textit{q-Pochhammer symbols}, given by 
\begin{equation*}
    (q^2;q^2)_i=\prod_{j=1}^{i}(1-q^{2j}),
\end{equation*}
 and a slight modification of them:
 \begin{equation*}
     (-t^{-1}q^2;q^2)_i=\prod_{j=1}^{i}(1+t^{-1}q^{2j}),
 \end{equation*}
which will be used when discussing Poincar\'e polynomials and reducing indices in Section \ref{indreducesec}. Note that these two types of Pochhammer symbols are related by setting $t=-1$.

Using the q-Pochhammer symbols, we can define quantum binomials and multinomials. Given $d_1+...+d_m=j$, we define the \textit{quantum multinomial} as 
\begin{equation*}
    \genfrac{[}{]}{0pt}{}{j}{d_1,...,d_m}=\frac{(q^2;q^2)_j}{(q^2;q^2)_{d_1}...(q^2;q^2)_{d_m}},
\end{equation*}
and the \textit{quantum binomial} is just the case where $m=2$. We will, however, adopt the notation of \cite{SW21} by writing 
\begin{equation*}
    \genfrac{[}{]}{0pt}{}{j}{d_1}_+=\genfrac{[}{]}{0pt}{}{j}{d_1,j-d_1}=\frac{(q^2;q^2)_j}{(q^2;q^2)_{d_1}(q^2;q^2)_{j-d_1}}.
\end{equation*}

It should be noted that the quantum multinomials simplify to give polynomial expressions in $q$. Next, we state an identity involving the Pochhammer symbols, which may be found in \cite{K19}.

\begin{lemmaN}
\label{algidentity}
    For $d_1,...,d_m\geq 0$, the following holds:
    \begin{equation*}
        \frac{(x^2;q^2)_{d_1+...+d_m}}{(q^2;q^2)_{d_1}...(q^2;q^2)_{d_m}}=\sum_{\substack{\alpha_1+\beta_1=d_1\\...\\\alpha_m+\beta_m=d_m}}\frac{(-x^2q^{-1})^{\alpha_1+...+\alpha_m}q^{\alpha_1^2+...+\alpha_m^2+2\sum_{i=1}^{m-1}\alpha_{i+1}(d_1+...+d_i)}}{(q^2;q^2)_{\alpha_1}...(q^2;q^2)_{\alpha_m}(q^2;q^2)_{\beta_1}...(q^2;q^2)_{\beta_m}}.
    \end{equation*}
\end{lemmaN}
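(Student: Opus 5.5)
The plan is to prove the identity by induction on $m$, using as the only ingredient the one-variable case, which is the $q$-binomial theorem in the form
\begin{equation*}
(x^2;q^2)_d=\sum_{\alpha+\beta=d}\genfrac{[}{]}{0pt}{}{d}{\alpha}_+(-x^2q^{-1})^{\alpha}q^{\alpha^2}.
\end{equation*}
This is the $m=1$ statement after dividing by $(q^2;q^2)_d$. To check it, expand $\prod_{j=0}^{d-1}(1-x^2q^{2j})$. The coefficient of $(-x^2)^\alpha$ is the elementary symmetric function $e_\alpha(1,q^2,\dots,q^{2(d-1)})=q^{\alpha(\alpha-1)}\genfrac{[}{]}{0pt}{}{d}{\alpha}_+$, and $q^{\alpha(\alpha-1)}=q^{-\alpha}q^{\alpha^2}$. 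I would either cite this standard identity or prove it by induction on $d$ via the $q$-Pascal rule.

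For the inductive step, set $D=d_1+\dots+d_{m-1}$. Factor
\begin{equation*}
(x^2;q^2)_{D+d_m}=(x^2;q^2)_D\,(x^2q^{2D};q^2)_{d_m}.
\end{equation*}
Divide both sides by $(q^2;q^2)_{d_1}\cdots(q^2;q^2)_{d_m}$.

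1. Apply the induction hypothesis to the factor $(x^2;q^2)_D/\big((q^2;q^2)_{d_1}\cdots(q^2;q^2)_{d_{m-1}}\big)$. This yields the sum over $\alpha_i+\beta_i=d_i$ for $i<m$ with the correct summand.
2. Apply the $m=1$ case to $(x^2q^{2D};q^2)_{d_m}/(q^2;q^2)_{d_m}$, with $x^2$ replaced by $x^2q^{2D}$. This produces the sum over $\alpha_m+\beta_m=d_m$ of
\begin{equation*}
\frac{(-x^2q^{-1})^{\alpha_m}\,q^{\alpha_m^2}\,q^{2\alpha_m D}}{(q^2;q^2)_{\alpha_m}(q^2;q^2)_{\beta_m}}.
\end{equation*}

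Multiplying the two expansions, the new cross term $q^{2\alpha_m(d_1+\dots+d_{m-1})}$ is exactly the $i=m-1$ summand of $2\sum_{i=1}^{m-1}\alpha_{i+1}(d_1+\dots+d_i)$. The remaining factors combine to give $(-x^2q^{-1})^{\alpha_1+\dots+\alpha_m}q^{\alpha_1^2+\dots+\alpha_m^2}$ over the full product of Pochhammer denominators, which is the stated formula.

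The only step needing care is the base case, specifically the exponent bookkeeping that $q^{\alpha(\alpha-1)}$ matches $(-q^{-1})^\alpha q^{\alpha^2}$ up to the sign absorbed into $(-x^2)^\alpha$. Once that is fixed, the induction is a formal factorization, so I do not expect a real obstacle.
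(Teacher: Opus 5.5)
Your proof is correct. The paper gives no proof of this lemma; it only quotes the identity from \cite{K19}, so your argument is a self-contained derivation of what the paper takes from the literature.

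Your base case is the $q$-binomial theorem. The identity $e_\alpha(1,q^2,\dots,q^{2(d-1)})=q^{\alpha(\alpha-1)}\genfrac{[}{]}{0pt}{}{d}{\alpha}_+$ produces exactly the factor $(-x^2q^{-1})^{\alpha}q^{\alpha^2}$. The factorization $(x^2;q^2)_{D+d_m}=(x^2;q^2)_D\,(x^2q^{2D};q^2)_{d_m}$ then makes the induction purely formal.

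Beyond the citation, your route explains the otherwise opaque cross terms. The term $2\alpha_{i+1}(d_1+\dots+d_i)$ is just the shift $x^2\mapsto x^2q^{2(d_1+\dots+d_i)}$ applied to the $(i+1)$st block.

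You should state explicitly that both sides lie in $\mathbb{Q}(q)[x^2]$, i.e.\ they are polynomials in the formal variable $x^2$. This justifies the substitution in your inductive step. It also justifies the specialization $x^2=-t^{-1}q^2$ that the paper actually uses in the proof of Lemma~\ref{collapsing}.
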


One particular consequence of this lemma is 
\begin{equation*}
    \frac{(-t^{-1}q^2;q^2)_d}{(q^2;q^2)_d}=\sum_{\alpha+\beta=d}\frac{(t^{-1}q)^\alpha q^{\alpha^2}}{(q^2;q^2)_\alpha(q^2;q^2)_\beta}.
\end{equation*}

Finally, we present one quantum multinomial identity that we will need in Section \ref{mainthmsec}. Following the notation of \cite{AM20}, we first need to define the following.

\begin{definitionN}
\label{seqdef}
    Let $\mathcal{S}_j^k(d_1,...,d_k)$ denote the set of sequences in $[k]:=\{1,...,k\}$ of length $j$ involving $d_1$ 1's, $d_2$ 2's,..., and $d_k$ k's.
\end{definitionN}

Now, if $\sigma=(\sigma_1,...,\sigma_j)$ is a sequence in $\mathcal{S}_j^k(d_1,...,d_k)$, then
\[
\text{inv}(\sigma):=|\{(a,b)\,:\, a<b\,\, \text{and}\,\,\sigma_a>\sigma_b\}|.
\]
Equipped with this notation, we have the following combinatorial result from \cite{AM20}.

\begin{propositionN}[Proposition 1.7, \cite{AM20}]
\label{qalgcombprop}
If $j,k,d_1,...,$ and $d_k$ are non-negative integers such that $d_1+...+d_k=j$, then
\begin{equation}
    {j\brack d_1,...,d_k} = \sum_{\sigma\in \mathcal{S}_j^k(d_1,...,d_k)}q^{2\,\text{inv}(\sigma)}.
\end{equation}
\end{propositionN}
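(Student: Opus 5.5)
We prove the identity by induction on the number of parts $k$. The base cases are immediate. For $k=1$ the set $\mathcal{S}_j^1(j)$ contains only the constant sequence, which has no inversions, and ${j\brack j}=1$. For $k=2$ we prove the binomial case
\[
{j\brack d_1}_+=\sum_{\sigma\in\mathcal{S}_j^2(d_1,d_2)}q^{2\,\text{inv}(\sigma)}
\]
separately, by induction on $j$.

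For the binomial case, split the sequences according to their last entry $\sigma_j$.
\begin{itemize}
\item If $\sigma_j=2$, the final entry creates no inversions. Deleting it gives a bijection with $\mathcal{S}_{j-1}^2(d_1,d_2-1)$ that preserves $\text{inv}$.
\item If $\sigma_j=1$, the final entry is inverted with each of the $d_2$ entries equal to $2$ before it. Deleting it gives a bijection with $\mathcal{S}_{j-1}^2(d_1-1,d_2)$ that lowers $\text{inv}$ by exactly $d_2$.
\end{itemize}
The generating function $G(d_1,d_2)$ of $q^{2\,\text{inv}}$ therefore satisfies
\[
G(d_1,d_2)=G(d_1,d_2-1)+q^{2d_2}G(d_1-1,d_2).
\]
This is one of the $q$-Pascal recurrences for the symmetric-in-$q^2$ binomial $\frac{(q^2;q^2)_j}{(q^2;q^2)_{d_1}(q^2;q^2)_{d_2}}$. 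To check it, clear denominators: the recurrence reduces to $(1-q^{2d_2})+q^{2d_2}(1-q^{2d_1})=1-q^{2j}$, which is immediate. Both sides equal $1$ when $d_1=0$ or $d_2=0$, so the binomial case follows.

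For general $k$, we factor the multinomial as
\[
{j\brack d_1,\dots,d_k}={j\brack d_1+\dots+d_{k-1}}_+\,{d_1+\dots+d_{k-1}\brack d_1,\dots,d_{k-1}},
\]
which is immediate from the Pochhammer definitions. On the combinatorial side, consider a sequence $\sigma\in\mathcal{S}_j^k(d_1,\dots,d_k)$ and record the following data:
\begin{itemize}
\item the $\{1,2\}$-pattern $\pi$ obtained by replacing each letter $<k$ by $1$ and each $k$ by $2$;
\item the subsequence $\sigma'$ of letters $<k$, which lies in $\mathcal{S}^{k-1}_{d_1+\dots+d_{k-1}}(d_1,\dots,d_{k-1})$.
\end{itemize}
The map $\sigma\mapsto(\pi,\sigma')$ is a bijection.

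The key check is that $\text{inv}(\sigma)=\text{inv}(\pi)+\text{inv}(\sigma')$. Inversions involving a letter $k$ are exactly the pairs where a $k$ precedes a smaller letter, and these are precisely the inversions of $\pi$. Inversions among letters $<k$ are exactly the inversions of $\sigma'$. Since the bijection is additive on inversions, the sum over $\sigma$ factors as the product of the sum over $\pi$ and the sum over $\sigma'$. The $k=2$ case handles the first factor and the induction hypothesis handles the second, so the two factorizations match and the proposition follows.

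The only step needing care is the binomial recurrence. We must choose the recurrence (deleting the last letter) whose exponent matches the $q^{2d_2}$ weighting, and then verify the Pochhammer algebra. The remaining steps are bookkeeping.
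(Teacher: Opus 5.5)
Your proof is correct. The paper does not actually prove this statement. It imports Proposition~1.7 of \cite{AM20} and adds only a remark: the factor $2$ in the exponent appears because the paper's multinomials are built from $(q^2;q^2)_i$ rather than $(q;q)_i$, so the cited identity is used after the substitution $q\mapsto q^2$.

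You give a self-contained proof of the classical inversion-counting formula instead. Your steps:
\begin{itemize}
\item The last-letter recurrence $G(d_1,d_2)=G(d_1,d_2-1)+q^{2d_2}G(d_1-1,d_2)$, checked against the Pochhammer quotient via $(1-q^{2d_2})+q^{2d_2}(1-q^{2d_1})=1-q^{2j}$, settles the binomial case.
\item The bijection $\sigma\mapsto(\pi,\sigma')$, with $\text{inv}(\sigma)=\text{inv}(\pi)+\text{inv}(\sigma')$, matches the factorization ${j\brack d_1,\dots,d_k}={j\brack d_1+\dots+d_{k-1}}_+{d_1+\dots+d_{k-1}\brack d_1,\dots,d_{k-1}}$.
\end{itemize}
Each step checks, including the boundary cases where some $d_i=0$. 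Your additivity of inversions is also right: a letter $k$ can only be the larger element of an inversion, and pairs of equal letters never contribute.

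What each route buys:
\begin{itemize}
\item The citation is more economical.
\item Your argument works directly in the paper's $q^2$ normalization, so no translation of conventions is needed.
\item Your argument also makes explicit the mechanism the paper relies on in Lemma~\ref{genblocks}. There, $\text{inv}(\sigma(\overline{x}))$ counts the rectangle moves from $\overline{x}$ to $\overline{d}$, and the splitting of a sequence into a coarser pattern plus a finer subsequence mirrors how the $q$-grading separates into independent contributions.
\end{itemize}
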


It should be noted that the proposition in \cite{AM20} does not involve the $2$ in the exponent, but this is because the authors define everything in terms of $q$ rather than $q^2$.

\subsection{Rational Tangles}
\label{rtanglesec}


\begin{figure}
    \raisebox{0pt}{\includegraphics[height=3cm, angle=0]{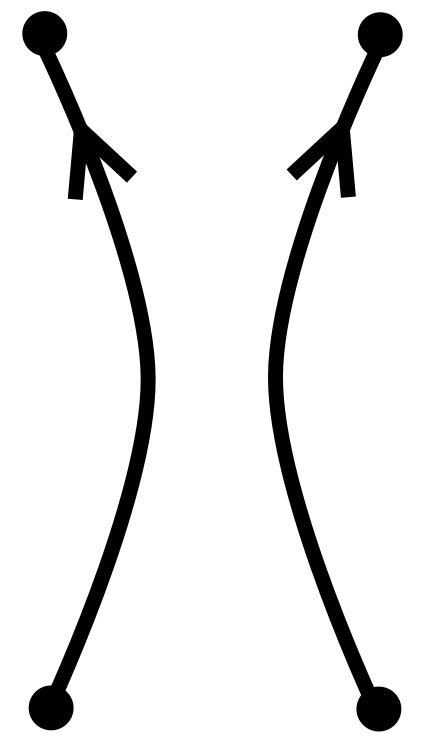}}\qquad \qquad \raisebox{0pt}{\includegraphics[height=3cm, angle=0]{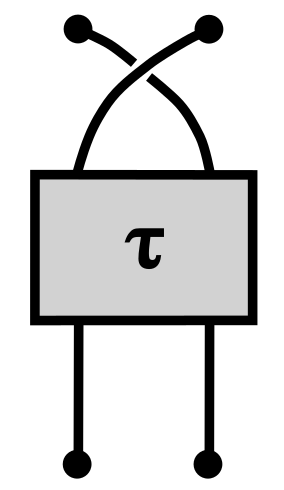}} \qquad \qquad \raisebox{0pt}{\includegraphics[height=3cm, angle=0]{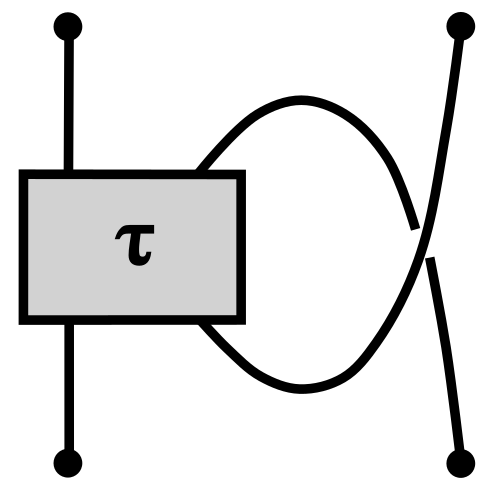}}
    \caption{Left: the trivial tangle $\tau_{0/1}$. Center: top twist rule. Right: right twist rule. For twist rules, $\tau$ denotes the tangle being twisted, and the orientation is determined by the orientation of endpoints on $\tau$.}
    \label{trivialandtwists}
\end{figure}

\textit{Positive rational tangles} are oriented tangles with four ends, which can be built inductively from the trivial tangle by adding a finite sequence of top and right twists. The trivial tangle, denoted $\tau_{0/1}$, is defined as the tangle with two untangled upward-oriented strands, as shown on the left in Figure \ref{trivialandtwists}. The twist rules are also shown in Figure \ref{trivialandtwists}. These twists will change the orientation of the strands, which we will want to keep track of. Following \cite{W16}, we will use the notation of UP, OP, and RI to denote the orientations at the boundary of the arcs, as shown below.

\[
\vcenter{\hbox{\includegraphics[height=3cm,angle=0]{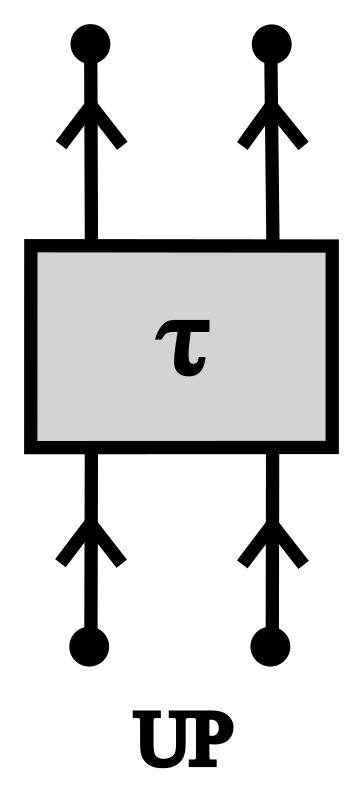}}} \qquad \qquad 
\vcenter{\hbox{\includegraphics[height=3cm,angle=0]{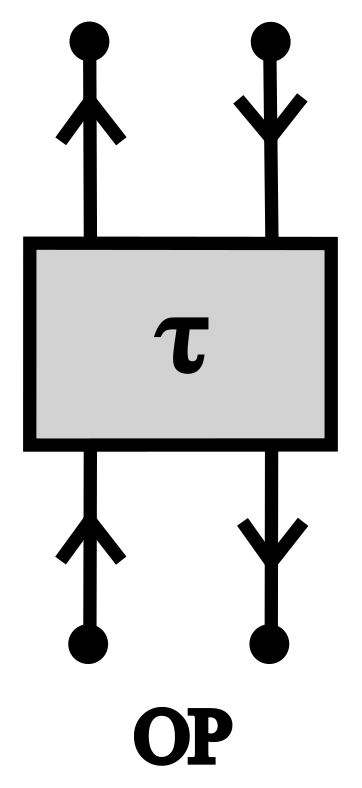}}} \qquad \qquad
\vcenter{\hbox{\includegraphics[height=3cm,angle=0]{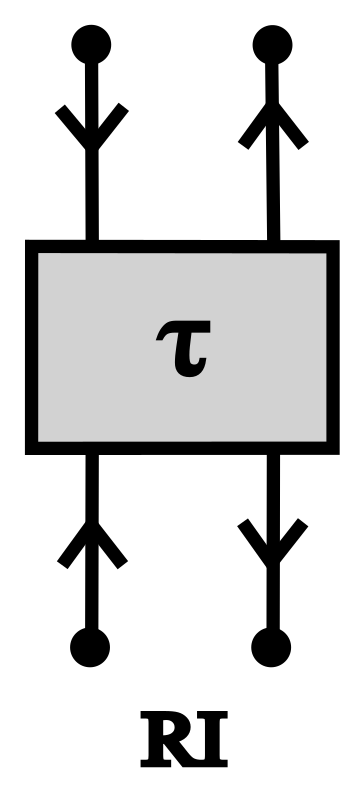}}}
\]

Perhaps the best way to think of these tangles is by placing them in the standard embedding of the unit ball $B^3$ in $\mathbb{R}^3$, with the four endpoints being located at $(0,\pm\frac{\sqrt{2}}{2},\pm\frac{\sqrt{2}}{2})$, which can be thought of as the NE, NW, SE, and SW points on the ball projected to the plane $x=0$. We may consider the double branched cover of the boundary sphere with the four endpoints being the branch points. Topologically, the double branched cover is a torus, so we write $\Sigma(S^2,4)\simeq S^1\times S^1$. Now, consider the four arcs connecting the branch points, as shown in Figure \ref{trivial}. The top and bottom arcs are colored blue and the two side arcs are colored red. Note that we may define the double branched cover so that the blue arcs lift to meridians of the torus $\Sigma(S^2,4)$ and the red arcs lift to longitudes. We take the top blue arc and the red arc on the right, whose lifts are generators $[m]$ and $[l]$ for $H_1(\Sigma(S^2,4))\cong \mathbb{Z}\times \mathbb{Z}$.

The trivial tangle $\tau_{0/1}$ is simply the tangle with its two arcs oriented from SE and SW to NE and NW, respectively. Observe that there is a disk $B^2$ separating the two tangle components, as seen in Figure \ref{trivial} on the right. The boundary of the disk is a simple closed curve $\gamma\subset\partial B^3=S^2$, and it lifts to two disjoint longitudes on the torus $\Sigma(S^2,4)$. Let $\tilde{\gamma}$ be one of these longitudes. With respect to the picture we have been setting up, the top twist is given by applying a Dehn twist along the top blue arc and the right twist is given by applying a Dehn twist along the red arc on the right. These twists will affect how the curve $\gamma$ winds around the boundary sphere, which will, consequently change the lift $\tilde{\gamma}$.

\begin{figure}
\centering
\raisebox{0pt}{\includegraphics[height=3.5cm, angle=0]{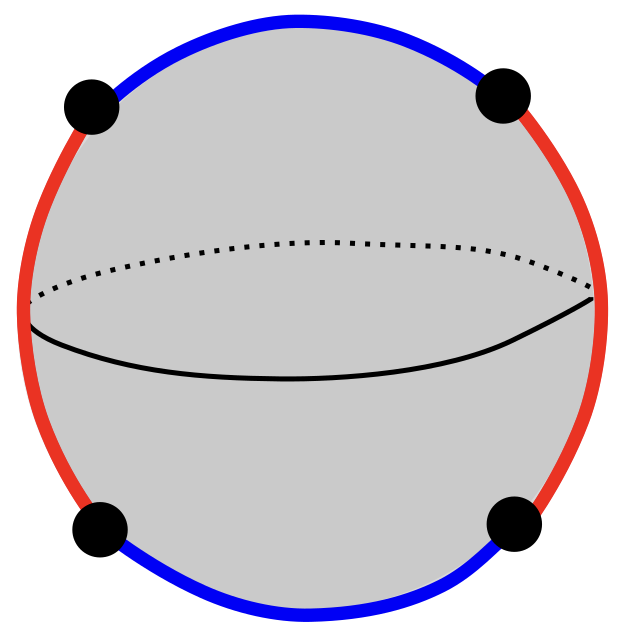}} \qquad \qquad
\raisebox{0pt}{\includegraphics[height=3.5cm, angle=0]{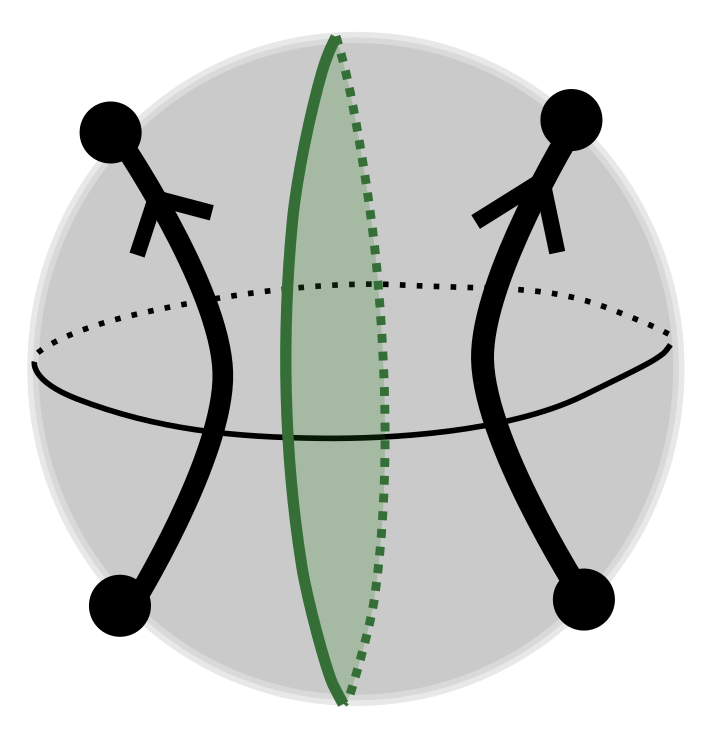}}
\caption{Left: the four colored arcs connecting the branch points on the boundary $S^2$. Right: the disk separating the trivial tangle in $B^3$.}
\label{trivial}
\end{figure}

It is easy to see that the Dehn twists just mentioned lift to Dehn twists along the lifted meridian and longitude in $\Sigma(S^2,4)\simeq S^1\times S^1$, and that $\tilde{\gamma}$ is a closed 1-chain determined by these lifted Dehn twists applied to the meridional lift of the trivial tangle. The homology class can be written as $[\tilde{\gamma}]=u[m]+v[l]\in H_1(S^1\times S^1)$ with $\text{gcd}(u,v)=1$, and taking the ratio $u/v$ gives us a way to distinguish the corresponding rational tangles. We write $\tau_{u/v}$ for the tangle given by this lifted curve $\tilde{\gamma}$. It is well-known that rational tangles are distinguished (up to isotopy) by the corresponding fraction $u/v$.

Now, we must revisit the effects of the Dehn twists in order to determine the corresponding fraction for the rational tangle. At the level of the tangle itself, we will use $T$ to denote the top twist and $R$ to denote the right twist. As we already said, the lift of $T$ gives a Dehn twist $\tau\in\text{Mod}(S^1\times S^1)$, the mapping class group of the torus, and the lift of $R$ is another Dehn twist $\rho \in \text{Mod}(S^1\times S^1)$ such that 
\begin{multicols}{2}
\begin{align*}
\tau:\begin{cases}
    m\mapsto m\\
    l\mapsto m+l
\end{cases}
\end{align*}

\begin{align*}
 \rho:\begin{cases}
    m\mapsto m+l\\
    l\mapsto l,
\end{cases} 
\end{align*}
\end{multicols}
\noindent so $\tau(u[m]+v[l])=(u+v)[m]+v[l]$ and $\rho(u[m]+v[l])=u[m]+(u+v)[l]$, which gives
\[
T\tau_{u/v}=\tau_{(u+v)/v}
\qquad\qquad
R\tau_{u/v}=\tau_{u/(u+v)}.
\]

Thus, we have a way of determining the fraction $u/v$ given a finite sequence of top twists and right twists applied to the trivial tangle $\tau_{0/1}$. We know that we may take the first twist to be a top twist $T$ because $R\tau_{0/1}=\tau_{0/1}$. Now, we first consider the case where $u/v\geq1$, which is the case where the final twist is a top twist $T$. This means that we may write $T^{a_n}R^{a_{n-1}}...R^{a_2}T^{a_1}\tau_{0/1}=\tau_{u/v}$, where the rules for $T$ and $R$ give 
\[
\frac{u}{v}=a_n+\frac{1}{a_{n-1}+\frac{1}{a_{n-2}+...+\frac{1}{a_2+\frac{1}{a_1}}}}.
\]
Otherwise, if $0<u/v<1$, the final twist is a right twist $R$, so we have $R^{a_n}T^{a_{n-1}}...R^{a_2}T^{a_1}\tau_{0/1}=\tau_{u/v}$ with
\[
\frac{u}{v}=\frac{1}{a_n+\frac{1}{a_{n-1}+...+\frac{1}{a_2+\frac{1}{a_1}}}}.
\]
We may record these sequences of $a_i$ by $[a_n,a_{n-1},...,a_2,a_1]$; in the first case above, $n$ is odd, whereas it is even in the second case. 

Now that we have carefully defined rational tangles and have determined how to distinguish them, we will describe another way of visualizing them that will be helpful for the content of this paper. In particular, for the tangle $\tau_{u/v}$, viewed as existing in $B^3$, we can push it onto the boundary sphere. In other words, starting with the trivial tangle $\tau_{0/1}$, we can push the two strands to be parallel to the two red arcs on the boundary sphere, so we know both lift to longitudes of $\Sigma(S^2,4)\simeq S^1\times S^1$, and then we apply the same sequence of top and right twists (as Dehn twists on the boundary sphere) that were used to construct $\tau_{u/v}$. Let $\alpha$ and $\alpha'$ be the two arcs on $S^2$ that we get for $\tau_{u/v}$ after applying these twists, with $\alpha$ being the arc with endpoint SE and $\alpha'$ being the arc with endpoint SW (the twist rules guarantee that these give the two distinct arcs). It is not difficult to see that their lifts $\tilde{\alpha}$ and $\tilde{\alpha'}$ on the torus satisfy $[\tilde{\alpha}]=[\tilde{\alpha'}]=[\tilde{\gamma}]\in H_1(\Sigma(S^2,4)).$ Thus, we could have used any one of these to determine the tangle $\tau_{u/v}$.

Since $S^2\simeq\mathbb{C}\cup\{\infty\}$, the 4-punctured sphere is homeomorphic to the 3-punctured plane. Thus, if we drop the arc $\alpha'$, then applying the stereographic projection with respect to the SW point results in the arc $\alpha$ being projected to the 3-punctured plane. 

\begin{definitionN}
    Given a rational tangle $\tau_{u/v}$, let $\alpha_{u/v}$ be the corresponding arc $\alpha$ in the 3-punctured plane obtained by the procedure just described.
\end{definitionN}

Next, we need a way to determine the shape of this arc for particular $u/v$; the following lemma tells us how to do so. The lemma is taken from Section 4.1 of \cite{W16}, adjusted to our conventions.

\begin{lemmaN}
\label{bridgerecipe}
    Up to isotopy, $\alpha_{u/v}$ may be drawn as follows:
    Draw the intervals $[-2,-1]$ and $[1,2]$ on the real axis and partition them into $u$ parts of equal size. If $u\geq v$, starting at the points $\pm1$, label the ends of the partitions on both line segments by $0,1,...,2u-1\in \mathbb{Z}/(2u)\mathbb{Z}$ in a clockwise manner, as in Figure \ref{4_3_tangle} on the top left. Starting at the 0-labeled point $\text{par}(u)\in\{\pm1\}$, the parity of $u$, draw the arc $\alpha_{u/v}$ by proceeding to the point labeled $v$ on the opposite side, then going to $2v$ on the same side you started (mod $2v$), and proceed until you hit $\{-2,-1,1,2\}$ again. The three punctures in the plane are the points at $x=\pm1, 2$ on the real axis. If $u<v$, reverse the roles of $u$ and $v$, label the partition points in a counter-clockwise manner, and proceed in a similar manner as in the previous case, but start at the 0-labeled point $-\text{par}(v)\in\{\pm1\}$. (Equivalently, reflect the picture for $\tau_{v/u}$ across the imaginary axis.) In this case, the three punctures in the plane are at $x=\pm1,-2$ on the real axis.
\end{lemmaN}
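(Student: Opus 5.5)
Lemma \ref{bridgerecipe} says that the Dehn-twist description of $\alpha_{u/v}$ and the combinatorial bridge drawing give isotopic arcs in the 3-punctured plane. I would prove it by induction on the length of the continued fraction $[a_n,\dots,a_1]$, checking at each step that a top or right twist acts on the combinatorial picture exactly as it acts on fractions, $u/v\mapsto (u+v)/v$ or $u/v\mapsto u/(u+v)$. The main tool is the double branched cover $\Sigma(S^2,4)\simeq S^1\times S^1$ from Section \ref{rtanglesec}. An essential simple arc between two fixed branch points in the 4-punctured sphere is determined up to isotopy by the homology class of its lift. So it suffices to show two things: the recipe's arc is a simple arc from the correct endpoint, and its lift has class $u[m]+v[l]$ with $\gcd(u,v)=1$.

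\textbf{Step 1: reduce to the case $u\ge v$.} The statement itself observes that the case $u<v$ is the reflection across the imaginary axis of the picture for $\tau_{v/u}$. Reflection swaps the roles of meridian and longitude, and so exchanges the top twist $T$ with the right twist $R$ up to orientation. This matches $\tau$ and $\rho$ being conjugate under the swap $m\leftrightarrow l$.

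\textbf{Step 2: the arc is simple and has the right endpoints.} Label the $2u$ partition points of $[-2,-1]\cup[1,2]$ clockwise by $\mathbb{Z}/2u$. The recipe draws the arc by rotating by $v$ and alternating sides. I would show that the resulting chords can be drawn disjointly, alternating above and below the axis. Since $\gcd(u,v)=1$ and the steps alternate sides, the arc avoids every labelled point until it reaches a boundary point in $\{\pm1,\pm2\}$. This is the standard bridge (Schubert) normal form argument.

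\textbf{Step 3: compute the lift class.} Lift the arc to the torus by counting its intersections with the blue arcs (the intervals) and the red arcs. The $u$ partition pieces on each interval give $u$ crossings with the lifted meridian direction. The slope $v$ of the rotation gives $v$ crossings in the longitude direction. Hence the lift has class $u[m]+v[l]$, matching $[\tilde\gamma]$.

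\textbf{Step 4: check endpoints and conventions.} The recipe starts at $\mathrm{par}(u)$ and the punctures sit at $\pm1,2$. I would check that this starting point is SE after stereographic projection from SW. This can be verified on the base cases $\tau_{0/1}$ and $\tau_{1/1}$, and then propagated by noting that $T$ and $R$ permute the endpoints according to the parity changes of $u$ and $v$.

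I expect Step 4 to be the main obstacle. The homology class determines the arc only up to which pair of endpoints it joins, so parity must be tracked carefully. I would first tabulate how $(\mathrm{par}(u),\mathrm{par}(v))$ evolves under the two twists. Then I would compare this with the connectivity rules for UP, OP and RI orientations from \cite{W16}, since the statement is Wedrich's recipe adjusted to the present conventions.
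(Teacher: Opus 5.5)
The paper does not prove this lemma. It is quoted from \cite{W16}, \S4.1, and the only supporting reasoning in the text is the inductive twist picture of Lemma \ref{twistfx}. Your route is a legitimate alternative in principle: arcs avoiding SW are classified by the slope of their lift, so it would suffice to compute the slope of the recipe arc. However, the decisive Step 3 does not work as written.

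\textbf{Step 3.} Unsigned intersection counts with the standard arcs give at most $|u|$ and $|v|$, and only after you check that the recipe arc is in minimal (bigon-free) position with respect to them. They cannot separate $\tau_{u/v}$ from its mirror, whose lift has class $-u[m]+v[l]$. Already for $u=v=1$, take the arcs from $-1$ to $2$ passing above and below the puncture $1$. They are exchanged by $z\mapsto\bar z$, so they have identical counts against every arc on the real axis. Yet they are not isotopic, and only one of them is $\alpha_{1/1}$. The only datum in the recipe that fixes this chirality is the clockwise (resp.\ counterclockwise) labeling, and your argument never uses it.

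Step 3 has three further problems:
\begin{itemize}
\item You never fix the marking, i.e.\ which curves in the recipe plane are the images of the top blue and right red arcs. This matters because the recipe moves the discarded point between $-2$ and $2$ and the start point between $\pm 1$.
\item The bookkeeping is inconsistent. If $[\tilde\alpha]=u[m]+v[l]$, then crossings with a lifted meridian (a blue lift) measure the $[l]$-coefficient $\pm v$. So if the intervals were blue, as you say, subdividing them into $u$ parts would produce slope $v/u$. For the $u$ to come from the intervals, they must play the role of the red arcs.
\item The claim that ``rotation by $v$ gives $v$ crossings'' is asserted, not derived.
\end{itemize}

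\textbf{Step 4.} This step aims at a false target. Take $u$ odd and $v$ even, e.g.\ $\tau_{1/2}=R\tau_{1/1}$. Since $[\tilde\alpha]\cdot[\tilde l]=\pm u$ is odd and $[\tilde\alpha]\cdot[\tilde m]=\pm v$ is even, the strand through SE must also pass through SW. The projected arc therefore joins NE and NW, and SE is the puncture $Y$, so no endpoint of the recipe arc is SE. (The phrase ``the arc with endpoint SE'' in Section \ref{rtanglesec} has to be read as ``the strand not through SW.'') In your framework this step is also unnecessary: once the marking is fixed, the slope alone determines the arc avoiding SW, endpoints included.

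\textbf{What is missing.} You need a fixed marking of the recipe plane and a \emph{signed} computation of the slope that uses the orientation of the labeling. The induction you announce but never carry out would supply both automatically: show that a half-twist about the relevant pair of punctures carries the recipe picture for $u/v$ to the one for $(u+v)/v$ or $u/(u+v)$. That is the mechanism the paper relies on through Lemma \ref{twistfx}.
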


\begin{figure}
   \begin{tikzpicture}
       \node at (0,0) {\includegraphics[height=3cm]{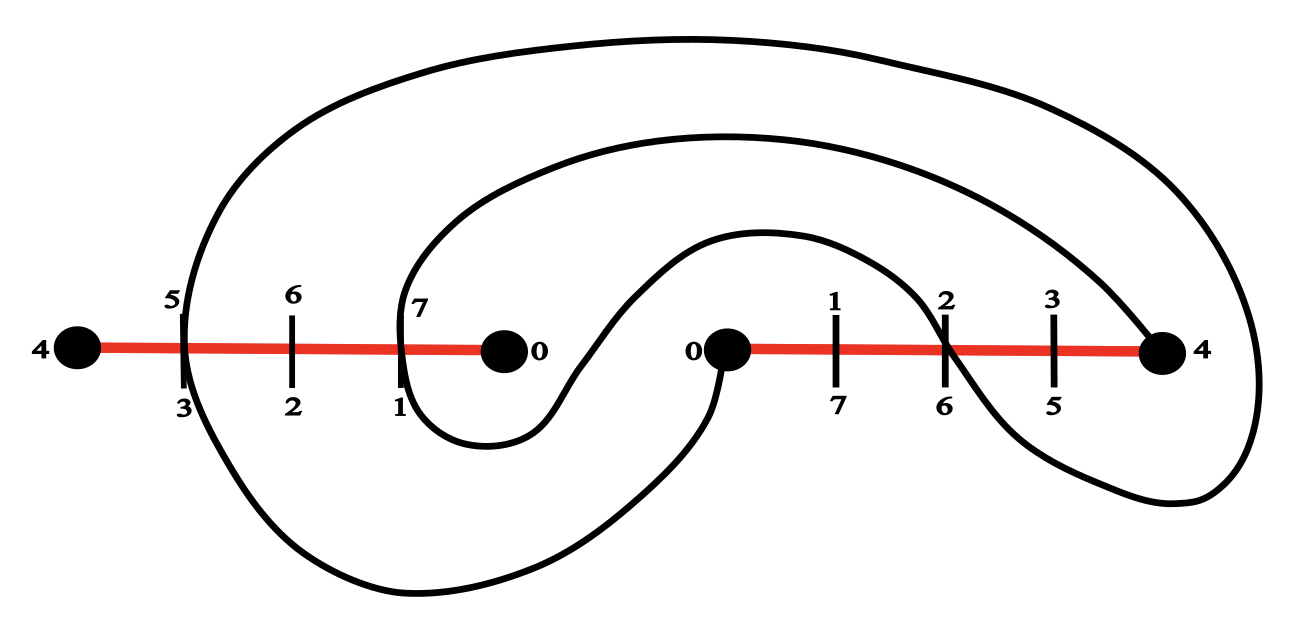}};
       \node at (3.8,0) {$\longrightarrow$};
       \node at (7,0) {\includegraphics[height=3cm]{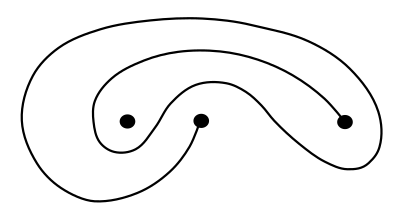}};
        \node at (0,-4) {\includegraphics[height=3cm]{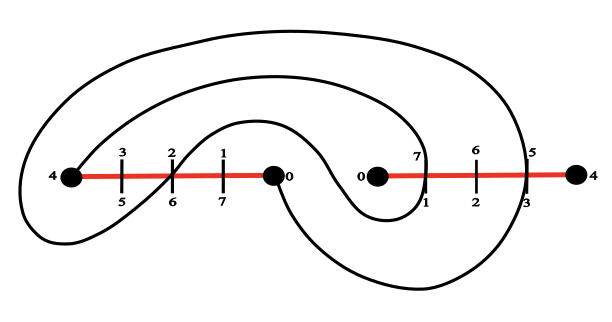}};
       \node at (3.8,-4) {$\longrightarrow$};
       \node at (7,-4) {\includegraphics[height=3cm]{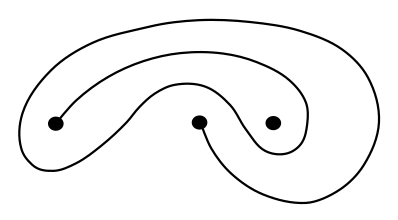}};
   \end{tikzpicture}
\caption{An application of Lemma \ref{bridgerecipe} to $\tau_{4/3}$ (top) and $\tau_{3/4}$ (bottom)}
\label{4_3_tangle}
\end{figure}

Furthermore, up to isotopy, we can make this picture as "tight" as possible to fix it uniquely. We can ensure that $\alpha$ has no self-intersections and has minimal intersections with the intervals $[-2,-1]$ and $[1,2]$ by pushing off bigons if necessary (strictly following the recipe in Lemma \ref{bridgerecipe} should avoid this, though). Additionally, in the $u\geq v$ case, we can guarantee that $\alpha_{u/v}$ is disjoint from $(-\infty,-2]$, which allowed us to throw out $x=-2$, and it is disjoint from $[2,\infty)$ if $u<v$, so we threw out $x=2$ in this case.

Now, we want to define two types of vertical Lagrangians in the 3-punctured plane. Assuming we have followed the procedure from Lemma \ref{bridgerecipe} to obtain $\alpha_{u/v}$, if $u\geq v$, let $l_A$ denote a vertical Lagrangian which intersects the real axis on $(-1,1)$ and let $l_I$ be a parallel vertical Lagrangian intersecting the interval $(1,2)$ and having the minimal number of intersections with $\alpha_{u/v}$. In the case where $u<v$, we let $l_A$ be a vertical Lagrangian intersecting $(-2,-1)$ having the minimal number of intersections with $\alpha_{u/v}$, and we let $l_I$ be a parallel Lagrangian intersecting the interval $(-1,1)$ on the real axis.  

\begin{note}
    Given the notation we have set up, we have
    \[
    |l_A\cap \alpha_{u/v}|=u
    \qquad\text{and}\qquad
    |l_I\cap \alpha_{u/v}|=v
    \]
    regardless of how $u$ and $v$ are related.
\end{note} 

\begin{definitionN}
\label{in_activedef}
    We will call $l_A$ and $l_I$ the \textit{active} and \textit{inactive axes}, respectively, and the intersection points $l_A \cap \alpha$ \textit{active intersection points} and $l_I\cap \alpha$ the \textit{inactive intersection points}.
\end{definitionN}

\begin{figure}
    \begin{tikzpicture}
        \node at (0,0) {$\mathcal{D}(\tau_{4/3})=$};
        \node at (3,0) {\includegraphics[height=3cm]{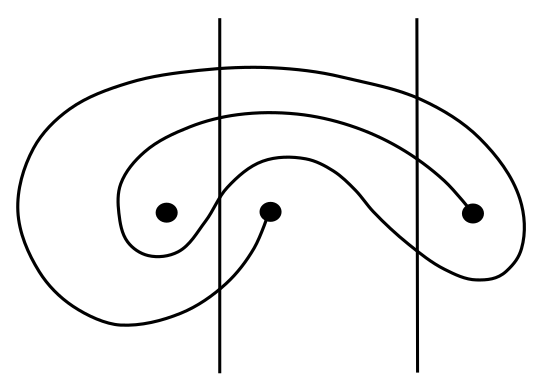}};
        \node at (7,0) {$\mathcal{D}(\tau_{3/4})=$};
        \node at (10,0) {\includegraphics[height=3cm]{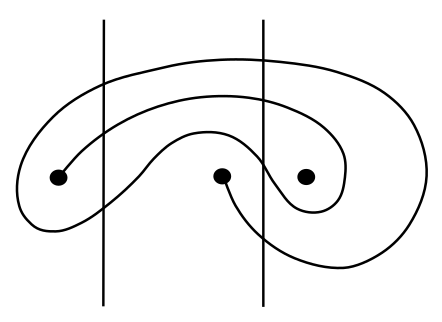}};
    \end{tikzpicture}
    \caption{Examples of the $\mathcal{D}(\tau_{u/v})$ notation}
    \label{DT43fig}
\end{figure}

In general, we may draw these verticals without labeling them, but it is important to remember that $l_A$ is always on the left. We are also now ready to define $\mathcal{D}(\tau_{u/v})$, which we referred to in the introduction. Examples are shown in Figure \ref{DT43fig}.

\begin{definitionN}
    Given $\tau_{u/v}$, let $\mathcal{D}(\tau_{u/v})$ be the picture in the 3-punctured plane consisting of $\alpha_{u/v}$, $l_A$, and $l_I$.
\end{definitionN}

In the following lemma, also taken from \cite{W16}, we describe how $\mathcal{D}(\tau_{u/v})$ is built inductively by seeing what the twist operations do to the verticals $l_A$ and $l_I$.

\begin{lemmaN}
\label{twistfx}
$\mathcal{D}(\tau_{u/v})$ can be built inductively using $l_A$ and $l_I$ as follows. Starting with the diagram corresponding to $\tau_{0/1}$, shown below, apply top twists by bending $l_A$ towards $l_I$ and straightening it out as shown below on the right. Right twists are performed in a similar way by bending $l_I$ towards $l_A$ and then straightening.
\[
\begin{tikzpicture}
\node at (-5,0) {$\mathcal{D}(\tau_{0/1})=$};
\node at (-3,0) {\includegraphics[height=2cm]{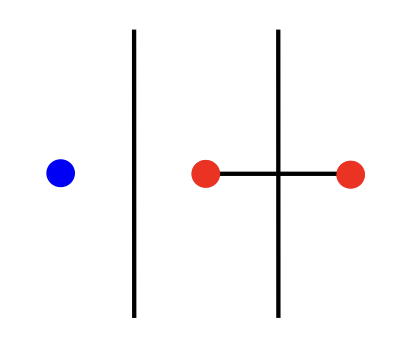}};
\node at (1,0) {$\mathcal{D}(T\tau_{1/1})=\mathcal{D}(\tau_{2/1})\colon$};
\node at (4,0) {\includegraphics[height=2cm]{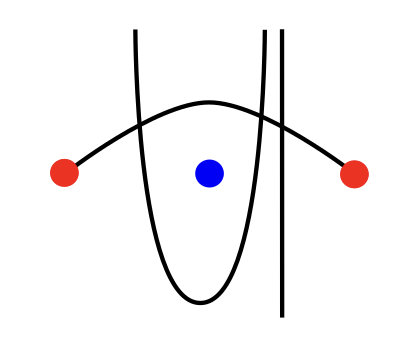}};
\node at (5.5,0) {$\longrightarrow$};
\node at (7,0) {\includegraphics[height=2cm]{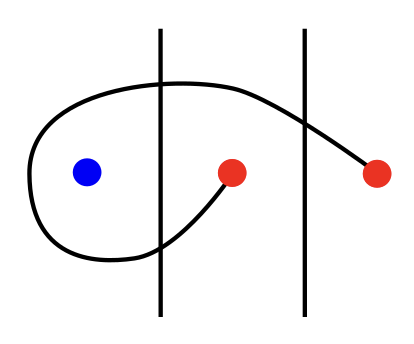}};
\end{tikzpicture}
\]
\end{lemmaN}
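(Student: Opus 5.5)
The plan is to argue by induction on the length of the twist sequence $[a_n,\dots,a_1]$, and to check the base case $\mathcal{D}(\tau_{0/1})$ directly. In $\mathcal{D}(\tau_{0/1})$ the arc $\alpha_{0/1}$ runs from $1$ to $2$ along (a pushoff of) the real interval. Then $l_A$, which crosses $(-1,1)$, misses $\alpha_{0/1}$, and $l_I$ meets it exactly once; this matches $u=0$, $v=1$. For the inductive step I take a diagram $\mathcal{D}(\tau_{u/v})$ satisfying the statement and apply one top twist $T$, giving $\tau_{(u+v)/v}$. A right twist $R$ gives $\tau_{u/(u+v)}$ and is handled symmetrically, by reflecting across the imaginary axis as in Lemma \ref{bridgerecipe}.

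The key geometric input is that a twist is a half Dehn twist of the punctured plane. A top twist is supported in a disk containing the two punctures that $l_A$ separates. So I realise the twist by the diffeomorphism $\phi$ that exchanges those two punctures, and leave $\alpha_{u/v}$ and $l_I$ fixed while replacing $l_A$ by $\phi^{-1}(l_A)$. This is equivalent to applying $\phi$ to $\alpha$ and then pulling back, so intersection numbers are preserved.

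Picture $\phi^{-1}(l_A)$ as the curve obtained by bending $l_A$ around the puncture toward $l_I$. It is then isotopic, relative to the punctures, to a band sum of $l_A$ and $l_I$. Each point of $\alpha\cap\phi^{-1}(l_A)$ therefore comes either from an old active point ($u$ of them) or from an old inactive point ($v$ of them), for a total of $u+v$. After straightening, the new active axis is the bent curve, and the old $l_I$ stays the new inactive axis, with $v$ points. I must check that no bigons form, so that the count stays minimal. I would do this by comparing with the explicit minimal picture from Lemma \ref{bridgerecipe}. Both pictures represent the same isotopy class of arc, so it suffices to see that the bent curve meets $\alpha$ exactly $u+v$ times, which equals the minimal intersection number $|l_A\cap\alpha_{(u+v)/v}|$ given in the Note.

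The main obstacle is this minimality and isotopy bookkeeping. The bent curve and the recipe's straight vertical must be shown isotopic through curves avoiding the punctures, while $\alpha$ is also transformed consistently with the tightened representative from Lemma \ref{bridgerecipe}. I would handle this through the double branched cover. There the straight vertical lines lift to meridian and longitude curves $m,l$, and the half twist lifts to the Dehn twist $\tau$ with $l\mapsto m+l$. Geometric intersection numbers of simple closed curves on the torus equal algebraic ones, which gives the counts $u+v$ and $v$ for $u[m]+v[l]$ and certifies minimality. Descending back to the plane then yields the inductive description stated in the lemma.
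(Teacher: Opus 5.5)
Your proposal is correct and is essentially the paper's argument: the paper's entire proof is that the lemma is easy to see by viewing the top and right twists as Dehn twists (half-twists) along arcs of the four-punctured sphere, and your conjugation trick is a fleshed-out version of that one line, together with the $u+v$ count and the branched-double-cover minimality check. The trick is to replace $l_A$ by $\phi^{-1}(l_A)$ while $\alpha$ and $l_I$ stay fixed, then straighten by applying $\phi$. The one detail worth stating explicitly is that the band joining $l_A$ to $l_I$ must be taken far above or below the compact arc $\alpha$, so that the band sum meets $\alpha$ in exactly the $u+v$ old points.
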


This is easy to see by thinking about the Dehn twists on the sphere. Additionally, we will need to keep track of the three punctures for our calculations later. We will follow the conventions of \cite{W16} and call them $X^+,X^-,$ and $Y$, where $Y$ is always the point that is not an endpoint of the arc (so it is the blue point above), and $X^+$ and $X^-$ (the red points above) are determined by the orientation of the tangle. In the case of $\tau_{0/1}$, the three points, labeled from left to right, are $Y, X^-,$ and $X^+$, which we encode by writing $Y|X^-|X^+$. We can keep track of which point is which by considering how the top and right twists permute them. The following lemma (Corollary 4.3 in \cite{W16}) tells us how the twists affect the orientation of $\tau_{u/v}$ and the configurations of these three points. In the lemma, we refer to this pair of data as a \textit{state}.

\begin{lemmaN}
\label{twistdiagram}
    The following diagram shows the effects of top and right twists on the orientations of the arcs and the configurations of the points $X^-,X^+$, and $Y$ (read left to right). The underlined state represents the state of the trivial tangle $\tau_{0/1}$.
  
\[\begin{tikzcd}[sep=small]
   & \underline{(UP,\,\,\,\, Y\vert X^-\vert X^+)} \arrow[rr,"T"]\arrow[ld] && (UP,\,\,\,\, X^-\vert Y\vert X^+) \arrow[ll]\arrow[rd,"R"] &\\
   (OP,\,\,\,\, Y\vert X^+\vert X^-) \arrow[ru,"R"]\arrow[rd]&&&& (OP,\,\,\,\,X^-\vert X^+\vert Y) \arrow[lu]\arrow[ld,"T"]\\
    & (RI,\,\,\,\, X^+\vert Y\vert X^-) \arrow[lu,"T"]\arrow[rr] && (RI,\,\,\,\, X^+\vert X^-\vert Y) \arrow[ll,"T"]\arrow[ru] &
\end{tikzcd}\]
\end{lemmaN}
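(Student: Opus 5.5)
We argue by induction on the total number of twists $a_1+\dots+a_n$ used to build $\tau_{u/v}$ from $\tau_{0/1}$, checking that the two kinds of twist act on states exactly as the diagram says. The starting point is the underlined state $(UP,\ Y|X^-|X^+)$ of $\tau_{0/1}$, read off from $\mathcal{D}(\tau_{0/1})$ in Lemma \ref{twistfx}. For the inductive step it is enough to understand how a single top twist $T$ and a single right twist $R$ act on an arbitrary state. Both twists are Dehn twists on the boundary sphere $S^2$, so each acts independently of the tangle's interior. Since $(T,R)$ are half-twists of the boundary, each one swaps two of the endpoints and fixes the others.

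\emph{Orientation component.} A top twist is a half-twist of the two northern endpoints NE and NW. It therefore exchanges their orientation data and leaves SE and SW fixed. Applying this to each of UP, OP and RI (drawn as in the pictures following Figure \ref{trivialandtwists}) gives the transitions UP$\leftrightarrow$UP, OP$\to$RI and RI$\to$OP, which are the $T$-arrows in the diagram. Likewise a right twist swaps NE and SE, giving UP$\leftrightarrow$OP and RI$\to$RI. Since a state here records only UP/OP/RI, each of these is a short case check on the four boundary arrows.

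\emph{Puncture component.} We work in the 3-punctured plane, with the punctures being the projections of three endpoints. By Lemma \ref{twistfx}, a top twist bends $l_A$ towards $l_I$ and straightens it. In the plane this amounts to a half-twist interchanging the two punctures adjacent to $l_A$, that is, the left and middle ones. A right twist bends $l_I$ and so interchanges the middle and right punctures. This explains why $T$ maps $Y|X^-|X^+$ to $X^-|Y|X^+$, and similarly for the other arrows.

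\emph{Relabelling.} The labels $X^\pm$ are not fixed points. $Y$ is always the puncture that is not an endpoint of $\alpha$, and $X^\pm$ is determined by the orientation of the strand at that endpoint. So after each twist we recompute which puncture is $Y$, using where $\alpha$ now ends, and which is $X^+$, using the new orientation computed above. We then confirm agreement with the six states and the arrows. The diagram is finite, so this is six vertices times two twists, with one subtlety below.

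\emph{Main obstacle.} The hardest step is matching conventions. The passage between $u\ge v$ and $u<v$ changes which puncture is dropped ($-2$ versus $2$), as described in Lemma \ref{bridgerecipe}. It also reverses the left/right reading order via reflection across the imaginary axis. I would handle this by checking the transition at $u=v$, namely $\tau_{1/1}$ and its twists, directly from the pictures. I would also verify that applying $R$ to $\tau_{0/1}$ returns the same state, consistent with $R\tau_{0/1}=\tau_{0/1}$; this accounts for the unlabeled back-arrows in the diagram. Since the statement is Corollary 4.3 of \cite{W16}, the remaining step is to confirm that our conventions (stereographic projection from SW, and $l_A$ on the left) agree with the ones used there.
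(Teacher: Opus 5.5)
The paper gives no argument of its own here; it imports the diagram as Corollary 4.3 of \cite{W16}. Your direct check is therefore a different, self-contained route, and its mechanism is right. $T$ transposes all endpoint data (strand membership and in/out) between NW and NE, which are the left and middle punctures of $\mathcal{D}(\tau_{u/v})$. $R$ does the same for NE and SE, the middle and right punctures, and SW never moves.

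However, one step you propose is false and comes from misreading the diagram. You plan to verify that $R$ applied to $\tau_{0/1}$ returns the same state, and you attribute the unlabeled arrows to this. But $R\tau_{0/1}=\tau_{0/1}$ holds only for the fraction, i.e.\ the unoriented tangle. The new crossing is a kink, after which the right strand enters at NE and leaves at SE. So your own rule UP$\to$OP (and the rule for $\mathrm{RUP}[j,k]$) gives $R(UP,\,Y|X^-|X^+)=(OP,\,Y|X^+|X^-)$, which is exactly the unlabeled arrow leaving the underlined state.

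The unlabeled arrows are genuine twist arrows. Each twist acts by a transposition, so $T$ and $R$ are involutions on the six states, and the hexagon's edges alternate $T,R,T,R,T,R$. Each edge is drawn in both directions with only one direction labeled. Your computation also shows that $(RI,\,X^+|X^-|Y)\to(RI,\,X^+|Y|X^-)$ is an $R$-arrow; its printed label $T$ is a misprint, since the rule for $\mathrm{TRI}[j,k]$ sends RI to OP. So your claim that your $T$-transitions ``are the $T$-arrows in the diagram'' holds only after this correction.

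Two smaller issues. First, your relabelling step never states the rule that determines the labels, so as written it cannot be carried out. The six states are consistent with this rule: $Y$ is the partner of SW, and $X^+$ (resp.\ $X^-$) is the endpoint where the other strand $\alpha$ enters (resp.\ leaves); for example $X^+=\mathrm{SE}$ for $\tau_{0/1}$. Here $\alpha$ must be the strand avoiding SW, not ``the arc with endpoint SE'' as the paper says. For $\tau_{1/2}$ or $\tau_{3/4}$, SE and SW lie on one strand, and indeed their state $(OP,\,X^-|X^+|Y)$ puts $Y$ at SE. With this rule, the labels are simply carried along by the transposition and nothing needs recomputing.

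Second, your ``main obstacle'' is not one. The punctures are always the images of NW, NE, SE read left to right, with $l_A$ between the first two; this is forced by $\alpha_{0/1}$ joining the middle and right punctures across $l_I$. The reflection in Lemma \ref{bridgerecipe} only concerns how $\alpha_{u/v}$ is drawn. Actually reversing the reading order for $u<v$ would give $(OP,\,Y|X^+|X^-)$ for $\tau_{3/4}=RT^3\tau_{0/1}$ instead of the correct $(OP,\,X^-|X^+|Y)$.
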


Continuing with our examples of $\tau_{4/3}$ and $\tau_{3/4}$, one can easily check that $\tau_{4/3}=TR^2T\tau_{0/1}$, so $\tau_{4/3}$ is given by the state $(UP, Y|X^-|X^+)$, and $\tau_{3/4}=RT^3\tau_{0/1}$, so $\tau_{3/4}$ is given by the state $(OP, X^-|X^+|Y)$.

\subsection{$\langle-\rangle_j$ and $\langle\langle-\rangle\rangle_j$ for rational tangles}
\label{SMEsec}




In \cite{W16}, Wedrich studied colored $\mathfrak{sl}_N$ invariants for rational tangles via diagrams related to $\mathcal{D}(\tau_{u/v})$, which we will also need for the next section. Such invariants are given diagramatically in terms of webs, or oriented planar graphs with integer flow, where all internal vertices are trivalent. Such webs encode the information of linear maps between tensor products of the miniscule representations of $\mathfrak{sl}_N$ (see \cite{CKM14}). 

Given $\tau_{u/v}$ with orientation $X\in\{UP,OP,RI\}$, its $j$-colored HOMFLY-PT polynomial, written $\langle\tau_{u/v}\rangle_j$, can be thought of as the $j$-colored $\mathfrak{sl}_N$ invariant with $N\gg 0$ and $a=q^N$. It is a linear combination over $\mathbb{C}[q^{\pm 1},a^{\pm 1}]$ of webs of the form $X[j,k]$, for $0\leq k\leq j$, where the webs $X[j,k]$ are shown diagrammatically in Figure \ref{basiswebs} for each possible $X$. We adopt the convention that $0$-labeled edges are deleted.

\begin{figure}
    \raisebox{0pt}{\includegraphics[height=2.5cm, angle=0]{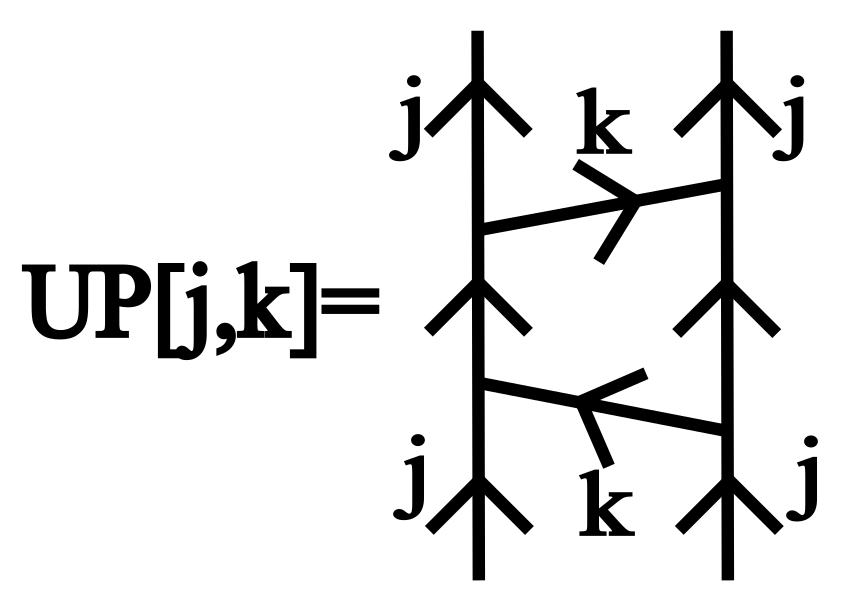}}\qquad \qquad \raisebox{0pt}{\includegraphics[height=2.5cm, angle=0]{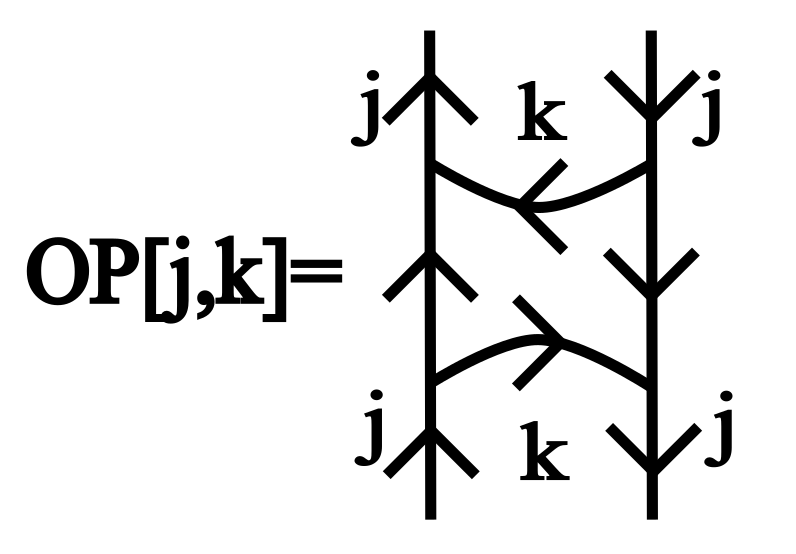}} \qquad \qquad \raisebox{0pt}{\includegraphics[height=2.5cm, angle=0]{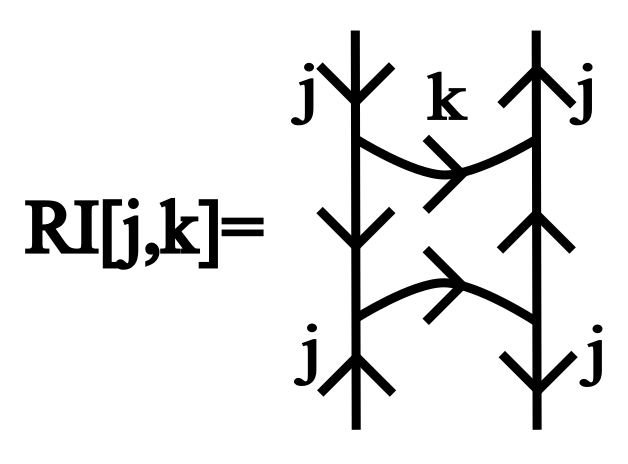}}
    \caption{The three types of basis webs}
    \label{basiswebs}
\end{figure}

As discussed in Section \ref{introsec}, $\langle -\rangle_j$ is related to the Poincar\'e polynomial $\langle\langle-\rangle\rangle_j$ of the $j$-colored HOMFLY-PT complex by
\[
\langle \tau_{u/v}\rangle_j=\langle\langle \tau_{u/v}\rangle\rangle_j\bigg|_{t\mapsto-1}
\]
for any $\tau_{u/v}$. In \cite{W16}, Wedrich used quantum skew Howe duality to prove how one can compute $\langle\langle\tau_{u/v}\rangle\rangle_j$ inductively. In particular, one starts with
\[
\begin{tikzpicture}
    \node at (0,0) {$\langle\langle\tau_{0/1}\rangle\rangle_j = UP[j,0]=$};
    \node at (2.5,0) {\includegraphics[height=1.5cm]{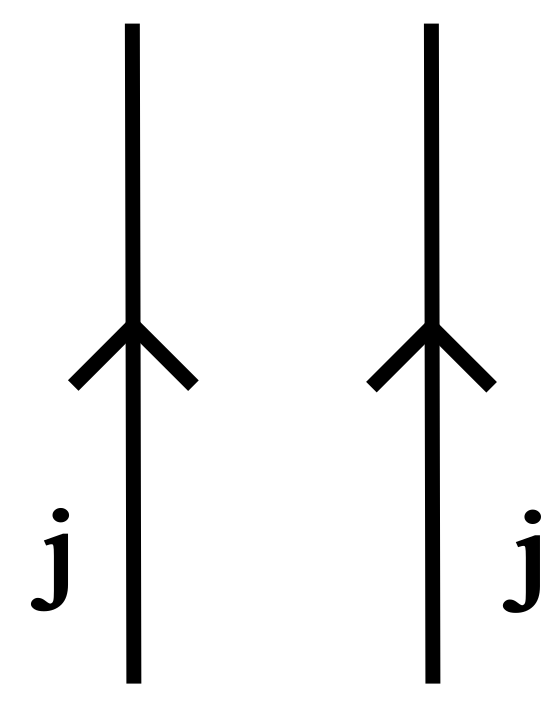}};   
\end{tikzpicture}
\]
and then one applies the same sequence of top and right twists used to build $\tau_{u/v}$ from $\tau_{0/1}$. One resolves crossings by the formula
\[
\begin{tikzpicture}
\node at (-2.5,0) {\includegraphics[height=1.5cm, angle=0]{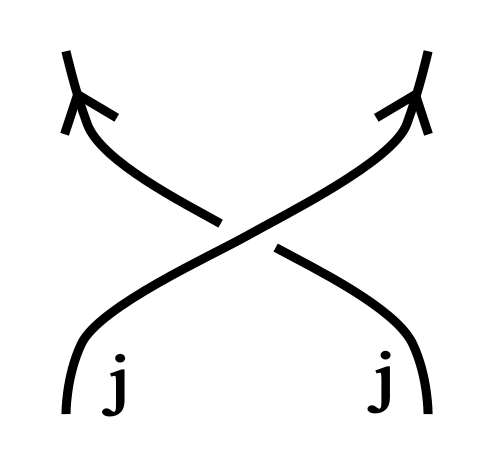}};
\node at (0,0) {\scalebox{1.25}{$=\sum_{k=0}^jt^{-k}q^k$}};
\node at (2.2,0) {\includegraphics[height=1.5cm, angle=0]{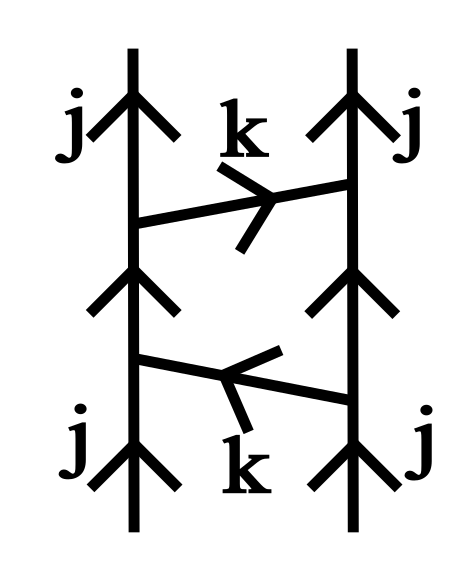}};
\node at (7,0) {$(q\leftrightarrow q^{-1} \,\, \text{if negative crossing}),$};
\end{tikzpicture}
\]
which translates to the following rules for applying $T$ and $R$ to $X[j,k]$ for each $X\in\{UP,OP,RI\}$, as proved in \cite{W16}:

\begin{enumerate}
  \item $\text{TUP}[j,k]\cong \sum_{h=k}^j t^{-h}q^{k^2+h}{h\brack k}_+\text{UP}[j,h]$ \label{TUPrule}
\item  $\text{TOP}[j,k]\cong \sum_{h=k}^j t^{-h}a^kq^{k(k-2j)+h}{h\brack k}_+\text{RI}[j,h]$
\item $\text{TRI}[j,k]\cong \sum_{h=k}^j t^{-h}a^hq^{q^2+h(1-2j)}{h\brack k}_+\text{OP}[j,h]$
\item $\text{RUP}[j,k]\cong \sum_{h=0}^k t^{-h}a^hq^{k(2j-k)+h(1-2j)}{j-h\brack k-h}^-\text{OP}[j,h]$
\item $\text{ROP}[j,k]\cong \sum_{h=0}^k t^{-h}a^kq^{-k^2+h}{j-h\brack k-h}^-\text{UP}[j,h]$
\item $\text{RRI}[j,k]\cong \sum_{h=0}^k t^{-h}q^{-k(k-2j)+h}{j-h\brack k-h}^-\text{RI}[j,h].$
\end{enumerate}
The negative superscript for some of the quantum binomials represents a rescaled version. In particular,
\[
{a\brack b}^-={a\brack b}_+ \bigg|_{q\mapsto q^{-1}}.
\]
It follows that the $X[j,k]$ form a sort of basis for the types of webs that appear in $\langle \tau_{u/v} \rangle_j$ and $\langle\langle \tau_{u/v}\rangle\rangle_j$ for any $\tau_{u/v}$. Furthermore, if we wanted to keep everything diagrammatic, one could write the first equation above as
\begin{equation*}
\begin{tikzpicture}
    \node at (-1.5,0) {\includegraphics[height=2cm]{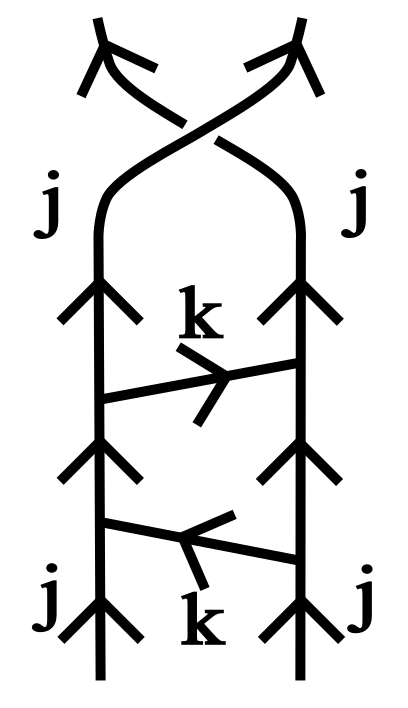}};
    \node at (1,0) {$=\sum_{h=k}^j t^{-h}q^{k^2+h}{h\brack k}_+$};
    \node at (3.5,0) {\includegraphics[height=1.5cm]{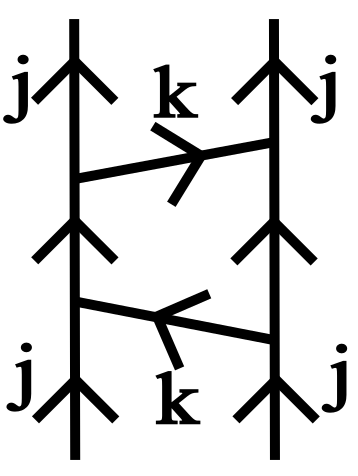}};
\end{tikzpicture}
\end{equation*}
and one could, similarly, write diagrammatic forms of each of the equations above. 

When computing $\langle\langle \tau_{u/v}\rangle\rangle_j$, it should be noted that the twist rules above need to be applied to each web in the Poincar\'e polynomials of each intermediate tangle during the inductive process. Finally, we will revisit the example of $\langle\langle\tau_{3/1}\rangle\rangle_1$ from Section \ref{introsec} to demonstrate how to apply the twist rules.

\begin{example}
  Recall that we had written
  \[
\begin{tikzpicture}
    \node at (0,0) {$\langle\langle \tau_{3/1}\rangle\rangle_1=(q^5t^{-3}+q^{3}t^{-2}+qt^{-1})$};
    \node at (3.3,0) {\includegraphics[height=1cm]{web1.png}};
    \node at (4,0) {$+$};
    \node at (4.5,0) {\includegraphics[height=1cm]{web0.png}};
\end{tikzpicture}
\]
in Section \ref{introsec}, which can be expressed in terms of the $X[j,k]$ as
\[
\langle\langle \tau_{3/1}\rangle\rangle_1=(q^5t^{-3}+q^{3}t^{-2}+qt^{-1})UP[1,1]+UP[1,0].
\]
We will now show this following the inductive procedure with the twist rules. Note that $\tau_{3/1}=T^3\tau_{0/1}$, so we need to apply $T$ to $UP[1,1]$ and $UP[1,0]$ at each of the three steps. By applying the rule for $TUP[j,k],$ we get
\begin{align*}
    &\langle\langle\tau_{0/1}\rangle\rangle_1=UP[1,0]\\
    &\langle\langle\tau_{1/1}\rangle\rangle_1=\langle\langle T\tau_{0/1}\rangle\rangle_1=qt^{-1}\,UP[1,1]+UP[1,0]\\
    &\langle\langle\tau_{2/1}\rangle\rangle_1=\langle\langle T\tau_{1/1}\rangle\rangle_1=q^3t^{-2}\,UP[1,1]+qt^{-1}\,UP[1,1]+UP[1,0]\\
    &\qquad\,\,\,\,\,\,\,\,\,\,\,= (q^3t^{-2}+qt^{-1})\,UP[1,1]+UP[1,0]\\
    & \langle\langle\tau_{3/1}\rangle\rangle_1=\langle\langle T\tau_{2/1}\rangle\rangle_1=(q^5t^{-3}+q^3t^{-2})\,UP[1,1]+qt^{-1}\,UP[1,1]+UP[1,0]\\
    &\qquad\,\,\,\,\,\,\,\,\,\,\,= (q^5t^{-3}+q^{3}t^{-2}+qt^{-1})UP[1,1]+UP[1,0],   
\end{align*}
as expected.
\end{example}


\subsection{Prior Results for Computing \(\langle\langle\tau_{u/v}\rangle\rangle_j\)}
\label{compmethods}

In \cite{W16}, Wedrich provides an elegant geometric method for computing $\langle\langle-\rangle\rangle_j$ for any rational tangle $\tau_{u/v}$, up to some universal shift of gradings, by calculating the grading differences between any pair of generators. Given $\tau_{u/v}$ and a coloring $j$, we can calculate $\langle\langle\tau_{u/v}\rangle\rangle_j$ using variations of what we have called $\mathcal{D}(\tau_{u/v})$. Recall that $\alpha_{u/v}$ comes with two special vertical Lagrangians: $l_A$ and $l_I$. Furthermore, the objects of the chain complex are of the form $X[j,k]$, where $k$ is called the \textit{weight}. Now we relate these two ideas.

\begin{definitionN}
    The generators of weight $k$ in the $j$-colored HOMFLY-PT complex for $\tau_{u/v}$ are defined in terms of weighted and unweighted verticals. We get the \textit{weighted verticals} in the colored HOMFLY-PT complex by taking $k$ parallel copies of $l_A$, which we denote by $w_1,...,w_k$, and the \textit{unweighted verticals} are $j-k$ parallel copies of $l_I$, denoted $u_1,...,u_{j-k}$. 
\end{definitionN}

We can use this new geometric setup to define our ``variation'' of $\mathcal{D}(\tau_{u/v})$.

\begin{definitionN}
\label{Dkjmkdef}
   Let $\mathcal{D}^{k,j-k}(\tau_{u/v})$ be the picture in the 3-punctured plane consisting of $\alpha_{u/v},w_1,...,w_k,$ and $u_1,...,u_{j-k}$. 
\end{definitionN}

In \cite{W16}, Wedrich proves that the generators of weight $k$ in the $j$-colored HOMFLY-PT complex are given by the $j$-tuples of intersection points 
\[
(x_1,...,x_j)\in (w_1\cap \alpha)\times...\times(w_k\cap \alpha)\times(u_1\cap \alpha)\times...\times(u_{j-k}\cap \alpha),
\]
 in $\mathcal{D}^{k,j-k}(\tau_{u/v})$, where we have dropped the $u/v$ subscript for $\alpha$, and then he describes how to compute the grading differences of the generators corresponding to these intersection points. If $\overline{x}=(x_1,...,x_h,...,x_j)$ and $\overline{x}'=(x_1,...,x_h',...,x_j)$ give two generators of the same weight that differ only at the $h$th coordinate, then one can compute the grading difference between $\overline{x}$ and $\overline{x}'$ as follows:

Take the path along $\alpha$ from $x_h$ to $x_h'$, denoted by $\alpha'$, and then the path $\beta$ from $x_h'$ back down to $x_h$ along the $h$th vertical. Consider now the loop $\alpha'\cdot \beta$ based at $x_h$. The enclosed region by this loop, called $D$, is a singular 2-chain over $\mathbb{Z}$, and the grading differences between $\overline{x}$ and $\overline{x}'$ are computed by studying intersections with this $D$. There are two types of contributions to the grading difference, known as the additive and non-additive parts. 

The \textit{additive part} is given by how $D$ intersects the points $\{X^+,X^-,Y\}$. The contributions to the grading difference arising from the intersection of these points with a simple disk $D_s$ coming from $D$ is given by
\[
D_s\cdot X^+=\frac{a^2}{tq^{4j-2}}, \qquad D_s \cdot X^-= D_s \cdot Y=\frac{q^2}{t}.
\]

The \textit{non-additive part} is given by how $D$ intersects the points in $\{x_1,...,x_{h-1},x_{h+1},...,x_j\}$ that $\overline{x}$ and $\overline{x}'$ have in common. The contribution from a simple disk $D_s$ coming from $D$ intersecting an $x_i$ from this set is 
\[
D_s \cdot x_i = \begin{cases}
    q^4 &  \text{if} \,x_i\in \text{int}(D_s),\\
    q^2 & \text{if} \, x_i \in \partial D_s, \\
    0   & \text{otherwise}.
\end{cases}
\]

Now, we have a way to determine the grading differences between any two generators of the colored HOMFLY-PT complex with weight $k$ (which, thus correspond to objects $X[j,k]$ for $X$ one of $UP, OP,$ or $RI$). Thus, we just need a way to compare generators of different weights. 

Suppose that $\overline{x}$ is a generator of weight $k$ and $\overline{y}$ is a generator of weight $k-1$ such that $\overline{x}$ and $\overline{y}$ are the same outside of the $k$th coordinate, and the $k$th coordinate of $\overline{y}$ is obtained from the $k$th coordinate of $\overline{x}$ by sliding the $k$th vertical from the weighted to the unweighted side. Then, we say that $\overline{x}$ and $\overline{y}$ are related by a \textit{simple slide}, and the grading difference between the generators is determined by which of the three special points from $\{X^+,X^-,Y\}$ the $k$th weighted vertical intersects while sliding to the unweighted side. We will use $Z$ here and throughout the rest of the paper to denote this point in between the other two special points. Then, the grading difference between $\overline{x}$ and $\overline{y}$ is given by

\[\begin{cases}
    t/q & \text{if}\, Z\in\{Y,X^-\},\\
    tq^{2j-1}/a &\text{if} \,Z= X^+.
\end{cases}\]

These rules allow us to determine the grading differences between any two generators in the colored $\mathfrak{sl}_N$ complex with $a=q^N$, which is sufficient to determine the Poincar\'e polynomial, up to some universal scaling of the gradings. 

Wedrich also gives another way one can think of these generators and computing their grading differences, to compare with the constructions of Bigelow and Manolescu found in \cite{B02} and \cite{M04}, respectively. We will now provide a description of this other method.

Consider the arc $\alpha_{u/v}$ as a 1-dimensional submanifold of $\mathbb{C}\setminus\{X^+,X^-,Y\}$. We will now set $M=\mathbb{C}\setminus\{X^+,X^-,Y\}$ to simplify notation and use $\text{Conf}^j(M)=\text{Sym}^j(M)\setminus \Delta$, where $\Delta$ is the big diagonal, to denote the $j$th (unordered) configuration space of $M$. Now, we define
\[
\mathbb{L}_{u/v}^j:=\text{Sym}^j(\alpha_{u/v})\setminus \Delta\subset\text{Conf}^j(M),
\]
and 
\[
\mathbb{L}_k^j := \pi(w_1\times...\times w_k\times u_1\times ...\times u_{j-k})\subset \text{Conf}^j(M)
\]
as two $j$-dimensional (over $\mathbb{R}$) submanifolds of the $2j$-real-dimensional $\text{Sym}^j(M)$, where we think of each of the parallel verticals shown in $\mathbb{L}_k^j$ as living in their own copies of $\mathbb{C}$ and 
\[
\pi:M^j\setminus\Delta\to \text{Conf}^j(M).
\]
Now, the generator $\overline{x}=(x_1,...,x_j)$ of weight $k$, using the notation from earlier in this section, can be thought of as the appropriate point in $\mathbb{L}_{u/v}^j \cap \mathbb{L}_k^j$. In fact, the whole set of generators for the $j$-colored $\mathfrak{sl}_N$ complex of $\tau_{u/v}$, which we denote by $\mathcal{G}^j_{u/v}$, is given by 
\[
\mathcal{G}^j_{u/v}=\bigcup_{k=0}^j (\mathbb{L}_{u/v}^j \cap \mathbb{L}_k^j).
\]

The grading differences between the points in $\mathcal{G}^j_{u/v}$ 
of the same weight $k$ can be computed by the graded intersection numbers of lifts of $\mathbb{L}_{u/v}^j$ and $\mathbb{L}_k^j$ to the covering space of $\text{Conf}^j(M)$ specified by a surjective homomorphism $\Theta_j:\pi_1(\text{Conf}^j(M))\to\mathbb{Z}^3$
 (the $\mathbb{Z}^3$ corresponds to the $q$-, $a$-, and $t$-gradings). We can describe this homomorphism explicitly with the help of some simpler ones. For any $V\in \{X^+,X^-,Y\}$, we can define a homomorphism
\[
\Psi_V^j:\pi_1(\text{Conf}^j(M))\xrightarrow{\iota_*} \pi_1(\text{Conf}^j(\mathbb{C}\setminus V)) \to \pi_1(\text{Conf}^{j+1}(\mathbb{C}))\cong\text{Br}_{j+1}\xrightarrow{\text{ab}} \mathbb{Z},
\]
where the first map is an inclusion, the second map comes from adding the points in $V$ to the unordered $j$-tuple, and the final map is the standard abelianization homomorphism. Additionally, we can define
\[
\Phi^j:\pi_1(\text{Conf}^j(M))\xrightarrow{\iota_*} \pi_1(\text{Conf}^j(\mathbb{C}))\cong\text{Br}_j\xrightarrow{\text{ab}} \mathbb{Z}.
\]
Our convention is that we compute with these homomorphisms by first determining the appropriate braid, which is drawn by viewing the loop from above the diagram, and then computing the abelianization---the number of positive crossings minus the number of negative crossings. The convention for obtaining the braid associated with a loop will be clarified shortly with an example.

Now, for $\overline{x}, \overline{y} \in \mathbb{L}_{u/v}^j \cap \mathbb{L}_k^j$ (two generators of the same weight), the $q$-, $a$-, and $t$-grading differences between $\overline{x}$ and $\overline{y}$ are given by $\Theta_j([\gamma_{\overline{x},\overline{y}}])\in \mathbb{Z}^3$, where $\gamma_{\overline{x},\overline{y}}$ is a loop in $\text{Conf}^j(M)$ that starts at $\overline{x}$, moves to $\overline{y}$ along $\mathbb{L}_{u/v}^j$, and then back down to $\overline{x}$ via $\mathbb{L}_k^j$. More precisely, if $\overline{x}=(x_1,...,x_j)$ and $\overline{y}=(y_1,...,y_j)$, then $\gamma_{\overline{x},\overline{y}}$ is given by a $j$-tuple of paths going from $x_i$ to $y_{\sigma(i)}$ along $\mathbb{L}_{u/v}^j$ for some $\sigma \in S_j$, then from $y_{\sigma(i)}$ to $x_{\sigma(i)}$ along the $\sigma(i)$th vertical in $\mathbb{L}_k^j.$ 

\begin{definitionN}
    Given two generators $\overline{x}$ and $\overline{y}$ of the $j$-colored HOMFLY-PT complex for a rational tangle $\tau_{u/v}$, let $q(\overline{x})-q(\overline{y})$ denote the difference of $q$-gradings between the two generators. Similarly, we define $a(\overline{x})-a(\overline{y})$ and $t(\overline{x})-t(\overline{y})$ for the $a$- and $t$-grading differences.
\end{definitionN}

Now, we formulate precisely what these grading differences are in terms of the homomorphisms just given, which we get by comparing with Wedrich's method of intersection numbers and 2-chains. Putting everything together, we see
\begin{align}
\label{qdiffeqn}
    &q(\overline{x})-q(\overline{y})=\left(2\Phi^j+(1-2j)\left(\Psi_{X^+}^j-\Phi^j\right)+\sum_{W\in\{X^-,Y\}}\left(\Psi_W^j-\Phi^j\right)\right)([\gamma_{\overline{x},\overline{y}}])\\
    \label{adiffeqn}
    &a(\overline{x})-a(\overline{y})=\left(\Psi_{X^+}^j-\Phi^j\right)([\gamma_{\overline{x},\overline{y}}])\\
    \label{tdiffeqn}
    &t(\overline{x})-t(\overline{y})=-\frac{1}{2}\sum_{W\in\{X^+,X^-,Y\}}\left(\Psi_W^j-\Phi^j\right)([\gamma_{\overline{x},\overline{y}}]).
\end{align}

This all follows from the fact that $2\Phi^j$ gives the non-additive part and, for $V\in\{X^+,X^-,Y\}$, $\frac{1}{2}\left(\Psi_V^j-\Phi^j\right)$ computes the winding number about $V$. These three homomorphisms on the right give the coordinate homomorphisms of $\Theta_j$.

\begin{example}
    Now, we give an example for comparing the grading differences between two generators $\overline{x},\overline{y}\in \mathbb{L}_{3/1}^2 \cap \mathbb{L}_2^2$ of weight $2$ for $\tau_{3/1}$. Note that $\tau_{3/1}$ has state $(UP, X^-|Y|X^+)$. For our generators, we will take
    \[
    \begin{tikzpicture}
        \node at (0,0) {$\overline x=$};
        \node at (2.5,0) {\includegraphics[height=4cm]{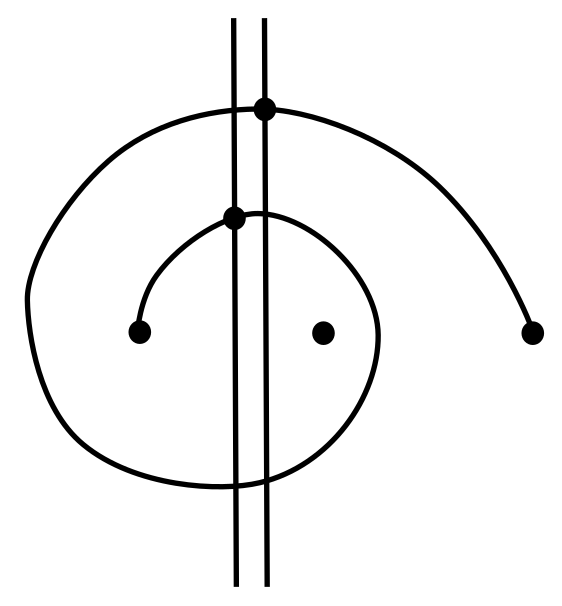}};
        \node at (7,0) {$\overline y=$};
        \node at (9.5,0) {\includegraphics[height=4cm]{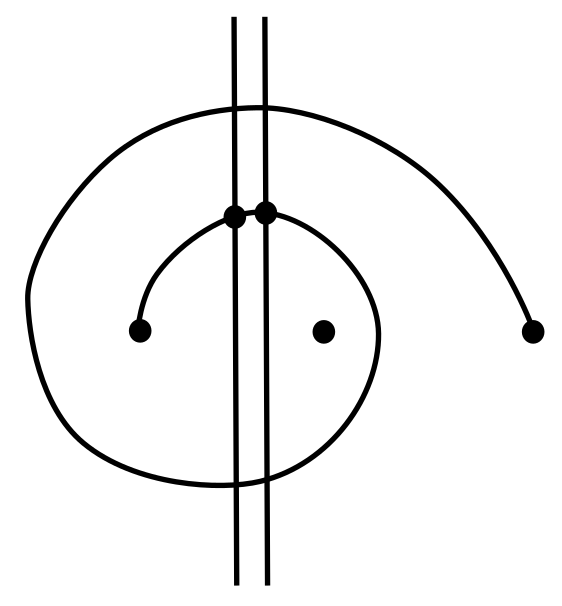}};
    \end{tikzpicture}
    \]
    where we can think of these pictures as living in $\text{Conf}^2(M)$. To compute the grading differences, we use the loop $\gamma_{\overline x,\overline y}$ that goes from $\overline x$ to $\overline y$ along $\mathbb{L}_{3/1}^2$ and then back to $\overline x$ along $\mathbb{L}_2^2$. The loop $\gamma_{\overline x, \overline y}$ is shown below with the eye symbol representing the perspective we use to draw the corresponding braids. In particular, the braid is obtained by viewing the loop from above and in the same plane (imagine tilting the page so that the top is near your face and the picture is facing down), so that when a path moves to the left/right on the page, it also moves to the left/right from this perspective. Recall that loops in $\text{Conf}^2(M)$ are given by two disjoint loops or two paths that swap starting/ending points. Below, the red point denotes a constant loop.
    \[
    \begin{tikzpicture}
        \node at (0,0) {$\gamma_{\overline x,\overline y}=$};
        \node at (4,0) {\includegraphics[height=5cm]{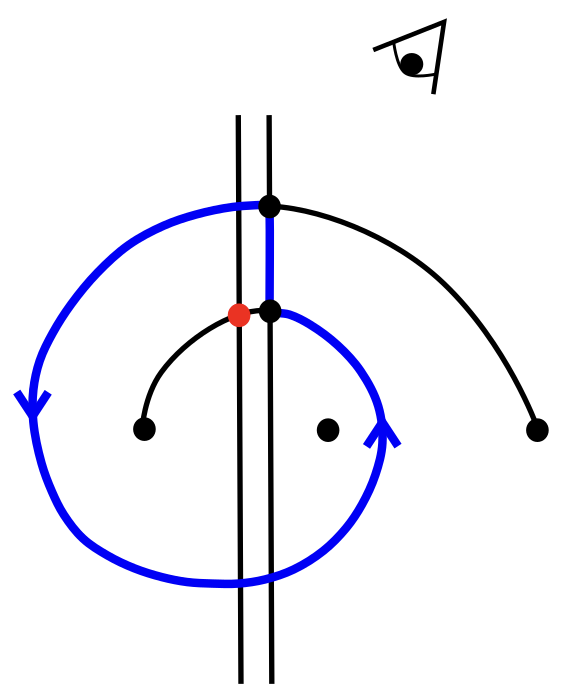}};
    \end{tikzpicture}
    \]
    Thus, for example, viewing the loop from the specified perspective, we see that the blue loop first goes to the left and in front of $X^-$, and then it goes behind $X^-$ and $Y$, moving to the right, and then it turns left again, passing in front of $Y$ and returning to the basepoint. Below are the braids we get from $\gamma_{\overline x,\overline y}$ for computing $\Phi^2, \Psi_{X^+}^2,\Psi_{X^-},$ and $\Psi_Y^2$, from left to right. We have colored in blue and red the strands corresponding to the colored component loops for $\gamma_{\overline x,\overline y}$, and the black strands represent the appropriate punctures.
    \[
    \begin{tikzpicture}
        \node at (1,0) {\includegraphics[height=3cm]{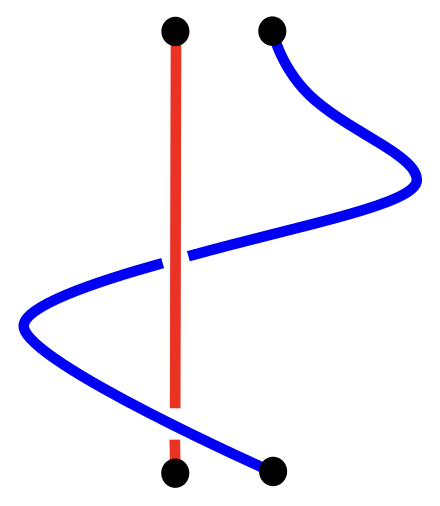}};
        \node at (4,0) {\includegraphics[height=3cm]{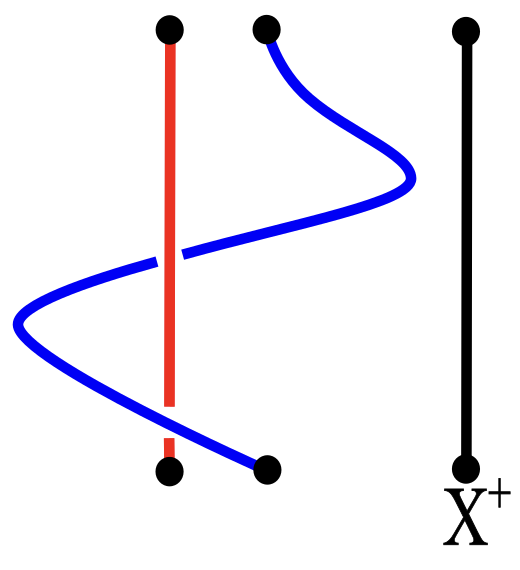}};
        \node at (8,0) {\includegraphics[height=3cm]{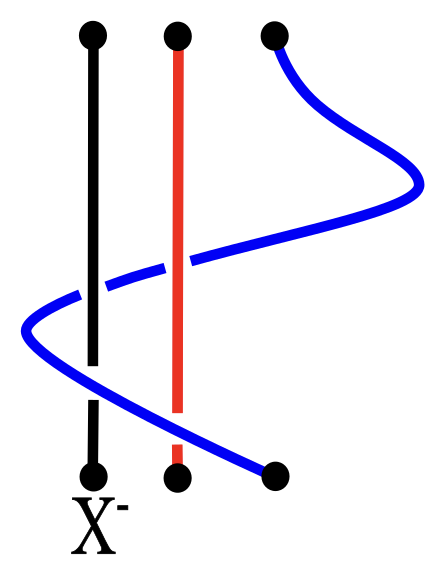}};
        \node at (11,0) {\includegraphics[height=3cm]{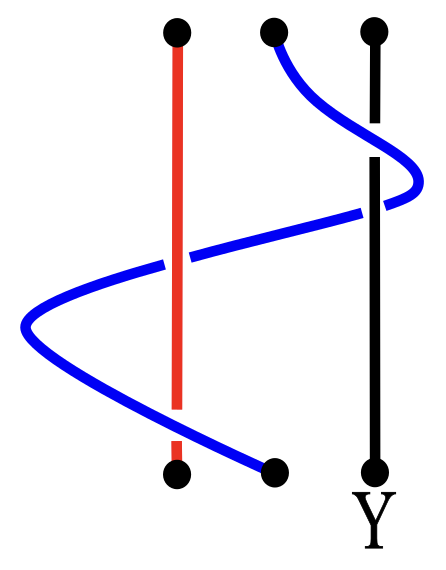}};
    \end{tikzpicture}
    \]
    From here, we see $\Phi^2([\gamma_{\overline x,\overline y}])=-2, \Psi_{X^+}^2([\gamma_{\overline x,\overline y}])=-2, \Psi_{X^-}^2([\gamma_{\overline x,\overline y}])=-4$, and $\Psi_Y^2([\gamma_{\overline x,\overline y}])=-4$. Now, we can use Equations (\ref{qdiffeqn})-(\ref{tdiffeqn}) to compute $q(\overline x)-q(\overline y)=-8$, $a(\overline x)-a(\overline y)=0$, and $t(\overline x)-t(\overline y)=2$.
\end{example}


\subsection{Links-Quivers for Rational Tangles}
\label{quiversec}

In Section \ref{introsec}, we briefly discussed how our tangles analogue of the Links-Quivers Correspondence was that we can write $\mathcal{P}(\tau_{u/v})=\sum_{j\geq 0}\langle\langle\tau_{u/v}\rangle\rangle_j$ in ``quiver form,'' which we precisely define now.

\begin{definitionN}
    For a rational tangle $\tau_{u/v}$, $\mathcal{P}(\tau_{u/v})$ is in \textit{quiver form} if it can be written as
    \begin{multline*}
    \mathcal{P}(\tau_{u/v})=
        \sum_{\bf{d}=(d_1,...,d_{u+v})\in \mathbb{N}^{u+v}}q^{S\cdot\textbf{d}+\textbf{d}\cdot Q\cdot \textbf{d}^t}a^{A\cdot \textbf{d}} t^{T\cdot\textbf{d}} {d_1+...+d_u\brack d_1,...,d_u}{d_{u+1}+...+d_{u+v}\brack d_{u+1},...,d_{u+v}}\\ \times X[d_1+...+d_{u+v},d_1+...+d_u],
\end{multline*}
where $S,A,$ and $T$ are linear forms and $Q$ is a quadratic form on a free $\mathbb{Z}$-module, denoted $\mathbb{Z}\mathcal{G}_{u/v}^1$. 
\end{definitionN}
Also, recall from Section \ref{introsec} that $\mathbb{Z}\mathcal{G}_{u/v}^1$ is equipped with a preferred basis $\mathcal{G}_{u/v}^1$ given by the Lagrangian intersections in $\mathcal{D}(\tau_{u/v})$.

It will be helpful to establish some notation regarding these linear and quadratic forms.
We see that the data for $\mathcal{P}(\tau_{u/v})$ is given by the orientation $X$ of $\tau_{u/v}$, the three linear forms $S,A,$ and $T$, which can be written as vectors, and the quadratic form $Q$, which can be written as a symmetric matrix. Following the conventions of \cite{SW21}, we will sometimes express this data by 
\[
X, [S|A|T], Q,
\]
where the three vectors are put as the columns of a matrix. Then, whenever we need to partition the intersection points (or, equivalently, the basis elements of $\mathbb{Z}\mathcal{G}_{u/v}^1$), one can use block representations such as 
\[
X, 
    \left[\begin{array}{ c| c | c  }
    S_1 & A_1 & T_1\\
    \hline
    S_2 & A_2 & T_2
  \end{array}\right],  \left[\begin{array}{ c |c }
    
    Q_{11} & Q_{12} \\
    \hline
    Q_{21} & Q_{22}
  \end{array}\right]
\]
to represent the data, where the subscript 1 corresponds to the block for one of the partitioning sets, and the 2 corresponds to the other. Thus, the blocks $Q_{12}$ and $Q_{21}$ can be thought of as the part of $Q$ that relates basis elements between the two sets. For example, one can use the partitioning of the indices by the active and inactive ones to define blocks such as the ones above. 

In Section \ref{indreducesec}, we will present a way to express $\mathcal{P}(\tau_{u/v})$ in a different form involving fewer indices in the sum, which corresponds to restricting $S,A,T,$ and $Q$ to a free submodule of $\mathbb{Z}\mathcal{G}_{u/v}^1$ with basis given by a subset of $\mathcal{G}_{u/v}^1$. The purpose for this will be apparent in \cite{JHpII26}, where we will need to use generating functions with compressed indices. As a consequence of reducing the number of indices, however, we will need one additional linear form $K$, which consists only of $1$'s and $0$'s. This modified way of writing $\mathcal{P}(\tau_{u/v})$ is analogous to what is referred to as the ``almost quiver form'' by Sto\v si\'c and Wedrich in \cite{SW21}, so we adopt this terminology. 
\begin{definitionN}
    Given a rational tangle $\tau_{u/v}$, we say that $\mathcal{P}(\tau_{u/v})$ is in \textit{almost quiver form} if it may be written as
    \begin{multline*}
\sum_{\bf{d}=(d_1,...,d_{m+n})\in \mathbb{N}^{m+n}}q^{S\cdot\textbf{d}+\textbf{d}\cdot Q\cdot \textbf{d}^t}a^{A\cdot \textbf{d}} t^{T\cdot\textbf{d}} (-t^{-1}q^2;q^2)_{K\cdot \textbf{d}}{d_1+...+d_m\brack d_1,...,d_m}{d_{m+1}+...+d_{m+n}\brack d_{m+1},...,d_{m+n}}\\ \times X[d_1+...+d_{m+n},d_1+...+d_m],
\end{multline*}
where $m\leq u$, $n\leq v$, $S,A,T,$ and $Q$ are restrictions of the same linear and quadratic forms from the quiver form, and $K$ is the new linear form.
\end{definitionN}

The data for $\mathcal{P}(\tau_{u/v})$, written in almost quiver form, may be given by 
\[
X, [K|S|A|T], Q,
\]
which we will use in Section \ref{indreducesec}.

In \cite{JHpII26}, we will need the generating functions $P(\tau_{u/v})=\sum_{j\geq 0}\langle\tau_{u/v}\rangle_j$ for the colored HOMFLY-PT polynomials (or skein module evaluations) of $\tau_{u/v}$. Consequently, we will present the important results for $P(\tau_{u/v})$ when appropriate, but they will follow trivially from the corresponding results for $\mathcal{P}(\tau_{u/v})$ by setting $t=-1$. For now, we will just state what the quiver and almost quiver forms look like for $P(\tau_{u/v})$.

\begin{definitionN}
    Given $\tau_{u/v}$, we say that $P(\tau_{u/v})$ is in \textit{quiver form} if it is written as 
    \begin{multline*}
        \sum_{\bf{d}=(d_1,...,d_{u+v})\in \mathbb{N}^{u+v}}(-q)^{S\cdot\textbf{d}}q^{\textbf{d}\cdot Q\cdot \textbf{d}^t}a^{A\cdot \textbf{d}} {d_1+...+d_u\brack d_1,...,d_u}{d_{u+1}+...+d_{u+v}\brack d_{u+1},...,d_{u+v}}\\ \times X[d_1+...+d_{u+v},d_1+...+d_u],
\end{multline*}
where $S,A,$ and $Q$ are the same as in the definition of the quiver form for $\mathcal{P}(\tau_{u/v})$. We say $P(\tau_{u/v})$ is in \textit{almost quiver form} if it is written as
\begin{multline*}
\sum_{\bf{d}=(d_1,...,d_{m+n})\in \mathbb{N}^{m+n}}(-q)^{S\cdot\textbf{d}}q^{\textbf{d}\cdot Q\cdot \textbf{d}^t}a^{A\cdot \textbf{d}} (q^2;q^2)_{K\cdot \textbf{d}}{d_1+...+d_m\brack d_1,...,d_m}{d_{m+1}+...+d_{m+n}\brack d_{m+1},...,d_{m+n}}\\ \times X[d_1+...+d_{m+n},d_1+...+d_m],
\end{multline*}
where $m,n,K,S,A,$ and $Q$ are the same as in the definition of the almost quiver form for $\mathcal{P}(\tau_{u/v})$.
\end{definitionN}
We can express the data for $P(\tau_{u/v})$ in the same way as $\mathcal{P}(\tau_{u/v})$, but we no longer need the linear form $T$.

\section{The Main Theorem}
\label{mainthmsec}

\subsection{Setting Up the Theorem for Poincar\'e Polynomials}
\label{thmsetupsec}

In this section, we discuss Theorem \ref{introtanglethm} from Section \ref{introsec}. In terms of the terminology we have developed, the theorem tells us that $\mathcal{P}(\tau_{u/v})=\sum_{j\geq 0} \langle\langle \tau_{u/v}\rangle\rangle_j$ can be put in quiver form, meaning
\begin{multline*}
    \mathcal{P}(\tau_{u/v})=
        \sum_{\bf{d}=(d_1,...,d_{u+v})\in \mathbb{N}^{u+v}}q^{S\cdot\textbf{d}+\textbf{d}\cdot Q\cdot \textbf{d}^t}a^{A\cdot \textbf{d}} t^{T\cdot\textbf{d}} {d_1+...+d_u\brack d_1,...,d_u}{d_{u+1}+...+d_{u+v}\brack d_{u+1},...,d_{u+v}}\\ \times X[d_1+...+d_{u+v},d_1+...+d_u],
\end{multline*}
where the linear forms $S,A,T,$ and $Q$ are computed geometrically from the first and second configuration spaces of $M$, the 3-punctured plane. Before we can state the theorem precisely, we first need to establish conventions and prove some easy lemmata. Section \ref{pfsec} is dedicated to proving the theorem. 

Given a rational tangle $\tau_{u/v}$, recall that we have $\mathcal{D}(\tau_{u/v})$ consisting of an arc Lagrangian $\alpha_{u/v}$ and two vertical Lagrangians $l_A$ and $l_I$ in $M$, the 3-punctured plane $\mathbb{C}\setminus\{X^+,X^-,Y\}$. Let $\mathcal{G}_{u/v}^1$ be the $u+v$ intersection points coming from $\alpha_{u/v}\cap(l_A\cup l_I)$. We will see that $S,A,$ and $T$ can be thought of as linear forms and $Q$ as a quadratic form on the free $\mathbb{Z}$-module $\mathbb{Z}\mathcal{G}_{u/v}^1$ equipped with the preferred basis $\mathcal{G}_{u/v}^1$. 

Now, we need to establish some notation for the intersection points (or, equivalently, the basis for $\mathbb{Z}\mathcal{G}_{u/v}^1$).

\begin{definitionN}
    Let $\mathcal{G}_{u/v}^1=\{\xi_1,...,\xi_{u+v}\}$, where the points $\xi_1,...,\xi_u$ come from $\alpha_{u/v}\cap l_A$ and $\xi_{u+1},...,\xi_{u+v}$ come from $\alpha_{u/v}\cap l_I$. As a convention, we label the $\xi_i$'s with increasing subscripts as one moves ``up'' along $l_A$ or $l_I$ (i.e. in the imaginary direction). Call this the \textit{standard ordering} of the $\xi_i$'s.
\end{definitionN}

\begin{figure}
    \centering
    \includegraphics[height=5cm]{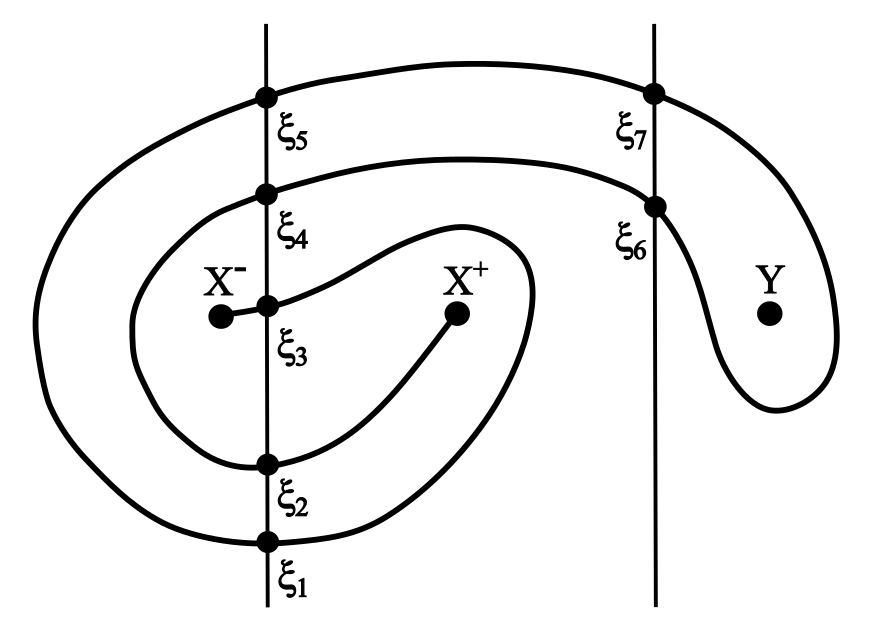}
    \caption{$\mathcal{D}(\tau_{5/2})$ drawn with the $\xi_i$'s labeled according to the standard convention.}
    \label{Dt52xilabel}
\end{figure}

In Section \ref{indreducesec}, we will use a different ordering of the intersection points, but it should be assumed that we are working with the standard ordering unless stated otherwise. We recall Definition \ref{in_activedef}, but now using our new notation.

\begin{definitionN}
    Let $\xi_1,...,\xi_u$, the intersections coming from $\alpha_{u/v}\cap l_A$, be called the \textit{active intersections}. Additionally, let $\xi_{u+1},...,\xi_{u+v}$, the intersections coming from $\alpha_{u/v}\cap l_I$, be called the \textit{inactive intersections}. 
\end{definitionN}

Recall that these names come from the fact that $l_A$ and $l_I$ are called the active and inactive axes, respectively.

Returning to the linear and quadratic forms, it turns out that we only need them on $\mathbb{N}$-linear combinations (where $0\in\mathbb{N}$) of the $\xi_i$'s. If $d_1\xi_1+...+d_{u+v}\xi_{u+v}\in \mathbb{Z}\mathcal{G}_{u/v}^1$ with all $d_i\in \mathbb{N}$, we will think of this as a point $\textbf{d}=(d_1,...,d_{u+v})\in \mathbb{N}^{u+v}$. As one might expect from the quiver form for $\mathcal{P}(\tau_{u/v})$, these $\textbf{d}$'s will correspond to the indices of the generating functions.

\begin{definitionN}
    Let $\mathbb{N}\mathcal{G}_{u/v}^1\subset \mathbb{Z}\mathcal{G}_{u/v}^1$ be the set of all $d_1\xi_1+...+d_{u+v}\xi_{u+v}\in \mathbb{Z}\mathcal{G}_{u/v}^1$ such that all $d_i\in \mathbb{N}$. 
\end{definitionN}



Since $S,A,$ and $T$ are linear forms on the free $\mathbb{Z}$-module $\mathbb{Z}\mathcal{G}_{u/v}^1$, it suffices to define them on the individual basis elements $\xi_i$. We will write $S_i=S(\xi_i)$, and similarly for $A$ and $T$. Thus, for $\textbf{d}\in \mathbb{N}^{u+v}$, when we write $S\cdot \textbf{d}$, we mean
\[
S\cdot \textbf{d}=S(\sum_{i=1}^{u+v}d_i\xi_i)=S_1d_1+...+S_{u+v}d_{u+v}.
\]
It also suffices to define $Q$ on $\xi_i\cdot\xi_j\in \text{Sym}^2(\mathbb{Z}\mathcal{G}_{u/v}^1)$ because quadratic forms on $\mathbb{Z}\mathcal{G}_{u/v}^1$ may be thought of as linear forms on $\text{Sym}^2(\mathbb{Z}\mathcal{G}_{u/v}^1)$. In particular, we have
\[
Q\left(\sum_{i=1}^{u+v} d_i\xi_i\right)=\sum_{1\leq i,j\leq u+v} Q_{ij} d_id_j= \sum_{i=1}^{u+v}Q_{ii}d_i^2+\sum_{1\leq i<j\leq u+v}2Q_{ij}d_id_j,
\]
where $Q_{ij}=Q_{ji}=Q'(\xi_i\cdot\xi_j)$, where $Q'$ is the corresponding linear form on $\text{Sym}^2(\mathbb{Z}\mathcal{G}_{u/v}^1)$. Additionally, this is what we mean by $\textbf{d}\cdot Q\cdot \textbf{d}^T$ in the quiver form.

Furthermore, we will see that all outputs for $S,A,T,$ and $Q$ are integers, so, for our purposes, we may think of them as functions $\mathbb{N}\mathcal{G}_{u/v}^1\to \mathbb{Z}$ that behave like linear and quadratic forms.

Before proceeding to define $S,A,T,$ and $Q$ in terms of homomorphisms and loops, we want a way to reduce generators of the colored HOMFLY-PT complexes to elements of $\mathbb{N}\mathcal{G}_{u/v}^1$. Recall that, in \cite{W16}, Wedrich gave a method to compare the gradings of generators in the $j-$colored HOMFLY-PT complex of a rational tangle, which were $j$-tuples
\[
\overline{x}=(x_1,...,x_j)\in (w_1\cap \alpha)\times...\times(w_k\cap \alpha)\times(u_1\cap \alpha)\times...\times(u_{j-k}\cap \alpha),
\]
where we have dropped the $u/v$ subscript for $\alpha_{u/v}$. Equivalently, these $\overline{x}$ are intersection points of the Lagrangians in $\mathcal{D}^{k,j-k}(\tau_{u/v})$. The method for computing the grading differences between generators $\overline{x}$ and $\overline{x}'$ was described in Section \ref{compmethods}.

\begin{definitionN}
    Given a generator $\overline{x}=(x_1,...,x_j)$ for the $j$-colored HOMFLY-PT complex for a rational tangle, as just described, let $c(x_i)$ be the $\xi_l$ that we get by viewing the $w_i$ or $u_i$ as $l_A$ or $l_I$ itself, respectively. In other words, if we collapse $w_1,...,w_k$ into $l_A$ and $u_1,...,u_{j-k}$ into $l_I$, then each $x_i$ collapses to a point $\xi_l$ on $l_A\cap \alpha$ or $l_I\cap \alpha$, and $c(x_i)$ is this point.
\end{definitionN}

Now, we can use this notation to define a function that will be very helpful for this section.

\begin{definitionN}
    If we let $\mathcal{G}_{u/v}^j$ be the set of all generators $\overline{x}=(x_1,...,x_j)$ of the $j$-colored HOMFLY-PT complex for $\tau_{u/v}$ (so $k$ ranges from $0$ to $j$), then we define the function $\textbf{c}:\bigcup_{j\geq 0}\mathcal{G}_{u/v}^j\to \mathbb{N}\mathcal{G}_{u/v}^1$, where
\[
\textbf{c}(\overline{x})=\sum_{i=1}^j c(x_i)=\sum_{l=1}^{u+v} d_l \xi_l \in \mathbb{N}\mathcal{G}_{u/v}^1
\]
if $\overline{x}\in \mathcal{G}_{u/v}^j$, which means $d_1+...+d_{u+v}=j$.
\end{definitionN}

This function is clearly surjective. Additionally, recall that this point in $\mathbb{N}\mathcal{G}_{u/v}^1$ may be written as a $(u+v)$-tuple $\textbf{d}=(d_1,...,d_{u+v})\in\mathbb{N}^{u+v}$. 

\begin{note}
    The set $\mathcal{G}_{u/v}^j$ defined above corresponds to the obvious basis for $T^j(\mathbb{Z}\mathcal{G}_{u/v}^1)$, the $j$-fold tensor product of $\mathbb{Z}\mathcal{G}_{u/v}^1$. In particular, the basis element corresponding to $\overline{x}$ is $c(x_1)\otimes...\otimes c(x_j)$. The domain of $\textbf{c}$ is, thus, the natural (infinite) basis for $T(\mathbb{Z}\mathcal{G}_{u/v}^1)=\bigoplus_{j\geq 0}T^j(\mathbb{Z}\mathcal{G}_{u/v}^1)$.
\end{note}

Consider also the natural basis of $\text{Sym}^j(\mathbb{Z}\mathcal{G}_{u/v}^1)$, the $j$-fold symmetric product of $\mathbb{Z}\mathcal{G}_{u/v}^1$, given by the symmetric tensors of the $\xi_i$. We will denote this basis by $\text{Sym}^j(\mathcal{G}_{u/v}^1)$. Observe that there is an injective function $\iota:\text{Sym}^j(\mathcal{G}_{u/v}^1) \to \mathcal{G}_{u/v}^j$ and a bijection $\tilde{\textbf{c}}:\bigcup_{j\geq 0}\text{Sym}^j(\mathcal{G}_{u/v}^1) \to \mathbb{N}\mathcal{G}_{u/v}^1$ such that (taking the obvious restrictions of $\textbf{c}$ and $\tilde{\textbf{c}}$)

\begin{center}
\begin{tikzcd}
\text{Sym}^j(\mathcal{G}_{u/v}^1) \arrow[rd,"\tilde{\textbf{c}}"'] \arrow[rr, "\iota"] & & \mathcal{G}_{u/v}^j \arrow[ld,"\textbf{c}"]\\
& \mathbb{N}\mathcal{G}_{u/v}^1 &
\end{tikzcd}
\end{center}
commutes, where, for $d=\xi_{i_1}\cdot\xi_{i_2}\cdot...\cdot\xi_{i_j}\in \text{Sym}^j(\mathcal{G}_{u/v}^1)$ written so that $1\leq i_1\leq i_2\leq...\leq i_j\leq u+v$, we have $\iota(d)=(x_1,...,x_j)$ for $c(x_m)=\xi_{i_m}$ for $1\leq m \leq j$ (i.e. the subscripts of the $c(x_i)$ are non-decreasing), and 
\[
\tilde{\textbf{c}}(d)=\sum_{m=1}^j \xi_{i_m}=\sum_{l=1}^{u+v} d_l \xi_l \in \mathbb{N}\mathcal{G}_{u/v}^1.
\]
Once again, we may write $(d_1,...,d_{u+v})$ for $\sum_{l=1}^{u+v} d_l \xi_l \in \mathbb{N}\mathcal{G}_{u/v}^1$. Now, if we let $\overline{d}=\iota(d)\in \mathcal{G}_{u/v}^j$, we may define the following set.

\begin{definitionN}
Given $d\in \text{Sym}^j(\mathcal{G}_{u/v}^1)$, define $G_{\overline{d}}$ to be the set
\[
G_{\overline{d}}=\{\overline{x} \in \mathcal{G}_{u/v}^j : \textbf{c}(\overline{x})=\textbf{c}(\overline{d})\}.
\]
Another way to write this is that $G_{\overline{d}}$ is the set of $\overline{x}$ such that $\textbf{c}(\overline{x})=\tilde{\textbf{c}}(d)$.
\end{definitionN}

Next, we will prove some lemmas about $q$-grading differences for generators in $G_{\overline{d}}$. To do so, we will use the formula
\[
q(\overline{x})-q(\overline{y})=\left(2\Phi^j+(1-2j)\left(\Psi_{X^+}^j-\Phi^j\right)+\sum_{W\in\{X^-,Y\}}\left(\Psi_W^j-\Phi^j\right)\right)([\gamma_{\overline{x},\overline{y}}])
\]
from Section \ref{compmethods}. It turns out, though, that we will only need to use a small portion of the expression on the right for studying $G_{\overline{d}}$.

\begin{lemmaN}
\label{uniquegr}
   For each $d\in \text{Sym}^j(\mathcal{G}_{u/v}^1)$, $G_{\overline{d}}$ has a unique generator of lowest $q$-grading, given by $\overline{d}$. 
\end{lemmaN}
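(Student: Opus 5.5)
Every element of $G_{\overline{d}}$ has the same weight $k=d_1+\dots+d_u$, so the formula for $q(\overline{x})-q(\overline{y})$ from Section \ref{compmethods} applies. Elements of $G_{\overline{d}}$ differ from $\overline{d}$ only by \emph{which} copy $w_i$ (or $u_i$) carries which lift of each $\xi_l$. Concretely, an element $\overline{x}\in G_{\overline{d}}$ is determined by a sequence $\sigma=(\sigma_1,\dots,\sigma_k)$ with $c(x_i)=\xi_{\sigma_i}$ on the weighted side, and similarly a sequence $\sigma'$ of length $j-k$ on the unweighted side. The multiset of entries of $\sigma$ is fixed by $(d_1,\dots,d_u)$ and that of $\sigma'$ by $(d_{u+1},\dots,d_{u+v})$. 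The element $\overline{d}$ corresponds to the non-decreasing sequences. The plan is to show
\[
q(\overline{x})-q(\overline{d}) = 2\,\mathrm{inv}(\sigma)+2\,\mathrm{inv}(\sigma'),
\]
which is $>0$ unless both sequences are non-decreasing, i.e.\ unless $\overline{x}=\overline{d}$. (This is also consistent with Proposition \ref{qalgcombprop}, which will later produce the multinomials in the quiver form.)

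First, I would reduce to transpositions. Any $\overline{x}\in G_{\overline{d}}$ can be reached from $\overline{d}$ by a sequence of adjacent swaps: exchanging the points on two adjacent parallel verticals $w_i,w_{i+1}$ (or $u_i,u_{i+1}$) that project to distinct $\xi_a,\xi_b$. Since grading differences are additive along chains of generators of the same weight, it suffices to show that each such swap changes the $q$-grading by exactly $\pm 2$, with sign $+$ when it creates an inversion ($a<b$ moved out of order).

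For a single swap, take the loop $\gamma$ in $\mathrm{Conf}^j(M)$: the two moving points travel along $\alpha$ between the lifts of $\xi_a$ and $\xi_b$ on neighboring verticals, then return along the verticals, while all other points stay fixed. The key observation is that, because $w_i,w_{i+1}$ are parallel copies of the same axis, the two moving points trace out (up to isotopy) the same arc of $\alpha$ in opposite directions, offset by a tiny parallel shift. Consequently their winding numbers about each puncture cancel, so $\Psi^j_V-\Phi^j$ vanishes on $[\gamma]$ for every $V\in\{X^+,X^-,Y\}$. Only the $2\Phi^j$ term survives. Moreover, since the return happens along the parallel verticals, which are far from the other fixed points in the relevant sense, only the crossing of the two moving strands with each other contributes to $\Phi^j$. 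This gives $\Phi^j([\gamma])=\pm1$, so the change is $\pm2$.

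The main obstacle is fixing the sign convention: I must check that moving the lower point ($\xi_a$, lower on the axis) to the left copy and the higher one to the right produces the positive half-twist, in the braid perspective of the worked example. I would verify this directly on the simplest case, $\tau_{1/1}$ or the $\tau_{3/1}$ example with $j=2$, and argue that the local picture near the two parallel verticals is the same in general. I would also need to confirm that the other fixed points on intermediate verticals do not contribute. They do not, because those verticals lie between $w_i$ and $w_{i+1}$ only if they are adjacent, which we arranged by choosing adjacent swaps.
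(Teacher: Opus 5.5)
Your overall architecture matches the paper's. Adjacent swaps on neighbouring parallel copies are exactly its ``rectangle moves''. Each contributes only through $2\Phi^j$, and the minimiser is the configuration with non-decreasing indices; your inversion-count formula is what the paper goes on to prove in Lemma \ref{genblocks}.

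The gap is in the one step that carries the content: the loop for a single swap. The first half of $\gamma_{\overline{x},\overline{y}}$ must lie in $\mathbb{L}^j_{u/v}=\mathrm{Sym}^j(\alpha)\setminus\Delta$. So all $j$ points stay on the single arc $\alpha$ and never collide; there is no ``parallel shift'' of $\alpha$, since only the verticals are doubled. Points on an arc cannot pass one another, so this path is forced, up to homotopy, to preserve the order of the points along $\alpha$.

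Hence the two moving points cannot ``trace out the same arc of $\alpha$ in opposite directions'' as you claim. That motion would make them collide. It could also run them into other points of the generator on that stretch of $\alpha$, contradicting ``all other points stay fixed.'' Moreover, in your description each point ends at height $\xi_b$ (resp.\ $\xi_a$) on its own vertical and returns along that vertical to its own start. The resulting braid is then pure, so $\Phi^j([\gamma])$ would be even, which contradicts the value $\pm1$ you need. The cancellation-of-winding-numbers argument therefore does not establish the step, and $\Phi^j=\pm1$ is asserted rather than derived.

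The correct loop is local. Because $w_m,w_{m+1}$ are adjacent, no other point of the generator lies on $\alpha$ between them. The order-preserving motion therefore only slides the point near $\xi_a$ across the short, locally horizontal piece of $\alpha$ from one copy to the other, and the point near $\xi_b$ the other way. Each point keeps its own $\xi$; it is the return along the verticals that exchanges the heights.

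Together the two paths traverse the boundary of a small rectangle inside the strip between two adjacent parallel copies of $l_A$ (or $l_I$). That strip contains no puncture and no other point. Consequences:
\begin{itemize}
\item $\Psi^j_V-\Phi^j$ vanishes for each $V$ trivially, not by cancellation.
\item The braid is the single generator $\sigma_m^{\pm1}$.
\item The local picture is identical for every swap, so the sign is uniform: the configuration in which the larger index precedes the smaller is exactly $2$ higher in $q$-grading (the paper records the braid as $\sigma_m$).
\end{itemize}
With this replacement for your single-swap computation, your proof coincides with the paper's.
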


\begin{proof}
We first consider the case where $\overline{x}=(x_1,...,x_j)$ and $\overline{y}=(y_1,...,y_j)$ are such that $x_i=y_i$ for all $i\not\in \{m,m+1\}$ for $m+1\leq k$ or $m\geq k$. In other words, they look the same except in a neighborhood of two parallel copies of $l_A$ or $l_I$. Because $\overline{x},\overline{y}\in G_{\overline{d}}$, we necessarily have $x_m=y_{m+1}$ and $x_{m+1}=y_m$. Thus, this neighborhood looks like the image in Figure \ref{rectangle}, where we assume the red points are coming from $\overline{x}$ and the blue points are coming from $\overline{y}$ (without loss of generality), so if $c(x_m)=c(y_{m+1}) = \xi_i$ and $c(x_{m+1})=c(y_m) = \xi_j$, then $i>j$ with respect to the standard ordering. The loop $\gamma_{\overline{x},\overline{y}}\subset \text{Conf}^j(M)$ that we want for computing $q(\overline{x})-q(\overline{y})$ is then given by the constant loops at all $x_i=y_i$ for $i\not\in \{m,m+1\}$ and then the pair of green paths shown in Figure \ref{rectangle}. We can think of this as swapping the red and the blue points; we will call this a \textit{rectangle move}. Because the loop does not wind around any of the three punctures, the only non-zero contribution comes from $2\Phi^j$, and we get
\[
q(\overline{x})-q(\overline{y})=2\Phi^j([\gamma_{\overline{x},\overline{y}}])=2.
\]
because the braid corresponding to this loop is the braid generator $\sigma_m\in \text{Br}_j$. 

\begin{figure}
\centering
\includegraphics[height=4.5cm, angle=0]{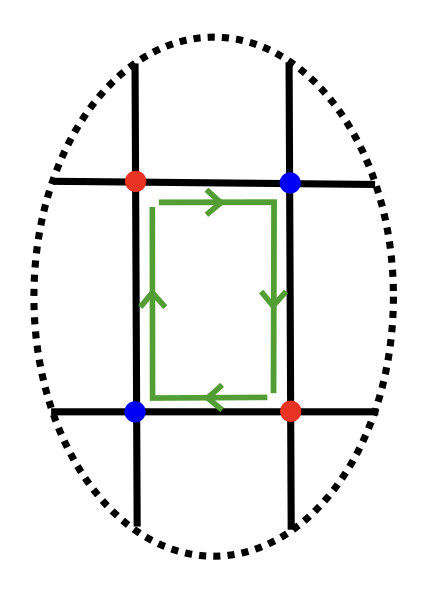}
\caption{A rectangle move, as used in the proof of Lemma \ref{uniquegr}}

\label{rectangle}
\end{figure}

Any two $\overline{x},\overline{y}\in G_{\overline{d}}$ are related by a finite sequence of these moves, so we see that there is a unique $\overline{x}\in G_{\overline{d}}$ of lowest $q$-grading obtained by sliding all intersection points on both the $l_A$ and $l_I$ sides to fill the states from bottom-left to top-right. In other words, using the standard ordering of the $\xi_i$, what we have shown is that the element of $G_{\overline{d}}$ with the lowest $q$-grading is given by the unique $(x_1,...,x_j)$ with the subscripts of $\text{c}(x_i)$ non-decreasing for $1\leq i \leq j$. This is precisely $\overline{d}$.
\end{proof}

The following illustrates what this $\overline{d}$ looks like in a concrete example. 

\begin{example}
    Consider $\tau_{4/3}$. The figure below shows how to label the intersection points $\xi_i$ according to the standard ordering. Additionally, it gives the lowest $q$-grading configuration $\overline{d}$ in $G_{\overline{d}}$, such that $\textbf{c}(\overline{d})=2\xi_1+\xi_2+2\xi_4+\xi_5+2\xi_7$, where the intersection points are given by the red points. Thus, $j=8$ and $\overline{d}=(x_1,...,x_8)$, where $c(x_1)=c(x_2)=\xi_1$, $c(x_3)=\xi_2$, $c(x_4)=c(x_5)=\xi_4$, $c(x_6)=\xi_5$, and $c(x_7)=c(x_8)=\xi_7$.

    \[
    \vcenter{\hbox{\includegraphics[height=5cm, angle=0]{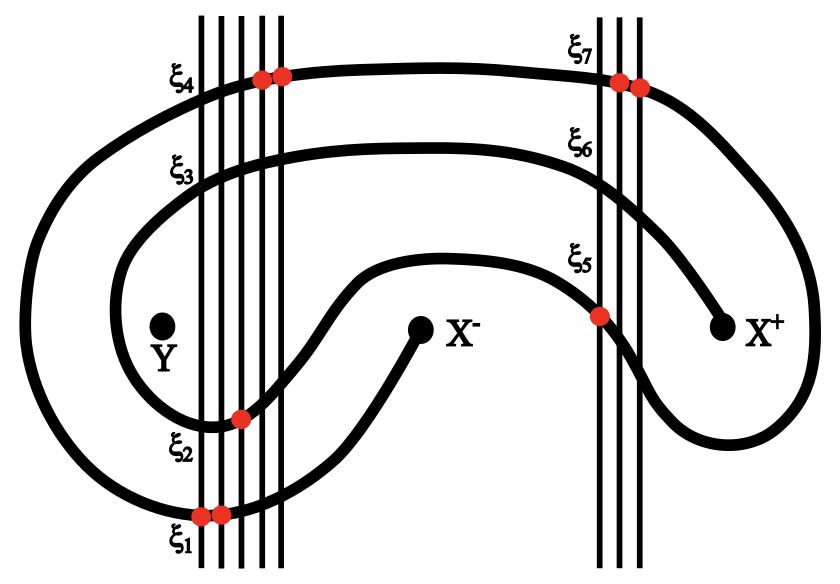}}}
    \]
\end{example}

Next, we consider how the $q$-gradings of all elements in $G_{\overline{d}}$ are related to each other.

\begin{lemmaN}
\label{genblocks}
    Given $G_{\overline{d}}$ where $\overline{d}=(\iota \circ \tilde{\textbf{c}}^{-1})(\sum_{i=1}^{u+v} d_i\xi_i)$ for $\tau_{u/v}$, we have
    \[
    \sum_{\overline{x}\in G_{\overline{d}}}q^{(q(\overline{x})-q(\overline{d}))}= {d_1+...+d_u\brack d_1,...,d_u}{d_{u+1}+...+d_{u+v}\brack d_{u+1},...,d_{u+v}}.
    \]
    Furthermore, the $a$- and $t$-gradings of all the generators in $G_{\overline{d}}$ are the same.
\end{lemmaN}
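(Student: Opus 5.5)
The plan is to set up a bijection between $G_{\overline{d}}$ and pairs of sequences, and then read off $q$-gradings using the rectangle moves from the proof of Lemma \ref{uniquegr}.

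First, the bijection. Let $k=d_1+\dots+d_u$. A generator $\overline{x}=(x_1,\dots,x_j)\in G_{\overline{d}}$ has weight $k$, since exactly $k$ of its points collapse onto $l_A$. Its first $k$ coordinates lie on $w_1,\dots,w_k$, and since $\textbf{c}(\overline{x})=\textbf{c}(\overline{d})$, the sequence $(c(x_1),\dots,c(x_k))$ uses $\xi_i$ exactly $d_i$ times for $1\le i\le u$. Similarly, $(c(x_{k+1}),\dots,c(x_j))$ uses $\xi_{u+i}$ exactly $d_{u+i}$ times. Conversely, any such pair of sequences determines a unique generator. Indexing the $\xi$'s by their subscripts, this gives a bijection
\[
G_{\overline{d}}\;\longleftrightarrow\;\mathcal{S}^u_k(d_1,\dots,d_u)\times\mathcal{S}^v_{j-k}(d_{u+1},\dots,d_{u+v}).
\]
Under it, $\overline{d}$ corresponds to the pair of non-decreasing sequences.

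Second, the $q$-grading. I claim that $q(\overline{x})-q(\overline{d})=2\,\text{inv}(\sigma)+2\,\text{inv}(\sigma')$, where $(\sigma,\sigma')$ is the pair attached to $\overline{x}$. To prove it, use the adjacent-swap computation from the proof of Lemma \ref{uniquegr}. There, swapping two adjacent coordinates on parallel copies of the same axis (both weighted or both unweighted) with $c(x_m)=\xi_i$, $c(x_{m+1})=\xi_{i'}$ and $i>i'$ gives $q(\overline{x})-q(\overline{y})=2$, where $\overline{y}$ has the two coordinates in increasing order. Such a swap is exactly an adjacent transposition that removes one inversion. Any sequence reaches the sorted one by $\text{inv}(\sigma)$ such transpositions (bubble sort), and gradings add along compositions. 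This gives the claim.

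Then I sum over the product and apply Proposition \ref{qalgcombprop} to each factor. The result is
\[
\sum_{\sigma}q^{2\text{inv}(\sigma)}\sum_{\sigma'}q^{2\text{inv}(\sigma')}={d_1+\dots+d_u\brack d_1,\dots,d_u}{d_{u+1}+\dots+d_{u+v}\brack d_{u+1},\dots,d_{u+v}}.
\]

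Third, the $a$- and $t$-gradings. Again reduce to a single rectangle move, since every element of $G_{\overline{d}}$ is connected to $\overline{d}$ by such moves. The loop $\gamma_{\overline{x},\overline{y}}$ for a rectangle move is supported in a small neighbourhood between two parallel verticals and does not enclose any puncture. Hence the winding numbers about $X^\pm,Y$ vanish, so $\Psi_V^j-\Phi^j$ vanishes on it for each $V\in\{X^+,X^-,Y\}$. Equations (\ref{adiffeqn}) and (\ref{tdiffeqn}) then give zero $a$- and $t$-differences.

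The main obstacle is the sign and orientation bookkeeping in the swap step. One must confirm that the rectangle move always raises $q$ by exactly $+2$ in the direction that creates an inversion, regardless of whether the pair lies on the weighted or unweighted side and of the ordering convention on the $\xi$'s. One must also confirm that the move does not depend on the other fixed points, which holds because the loop avoids them, so there is no non-additive contribution beyond the $\sigma_m$ crossing. I would verify this once with the braid picture (the loop's braid is the generator $\sigma_m$, with abelianization $+1$) and then invoke it uniformly.
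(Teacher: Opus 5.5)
Your proposal is correct and follows essentially the same route as the paper. In both, rectangle moves avoid the punctures, which gives zero $a$- and $t$-differences and a $q$-shift of $2$ per inversion, and Proposition~\ref{qalgcombprop} finishes the count. The only cosmetic difference is that you biject $G_{\overline{d}}$ directly with pairs of sequences, whereas the paper first splits each generator into its active and inactive parts and reduces to the case $k=j$.
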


\begin{proof}
As we saw in the proof of Lemma \ref{uniquegr}, all elements of $G_{\overline{d}}$ are related by a finite sequence of rectangle moves, as shown in Figure \ref{rectangle}, which does not involve any of the three punctures, so for any $\overline{x},\overline{y}\in G_{\overline{d}}$, we get
\begin{align*}
    &q(\overline{x})-q(\overline{y})=2\Phi^j([\gamma_{\overline{x},\overline{y}}])\\
    &a(\overline{x})-a(\overline{y})=0\\
    &t(\overline{x})-t(\overline{y})=0
\end{align*}
 because $\left(\Psi_V^j-\Phi^j\right)([\gamma_{\overline{x},\overline{y}}])=0$ for each $V\in\{X^+,X^-,Y\}$. This proves the second part of the lemma.

    For the first part, this reduces to a simple combinatorial problem. The two q-multinomials come from the $u\times k$ and $v\times (j-k)$ grids that we get from the $k$ parallel copies of $l_A$ and the $j-k$ parallel copies of $l_I$ intersecting $\alpha$. Since our sum is over the generators in some $G_{\overline{d}}$, these are all generators of the same weight $k$, so each $\overline{x}$ can be split as $(\overline{x}_A,\overline{x}_I)$, where $\overline{x}_A=(x_1,...,x_k)$ and $\overline{x}_I=(x_{k+1},...,x_j)$, corresponding to the intersections on the active and inactive sides. We have
    \[
    q^{(q(\overline{x})-q(\overline{d}))}=q^{(q(\overline{x}_A)-q(\overline{d}_A))}q^{(q(\overline{x}_I)-q(\overline{d}_I))},
    \]
    since the rectangle moves only involve changes within the active or inactive sides (there is no mixing between them), so the grading differences coming from each side are independent of each other. Thus, it suffices to show
    \[
    \sum_{\overline{x}\in G_{\overline{d}}}q^{(q(\overline{x})-q(\overline{d}))}= {j\brack d_1,...,d_u}
    \]
    for the case where $k=j$. In this case, note that $\textbf{c}(\overline{d})=\sum_{i=1}^{u+v}d_i\xi_i$ with $d_i=0$ for $i>u$. Because of this, we get a bijection 
    
    \begin{align*}
    G_{\overline{d}} &\longleftrightarrow \mathcal{S}_j^u(d_1,...,d_u)\\
    \overline{x} &\longmapsto \sigma(\overline{x}),
    \end{align*}
    where $\mathcal{S}_j^u(d_1,...,d_u)$ is the set of sequences of length $j=k$ with $d_1$ 1's, $d_2$ 2's,..., and $d_u$ u's (as given by Definition \ref{seqdef}), and $\sigma(\overline{x})$ is the sequence with its $i$th term being $\sigma(\overline{x})_i=n$ if $c(x_i)=\xi_n$. 
    Recall that, in the proof of Lemma \ref{uniquegr}, we saw that the process of sliding from the red points to the blue points in a rectangle move results in a $q$-grading difference of 2; from here, it is not difficult to see
    \[
    q(\overline{x})-q(\overline{d})=2|\{(m,n): m<n \,\text{and}\, \sigma(\overline{x})_m>\sigma(\overline{x})_n\}|=: 2 \,\text{inv}(\sigma(\overline{x})) 
    \]
    because $\text{inv}(\sigma(\overline{x}))$ is the number of rectangle moves needed to get from $\overline{x}$ to $\overline{d}$. Thus, we get
    \[
    \sum_{\overline{x}\in G_{\overline{d}}}q^{(q(\overline{x})-q(\overline{d}))}= \sum_{\sigma\in \mathcal{S}_j^u(d_1,...,d_u)}q^{2\,\text{inv}(\sigma)}= {j\brack d_1,...,d_u},
    \]
    where we are using Proposition \ref{qalgcombprop} in the last equality. 
\end{proof}

Now, we define an ordering on $\mathcal{G}_{u/v}^1$, which will be helpful throughout the paper. In what follows, we will write $\alpha$ instead of $\alpha_{u/v}$ when the particular fraction $u/v$ does not matter (i.e. when defining things for arbitrary $\tau_{u/v}$).

\begin{definitionN}
    Let $\textbf{r}:[0,1]\to\alpha$ be the constant-speed parametrization of $\alpha$ with no critical points, such that $\mathbf{r}(0)=X^-$ and $\mathbf{r}(1)=X^+$. For $\xi_i,\xi_j\in \mathcal{G}_{u/v}^1$, we say $\xi_i\prec \xi_j$ if $\xi_i=\mathbf{r}(t_0)$ and $\xi_j=\mathbf{r}(t_1)$ with $t_0<t_1$.
\end{definitionN}

In other words, if we orient the arc $\alpha_{u/v}$ from $X^-$ to $X^+$, then $\xi_i\prec \xi_j$ if $\xi_i$ comes "before" $\xi_j$. 

Next, we will carefully define the homomorphisms and loops that we will need for this section. They are closely related to the ones from Section \ref{compmethods}, but they are considerably simpler. In fact, we will only need homomorphisms defined on $\pi_1(\text{Conf}^j(M))$ for $j\in\{1,2\}$. 

Recall that we defined homomorphisms $\Psi_V^j,\Phi^j:\pi_1(\text{Conf}^j(M))\to\mathbb{Z}$ in Section \ref{compmethods} (where $V\in \{X^+,X^-,Y\}$) given by compositions
\[
\Psi_V^j:\pi_1(\text{Conf}^j(M))\xrightarrow{\iota_*} \pi_1(\text{Conf}^j(\mathbb{C}\setminus V)) \to \pi_1(\text{Conf}^{j+1}(\mathbb{C}))\cong\text{Br}_{j+1}\xrightarrow{\text{ab}} \mathbb{Z},
\]
and
\[
\Phi^j:\pi_1(\text{Conf}^j(M))\xrightarrow{\iota_*} \pi_1(\text{Conf}^j(\mathbb{C}))\cong\text{Br}_j\xrightarrow{\text{ab}} \mathbb{Z}.
\]
For the first type of homomorphism that we want, let $W\subseteq \{X^+,X^-,Y\}$. Then, we define $\Psi_W:\pi_1(M)\to \mathbb{Z}$ to be 
\[
\Psi_W:=\frac{1}{2}\sum_{V\in W}\Psi_V^1.
\]
The second type of homomorphism is $\Phi:\pi_1(\text{Conf}^2(M))\to\mathbb{Z}$, which is defined just to be $\Phi:=\Phi^2$. 

Now, we must explicitly define the loops we want to consider for our homomorphisms. First, consider $\xi_i,\xi_j\in \mathcal{G}_{u/v}^1$ and suppose $\xi_i\in \alpha \cap l_i$ and $\xi_j\in \alpha \cap l_j$ for $l_i,l_j \in \{l_A,l_I\}$. We will define $\gamma_{i,j}:[0,1]\to M$ as a concatenation of paths. Let $\eta_{i,j}^1$ be the path along $\alpha$ from $\xi_i$ to $\xi_j$. Next, if $l_i=l_j$, i.e. if $\xi_i$ and $\xi_j$ are both active or inactive intersections, then define $\bar\eta_{j,i}^1$ to be the path from $\xi_j$ back to $\xi_i$ along $l_i$; if $l_i\neq l_j$, then take $\bar\eta_{j,i}^1$ to be the path from $\xi_j$ to the top intersection point of $l_j$ (either $\xi_u$ or $\xi_{u+v}$ by the standard ordering), followed by a simple slide to $l_i$ and then the path along $l_i$ back to $\xi_i$. 

\begin{definitionN}
    Given $\xi_i,\xi_j\in \mathcal{G}_{u/v}^1$, define $\gamma_{i,j}:[0,1]\to M$ by the concatenation $\gamma_{i,j}=\eta_{i,j}^1\cdot \bar\eta_{j,i}^1$.
\end{definitionN}

\begin{figure}
\begin{tikzpicture}
    \node at (0,0) {\includegraphics[height=5cm, angle=0]{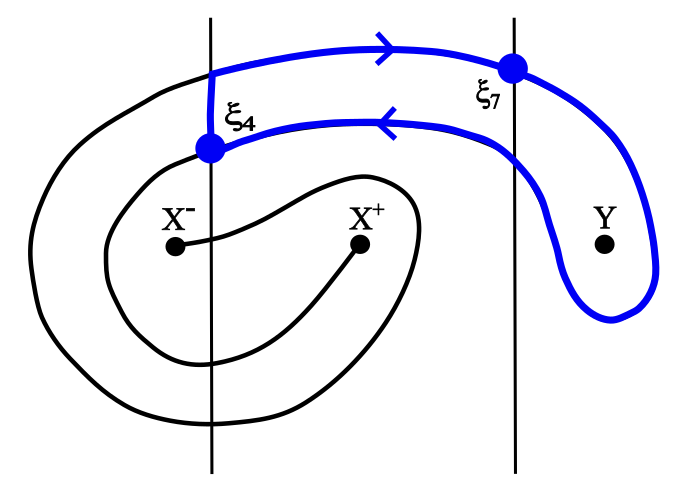}};
    \node at (6,0) {\includegraphics[height=4cm]{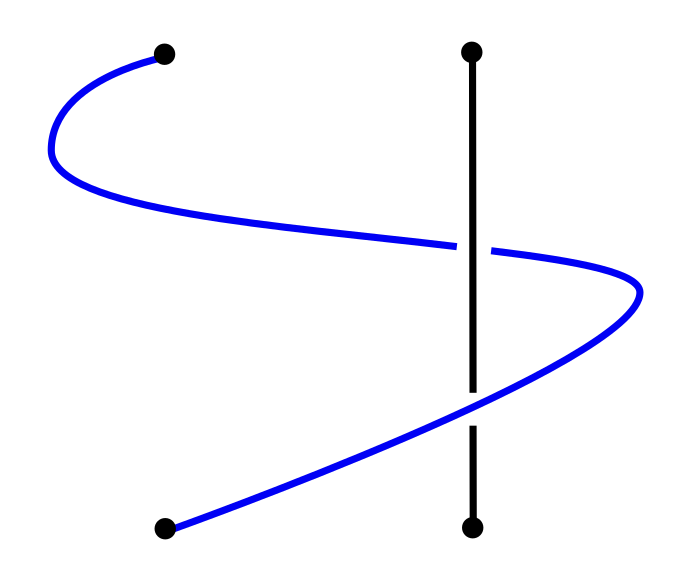}};
\end{tikzpicture}
\caption{Left: the loop $\gamma_{7,4}$ is drawn for $\tau_{5/2}$. Right: the braid image in $\text{Br}_2$ used for computing $\Psi_Y([\gamma_{4,7}]),$ where the blue strand comes from the loop and the black strand from $Y$. Notice that this gives $\Psi_Y([\gamma_{7,4}])=1$.}
\label{pathexample}
\end{figure}

See an example of one of these loops in Figure \ref{pathexample}. In the main theorem for this section, we will need to use loops of the form $\gamma_{i,j}$ where $\xi_j$ is the maximum element of $\mathcal{G}_{u/v}^1$ with respect to $\prec$. We will use special notation for designating this intersection point.

\begin{definitionN}
    Let $\xi_\omega$ denote the maximum element of $\mathcal{G}_{u/v}^1$ with respect to $\prec$.
\end{definitionN}

In other words, $\xi_\omega$ is the intersection point closest to the puncture $X^+$. Using this, we can introduce the simplified notation used in Theorem \ref{bigthm}, where we take the loops $\gamma_{i,j}$ where $j=\omega$.

\begin{definitionN}
    Define $\gamma_i:[0,1]\to M$ as the loop $\gamma_i:=\gamma_{i,\omega}$.
\end{definitionN}

It is slightly more tricky to define our desired loops in $\text{Conf}^2(M)$. For $\xi_i,\xi_j \in \mathcal{G}_{u/v}^1$, we have $\xi_i\cdot\xi_j\in\text{Sym}^2(\mathcal{G}_{u/v}^1)$, so we can apply $\iota$ to get $\iota(\xi_i\cdot\xi_j)=(x_1,x_2)\in \mathcal{G}_{u/v}^2$ such that the subscript of $c(x_1)$, which is $i$ or $j$, is less than or equal to the subscript of $c(x_2)$. Thus, $x_2$ is to the right of and possibly above $x_1$ if both are on the active or inactive side; otherwise, $x_1$ is on the active side and $x_2$ is on the inactive side. Now, $(x_1,x_2)$ is also a point in $\text{Conf}^2(M)$, and we will use these as the basepoints for our loops in $\text{Conf}^2(M)$. Additionally, recall from Section \ref{compmethods} that a loop in $\text{Conf}^2(M)$ is given by two disjoint loops in $M$ or two paths that swap starting and ending points; both situations will arise for us.

Given the set-up above, we now describe the paths used to define $\tilde{\gamma}_{i,j}=(\sigma_{i,j}^1,\sigma_{i,j}^2):[0,1]\to\text{Conf}^2(M)$, where $\sigma_{i,j}^1$ and $\sigma_{i,j}^1$ are the two component paths. Let $x_i\in\{x_1,x_2\}$ be the one such that $c(x_i)=\xi_i$ and similarly define $x_j$. First, we will define the path $\eta_{i,j;j}^2:[0,1]\to\text{Conf}^2(M)$ from $(x_1,x_2)$ to $(x_1',x_2')$ where $x_j=x_j'$ and $c(x_i')=\xi_j.$ Think of $x_j=x_j'$ as being a fixed point in $M$ (to the left or right of $\xi_j$) and consider the path $\eta_{i,j}^1$ along $\alpha$ that we have already defined, but thought of as going from $x_i$ to $x_i'$. If $\eta_{i,j}^1$ is disjoint from $x_j$, define  $\eta_{i,j;j}^2=(\eta_{i,j}^1,x_j)$, where $x_j$ is the constant path at $x_j$. Otherwise, if $\eta_{i,j}^1$ intersects $x_j$, we can think of it as a concatenation $\eta_{i,j}^1=\rho_{i,j}^1\cdot\rho_{j,i'}^1$, where $\rho_{i,j}^1$ is the path from $x_i$ to $x_j=x_j'$ and $\rho_{j,i'}^1$ is the path from $x_j=x_j'$ to $x_i'$. In this case, we take $\eta_{i,j;j}^2=(\rho_{i,j}^1,\rho_{j,i'}^1)$. These two cases are represented below, assuming $x_i$ and $x_j$ are both on the active or inactive side, but the situation is similar when one is active and the other is inactive. 
\[
\begin{tikzpicture}
    \node at (0,0) {\includegraphics[height=4cm]{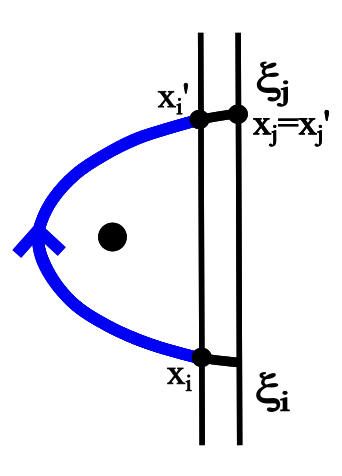}};
    \node at (5,0) {\includegraphics[height=4cm]{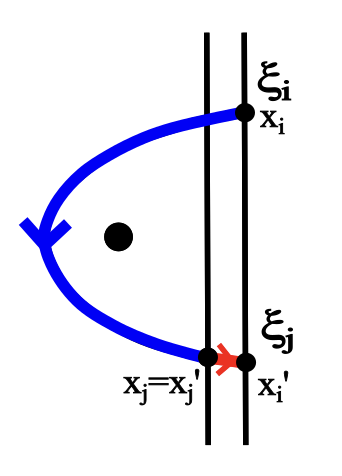}};
\end{tikzpicture}
\]
We will define $\tilde\gamma_{i,j}$ as a concatenation of $\eta_{i,j;j}^2$ with another path $\bar\eta_{j;i,j}^2=(\bar\sigma_{i,j}^1,\bar\sigma_{i,j}^2):[0,1]\to\text{Conf}^2(M)$ from $(x_1',x_2')$, where $c(x_1')=c(x_2')=\xi_j$, back to $(x_1,x_2)$. If $\eta_{i,j}^1$ was disjoint from $x_j$, then $\bar\eta_{j;i,j}=(\bar\eta_{j,i}^1,x_j)$, where $\bar\eta_{j,i}^1$ is the same path as before, but thought of as going from $x_i'$ to $x_i$. Otherwise, we take $\bar\eta_{j;i,j}=(x_j,\bar\eta_{j,i}^1)$ to get a well-defined loop in $\text{Conf}^2(M)$.

\begin{definitionN}
    Define $\tilde\gamma_{i,j}:[0,1]\to\text{Conf}^2(M)$ by the concatenation of paths $\tilde\gamma_{i,j}:=\eta_{i,j;j}^2\cdot \bar\eta_{j;i,j}^2$.
\end{definitionN}

\begin{figure}
    \raisebox{0pt}{\includegraphics[height=4cm, angle=0]{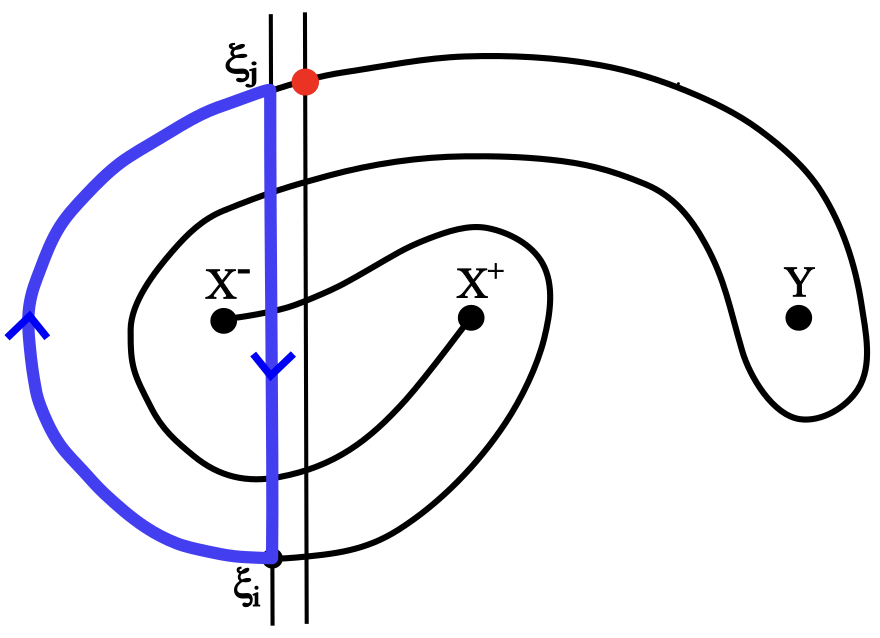}}\qquad \qquad \raisebox{0pt}{\includegraphics[height=4cm, angle=0]{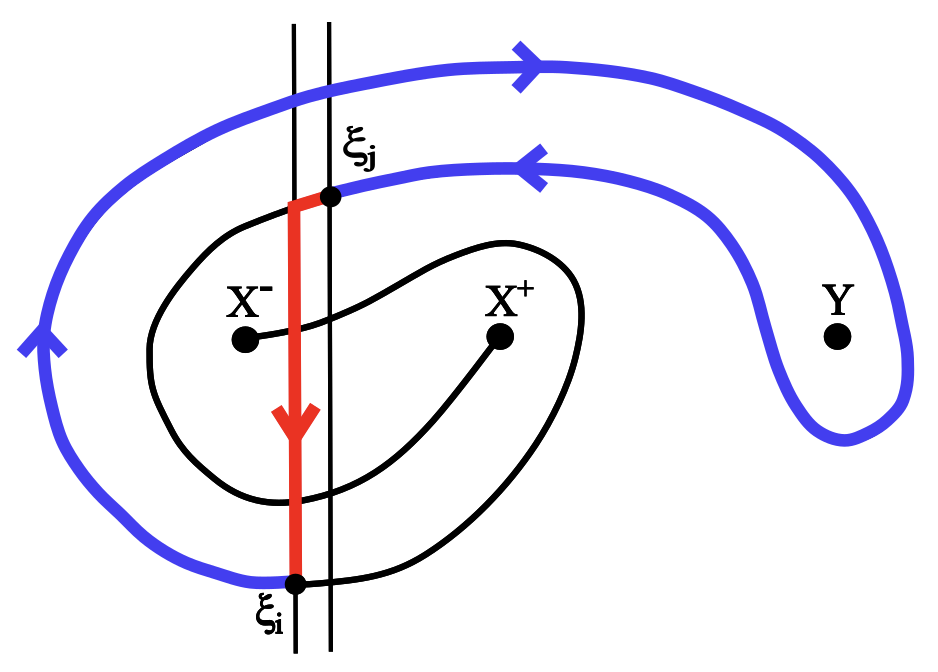}}
    \raisebox{0pt}{\includegraphics[height=4cm, angle=0]{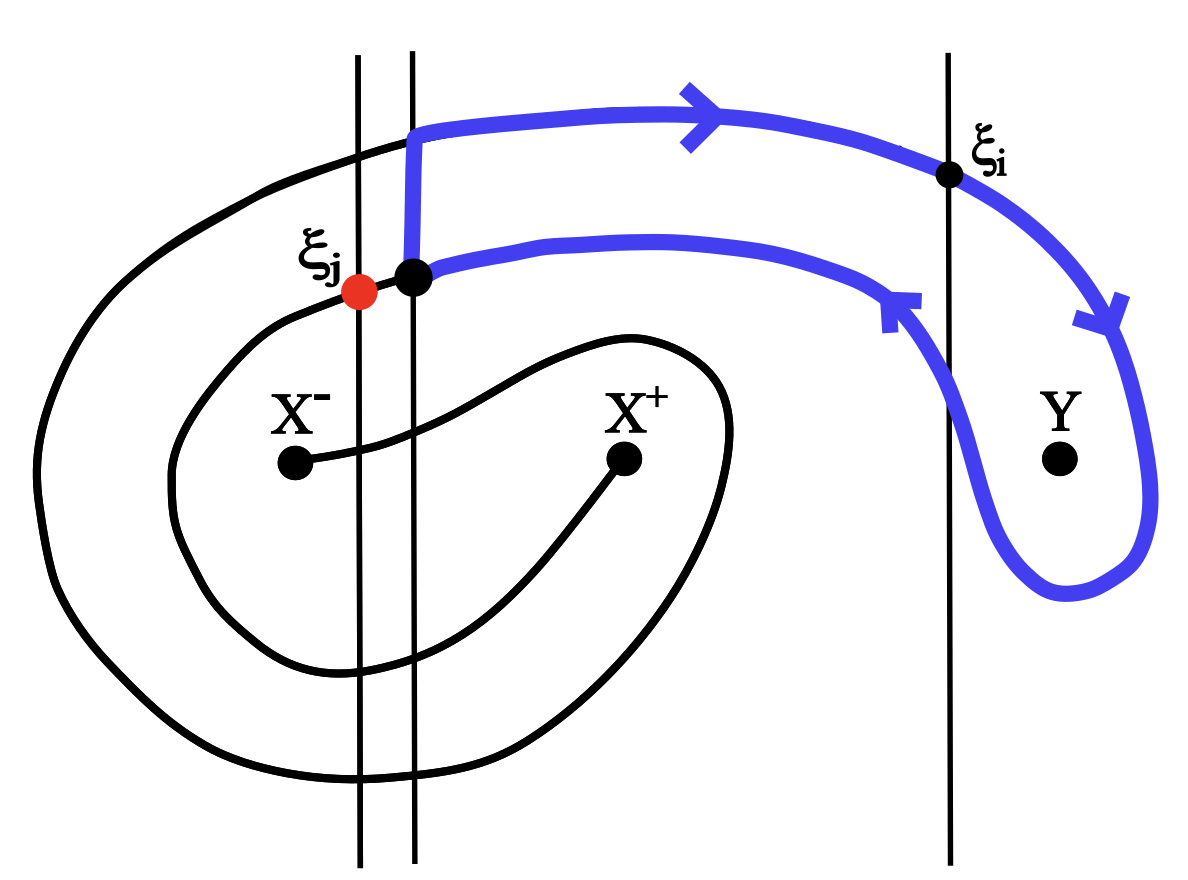}}\qquad \qquad \raisebox{0pt}{\includegraphics[height=4cm, angle=0]{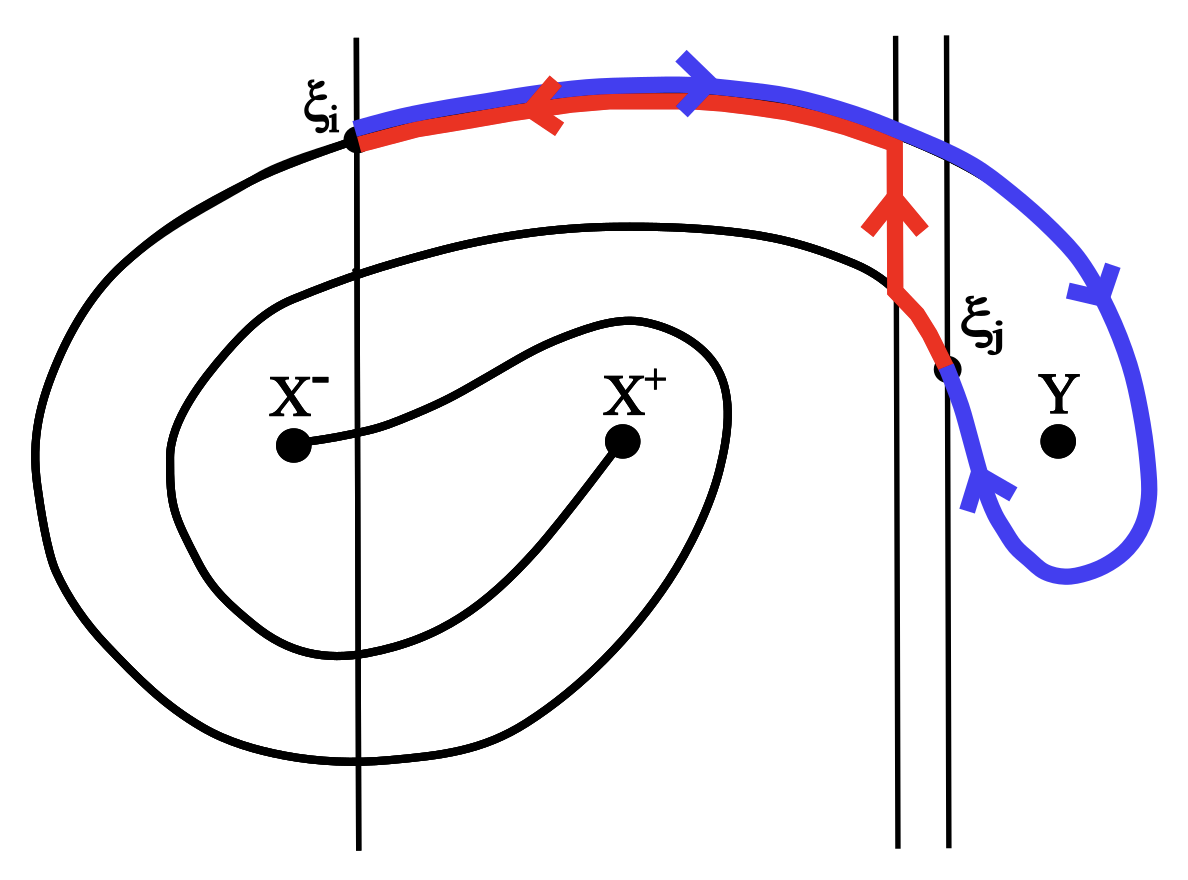}}
    \caption{Examples of $\tilde{\gamma}_{i,j}$ for $\tau_{5/2}$ demonstrating the four different cases described in the definition. Observe that $\Phi([\tilde{\gamma}_{i,j}])$ is 0 for the two pictures on the left, and it is 1 for the two on the right.}
    \label{gammacases}
\end{figure}

Refer to Figure \ref{gammacases} for examples of these paths, with the top two images giving loops for the case where $x_i$ and $x_j$ are both on the active side, and the bottom two images give loops where they are on opposite sides.

\begin{note}
We will abuse notation slightly and write $(\Psi_W+\Phi)([\gamma_{i,j}])$ instead of $\Psi_W([\gamma_{i,j}])+\Phi([\tilde{\gamma}_{i,j}])$.
\end{note}

Before proceeding, it is worth thinking about how $\tilde \gamma_{i,j}=(\sigma_{i,j}^1,\sigma_{i,j}^2)$ is related to $\gamma_{i,j}$. Particularly, $\sigma_{i,j}^1$ and $\sigma_{i,j}^2$ may be thought of as paths in $M$, and then it is easy to see from the definitions that $\sigma_{i,j}^1+\sigma_{i,j}^2=\gamma_{i,j}$ in $H_1(M)$. We also get this from the isomorphism $H_1(M)\cong H_1(\text{Sym}^2(M))$.

Now, we can state our main theorem for this section. In the formulas below, expressions like $\delta_{i,A}$ are to be read as 
\[
\delta_{i,A}=
\begin{cases}
    1, \qquad \xi_i \,\text{is an active intersection point}\\
    0, \qquad \text{otherwise}
\end{cases}
\]
and $\delta_{i,I}$ is similar for inactive intersections. Additionally, we will use $Z$ in the proof to denote the middle of the three punctures $X^+,X^-,Y$ in $\mathcal{D}(\tau_{u/v})$, so $\delta_{Z,X^+}$ should also be viewed as a Kronecker delta giving $1$ if $Z=X^+$ and 0 otherwise. It should be noted that $Z=X^+$ is equivalent to $\tau_{u/v}$ having $OP$ orientation.

\begin{theoremN}
\label{bigthm}
    We can express $\mathcal{P}(\tau_{u/v})$, up to some overall shift in the $(q,a,t)$-trigradings, as
    \begin{multline}
    \label{bigthmeq}
        \sum_{\bf{d}=(d_1,...,d_{u+v})\in \mathbb{N}^{u+v}}q^{S\cdot\textbf{d}+\textbf{d}\cdot Q\cdot \textbf{d}^t}a^{A\cdot \textbf{d}} t^{T\cdot\textbf{d}} {d_1+...+d_u\brack d_1,...,d_u}{d_{u+1}+...+d_{u+v}\brack d_{u+1},...,d_{u+v}}\\ \times X[d_1+...+d_{u+v},d_1+...+d_u],
    \end{multline}
    where $S,A,T,$ and $Q$ are computed by the following formulas:
    \newline
    \[
    \begin{cases}
        S_i=\Psi_{\{X^\pm,Y\}}([\gamma_i])+\delta_{i,A}\delta_{\omega,I}-\delta_{i,I}\delta_{\omega,A}\\
        A_i=2\Psi_{X^+}([\gamma_i])+\delta_{Z,X^+}(\delta_{i,A}\delta_{\omega,I}-\delta_{i,I}\delta_{\omega,A})\\
        T_i=-S_i=-\Psi_{\{X^\pm,Y\}}([\gamma_i])-\delta_{i,A}\delta_{\omega,I}+\delta_{i,I}\delta_{\omega,A}\\
        \begin{cases}
            Q_{ii}=(\Psi_{\{X^-,Y\}}-3\Psi_{X^+})([\gamma_i])+2\delta_{Z,X^+}(\delta_{i,I}\delta_{\omega,A}-\delta_{i,A}\delta_{\omega,I})\\
            Q_{ij}=Q_{ii}+(\Phi-2\Psi_{X^+})([\gamma_{j,i}])+\delta_{Z,X^+}(\delta_{i,A}\delta_{j,I}-\delta_{j,A}\delta_{i,I}).
        \end{cases}
    \end{cases}
    \]
\end{theoremN}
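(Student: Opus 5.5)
The proof will compute the trigrading of each block $G_{\overline{d}}$ relative to one fixed reference generator. By Lemmas \ref{uniquegr} and \ref{genblocks}, $\langle\langle\tau_{u/v}\rangle\rangle_j$ is a sum over $d\in\text{Sym}^j(\mathcal{G}^1_{u/v})$ of the trigrading of $\overline{d}$ times the product of the two $q$-multinomials times $X[j,d_1+\dots+d_u]$. So it suffices to show that, up to a global shift independent of $j$ and $\textbf{d}$,
\[
q(\overline{d})=S\cdot\textbf{d}+\textbf{d}\cdot Q\cdot\textbf{d}^t,\qquad a(\overline{d})=A\cdot\textbf{d},\qquad t(\overline{d})=T\cdot\textbf{d},
\]
where $\textbf{d}=\tilde{\textbf{c}}(d)$.

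\textbf{Reference generators.} For each $j$ I take $\overline{\omega}_j=\iota(\xi_\omega^j)$, all $j$ points stacked at $\xi_\omega$. Its grading should be quadratic in $j$ with coefficients matching $S_\omega+Q_{\omega\omega}$. This can be checked inductively by comparing $\overline{\omega}_j$ with $\overline{\omega}_{j-1}$ plus one point, or by anchoring to the known $\overline{0}$-type generator via Wedrich's rules.

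\textbf{Comparing $\overline{d}$ with $\overline{\omega}_j$.} I move the points of $\overline{d}$ one at a time along $\alpha$ to $\xi_\omega$, processing them in an order compatible with $\prec$ so that each move is a loop of the form $\tilde\gamma_{i,\omega}$ or $\tilde\gamma_{i,j}$. When a point crosses between the active and inactive sides, the weight $k$ changes, and I insert a simple slide at the top of the vertical. The slide contributes $t/q$, or $tq^{2j-1}/a$ if $Z=X^+$; this is the source of the $\delta_{i,A}\delta_{\omega,I}-\delta_{i,I}\delta_{\omega,A}$ and $\delta_{Z,X^+}$ correction terms.

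For a single moving point, the winding numbers are $\tfrac12(\Psi^j_V-\Phi^j)$. Using additivity, this winding number splits as the winding number of the one-point loop $\gamma_i$ about $V$, which is $\Psi_V([\gamma_i])$. The non-additive part $2\Phi^j$ of the $j$-point loop should decompose into a sum over pairs of braid exponents of the two-point loops $\tilde\gamma_{i,j}$ in $\text{Conf}^2(M)$. This holds because abelianization of a pure-ish braid is a sum of pairwise linking contributions.

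\textbf{Assembling the formulas.} I plug these contributions into equations (\ref{qdiffeqn})--(\ref{tdiffeqn}). The $(1-2j)(\Psi^j_{X^+}-\Phi^j)$ term supplies both a linear part (in $S$) and a quadratic part $-2j\cdot2\Psi_{X^+}$, which gives the $-3\Psi_{X^+}$ and $-2\Psi_{X^+}$ pieces of $Q$ after symmetrizing over $j=\sum d_l$. Summing the pairwise terms gives $\textbf{d}\cdot Q\cdot\textbf{d}^t$, where the off-diagonal $Q_{ij}=Q_{ii}+(\Phi-2\Psi_{X^+})([\gamma_{j,i}])+\dots$ arises from rewriting $\gamma_j=\gamma_{j,i}\cdot\gamma_i$ in homology.

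\textbf{Main obstacle.} The hardest step is the bookkeeping of $\Phi^j$. I must show that the braid abelianization of the full $j$-point loop from $\overline{d}$ to $\overline{\omega}_j$ equals $\sum$ over ordered pairs of $\Phi([\tilde\gamma_{\cdot,\cdot}])$, with diagonal pairs handled correctly. This means carefully matching the basepoint conventions of $\iota$, under which $\overline{d}$ is the lowest $q$-grading configuration, and the side-crossing slides. I would first verify this on $j=2$, where it is the definition, and then argue by induction on $j$, adding one point at a time. For the induction, I would use that the additional crossings involve only the new strand, each pair contributing exactly as in $\text{Conf}^2$.
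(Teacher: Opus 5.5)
\textbf{Verdict: genuine gap.} The route is different from the paper's, but its core step is left unproved and one step cannot be carried out.

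Your reduction to the minimal representatives $\overline{d}$ via Lemmas \ref{uniquegr} and \ref{genblocks} is the same as the paper's. The global heuristic behind your plan is also sound: winding numbers are additive over points, the factors $(1-2j)$ and $q^{2j-1}$ split into sums over pairs, and $\Phi^j$ is a sum of pairwise crossing counts. So colour-$j$ gradings ought to be determined by one- and two-point data. For example, the $-3\Psi_{X^+}$ in $Q_{ii}$ is $-4\Psi_{X^+}$ from the $-2j$ term plus $+\Psi_{X^+}$ from the half twists picked up by $d_i$ parallel strands; this is the identity $\Psi_{\{X^\pm,Y\}}([\gamma_i])=\Phi([\gamma^2_{i,i}])$ used in Section \ref{symsec}.

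\textbf{The reference-generator step cannot be carried out.} Wedrich's rules only compare generators within a fixed colour $j$. There is no move relating $\overline{\omega}_j$ to $\overline{\omega}_{j-1}$ ``plus one point'', so a shift independent of $j$ cannot be extracted from them. The step is also unnecessary. The theorem holds only up to a shift of each $\langle\langle\tau_{u/v}\rangle\rangle_j$, and the $\delta$-terms are chosen precisely so that $S_\omega=A_\omega=T_\omega=Q_{\omega\omega}=0$. In other words, $\overline{\omega}_j$ is \emph{declared} to sit in trigrading $(0,0,0)$ in each colour separately.

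\textbf{The main gap is the step you call the main obstacle.} It is the entire content of the $Q$-formula, and the claim that each pair of strands ``contributes exactly as in $\text{Conf}^2$'' does not hold without further input.
\begin{itemize}
\item When weights differ, comparing $\overline{d}$ with $\overline{\omega}_j$ is not one loop. A simple slide may only be applied to the point on the $k$-th weighted vertical, and only along the arc of $\alpha$ that crosses the strip past $Z$ between $\prec$-consecutive active and inactive points. The ``slide at the top of the vertical'' in the definition of $\gamma_{i,j}$ is only a homological device. So every change of side must be preceded and followed by rectangle moves, each contributing $\pm 2$ per point passed.
\item Even within one weight, the loop $\gamma_{\overline{d},\overline{\omega}_j}$ generally induces a nontrivial permutation; a single rectangle move already gives $\sigma_m$. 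Hence a pair of strands does not restrict to a closed loop in $\text{Conf}^2(M)$.
\item Matching the pairwise crossing counts with $\Phi([\tilde\gamma_{j,i}])$ and $\Phi([\gamma^2_{i,i}])$, with their specific basepoints $\iota(\xi_i\xi_j)$ and return paths, depends on the order of the two points on the parallel verticals. This is exactly the discrepancy $\Phi([\gamma_{i,j}])=\Phi([\widehat{\gamma_{i,j}}])-1$ of Lemma \ref{phihat}.
\end{itemize}

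\textbf{How the paper avoids this.} Any two $\vec{d}$ with $\sum_l d_l=j$ are joined by steps $e_i-e_j$ with $\xi_i,\xi_j$ $\prec$-consecutive, so it suffices to verify (\ref{qdiffeq})--(\ref{tdiffeq}) for such steps. For each step, Wedrich's grading difference is a single slide across $Z$ or a single pass around $W$, plus rectangle moves past the $d'_k$ intervening points. The required values of $Q_{ik}-Q_{jk}$ then follow from $\gamma_{k,i}\sim\gamma_{k,j}$ or $\gamma_{k,i}\sim\widehat{\gamma_{k,j}}$ together with Lemma \ref{phihat}, checked in three geometric cases. To finish your global route you would have to prove an analogue of that lemma and case analysis anyway, so the local reduction is the more economical way to close the argument.
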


In other words, $\mathcal{P}(\tau_{u/v})$ can be put in quiver form with the linear and quadratic forms computed from the geometry of $\tau_{u/v}$. It should also be noted that the $\delta$ terms in Theorem \ref{bigthm} are unnecessarily complicated for computing $\mathcal{P}(\tau_{u/v})$ up to some arbitrary shift, but they have been written in this way to guarantee that $S_\omega=A_\omega=T_\omega=Q_{\omega\omega}=0$ for all $\tau_{u/v}$. Equivalently, the generators in the colored HOMFLY-PT complexes coming from Lagrangian intersections $\overline{x}$ with $\textbf{c}(\overline{x})=j\xi_\omega$ are in $(q,a,t)$-gradings $(0,0,0)$ for all $j$.

Before proving this theorem, we will first show that the formula for $Q$ results in a symmetric matrix.

\subsection{Symmetry of $Q$}
\label{symsec}

The formulas in Theorem \ref{bigthm} are presented in a way that makes computation as easy as possible. In particular, we can view the diagonal of $Q$ as a linear form that can be computed in terms of loops in $M$. However, we can also compute $Q$ purely in terms of loops in $\text{Conf}^2(M)$, and doing so will make it easier to show that $Q$ is symmetric, i.e. that $Q_{ij}=Q_{ji}$. 

The homomorphisms we need for this new method of computing $Q$ are $\Phi^2$ (or just $\Phi$) and $\Psi_{X^+}^2$. We will keep the superscripts to emphasize that everything in this subsection takes place in $\text{Conf}^2(M)$. 

We will ultimately provide a single formula for computing entries of $Q$, but it will help to first translate the diagonal part to $\text{Conf}^2(M)$, so we start by defining the loop used in these computations. To compute $Q_{ii}$, we will define a loop $\gamma_{i,i}^2:[0,1]\to\text{Conf}^2(M)$ with basepoint $(x_1,x_2)\in \mathcal{G}_{u/v}^2$ such that $c(x_1)=c(x_2)=\xi_i$. First, define the path $\eta_{i;j}^2$ from $(x_1,x_2)$ to $(y_1,y_2)$, with $c(y_1)=c(y_2)=\xi_j$ by $\eta_{i;j}^2=(\eta_{i,j}^1,\eta_{i,j}^1)$, thought of as two parallel copies of the path $\eta_{i,j}^1$ taken in the natural way that does not intersect the diagonal $\Delta$ of $\text{Conf}^2(M)$. This means that one of the paths is always in front of the other, so if they turn an odd number of times, they swap the verticals on which they start and end. This follows from the fact that the path is defined up to homotopy in the complement of $\Delta$, so the component paths never cross each other. To define $\gamma_{i,i}^2$, we will want $\eta_{i;\omega}^2$ in particular.  

Now, we define the path $\bar\eta_{\omega;i}^2$ by taking two parallel copies of $\bar\eta_{\omega,i}^1$, i.e. $\bar\eta_{\omega;i}^2=(\bar\eta_{\omega,i}^1,\bar\eta_{\omega,i}^1)$, where the paths are pushed off of each other to avoid the diagonal $\Delta$. 

\begin{definitionN}
  The loop $\gamma_{i,i}^2$ is defined to be the concatenation $\gamma_{i,i}^2=\eta_{i;\omega}^2\cdot\bar\eta_{\omega;i}^2$.  
\end{definitionN}

\begin{figure}
    \begin{tikzpicture}
        \node at (0,0) {\includegraphics[height=5cm]{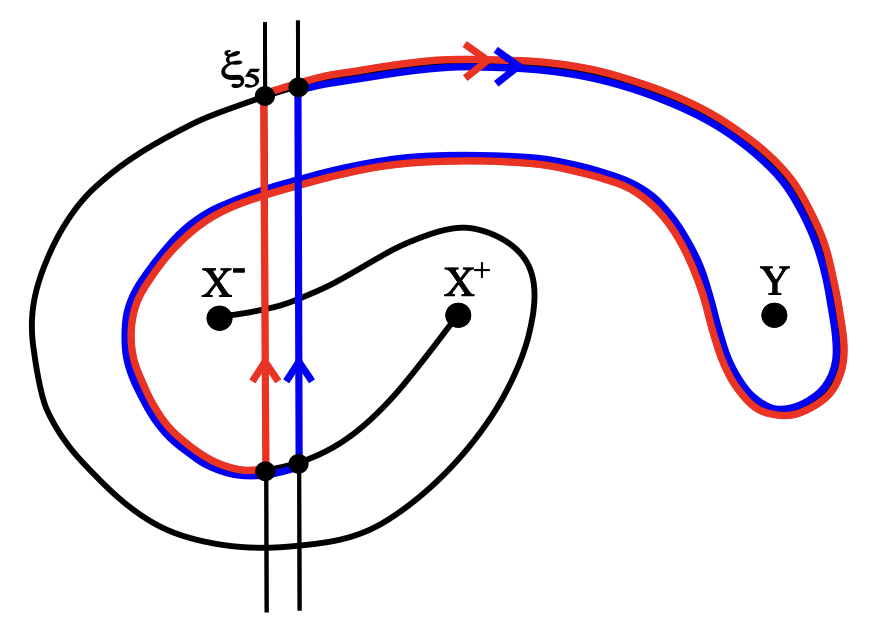}};
        \node at (6,0) {\includegraphics[height=4cm]{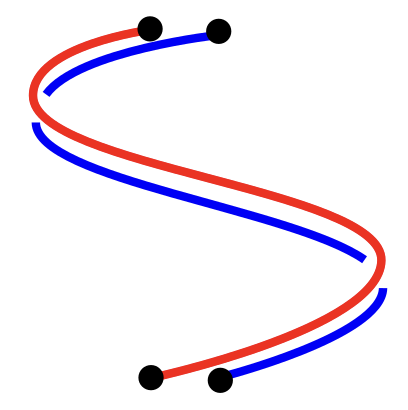}};
    \end{tikzpicture}
    \caption{Left: the loop $\gamma_{5,5}^2$ for $\tau_{5/2}$. Right: the corresponding braid in $\text{Br}_2$ for $\gamma_{5,5}^2$}
    \label{gamma552fig}
\end{figure}

The loop may be thought of, intuitively, as two parallel copies of $\gamma_i$, defined in the natural way to miss the diagonal. In Figure \ref{gamma552fig}, one can see the example of $\gamma_{5,5}^2$ for $\tau_{5/2}$.

Next, we need to define the general loops $\gamma_{j,i}^2$ that will be used in our new formula for $Q$. To do so, we will use $\eta_{j,i;i}^2$ (the same path that would be used to define $\tilde\gamma_{j,i}$) and $\eta_{i;\omega}^2$, which was just defined. The only additional path we need will be denoted $\bar\eta_{\omega;j,i}^2$, which is a slight generalization of $\bar\eta_{\omega;i}^2$. The difference is that, rather than taking two parallel copies of $\bar\eta_{\omega,i}^1$, we define the component paths to be copies of $\bar\eta_{\omega,i}^1$ and $\bar\eta_{\omega,j}^1$ taken the natural way that results in a well-defined loop in $\text{Conf}^2(M)$.


\begin{definitionN}
\label{conf2gammaloops}
    Define the loop $\gamma_{i,j}^2:[0,1]\to\text{Conf}^2(M)$ by the concatenation $\gamma_{i,j}^2:=\eta_{i,j;j}^2\cdot\eta_{j;\omega}^2\cdot\bar\eta_{\omega;i,j}^2$.
\end{definitionN}

An example with $i\neq j$ is provided in Figure \ref{gamma152fig}.

\begin{note}
    Definition \ref{conf2gammaloops} agrees with how we defined $\gamma_{i,i}^2$ (up to homotopy) because $\eta_{i,i;i}$ is the constant loop at $(x_1,x_2)$ with $c(x_1)=c(x_2)=\xi_i$ and we wrote $\bar\eta_{\omega;i}^2$ for the final path, but this is the same thing as $\bar\eta_{\omega;i,i}^2$.
\end{note}

Now, we are equipped to provide our new formula for computing $Q$ in terms of loops in $\text{Conf}^2(M)$. We will present the formula and then show that it works by proving the following proposition.

\begin{propositionN}
\label{Qsymprop}
    The quadratic form $Q$ may be computed by the formula
    \begin{equation}
     \label{conf2Qformula}
Q_{ij}=(2\Phi^2-\Psi_{X^+}^2)([\gamma_{j,i}^2])+\Delta,   
    \end{equation}
where 
\[
\Delta=\delta_{Z,X^+}(2\delta_{i,I}\delta_{j,I}\delta_{\omega,A}-2\delta_{i,A}\delta_{j,A}\delta_{\omega,I}+\delta_{i,A}\delta_{j,I}\delta_{\omega,A}+\delta_{i,I}\delta_{j,A}\delta_{\omega,A}-\delta_{i,I}\delta_{j,A}\delta_{\omega,I}-\delta_{i,A}\delta_{j,I}\delta_{\omega,I}),
\]
    in the sense that this agrees with Theorem \ref{bigthm}. Furthermore, Equation (\ref{conf2Qformula}) satisfies $Q_{ij}=Q_{ji}$.
\end{propositionN}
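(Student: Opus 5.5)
The plan is to decompose the loop $\gamma_{j,i}^2=\eta_{j,i;i}^2\cdot\eta_{i;\omega}^2\cdot\bar\eta_{\omega;j,i}^2$ in $\pi_1(\text{Conf}^2(M))$ (after changing basepoints along the obvious paths) into pieces whose images under $\Phi^2$ and $\Psi_{X^+}^2$ can be read off from loops already appearing in Theorem \ref{bigthm}. The basic tool is that, for a loop in $\text{Conf}^2(M)$ whose components are $\sigma^1,\sigma^2$, we have $\Psi^2_{X^+}-\Phi^2=\Psi^1_{X^+}(\sigma^1)+\Psi^1_{X^+}(\sigma^2)$. This holds because $\frac12(\Psi_V^j-\Phi^j)$ is the total winding about $V$, and by $H_1(M)\cong H_1(\text{Sym}^2 M)$ this is additive in the components. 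Consequently $2\Phi^2-\Psi^2_{X^+}=\Phi^2-(\Psi^2_{X^+}-\Phi^2)$, so on any loop it equals $\Phi^2$ minus twice the total winding about $X^+$ of its components.

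The first step is the diagonal case. The loop $\gamma_{i,i}^2$ consists of two parallel copies of $\gamma_i$ that never cross the diagonal. Its winding part is therefore $2\cdot 2\Psi_{X^+}([\gamma_i])$. Its $\Phi^2$ value should be $\Psi_{\{X^\pm,Y\}}([\gamma_i])$ corrected by a turning term: every half-turn of $\gamma_i$ around a puncture swaps the two parallel strands and contributes one crossing, so the total crossing count equals the total winding about all punctures. The $\delta$ terms record the simple slide in $\bar\eta_{\omega,i}^1$ when $\xi_i$ and $\xi_\omega$ lie on different axes. I would check this count by a local picture at each half-turn and at the slide past $Z$. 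Comparing the result with $Q_{ii}=(\Psi_{\{X^-,Y\}}-3\Psi_{X^+})([\gamma_i])+\cdots$ should then produce exactly the $\delta_{Z,X^+}$ diagonal part of $\Delta$.

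The second step is the off-diagonal case. Here $\gamma^2_{j,i}$ is homotopic to the composite of $\tilde\gamma_{j,i}$, rebased, with $\gamma^2_{i,i}$, plus a correction coming from replacing $\bar\eta^2_{\omega;i}$ by $\bar\eta^2_{\omega;j,i}$. That correction is a loop whose only nontrivial component is $\bar\eta^1_{\omega,j}\cdot(\bar\eta^1_{\omega,i})^{-1}$ along the verticals. It winds around no puncture unless $\xi_i,\xi_j$ lie on different axes, in which case it passes $Z$ once. Additivity of the homomorphisms then gives
\[
(2\Phi^2-\Psi^2_{X^+})([\gamma_{j,i}^2]) = Q_{ii}-(\text{diag. }\delta) + (\Phi-2\Psi_{X^+})([\gamma_{j,i}]) + (\text{slide correction}).
\]
Matching the slide corrections against the $\delta_{Z,X^+}(\delta_{i,A}\delta_{j,I}-\delta_{j,A}\delta_{i,I})$ term is a finite case check over the axes of $i$, $j$ and $\omega$.

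The third step is symmetry. Both $\gamma^2_{j,i}$ and $\gamma^2_{i,j}$ are based at $\iota(\xi_i\cdot\xi_j)$. I would show that they are homotopic up to loops that $2\Phi^2-\Psi^2_{X^+}$ kills, or whose values are absorbed by the asymmetric part of $\Delta$. Both loops send the pair to $\xi_\omega$ along $\alpha$ and return along the verticals. One merges $i$ into $j$ first and the other merges $j$ into $i$, so after the merge the paths to $\omega$ agree up to a reordering of the strands, which is a pure braid with no puncture winding. I therefore expect $[\gamma^2_{j,i}][\gamma^2_{i,j}]^{-1}$ to be trivial modulo rectangle-type moves, and those cancel because each traversal appears with both signs. $\Delta$ is visibly symmetric in $i,j$, which finishes the argument.

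I expect the main obstacle to be the crossing count for $\Phi^2$, both in the parallel-copies loop and in the case split of $\eta^2_{j,i;i}$ according to whether $\eta^1_{j,i}$ meets $x_i$. Sign conventions at the simple slides, which depend on whether $Z=X^+$, are where the $\delta$ bookkeeping could go wrong, so I would verify each case against the $\tau_{5/2}$ examples in Figure \ref{gammacases}.
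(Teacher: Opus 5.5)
Your three steps follow the paper's skeleton: the diagonal identity, the splitting of $\gamma^2_{j,i}$ as $\tilde\gamma_{j,i}$ plus $\gamma^2_{i,i}$ plus a correction, and the homology of $\gamma^2_{j,i}$ with $\gamma^2_{i,j}$. However, you have misplaced where the $\delta$-terms come from, and this makes your central geometric prediction wrong.

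The $\delta$'s are pure bookkeeping. On the diagonal, $\Delta_{ii}=2\delta_{Z,X^+}(\delta_{i,I}\delta_{\omega,A}-\delta_{i,A}\delta_{\omega,I})$ is exactly the $\delta$-term of $Q_{ii}$ in Theorem \ref{bigthm}. Off the diagonal, checking the eight axis patterns of $\xi_i,\xi_j,\xi_\omega$ gives the identity
\[
\Delta_{ij}-\Delta_{ii}=\delta_{Z,X^+}(\delta_{i,A}\delta_{j,I}-\delta_{j,A}\delta_{i,I}).
\]
So agreement with Theorem \ref{bigthm} is \emph{equivalent} to two exact identities with no slide corrections: $\Phi^2([\gamma^2_{i,i}])=\Psi_{\{X^\pm,Y\}}([\gamma_i])$, and $[\gamma^2_{j,i}]=[\tilde\gamma_{j,i}]+[\gamma^2_{i,i}]$ in $H_1(\text{Conf}^2(M))$.

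In particular, your correction cycle $(\bar\eta^2_{i;j,i})^*+\bar\eta^2_{\omega;j,i}-\bar\eta^2_{\omega;i,i}$ must be null-homologous. Suppose, as you predict, it wound once around $Z$ whenever $\xi_i,\xi_j$ lie on different axes. Then, since $2\Phi^2-\Psi^2_{X^+}=\Phi^2-2w_{X^+}$ with $w_{X^+}$ the total winding about $X^+$, it would contribute $\mp 2$ whenever $Z=X^+$. Nothing in $\Delta$ is left over to absorb that, so the formula would fail.

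Proving this vanishing is the real content of the argument; it is the paper's Lemma \ref{1cycleslem}. In every axis pattern, each slide between the tops of $l_A$ and $l_I$ is traversed once in each direction and cancels. What remains lies on a single vertical and cancels by inspecting the six relative orders of $\xi_i,\xi_j,\xi_\omega$.

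The same point applies on the diagonal. $\Phi^2$ ignores the punctures, so it cannot produce a $\delta_{Z,X^+}$. The two components of $\gamma^2_{i,i}$ sum to $2\gamma_i$ in $H_1(M)$, so its $X^+$-winding is exactly $2\Psi_{X^+}([\gamma_i])$. Hence the slide in $\bar\eta^1_{\omega,i}$ contributes no $\delta$ there either.

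Your symmetry step is also not justified as written. A reordering of the two moving strands that is a pure braid with no puncture winding is not killed by $2\Phi^2-\Psi^2_{X^+}$: a full twist has $\Phi^2=\Psi^2_{X^+}=2$, hence value $2$. Also, $\Delta$ is symmetric, so it has no asymmetric part to absorb anything.

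The argument that works is backtracking. Assume $\xi_i\prec\xi_j$ and write $\eta^2_{i;\omega}=\eta^2_{i;j}\cdot\eta^2_{j;\omega}$. Then $\eta^2_{j,i;i}\cdot\eta^2_{i;j}\sim\eta^2_{i,j;j}$, because the point sent from $\xi_j$ back to $\xi_i$ and then forward again just retraces its path along $\alpha$. Together with $\bar\eta^2_{\omega;j,i}=\bar\eta^2_{\omega;i,j}$, this gives $[\gamma^2_{j,i}]=[\gamma^2_{i,j}]$ outright, and $Q_{ij}=Q_{ji}$ follows.
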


\begin{figure}
    \begin{tikzpicture}
        \node at (0,0) {\includegraphics[height=5cm]{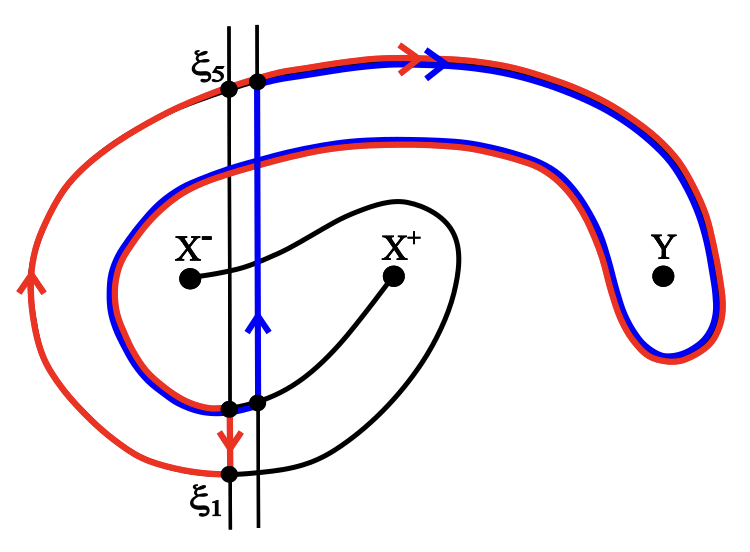}};
        \node at (6,0) {\includegraphics[height=4cm]{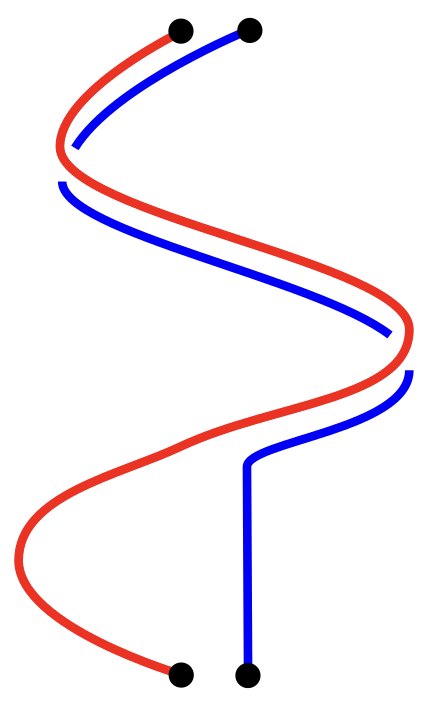}};
    \end{tikzpicture}
    \caption{Left: the loop $\gamma_{1,5}^2$. Right: the corresponding braid in $\text{Br}_2$ for $\gamma_{1,5}^2$}
    \label{gamma152fig}
\end{figure}

The proof of this proposition will occupy the remainder of this subsection and will be done in steps. First, we prove symmetry.

\begin{lemmaN}
Using Equation (\ref{conf2Qformula}), one computes $Q_{ij}=Q_{ji}$  
\end{lemmaN}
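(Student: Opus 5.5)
The plan is to show that the two loops $\gamma_{j,i}^2$ and $\gamma_{i,j}^2$ have the same image under $2\Phi^2-\Psi_{X^+}^2$, and that the correction term $\Delta$ is symmetric in $i$ and $j$.

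The second point is immediate by inspection. The terms $2\delta_{i,I}\delta_{j,I}\delta_{\omega,A}$ and $-2\delta_{i,A}\delta_{j,A}\delta_{\omega,I}$ are visibly symmetric. The remaining four terms come in pairs that are swapped under $i\leftrightarrow j$: the pair $\delta_{i,A}\delta_{j,I}\delta_{\omega,A}+\delta_{i,I}\delta_{j,A}\delta_{\omega,A}$ and the pair $-\delta_{i,I}\delta_{j,A}\delta_{\omega,I}-\delta_{i,A}\delta_{j,I}\delta_{\omega,I}$. So everything reduces to the loops.

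For the loops, I would show that $[\gamma_{j,i}^2]$ and $[\gamma_{i,j}^2]$ agree in $\pi_1(\text{Conf}^2(M))$, at least after abelianizing, which is all the target $\mathbb{Z}$ sees.

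\begin{enumerate}
\item Both loops are based at the same point $\iota(\xi_i\cdot\xi_j)=(x_1,x_2)$. This basepoint is defined from the unordered product, so it is independent of the order of $i$ and $j$.
\item Write $\gamma_{j,i}^2=\eta_{j,i;i}^2\cdot\eta_{i;\omega}^2\cdot\bar\eta_{\omega;j,i}^2$. In words: slide the $x_j$-point along $\alpha$ onto $\xi_i$, then move both points in parallel along $\alpha$ to $\xi_\omega$, then return along the verticals.
\item The key observation is that, up to homotopy in $\text{Conf}^2(M)$, both loops are the same motion: the two points travel along $\alpha$ toward $X^+$ until both sit at $\xi_\omega$, and then return along $\bar\eta_{\omega,i}^1$ and $\bar\eta_{\omega,j}^1$. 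Indeed, $\eta_{j,i;i}^2\cdot\eta_{i;\omega}^2$ and $\eta_{i,j;j}^2\cdot\eta_{j;\omega}^2$ are both paths inside $\mathbb{L}^2_{u/v}=\text{Sym}^2(\alpha)\setminus\Delta$ from $(x_1,x_2)$ to the configuration of two points near $\xi_\omega$.
\item The space $\text{Sym}^2(\alpha)\setminus\Delta$ is a configuration space of an interval, hence contractible (an open simplex). Therefore any two such paths with the same endpoints are homotopic.
\item The only care needed is that the path from $(x_1,x_2)$ to the $\alpha$-part, and back, uses the same short vertical pushes in both cases. In particular, in the case where the moving point passes through the fixed one, the swap of component paths in $\eta_{i,j;j}^2$ is exactly the reparametrization inside $\text{Sym}^2(\alpha)$ that makes this consistent.
\item The returning paths $\bar\eta_{\omega;i,j}^2$ and $\bar\eta_{\omega;j,i}^2$ consist of the same two component paths $\bar\eta_{\omega,i}^1$ and $\bar\eta_{\omega,j}^1$, taken "the natural way." So they coincide as paths in $\text{Conf}^2(M)$, provided "natural way" is interpreted identically, which I would make precise by fixing a convention for which copy lies in front.
\end{enumerate}

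The main obstacle is making the previous step rigorous. The intersection points are not literally on $\alpha$ but on parallel verticals $w$ and $u$, so the segments joining the basepoint to $\text{Sym}^2(\alpha)$ need to be checked to match. Concretely, the ordering of the two points (which is in front) at $\xi_\omega$ must be the same for both loops, and a mismatch would change $\Phi^2$ by $\pm1$.

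I would handle this by a local case analysis at $\xi_\omega$, depending on whether $\xi_i,\xi_j$ are active or inactive and on their $\prec$-order. In each case I would verify that the braids read off for $\Phi^2$ and $\Psi^2_{X^+}$ agree; $\Psi^2_{X^+}$ is insensitive to the ordering of the two points up to the crossings among themselves, which $\Phi^2$ already tracks. If a discrepancy does appear in the mixed active/inactive cases, I expect it to be exactly compensated by the asymmetric-looking $\delta$-terms. In that event I would instead compare $Q_{ij}-Q_{ji}$ directly and show it vanishes.
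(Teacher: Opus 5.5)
Your proposal is correct and is essentially the paper's argument: both rest on three facts. The correction term $\Delta$ is symmetric, the return paths agree ($\bar\eta_{\omega;j,i}^2=\bar\eta_{\omega;i,j}^2$), and the forward parts are equivalent; the paper gets the last via the backtracking relation $\eta_{j,i;i}^2\cdot\eta_{i;j}^2\sim\eta_{i,j;j}^2$ after assuming $\xi_i\prec\xi_j$, and your observation that $\text{Sym}^2(\alpha)\setminus\Delta$ is contractible justifies this cleanly without the case split. Your closing worries are unnecessary: the generator points are intersections of $\alpha$ with the verticals, so they already lie on $\alpha$ and the entire forward part sits in $\text{Sym}^2(\alpha)\setminus\Delta$; and an asymmetry in the loop term could never be compensated by $\Delta$, which you have already shown is symmetric.
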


\begin{proof}
 First, we check that $\gamma_{j,i}^2$ and $\gamma_{i,j}^2$ are homologous. Without loss of generality, we may assume $\xi_i\prec \xi_j$, which means that $\gamma_{j,i}^2$ ``backtracks.'' More precisely, we can break $\eta_{i;\omega}^2$ further into the concatenation $\eta_{i;j}^2\cdot \eta_{j;\omega}^2$; then the backtracking refers to the easy observation that $\eta_{j,i;i}^2\cdot \eta_{i;j}^2\sim \eta_{i,j;j}^2$, where $\sim$ denotes that the paths are homologous in $\text{Conf}^2(M)$ (their difference forms a boundary cycle). This means
 \[
 \gamma_{j,i}^2=\eta_{j,i;i}^2\cdot \eta_{i;j}^2\cdot \eta_{j;\omega}^2\cdot  \bar\eta_{\omega;j,i}^2\sim \eta_{i,j;j}^2\cdot \eta_{j;\omega}^2\cdot  \bar\eta_{\omega;j,i}^2= \eta_{i,j;j}^2\cdot \eta_{j;\omega}^2\cdot  \bar\eta_{\omega;i,j}^2=\gamma_{i,j}^2,
 \]
 where we have also used $\bar\eta_{\omega;j,i}^2=\bar\eta_{\omega;i,j}^2$. Because $\Phi^2$ and $\Psi_{X^+}^2$ are $\mathbb{Z}$-valued homomorphisms, they factor through $H_1(\text{Conf}^2(M))$, so $\gamma_{j,i}^2$ and $\gamma_{i,j}^2$ being homologous implies 
 \[
 (2\Phi^2-\Psi_{X^+}^2)([\gamma_{j,i}^2])=(2\Phi^2-\Psi_{X^+}^2)([\gamma_{i,j}^2]).
 \]
 The proof is concluded by observing that $\Delta$ is symmetric in $i$ and $j$.
\end{proof}

Before proceeding to the last step in the proof of Proposition \ref{Qsymprop}, we need to establish the following lemma. This lemma will ultimately enable us to rewrite Equation (\ref{conf2Qformula}) in a form comparable to the formula for $Q_{ij}$ in Theorem \ref{bigthm}.

\begin{lemmaN}
\label{1cycleslem}
    As $1$-cycles in $H_1(\text{Conf}\,^2(M)),$
    \begin{equation}
        \gamma_{j,i}^2=\tilde\gamma_{j,i}+\gamma_{i,i}^2.
    \end{equation}
\end{lemmaN}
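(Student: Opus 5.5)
Our plan is to decompose $\gamma_{j,i}^2=\eta_{j,i;i}^2\cdot\eta_{i;\omega}^2\cdot\bar\eta_{\omega;j,i}^2$ into pieces and compare them with $\tilde\gamma_{j,i}=\eta_{j,i;i}^2\cdot\bar\eta_{i;j,i}^2$ and $\gamma_{i,i}^2=\eta_{i;\omega}^2\cdot\bar\eta_{\omega;i}^2$. All three loops are based at configurations built from $\xi_i,\xi_j$, but $\gamma_{i,i}^2$ is based at the point with both coordinates over $\xi_i$. The identity is only claimed in $H_1(\text{Conf}^2(M))$, so basepoints play no role once each loop is a genuine $1$-cycle. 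The first step is therefore to insert a canceling pair $\bar\eta_{i;j,i}^2\cdot(\bar\eta_{i;j,i}^2)^{-1}$ right after $\eta_{j,i;i}^2$ in $\gamma_{j,i}^2$. Here $\bar\eta_{i;j,i}^2$ is the return path from the configuration with both points over $\xi_i$ back to $\iota(\xi_i\cdot\xi_j)$.

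This rewrites $\gamma_{j,i}^2$, as a $1$-chain, as the sum of two $1$-cycles. The first is $\eta_{j,i;i}^2\cdot\bar\eta_{i;j,i}^2=\tilde\gamma_{j,i}$. The second is
\[
(\bar\eta_{i;j,i}^2)^{-1}\cdot\eta_{i;\omega}^2\cdot\bar\eta_{\omega;j,i}^2,
\]
a loop based at the doubled-$\xi_i$ configuration. It remains to show this second loop is homologous to $\gamma_{i,i}^2$. Equivalently, we need
\[
\bar\eta_{\omega;j,i}^2\cdot\bar\eta_{i;j,i}^2\sim\bar\eta_{\omega;i}^2
\]
as paths rel endpoints, up to boundaries in $\text{Conf}^2(M)$.

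The main step is this last comparison of return paths. Every path involved runs only along $l_A$ and $l_I$ (and along the simple slide between their tops) together with the fixed parallel verticals. The two component paths of $\bar\eta_{\omega;j,i}^2$ are copies of $\bar\eta_{\omega,j}^1$ and $\bar\eta_{\omega,i}^1$. Those of $\bar\eta_{\omega;i}^2$ are two copies of $\bar\eta_{\omega,i}^1$. The path $\bar\eta_{i;j,i}^2$ moves one point between $\xi_i$ and $\xi_j$ along the verticals.

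We would handle this by cases according to whether $\xi_i,\xi_j,\xi_\omega$ are active or inactive. In each case, both sides are paths in the union of two parallel strips around $l_A,l_I$ (joined at the top), and this union contains no puncture. So $\Psi_V^2$ vanishes on the difference, and the difference is determined by its image under $\Phi^2$, that is, by how the two strands cross. Since the component paths are taken ``the natural way'', meaning parallel with one always in front and never crossing, we would check case by case that both sides give the same braid word. The difference is then null-homologous.

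We expect the hardest part to be exactly this bookkeeping. One must make sure that the conventions in the definitions of $\eta_{j,i;i}^2$ (the split into $\rho^1$ paths when the moving point passes $x_i$) and of $\bar\eta^2_{\omega;j,i}$ match, so that no stray half-twist ($\sigma_1^{\pm1}$) appears. To control this, we would first treat the case where $\eta^1_{j,i}$ avoids $x_i$, then reduce the other case to it by a homotopy pushing the moving point off $x_i$. Such a push changes $\Phi$ by the same amount on both sides. As a consistency check, we would confirm the identity against the projection $H_1(\text{Conf}^2(M))\to H_1(M)$, using $\sigma^1+\sigma^2=\gamma$ as noted earlier. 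There the left side gives $\gamma_j+\gamma_i$, and the right side gives $(\gamma_{j,i}+\gamma_i-\gamma_i\ldots)$, which should reduce to the same class.
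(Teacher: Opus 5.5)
Your proposal is correct and is essentially the paper's proof. You insert $\bar\eta_{i;j,i}^2\cdot(\bar\eta_{i;j,i}^2)^{*}$ after $\eta_{j,i;i}^2$, split off $\tilde\gamma_{j,i}$, and check casewise that $\bar\eta_{\omega;j,i}^2-\bar\eta_{i;j,i}^2=\bar\eta_{\omega;i,i}^2$ in homology; the paper sees this as literal cancellation of the chains along the verticals, you see it via $\Phi^2$ on the puncture-free disk containing them. Two small fixes: on that disk it is $\Psi_V^2-\Phi^2$, not $\Psi_V^2$, that vanishes, and your displayed reduction should read $\bar\eta_{\omega;j,i}^2\cdot(\bar\eta_{i;j,i}^2)^{-1}\sim\bar\eta_{\omega;i}^2$ (the worry about the $\eta_{j,i;i}^2$ conventions is moot, since that path is absorbed verbatim into $\tilde\gamma_{j,i}$).
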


\begin{proof}
    First, we prove
    \begin{equation}
    \label{n2cH}
(\bar\eta_{i;j,i}^2)^*\cdot\eta_{i;\omega}^2\cdot\eta_{\omega;j,i}^2\sim \eta_{i;\omega}^2\cdot \bar\eta_{\omega;i,i}^2=\gamma_{i,i}^2,
    \end{equation}
    where $(\bar\eta_{i;j,i}^2)^*$ is the reverse of $\bar\eta_{i;j,i}^2$. In other words, as 1-chains in 
$H_1(\text{Conf}^2(M)),$ we have $(\bar\eta_{i;j,i}^2)^*=-\bar\eta_{i;j,i}^2$. To see that (\ref{n2cH}) holds, it suffices to show that $(\bar\eta_{i;j,i}^2)^*+\eta_{\omega;j,i}^2-\bar\eta_{\omega;i,i}^2=0$ in $H_1(\text{Conf}^2(M))$.

    If $\xi_i, \xi_j,$ and $\xi_\omega$ are all active or inactive intersections, these three paths only live on the vertical Lagrangians, and if they are not all active or inactive, it can be checked that the components of the paths not on the vertical Lagrangians cancel. Now, we explicitly consider what happens when $\xi_i, \xi_j$ and $\xi_\omega$ are all active or inactive. Below, we show the six cases for how they might be arranged with respect to each other along the appropriate vertical Lagrangians (the two intersection points $x_1,x_2$ along the horizontal labeled $k$ are such that $c(x_1)=c(x_2)=\xi_k$). For each of the six cases, we have drawn $(\bar\eta_{i;j,i}^2)^*$ in red (excluding the constant component), $\eta_{\omega;j,i}^2$ in blue, and $\bar\eta_{\omega;i,i}^2$ in green.
    \[
    \begin{tikzpicture}
        \node at (0,0) {\includegraphics[height=3cm]{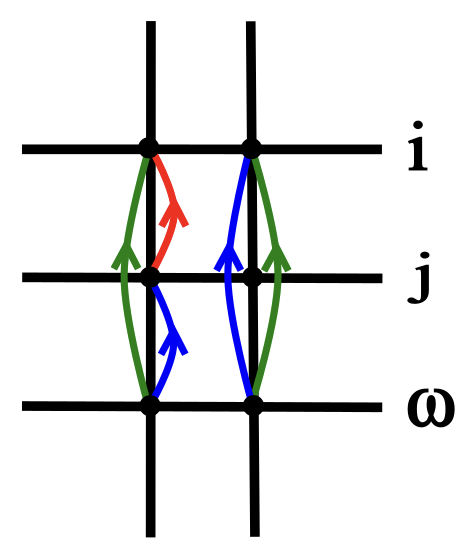}};
        \node at (5,0) {\includegraphics[height=3cm]{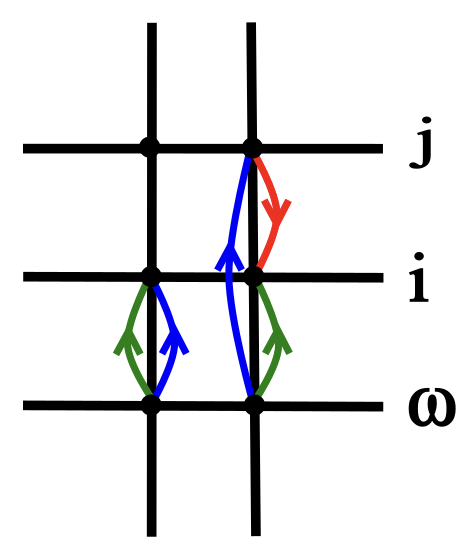}};
        \node at (10,0) {\includegraphics[height=3cm]{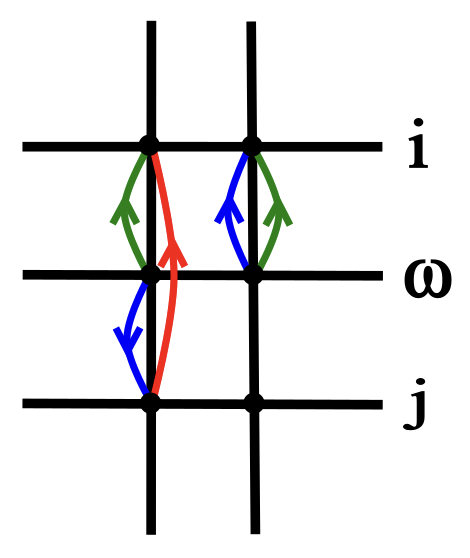}};
        \node at (0,-3.5) {\includegraphics[height=3cm]{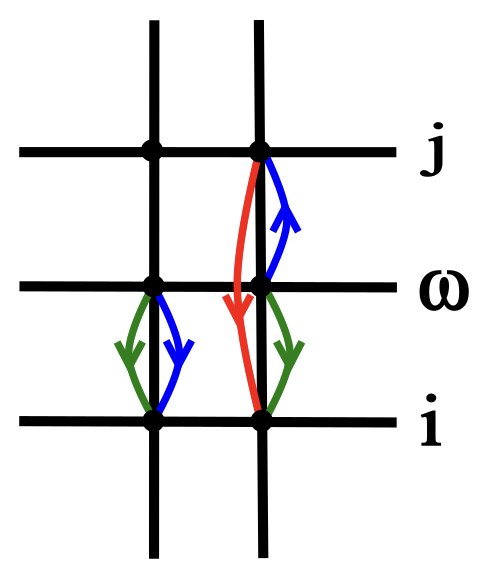}};
        \node at (5,-3.5) {\includegraphics[height=3cm]{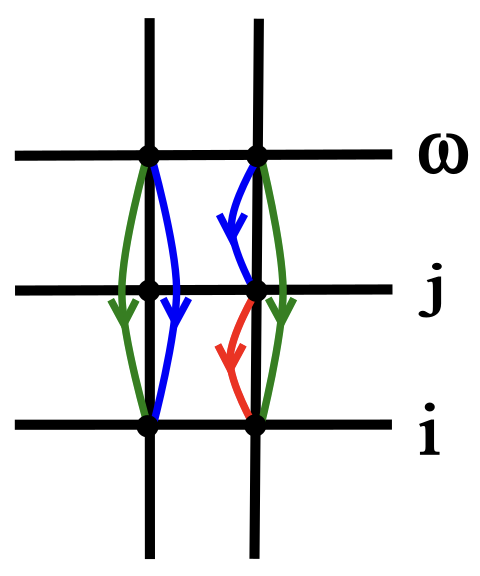}};
        \node at (10,-3.5) {\includegraphics[height=3cm]{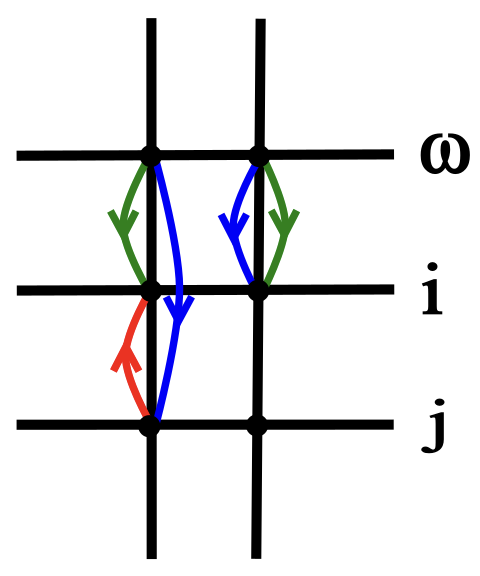}};
    \end{tikzpicture}
    \]
    In each case, we see that flipping the green arrows (taking $-\bar\eta_{\omega;i,i}^2$) results in a complete cancellation, which proves $(\bar\eta_{i;j,i}^2)^*+\eta_{\omega;j,i}^2-\bar\eta_{\omega;i,i}^2=0$. Now, we have
    \[
    \gamma_{j,i}^2=\eta_{j,i;i}^2\cdot \eta_{i;\omega}^2\cdot \eta_{\omega;j,i}^2 \sim \eta_{j,i;i}^2\cdot (\bar\eta_{i;j,i}^2\cdot (\bar\eta_{i;j,i}^2)^*)\cdot\eta_{i;\omega}^2\cdot\eta_{\omega;j,i}^2=(\eta_{j,i;i}^2\cdot \bar\eta_{i;j,i}^2)\cdot ((\bar\eta_{i;j,i}^2)^*\cdot\eta_{i;\omega}^2\cdot\eta_{\omega;j,i}^2),
    \]
    and in $H_1(\text{Conf}^2(M))$, the last term above splits into a sum. In particular, 
    \[
    (\eta_{j,i;i}^2\cdot \bar\eta_{i;j,i}^2)\cdot ((\bar\eta_{i;j,i}^2)^*\cdot\eta_{i;\omega}^2\cdot\eta_{\omega;j,i}^2)=(\eta_{j,i;i}^2\cdot \bar\eta_{i;j,i}^2)+((\bar\eta_{i;j,i}^2)^*\cdot\eta_{i;\omega}^2\cdot\eta_{\omega;j,i}^2),
    \]
    where $\eta_{j,i;i}^2\cdot \bar\eta_{i;j,i}^2=\tilde\gamma_{j,i}$ and $(\bar\eta_{i;j,i}^2)^*\cdot\eta_{i;\omega}^2\cdot\eta_{\omega;j,i}^2\sim \gamma_{i,i}^2$ by (\ref{n2cH}), so the lemma follows.
\end{proof}

Finally, we need to check that Equation (\ref{conf2Qformula}) computes the same $Q$ as Theorem \ref{bigthm}. Then, we will have completed the proof of Proposition \ref{Qsymprop}, which shows that the formula for $Q$ in Theorem \ref{bigthm} gives a symmetric matrix.

\begin{lemmaN}
    Equation (\ref{conf2Qformula}) agrees with the formula for $Q$ in Theorem \ref{bigthm}.
\end{lemmaN}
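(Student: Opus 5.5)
We plan to start from Lemma \ref{1cycleslem}, which gives $[\gamma_{j,i}^2]=[\tilde\gamma_{j,i}]+[\gamma_{i,i}^2]$ in $H_1(\text{Conf}^2(M))$. Both $\Phi^2$ and $\Psi_{X^+}^2$ factor through $H_1$, so Equation (\ref{conf2Qformula}) splits as
\[
Q_{ij}=(2\Phi^2-\Psi_{X^+}^2)([\gamma_{i,i}^2])+(2\Phi^2-\Psi_{X^+}^2)([\tilde\gamma_{j,i}])+\Delta .
\]
We will then match the first summand with $Q_{ii}$ from Theorem \ref{bigthm}, and the second summand (together with the off-diagonal part of $\Delta$) with $(\Phi-2\Psi_{X^+})([\gamma_{j,i}])+\delta_{Z,X^+}(\delta_{i,A}\delta_{j,I}-\delta_{j,A}\delta_{i,I})$.

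For the $\tilde\gamma_{j,i}$ term we use two observations.
\begin{itemize}
\item Each component of $\tilde\gamma_{j,i}$ winds around the punctures as a path in $M$, and the two components sum to $\gamma_{j,i}$ in $H_1(M)$. We expect $\Psi_{X^+}^2-\Phi^2$ to be additive over components and to equal $2\cdot$(winding about $X^+$) $=\Psi^1_{X^+}=2\Psi_{X^+}$. This gives $\Psi_{X^+}^2([\tilde\gamma_{j,i}])=\Phi^2([\tilde\gamma_{j,i}])+2\Psi_{X^+}([\gamma_{j,i}])$.
\item Consequently $(2\Phi^2-\Psi^2_{X^+})([\tilde\gamma_{j,i}])=\Phi([\tilde\gamma_{j,i}])-2\Psi_{X^+}([\gamma_{j,i}])$, which, with the abuse of notation in the Note, is exactly $(\Phi-2\Psi_{X^+})([\gamma_{j,i}])$.
\end{itemize}

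For the diagonal term we apply the same identity to $\gamma_{i,i}^2$. Its two components are parallel copies of $\gamma_i$, so $(\Psi_{X^+}^2-\Phi^2)([\gamma_{i,i}^2])=2\cdot 2\Psi_{X^+}([\gamma_i])$. This gives
\[
(2\Phi^2-\Psi_{X^+}^2)([\gamma^2_{i,i}])=\Phi^2([\gamma_{i,i}^2])-4\Psi_{X^+}([\gamma_i]).
\]
It remains to compute $\Phi^2([\gamma_{i,i}^2])$, the self-twisting of two parallel strands following $\gamma_i$. Each half-turn of the pair at an extremum of $\alpha$, and each simple slide, contributes a half-twist. We expect these to total the winding of $\gamma_i$ around all three punctures, i.e. $\Phi^2([\gamma_{i,i}^2])=\Psi_{\{X^\pm,Y\}}([\gamma_i])$ up to a correction from the slide between $l_A$ and $l_I$ when $\xi_i$ and $\xi_\omega$ lie on different axes. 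Granting this, we get $(\Psi_{\{X^-,Y\}}-3\Psi_{X^+})([\gamma_i])$, matching $Q_{ii}$.

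The main obstacle is this parallel-strand count. We would argue it via the framing/blackboard twisting: a pair of parallel curves tracing a planar immersed loop has linking (braid abelianization) equal to the total turning, and the total turning of $\gamma_i$ equals its winding about the punctures enclosed. We would check this by induction on the twist operations of Lemma \ref{twistfx}, or directly on the rectangle-like local pictures.

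Finally, we reconcile the $\delta$-terms. We split into cases by whether $\xi_i,\xi_j,\xi_\omega$ are active or inactive, and whether $Z=X^+$. In each case, the simple slide through $Z$ contributes extra crossings with $X^+$ exactly when $Z=X^+$. We verify that $\Delta$ restricted to $i=j$ reproduces $2\delta_{Z,X^+}(\delta_{i,I}\delta_{\omega,A}-\delta_{i,A}\delta_{\omega,I})$. The remaining terms of $\Delta$ combine to give $\delta_{Z,X^+}(\delta_{i,A}\delta_{j,I}-\delta_{j,A}\delta_{i,I})$ together with the slide corrections coming from $\Psi^2_{X^+}$ on the slide segments. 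This is a finite table check over the $2^3$ side-assignments.
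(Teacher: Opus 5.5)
Your architecture is the paper's. You split $[\gamma^2_{j,i}]$ by Lemma \ref{1cycleslem}, use that $\frac12(\Psi^2_{X^+}-\Phi^2)$ counts winding about $X^+$ to get $(2\Phi^2-\Psi^2_{X^+})([\tilde\gamma_{j,i}])=\Phi([\tilde\gamma_{j,i}])-2\Psi_{X^+}([\gamma_{j,i}])$ and $(\Psi^2_{X^+}-\Phi^2)([\gamma^2_{i,i}])=4\Psi_{X^+}([\gamma_i])$, and reduce everything to $\Phi^2([\gamma^2_{i,i}])=\Psi_{\{X^\pm,Y\}}([\gamma_i])$.

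The gap is in that last identity. You rightly call it the crux, but the framing argument you propose fails. The two strands of $\gamma^2_{i,i}$ are not push-offs of the closed curve $\gamma_i$, and ``total turning equals winding about the enclosed punctures'' is false. Test it on the simplest case: $\xi_i,\xi_\omega$ consecutive on one axis, joined by a single U-turn of $\alpha$ around a puncture $W$. Then $\gamma_i$ is a simple closed curve about $W$, and two push-offs of it form a full twist (abelianization $\pm2$). But $\Psi_{\{X^\pm,Y\}}([\gamma_i])=\pm1=\Phi^2([\gamma^2_{i,i}])$.

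What actually happens is different. Along $\alpha$ the two points travel in tandem on $\alpha$ itself, while on the return they sit side by side on parallel copies of the vertical. So their displacement vector turns with the tangent of $\alpha$ between $\xi_i$ and $\xi_\omega$, and stays horizontal on the vertical and slide segments; the two corner turns of $\gamma_i$ are never registered. Hence $\Phi^2([\gamma^2_{i,i}])$ is the signed number of $x$-extrema (U-turns) of $\alpha$ between $\xi_i$ and $\xi_\omega$, and simple slides contribute no half-twist. One then compares consecutive pairs one at a time. By tightness each U-turn encloses exactly one puncture with matching sign, while an $l_A$--$l_I$ pair contributes neither a crossing nor winding. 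That local count is the paper's argument.

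The identity must also hold with no correction term, and the bookkeeping forces this. At $i=j$, $\Delta$ reduces to $2\delta_{Z,X^+}(\delta_{i,I}\delta_{\omega,A}-\delta_{i,A}\delta_{\omega,I})$, which is exactly the $\delta$-term of $Q_{ii}$ in Theorem \ref{bigthm}. Any slide correction would therefore destroy agreement on the diagonal.

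For the same reason, drop the ``slide corrections coming from $\Psi^2_{X^+}$'' in your last paragraph. The slide's contribution around $X^+$ is already contained in $2\Psi_{X^+}([\gamma_{j,i}])$ and $4\Psi_{X^+}([\gamma_i])$, and your own first bullet gives those identities exactly. What remains is the purely combinatorial identity
\[
\Delta=2\delta_{Z,X^+}(\delta_{i,I}\delta_{\omega,A}-\delta_{i,A}\delta_{\omega,I})+\delta_{Z,X^+}(\delta_{i,A}\delta_{j,I}-\delta_{j,A}\delta_{i,I}),
\]
which holds in all eight active/inactive cases.
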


\begin{proof}
    First, observe that the formula for $Q_{ii}$ in Theorem \ref{bigthm} may be written as 
    \[
    Q_{ii}=(\Psi_{\{X^+,X^-,Y\}}-4\Psi_{X^+})([\gamma_{i}])+2\delta_{Z,X^+}(\delta_{i,I}\delta_{\omega,A}-\delta_{i,A}\delta_{\omega,I}).
    \]
    We claim that $\Psi_{\{X^+,X^-,Y\}}([\gamma_i])=\Phi^2([\gamma_{i,i}^2])$. Observe that $\Psi_{\{X^+,X^-,Y\}}([\gamma_i])$ computes the number of times $\gamma_i$ wraps around any of the punctures (with sign), and for the loop $\gamma_{i,i}^2$ in $\text{Conf}^2(M)$, wrapping around a puncture has the effect of adding a single crossing in the braid image of $\gamma_{i,i}^2$ in $\text{Br}_2$. For example, the figures below show how $\gamma_i$ wrapping around a puncture in $M$ translates to the 2-strand braid corresponding to $\gamma_{i,i}^2$ having a single crossing.
    \[
    \begin{tikzpicture}
        \node at (0,0) {\includegraphics[height=3cm]{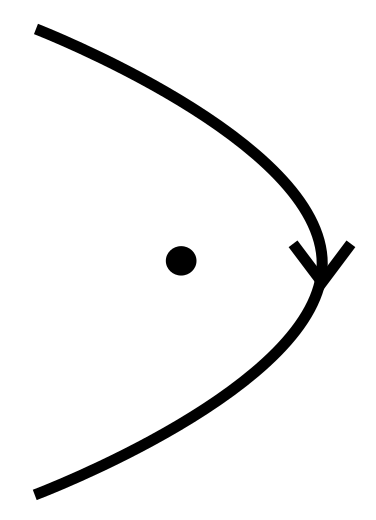}};
        \node at (2,0) {$\longrightarrow$};
        \node at (3.5,0) {\includegraphics[height=3cm]{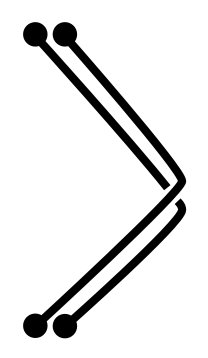}};
        \node at (7,0) {\includegraphics[height=3cm]{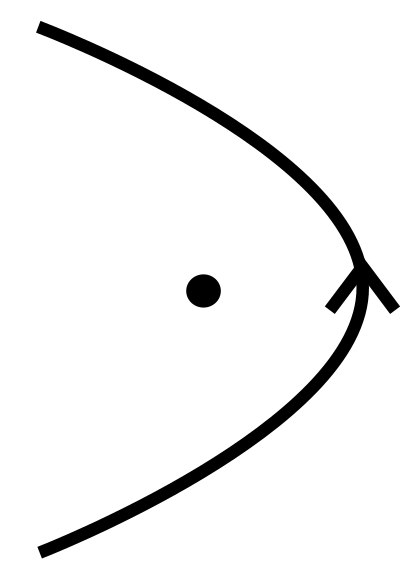}};
        \node at (9,0) {$\longrightarrow$};
        \node at (10.5,0) {\includegraphics[height=3cm]{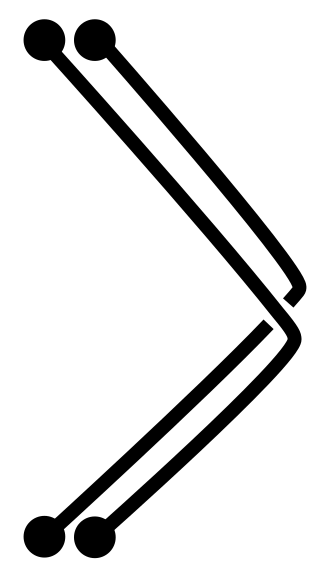}};
    \end{tikzpicture}
    \]
    We can also consider the two cases where $\alpha_{u/v}$ bends the other way around the puncture. In each case, we see that the signs contributed by these path segments to $\Psi_{\{X^+,X^-,Y\}}([\gamma_i])$ and $\Phi^2([\gamma_{i,i}^2])$ agree. It follows that $\Psi_{\{X^+,X^-,Y\}}([\gamma_i])=\Phi^2([\gamma_{i,i}^2]),$ where we are also using the assumption that $\alpha_{u/v}$ has been constructed to be as ``tight'' as possible, which means in this case that these are the only crossings in the braid for $\gamma_{i,i}^2$.
    
    Additionally, each time $\gamma_i$ wraps around $X^+$ in $M$, the loop $\gamma_{i,i}^2$ wraps around it twice. We know from Section \ref{compmethods} that $\frac{1}{2}\left(\Psi_{X^+}^2-\Phi^2\right)$ computes how many times a loop in $\text{Conf}^2(M)$ wraps around $X^+$, which means
    \[
    2\Psi_{X^+}([\gamma_i])=\frac{1}{2}\left(\Psi_{X^+}^2-\Phi^2\right)([\gamma_{i,i}^2]),
    \]
    so putting all this together gives 
    \[
    (\Psi_{\{X^+,X^-,Y\}}-4\Psi_{X^+})([\gamma_{i}])=(2\Phi^2-\Psi_{X^+}^2)([\gamma_{i,i}^2]).
    \]
    Now, it is easy to see that Equation (\ref{conf2Qformula}) and Theorem \ref{bigthm} agree on the diagonal of $Q$ by noticing that the $\delta$ terms also match.

    Next, we must check for agreement on the off-diagonal entries of $Q$. To show this, we need to use Lemma \ref{1cycleslem}, which gives
    \[
    (2\Phi^2-\Psi_{X^+}^2)([\gamma_{j,i}^2])=(2\Phi^2-\Psi_{X^+}^2)([\tilde\gamma_{j,i}])+(2\Phi^2-\Psi_{X^+}^2)([\gamma_{i,i}^2]).
    \]
    Consequently,
    \[
    Q_{ij}=Q_{ii}+(2\Phi^2-\Psi_{X^+}^2)([\tilde\gamma_{j,i}])+\delta_{Z,X^+}(\delta_{i,A}\delta_{j,I}-\delta_{j,A}\delta_{i,I}),
    \]
    where the $\delta$ terms can, once again, be worked out by considering the eight possible cases for whether $\xi_i,\xi_j,$ and $\xi_\omega$ are active or inactive intersections. Now, we only need to check that 
    \[
    (2\Phi^2-\Psi_{X^+}^2)([\tilde\gamma_{j,i}])=\Phi([\tilde\gamma_{j,i}])-2\Psi_{X^+}([\gamma_{j,i}]).
    \]
    However, this follows immediately from the fact that $\tilde\gamma_{j,i}$ and $\gamma_{j,i}$ wind around $X^+$ the same number of times. This is an easy consequence of the fact that $\sigma_{j,i}^1+\sigma_{j,i}^2=\gamma_{j,i}$ in $H_1(M)$, as we discussed earlier, where $\tilde\gamma_{j,i}=(\sigma_{j,i}^1,\sigma_{j,i}^2)$. This means
    \[
    \Psi_{X^+}([\gamma_{j,i}])=\frac{1}{2}\left(\Psi_{X^+}^2-\Phi^2\right)([\tilde\gamma_{j,i}]).
    \]
\end{proof}

This concludes the proof of Proposition \ref{Qsymprop}, which tells us that $Q_{ij}=Q_{ji}$ whether we compute $Q$ by Theorem \ref{bigthm} or by Equation (\ref{conf2Qformula}).

\subsection{Specializing to the HOMFLY-PT polynomial}
\label{specializationsec}

In Section \ref{quiversec}, we briefly discussed what it means for $P(\tau_{u/v})=\sum_{j\geq 0}\langle\tau_{u/v}\rangle_j$ to be in quiver form. We will need to use this type of generating function in \cite{JHpII26}, where our main results will be phrased in terms of the fully decategorified HOMFLY-PT invariants. Consequently, we state the equivalent version of Theorem \ref{bigthm} for $P(\tau_{u/v})$ below. 


\begin{propositionN}
\label{tangleHOMFLYpoly}
    We can express $P(\tau_{u/v})$ in quiver form, i.e.
    \begin{multline}
    \label{HOMFLYthmeq}
        \sum_{\bf{d}=(d_1,...,d_{u+v})\in \mathbb{N}^{u+v}}(-q)^{S\cdot\bf{d}}a^{A\cdot \bf{d}}q^{\bf{d}\cdot Q\cdot \bf{d}^t}{d_1+...+d_u\brack d_1,...,d_u}{d_{u+1}+...+d_{u+v}\brack d_{u+1},...,d_{u+v}}\\ \times X[d_1+...+d_{u+v},d_1+...+d_u],
    \end{multline}
    where $S,A,$ and $Q$ are given by the following formulas:
    \newline
    \[
    \begin{cases}
        S_i=\Psi_{\{X^\pm,Y\}}([\gamma_i])+\delta_{i,A}\delta_{\omega,I}-\delta_{i,I}\delta_{\omega,A}\\
        A_i=2\Psi_{X^+}([\gamma_i])+\delta_{Z,X^+}(\delta_{i,A}\delta_{\omega,I}-\delta_{i,I}\delta_{\omega,A})\\
        \begin{cases}
            Q_{ii}=(\Psi_{\{X^-,Y\}}-3\Psi_{X^+})([\gamma_i])+2\delta_{Z,X^+}(\delta_{i,I}\delta_{\omega,A}-\delta_{i,A}\delta_{\omega,I})\\
            Q_{ij}=Q_{ii}+(\Phi-2\Psi_{X^+})([\gamma_{j,i}])+\delta_{Z,X^+}(\delta_{i,A}\delta_{j,I}-\delta_{j,A}\delta_{i,I}).
        \end{cases}
    \end{cases}
    \]
\end{propositionN}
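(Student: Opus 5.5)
This statement is a specialization of Theorem \ref{bigthm}, so the plan is to deduce it by setting $t=-1$ termwise in Equation (\ref{bigthmeq}), using $\langle\tau_{u/v}\rangle_j=\langle\langle\tau_{u/v}\rangle\rangle_j|_{t\mapsto -1}$ for every $j$. Summing over $j$ gives $P(\tau_{u/v})=\mathcal{P}(\tau_{u/v})|_{t=-1}$. This specialization commutes with the sum over $\mathbf{d}\in\mathbb{N}^{u+v}$, because for fixed $j=d_1+\dots+d_{u+v}$ only finitely many $\mathbf{d}$ contribute to the coefficient of each web $X[j,k]$. The quantum multinomials, the web $X[j,k]$, and the factors $q^{S\cdot\mathbf d+\mathbf d\cdot Q\cdot\mathbf d^t}$ and $a^{A\cdot\mathbf d}$ are all independent of $t$, so they pass through the specialization unchanged.

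The only $t$-dependent factor is $t^{T\cdot\mathbf d}$. Theorem \ref{bigthm} gives $T_i=-S_i$ for every $i$, hence $T\cdot\mathbf d=-S\cdot\mathbf d$, and specializing yields
\[
q^{S\cdot\mathbf d}(-1)^{-S\cdot\mathbf d}=q^{S\cdot\mathbf d}(-1)^{S\cdot\mathbf d}=(-q)^{S\cdot\mathbf d}.
\]
Substituting this into each summand produces exactly the expression (\ref{HOMFLYthmeq}). The forms $S$, $A$, $Q$ are literally those of Theorem \ref{bigthm}, so the stated formulas hold, and the symmetry of $Q$ is already provided by Proposition \ref{Qsymprop}.

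The one point that needs care is the phrase "up to an overall shift" in Theorem \ref{bigthm}. An overall trigrading shift $q^{s}a^{\alpha}t^{\tau}$ specializes to $\pm q^{s}a^{\alpha}$. This fits the convention already built into the theorem: the $\delta$-corrections normalize $S_\omega=A_\omega=T_\omega=Q_{\omega\omega}=0$, so the shift is the same universal normalization, now including a possible global sign. I expect no real obstacle beyond stating this normalization consistently. If an exact rather than up-to-shift identity is needed, I would fix the shift by comparing the $j=1$ term $\mathbf d=e_\omega$ with Wedrich's absolute grading for $\langle\tau_{u/v}\rangle_1$.
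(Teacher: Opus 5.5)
Your argument is correct and is the paper's proof: specialize $t=-1$ in Theorem \ref{bigthm} using $\langle\tau_{u/v}\rangle_j=\langle\langle\tau_{u/v}\rangle\rangle_j|_{t\mapsto-1}$, then use $T=-S$ so that $q^{S\cdot\mathbf d}(-1)^{T\cdot\mathbf d}=(-q)^{S\cdot\mathbf d}$. Your remarks on the overall shift go beyond what the paper discusses. Note, though, that the normalization in Theorem \ref{bigthm} is made separately for each color $j$, so any residual sign or shift may depend on $j$, and checking the $j=1$ term alone would not fix it.
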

    
\begin{proof}
This theorem follows trivially from Theorem \ref{bigthm} because 
\[
\langle\tau_{u/v}\rangle_j=\langle\langle\tau_{u/v}\rangle\rangle_j\bigg|_{t\mapsto-1},
\] 
which means $P(\tau_{u/v})$ is obtained from $\mathcal{P}(\tau_{u/v})$ by setting $t=-1$. The only other thing we need to observe is that, as vectors, $T=-S$, so when we set $t=-1$, the relevant part of $\mathcal{P}(\tau_{u/v})$ becomes $q^{S\cdot \textbf{d}}(-1)^{T\cdot \textbf{d}}=(-q)^{S\cdot \textbf{d}}$.
\end{proof}

As we saw in Section \ref{symsec}, we could also write a single formula for $Q$ as 
\[
Q_{ij}=(2\Phi^2-\Psi_{X^+}^2)([\gamma_{j,i}^2])+\Delta.
\]

Now, we proceed to the proof of Theorem \ref{bigthm}.

\section{Proof of Main Theorem}
\label{pfsec}


The proof of Theorem \ref{bigthm} consists of showing that (\ref{calPtuvformula}), computed by the formulas in the theorem statement, gives the correct $\langle\langle\tau_{u/v}\rangle\rangle_j$ for each $j$. Since we are only care about the Poincar\'e polynomials $\langle\langle\tau_{u/v}\rangle\rangle_j$ up to some overall shift in the gradings, it suffices to show that, once we pair the generators of the $j$-colored complex with the terms in the sum
\begin{multline}
\label{Poincarej}
\sum_{\substack{\bf{d}=(d_1,...,d_{u+v})\in \mathbb{N}^{u+v}\\ d_1+...+d_{u+v}=j}}q^{S\cdot\textbf{d}+\textbf{d}\cdot Q\cdot \textbf{d}^t}a^{A\cdot \textbf{d}} t^{T\cdot\textbf{d}} {d_1+...+d_u\brack d_1,...,d_u}{d_{u+1}+...+d_{u+v}\brack d_{u+1},...,d_{u+v}}\\ \times X[d_1+...+d_{u+v},d_1+...+d_u],
\end{multline}
 we have that (\ref{Poincarej}) gives the correct grading differences between all generators, which can be computed using the methods of Wedrich in \cite{W16}.

First, recall that if $\overline d\in G_{\overline d}$ is the unique generator of lowest $q$-grading in $G_{\overline d}$ and $\textbf{c}(\overline{d})=\sum_{i=1}^{u+v}d_i\xi_i$, then we can represent $\overline d$ by $\textbf{d}=(d_1,...,d_{u+v})\in\mathbb{N}^{u+v}.$ Consequently, the individual term in (\ref{Poincarej}) corresponding to $\overline d$ is given by
\[
q^{S\cdot \textbf{d}+\textbf{d}\cdot Q\cdot\textbf{d}^t}a^{A\cdot \textbf{d}}t^{T\cdot \textbf{d}}\times X[j,d_1+...+d_u],
\]
and the sum of over all terms in $G_{\overline d}$ is given by
\[
q^{S\cdot \textbf{d}+\textbf{d}\cdot Q\cdot\textbf{d}^t}a^{A\cdot \textbf{d}}t^{T\cdot \textbf{d}}{d_1+...+d_u\brack d_1,...,d_u}{d_{u+1}+...+d_{u+v}\brack d_{u+1},...,d_{u+v}}\times X[j,d_1+...+d_u],
\]
by Lemma \ref{genblocks}. Thus, we can think of  (\ref{Poincarej}) as summing over the $\overline{d}=\iota(d)$ for all $d\in\text{Sym}^j(\mathcal{G}_{u/v}^1)$. Additionally, it follows now from Lemma \ref{genblocks} that we only need to compare the $q$-, $a$-, and $t$-grading differences between any two of these generators $\overline{d}$ and $\overline{d'}$ representing the lowest $q$-grading generators of sets $G_{\overline{d}}$  and $G_{\overline{d'}}$, as points within these sets have grading differences encoded by the quantum multinomials.


Now, suppose $\overline{d}\in G_{\overline{d}}$ and $\overline{d'}\in G_{\overline{d'}}$ are the unique generators of lowest $q$-grading in their respective sets of generators. As we just explained, although $\overline{d}$ and $\overline{d'}$ are $j$-tuples, they are uniquely determined by $(u+v)$-tuples given by the $d_i$'s in $\textbf{c}(\overline{d})=\sum_{i=1}^{u+v}d_i\xi_i$ (and similarly for $\overline{d'}$). Now, we think of them as vectors 
    \begin{align*}
    \overline{d} &\mapsto \vec{d}=\langle d_1,...,d_{u+v}\rangle\\
    \overline{d'} & \mapsto \vec{d'} = \langle d_1',...,d_{u+v}' \rangle.
    \end{align*}
It suffices to show that the formulas for $S,A,T,$ and $Q$ in Theorem \ref{bigthm} give the appropriate grading differences for $\overline{d}$ and $\overline{d'}$ when $\vec{d}-\vec{d'}=\pm\langle 0,...0,1,0,...0,-1,0,...,0\rangle$, i.e. where the vectors $\vec{d}$ and $\vec{d'}$ differ by $e_i-e_j$ for $1\leq i,j\leq u+v$ ($i\neq j)$, where $e_k$ is the vector of length $u+v$ with a $1$ in the $k$th coordinate and 0's elsewhere. Arbitrary $\overline{d}$ and $\overline{d'}$ are related by a finite sequence of these differences.

    Now, a simple calculation shows that, if $\vec{d}-\vec{d'}=e_i-e_j$, then
    \begin{align}
    \label{qdiffeq}
        &q(\overline{d})-q(\overline{d'}) = S_i-S_j+Q_{ii}+Q_{jj}-2Q_{ij}+2\sum_{k=1}^{u+v} d_k' (Q_{ik}-Q_{jk})\\
        \label{adiffeq}
        &a(\overline{d})-a(\overline{d'}) =A_i-A_j \\
        \label{tdiffeq}
        &t(\overline{d})-t(\overline{d'}) =T_i-T_j.
    \end{align}
    In fact, it suffices to consider the situation where the $i$ and $j$ correspond to $\xi_i, \xi_j \in \mathcal{G}_{u/v}^1$ with $\xi_i$ and $\xi_j$ consecutive with respect to the $\prec$ ordering. In other words, we are going to compare the gradings of $\overline{d}$ and $\overline{d'}$ by sliding points one at a time to consecutive intersection points. This results in three cases to consider. In particular, consecutive intersection points in $\mathcal{D}(\tau_{u/v})$ are connected to each other by $\alpha$ in one of the three ways shown in Figure \ref{3cases}; in each case, we need to show that if $\overline d$ and $\overline{d'}$ are such that $\vec{d}-\vec{d'}=e_i-e_j$ for $\xi_i$ and $\xi_j$ the intersection points represented by the case, then Equations (\ref{qdiffeq})-(\ref{tdiffeq}) give the same same grading differences as \cite{W16}.

   \begin{figure}
       \begin{tikzpicture}
           \node at (0,0) {\includegraphics[height=3cm]{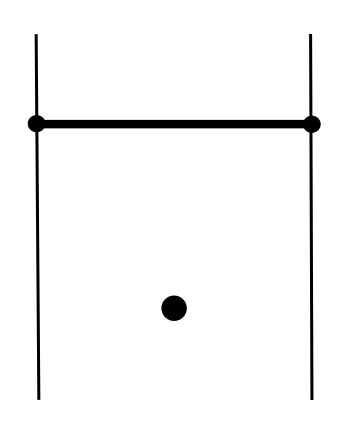}};
           \node at (4,0) {\includegraphics[height=3cm, angle=0]{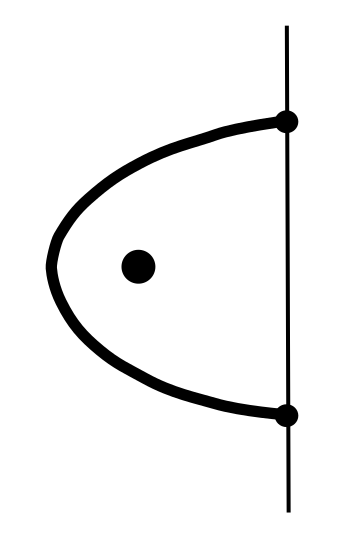}};
           \node at (8,0) {\includegraphics[height=3cm, angle=0]{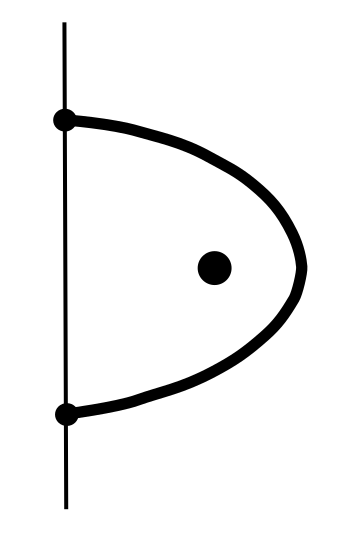}};
           \node at (0,-1.75) {Case $1$};
           \node at (4,-1.75) {Case $2$};
           \node at (8,-1.75) {Case $3$};
       \end{tikzpicture}
       \caption{The three main cases for the proof of Theorem \ref{bigthm}}
   \label{3cases}
   \end{figure}

   The first picture represents the case where we are sliding one of the points from the active side to the consecutive point on the inactive side (or vice versa) by a simple slide, thus changing the weight of the generators. Recall that we use $Z$ to denote the middle of the three punctures for $\tau_{u/v}$, represented by the unlabeled point in the image for the first case. The other two pictures correspond to the cases where we are sliding a point to a consecutive point on the same (active or inactive) side, thus preserving the weight. The additional unlabeled point can be $X^+,X^-,$ or $Y$, and it will be labeled $W$.

\subsection{Some More Loops in $\text{Conf}^2(M)$}

Recall that we defined the loop $\gamma_{i,j}:[0,1]\to\text{Conf}^2(M)$ (dropping the tilde) to be a loop at $\iota(\xi_i\xi_j)$, i.e. at $(x_1,x_2)$ where $c(x_1)=\xi_i$ and $c(x_2)=\xi_j$ if $i\leq j$ and vice versa if $i>j$. Alternatively, if $\xi_i$ and $\xi_j$ are both active or inactive intersections, we may define the loop $\widehat{\gamma_{i,j}}$ at $(x_2,x_1)$ under the above conditions, which is given by the same rule as $\gamma_{i,j}$. Two examples are shown in Figure \ref{T52hatgammafig}. These loops should be compared to the top two shown in Figure \ref{gammacases}.

\begin{figure}
\begin{tikzpicture}
    \node at (0,0) {\includegraphics[height=4cm]{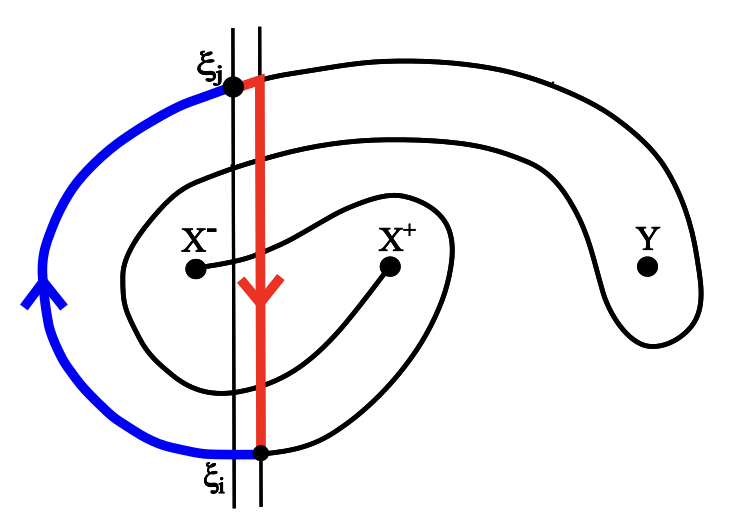}};
    \node at (7,0) {\includegraphics[height=4cm]{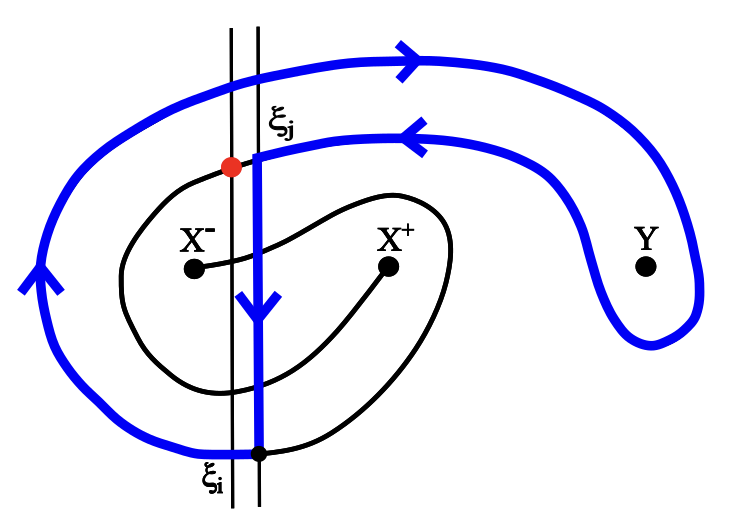}};
\end{tikzpicture}
\caption{Two example $\widehat{\gamma_{i,j}}$ loops for $\tau_{5/2}$}
\label{T52hatgammafig}
\end{figure}

We will define two more types of loops that will be helpful for proving our theorem. Define $s_{i,j}$ to be the loop at the same point as $\gamma_{i,j}$, shown in the image on the left in Figure \ref{sij}, and similarly, let $\widehat{s_{i,j}}$ be the loop at the same point as $\widehat{\gamma_{i,j}}$ shown in the image on the right in the same figure.

\begin{figure}
  \raisebox{0pt}{\includegraphics[height=3cm, angle=0]{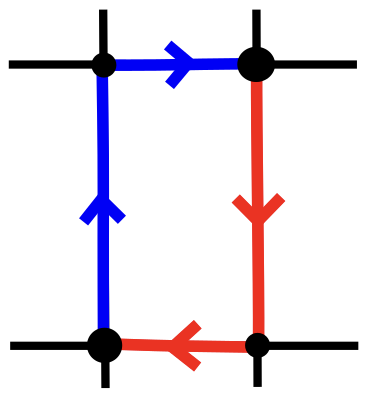}}  \qquad \qquad \raisebox{0pt}{\includegraphics[height=3cm, angle=0]{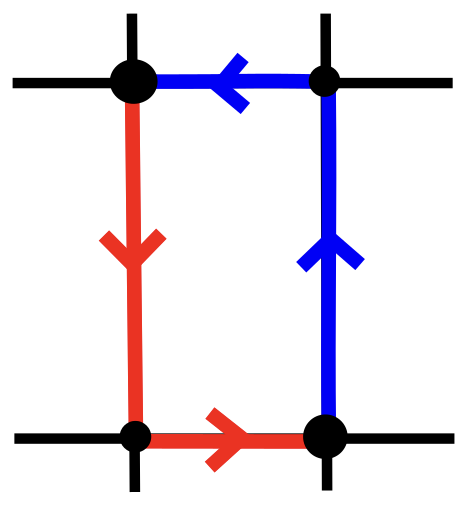}}
   \caption{The loops $s_{i,j}$ (left) and $\widehat{s_{i,j}}$ (right)}  
   \label{sij}
\end{figure}

\begin{lemmaN}
\label{phihat}
If $\xi_i$ and $\xi_j$ are both active or inactive intersection points, then
   \[
   \Phi([\gamma_{i,j}])= \Phi([\widehat{\gamma_{i,j}}])-1.
   \]
\end{lemmaN}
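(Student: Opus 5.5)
Here $\Phi$ applied to a loop in $\text{Conf}^2(M)$ means $\Phi^2$: push the loop into $\text{Conf}^2(\mathbb{C})$, read off a 2-strand braid, and take the signed crossing count. The plan is to compare $\gamma_{i,j}$ and $\widehat{\gamma_{i,j}}$ through a change-of-basepoint path that moves $(x_1,x_2)$ to $(x_2,x_1)$. When $\xi_i$ and $\xi_j$ lie on the same vertical, these two basepoints differ only in which of the two parallel copies (of $l_A$ or $l_I$) carries which intersection. So they are joined by exactly the kind of path used in a rectangle move in the proof of Lemma \ref{uniquegr}. Call this path $\rho$, from $\iota(\xi_i\xi_j)$ to the hatted basepoint. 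Its image in $\text{Conf}^2(\mathbb{C})$ is a half-twist: in the braid picture, a single crossing $\sigma_1^{\pm1}$.

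The key step is to show that, up to homotopy in $\text{Conf}^2(M)$ (or just homology in $\text{Conf}^2(\mathbb{C})$, which suffices for $\Phi$),
\[
\widehat{\gamma_{i,j}} \sim \rho^{*}\cdot\gamma_{i,j}\cdot\rho\cdot \varepsilon ,
\]
where $\varepsilon$ is a loop contributing a single crossing. Equivalently, one should compare the two loops directly. I would do this segment by segment along the concatenation $\eta_{i,j;j}^2\cdot\bar\eta_{j;i,j}^2$. The path along $\alpha$ is literally the same curve in both cases, except near the vertical, because the only difference is on which parallel copy the moving point starts and ends. The fixed point $x_j$ likewise sits on the other copy.

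The difference is therefore localized in a small neighbourhood of the two parallel verticals, as in Figure \ref{rectangle}. There, one loop's moving point passes in front of the stationary point and the other's passes behind it, or else one case splits into the swap configuration and the other does not. This local comparison is exactly what $s_{i,j}$ and $\widehat{s_{i,j}}$ in Figure \ref{sij} encode. I would express each of $\gamma_{i,j}$ and $\widehat{\gamma_{i,j}}$ as (homologically) a common loop lying away from the verticals, plus $s_{i,j}$ or $\widehat{s_{i,j}}$ respectively. The lemma then reduces to checking $\Phi([\widehat{s_{i,j}}])-\Phi([s_{i,j}])=1$ by drawing the two local 2-strand braids from the fixed viewing perspective of Section \ref{compmethods}.

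The main obstacle is bookkeeping the cases: whether $\eta_{i,j}^1$ passes through $x_j$ (the swap case versus the disjoint case), and whether $\xi_i\prec\xi_j$ or the reverse. One must verify that the local discrepancy is always exactly one positive crossing and never its inverse. First I would check the case where $\eta^1_{i,j}$ avoids $x_j$, using the top-left picture of Figure \ref{gammacases}. Here the difference should be that the returning vertical segment crosses over the parallel copy once. I would then argue that the swap case differs from it by the same local modification, since the global part along $\alpha$ contributes identically to both braids. Since $\Phi$ is a homomorphism to $\mathbb{Z}$ that factors through $H_1$, all remaining contributions cancel, and the identity $\Phi([\gamma_{i,j}])=\Phi([\widehat{\gamma_{i,j}}])-1$ follows.
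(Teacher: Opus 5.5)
The flaw is in your central reduction: the symmetric decomposition you propose cannot hold, and it leads to a check that comes out to $-2$ rather than $1$.

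\textbf{What is right.} Your first formulation is the correct target and matches the paper. Since $\Phi$ factors through $H_1$, the conjugating path $\rho$ drops out. So one needs $[\widehat{\gamma_{i,j}}]=[\gamma_{i,j}]+[\varepsilon]$ with $\Phi([\varepsilon])=1$, where $\varepsilon$ comes from a local modification near the two parallel verticals.

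\textbf{The step that fails.} You propose $\gamma_{i,j}\sim C+s_{i,j}$ and $\widehat{\gamma_{i,j}}\sim C+\widehat{s_{i,j}}$ for a common $C$, which reduces the lemma to $\Phi([\widehat{s_{i,j}}])-\Phi([s_{i,j}])=1$. Both local loops are half-twists; the paper has $\Phi([s_{i,j}])=1$ and $\Phi([\widehat{s_{i,j}}])=-1$. Their difference is therefore even, namely $-2$.

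The underlying reason is a parity asymmetry your plan does not account for. Near the level of $\xi_j$, the arc $\alpha$ crosses the two parallel copies in a definite order. So the moving point runs into the occupied spot for exactly one of the two basepoints. Consequently exactly one of $\gamma_{i,j}$, $\widehat{\gamma_{i,j}}$ is of swap type (odd $\Phi$) and the other is pure (even $\Phi$). No symmetric ``common part plus mirror-image local loop'' decomposition can produce that.

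\textbf{The correct relation.} It is asymmetric, and the swap/disjoint dichotomy decides at which basepoint the correction sits:
\begin{itemize}
\item if $\gamma_{i,j}$ is the swap-type loop, then $s_{i,j}\cdot\gamma_{i,j}\sim\widehat{\gamma_{i,j}}$;
\item if $\widehat{\gamma_{i,j}}$ is the swap-type loop, then $\widehat{s_{i,j}}\cdot\widehat{\gamma_{i,j}}\sim\gamma_{i,j}$.
\end{itemize}
Either way $\Phi([\gamma_{i,j}])=\Phi([\widehat{\gamma_{i,j}}])-1$. So your $\varepsilon$ is $s_{i,j}$ in the first case and $-\widehat{s_{i,j}}$ (in homology) in the second. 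It is not one fixed local modification applied identically in the swap and disjoint cases, as your last paragraph suggests.

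Establishing these two homologies with the correct signs is the entire content of the lemma, and your proposal defers exactly this verification. The paper does it by checking eight local pictures: the local directions of $\alpha$ at the levels of $\xi_i$ and $\xi_j$, together with $i<j$ versus $j<i$.
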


\begin{proof}
We will prove the lemma by showing that $s_{i,j}\cdot\gamma_{i,j} \sim \widehat{\gamma_{i,j}}$ or $\widehat{s_{i,j}}\cdot\widehat{\gamma_{i,j}} \sim \gamma_{i,j}$, depending on the behavior of $\gamma_{i,j}$, where $\sim$ denotes being homologous in $\text{Conf}^2(M)$. It suffices to show this because $\Phi$ factors through homology, so the lemma will follow since $\Phi([s_{i,j}])=1$ and $\Phi([\widehat{s_{i,j}}])=-1$.

The loops $\widehat{\gamma_{i,j}}$ and $\gamma_{i,j}$ only differ in a neighborhood of the two verticals and the portions of $\alpha$ that intersect them, and $s_{i,j}$ and $\widehat{s_{i,j}}$ are fully contained in such a neighborhood, so we only need to look at what happens there. We will consider all possible orientations of $\gamma_{i,j}$ at the (locally horizontal) parts of $\alpha$ where it intersects the two verticals, and we also need to consider whether $i<j$ or $j<i$. Thus, there are eight possible cases; four of them have $s_{i,j}\cdot\gamma_{i,j} \sim \widehat{\gamma_{i,j}}$ and the other four have $\widehat{s_{i,j}}\cdot\widehat{\gamma_{i,j}} \sim \gamma_{i,j}$.

First, we consider the four cases where $s_{i,j}\cdot\gamma_{i,j} \sim \widehat{\gamma_{i,j}}$. These are the cases where $\gamma_{i,j}$ splits into two non-trivial paths. The path components of $\gamma_{i,j}$ are colored in yellow and green, and the components of $s_{i,j}$ are colored the same way as in Figure \ref{sij}. In each case, it can easily be seen that $s_{i,j}\cdot\gamma_{i,j} \sim \widehat{\gamma_{i,j}}$. 

\[
\vcenter{\hbox{\includegraphics[height=3cm,angle=0]{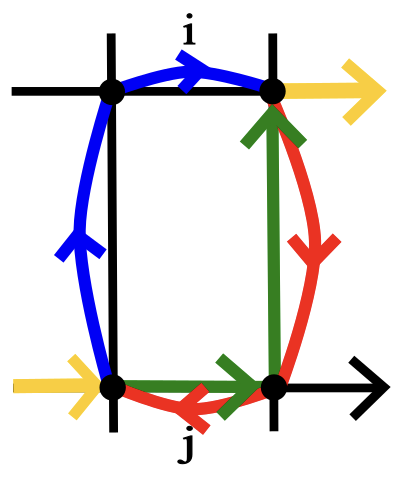}}} \qquad \qquad 
\vcenter{\hbox{\includegraphics[height=3cm,angle=0]{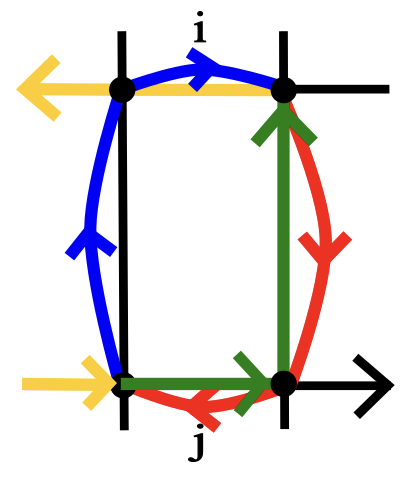}}} \qquad \qquad
\vcenter{\hbox{\includegraphics[height=3cm,angle=0]{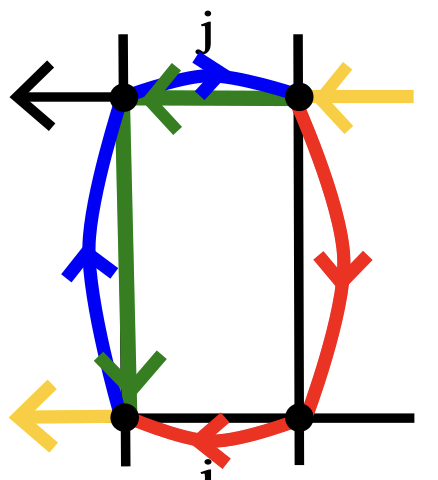}}} \qquad \qquad
\vcenter{\hbox{\includegraphics[height=3cm,angle=0]{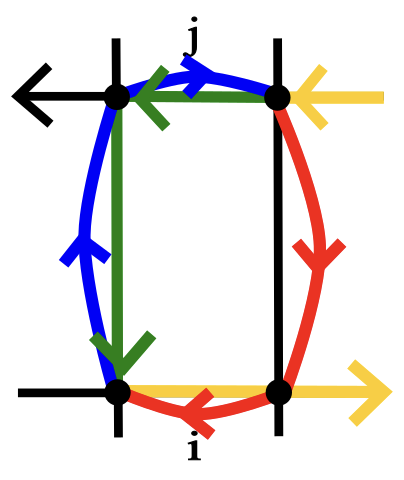}}}
\]

The remaining four cases satisfy $\widehat{s_{i,j}}\cdot\widehat{\gamma_{i,j}} \sim \gamma_{i,j}$, and they are shown below. We use yellow and green to denote the components of $\widehat{\gamma_{i,j}}$ and red and green to denote the components of $\widehat{s_{i,j}}$, similar to what we did before.

\[
\vcenter{\hbox{\includegraphics[height=3cm,angle=0]{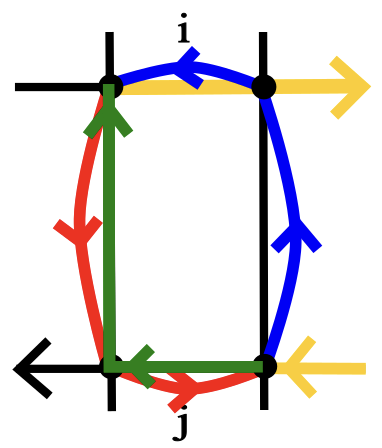}}} \qquad \qquad 
\vcenter{\hbox{\includegraphics[height=3cm,angle=0]{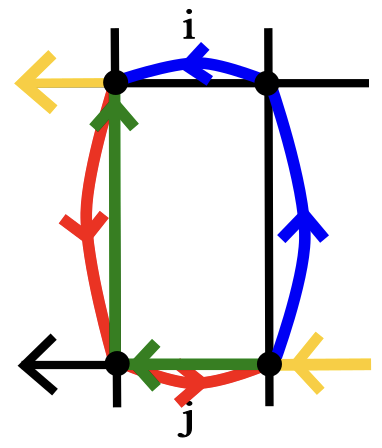}}} \qquad \qquad
\vcenter{\hbox{\includegraphics[height=3cm,angle=0]{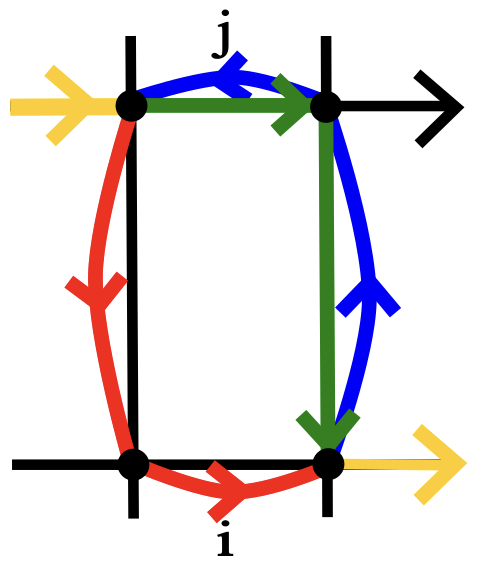}}} \qquad \qquad
\vcenter{\hbox{\includegraphics[height=3cm,angle=0]{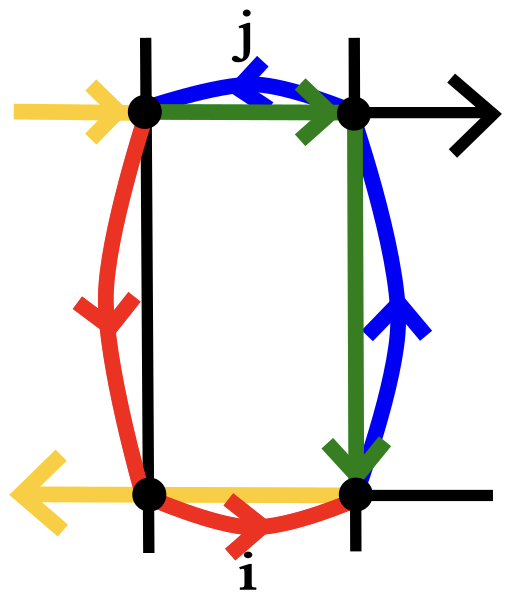}}}
\]

Since all cases satisfy $\Phi([\gamma_{i,j}])=\Phi([\widehat{\gamma_{i,j}}])-1$, the lemma follows.
\end{proof}

 By definition of the homomorphism $\Phi$, it factors through $H_1(\text{Conf}^2(\mathbb{C}))$, so $\Phi$ must be equal on loops that are homologous in $\text{Conf}^2(\mathbb{C})$. This is, in fact, tautological because the image group of $\Phi$ is isomorphic to $H_1(\text{Conf}^2(\mathbb{C}))$. This observation will be useful for computing values such as $\Phi([\gamma_{i,k}])-\Phi([\gamma_{j,k}])$ and $\Phi([\gamma_{k,i}])-\Phi([\gamma_{k,j}])$ when proving the three cases.

 Now, we proceed to prove Theorem \ref{bigthm} by considering the three cases shown in Figure \ref{3cases}, one at a time.

 \subsection{Proof of Case 1}

Now, we consider the case shown on the left in Figure \ref{3cases}, where we want to compare the grading differences between to generators that are related by a simple slide. In particular, we want to show that the grading differences predicted by Theorem \ref{bigthm} agree with the grading differences from \cite{W16}.

\begin{lemmaN}
\label{case1bigpf}
    Let $\xi_i$ and $\xi_j$ be consecutive with respect to the $\prec$ ordering with one on the active side and the other on the inactive side, as in the left-most image in Figure \ref{3cases}. Then, if $\vec{d}-\vec{d}'=e_i-e_j$, then the formulas in Theorem \ref{bigthm} give the correct grading differences between $\overline{d}$ and $\overline{d'}$.
\end{lemmaN}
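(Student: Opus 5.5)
We compare the two sides of Equations (\ref{qdiffeq})--(\ref{tdiffeq}) directly. On one side are the differences predicted by the formulas of Theorem \ref{bigthm}. On the other are the differences computed by Wedrich's rules from Section \ref{compmethods}. Without loss of generality, $\xi_i$ is active and $\xi_j$ is inactive, so $\overline{d}$ has weight $k$ and $\overline{d'}$ has weight $k-1$; the other orientation follows by negating everything.

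\emph{The Wedrich side.} The generators $\overline{d}$ and $\overline{d'}$ are the lowest-$q$ representatives of their blocks, so they need not be related by a single simple slide. The first step is therefore to factor the comparison. Start from $\overline{d}$ and apply rectangle moves on the active side to bring the point over $\xi_i$ to the top weighted vertical $w_k$. Then perform a simple slide across $Z$ onto the first unweighted vertical. Finally, apply rectangle moves on the inactive side to reach $\overline{d'}$. By Lemma \ref{genblocks}, the rectangle moves change only the $q$-grading, by $2$ per move.

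\emph{Counting the moves.} Since $\xi_i$ and $\xi_j$ are consecutive along $\alpha$ and are joined by an arc passing around $Z$, the pictures of Case 1 show that $\xi_i$ and $\xi_j$ are extreme on their axes (the top ones, up to the conventions of Lemma \ref{bridgerecipe}). Hence the number of moves is $\sum_{k\neq i,\text{ active}} d_k$ above or below $\xi_i$ plus the analogous inactive count, each expressible as partial sums of the $d_k'$. The simple slide itself contributes $t/q$ or $tq^{2j-1}/a$, depending on whether $Z\neq X^+$ or $Z=X^+$, with $j=\sum d_k$ here.

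\emph{The formula side.} First, $A_i-A_j$ and $T_i-T_j=-(S_i-S_j)$ follow from the definitions. Because $\gamma_i-\gamma_j=\gamma_{i,j}$ in $H_1(M)$, and $\gamma_{i,j}$ is essentially the simple-slide loop around $Z$, we get $\Psi_{\{X^\pm,Y\}}([\gamma_{i,j}])=\pm1$ and $\Psi_{X^+}([\gamma_{i,j}])=\pm\tfrac12\delta_{Z,X^+}$ up to the sign conventions. Adding the $\delta$ corrections then gives $t$-difference $1$ and $a$-difference $-\delta_{Z,X^+}$, matching the slide.

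Next, the $q$-difference. Write $2\sum_k d_k'(Q_{ik}-Q_{jk})$ using $Q_{ik}=Q_{ii}+(\Phi-2\Psi_{X^+})([\gamma_{k,i}])+\dots$, or equivalently the $\text{Conf}^2$ formula from Proposition \ref{Qsymprop}. The $\Psi_{X^+}$ parts telescope to $-2\delta_{Z,X^+}\sum_k d_k'$, which reproduces the $q^{2j-1}$-type factor; the constant $Q_{ii}+Q_{jj}-2Q_{ij}$ and $S_i-S_j$ absorb the remaining $-1$ or $2j-1$ offset. The $\Phi$ parts $\Phi([\gamma_{k,i}])-\Phi([\gamma_{k,j}])$ must equal $1$ exactly when $\xi_k$ lies on the side that forces a rectangle move, and $0$ otherwise. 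This is where Lemma \ref{phihat} and the observation that $\Phi$ factors through $H_1(\text{Conf}^2(\mathbb{C}))$ are used. The loops $\gamma_{k,i}$ and $\gamma_{k,j}$ differ only near the slide, and locally that difference is either trivial or a single half-twist ($s_{i,k}$-type), depending on the position of $\xi_k$ relative to $\xi_i$ or $\xi_j$ on the common axis.

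The main obstacle is this last bookkeeping: showing that the $\Phi$-differences exactly match the rectangle-move counts, and that the $\delta$ terms in $S$, $A$, and $Q$ (including the $\delta_{Z,X^+}$ cross terms) produce the correct constant, uniformly across the cases where $\xi_\omega$ is active or inactive. I would first verify this on $\tau_{5/2}$ and $\tau_{3/4}$ to fix the signs, then split into subcases according to whether $\xi_k$ is active or inactive, and apply Lemma \ref{phihat} when $\xi_k$ shares the axis of $\xi_i$ or $\xi_j$.
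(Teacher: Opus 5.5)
Your scaffolding matches the paper's. You factor the comparison on Wedrich's side into active-side rectangle moves, one simple slide across $Z$, and inactive-side rectangle moves, then match against (\ref{qdiffeq})--(\ref{tdiffeq}) by computing $Q_{ik}-Q_{jk}$ with Lemma \ref{phihat}. The formula-side computation, however, rests on a false premise. In Case 1 the loop $\gamma_{i,j}$ does \emph{not} wind around $Z$: the simple slide in its return path moves the point along $\alpha$ itself. The paper's proof records the consequence: $\Psi_W([\gamma_i])=\Psi_W([\gamma_j])$ for every $W\subseteq\{X^+,X^-,Y\}$, and $\Phi([\gamma_{i,j}])=0$. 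A value $\pm\tfrac12$ for $\Psi_{X^+}$ on a loop in $M$ is impossible anyway, since it is an integer winding number.

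Every shift comes instead from the $\delta$-corrections. The term $\delta_{i,A}\delta_{\omega,I}-\delta_{i,I}\delta_{\omega,A}$ gives $S_i-S_j=1=-(T_i-T_j)$ and $A_i-A_j=\delta_{Z,X^+}$, whichever side $\xi_\omega$ lies on. For $Q$, one gets $Q_{ij}=Q_{ii}=Q_{jj}$ if $Z\neq X^+$, and $Q_{jj}=Q_{ii}+2$, $Q_{ij}=Q_{jj}-1$ if $Z=X^+$. If you take your winding values at face value and then add the $\delta$ terms, you double count and get $S_i-S_j\in\{0,2\}$. Likewise, for $Z=X^+$ the $\Psi_{X^+}$ terms do not telescope to anything, because $\Psi_{X^+}([\gamma_{k,i}])=\Psi_{X^+}([\gamma_{k,j}])$ for all $k$. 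The extra $-2\sum_k d_k'$ comes from $Q_{ii}-Q_{jj}=-2$ combined with the cross term $\delta_{Z,X^+}(\delta_{i,A}\delta_{k,I}-\delta_{k,A}\delta_{i,I})$. That cross term adds $+1$ to $Q_{ik}-Q_{jk}$ for \emph{every} $k$, so the net effect is a uniform shift of $-1$.

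The step you defer as the ``main obstacle'' is the actual content of the lemma, and you describe its target wrongly. On Wedrich's side, $q(\overline d)-q(\overline{d'})=1-2\sum_{i<k\le u}d_k'+2\sum_{u+1\le k<j}d_k'$, with an extra $-2\sum_k d_k'$ when $Z=X^+$. Moves past active points above $\xi_i$ contribute $-2$ each, and moves past inactive points below $\xi_j$ contribute $+2$ each. So for $Z\neq X^+$ you need $\Phi([\gamma_{k,i}])-\Phi([\gamma_{k,j}])$ to equal $-1$ for $i<k\le u$, $+1$ for $u+1\le k<j$, and $0$ otherwise. It is not ``$1$ exactly when a move is forced.'' The paper obtains this pattern from three homology statements, each combined with Lemma \ref{phihat}:
\begin{itemize}
\item $\gamma_{k,i}\sim\gamma_{k,j}$ for $k\le i$ or $k\ge j$;
\item $\gamma_{k,j}\sim\widehat{\gamma_{k,i}}$ for $i<k\le u$;
\item $\gamma_{k,i}\sim\widehat{\gamma_{k,j}}$ for $u+1\le k<j$.
\end{itemize}
Your proposal supplies none of these. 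Finally, your claim that $\xi_i$ and $\xi_j$ are the extreme points of their axes is unnecessary and false in general. For $\tau_{3/2}$, one Case 1 pair has its active point in the middle of $\alpha\cap l_A$. The move counts come purely from the non-decreasing ordering that defines $\overline d$ and $\overline{d'}$.
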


\begin{proof}
Suppose $\overline{d}=(x_1,...,x_k,x_{k+1},...,x_{u+v})$ is the generator of weight $k$ and we want to compare its gradings with $\overline{d'}=(x_1',...,x_{k-1}',x_k',...,x_{u+v}')$, the generator of weight $k-1$ such that $\vec{d}-\vec{d}'= e_i-e_j$ with $i\leq u$, $j>u$, and $\xi_i$ and $\xi_j$ are consecutive with respect to the $\prec$ ordering. Thus, $x_l'=x_l$ if $l\leq\sum_{k\leq i} d_k'$ or $l>\sum_{k\leq {j-1}} d_k'+1$, $x_l'=x_{l+1}$ if $\sum_{k\leq i}d_k' < l < \sum_{k\leq {j-1}} d_k'+1$, and, if $l=\sum_{k\leq {j-1}} d_k'+1$, then $x_l$ is such that $c(x_l)=\xi_j$. 

       We can think of this geometrically as sliding the point $x_l$ of $\overline{d}$ for $l=\sum_{k\leq i} d_k$ from the active side to the consecutive point on the inactive side to get $\overline{d'}$. First, we need to think about what the grading differences should be according to \cite{W16}. The rule for sliding a point from the active to the inactive side requires that the point be on the $k$th vertical, for $k$ the weight, so first need to slide $x_l$ (as given at the beginning of the paragraph) to the $k$th vertical, which we can do one step at a time by sequentially sliding the $x_m$ for $l<m\leq k$ to the left as we slide our point $x_l$ to the right; each time this contributes a $-2$ to the $q$-grading difference and 0 to the $a$- and $t$-grading differences (see Figure \ref{rectangle} and the argument using it in the proof of Lemma \ref{uniquegr}). Then, we can slide this point from the active side to the inactive side with a simple slide, which provides grading shifts as given in Section \ref{compmethods}. Then we must slide this point to the appropriate position on the inactive side to get $\overline{d'}$, which contributes $+2$ to the $q$-grading difference at each step and nothing to the other grading differences. Recall that we use $Z$ to denote the point between the active and inactive sides. 

       Figure \ref{bigthms1fig} can help with visualizing what was described in the previous paragraph. The labels on the sides denote the subscripts of the active and inactive intersections, with the portion of $\alpha$ included that relates $\xi_i$ and $\xi_j$ by a simple slide. The red and blue points represent the types of points that contribute a $-2$ or a $+2$, respectively, to the $q$-grading difference in the procedure just described.
       
       \begin{figure}
           \centering
           \includegraphics[height=5cm]{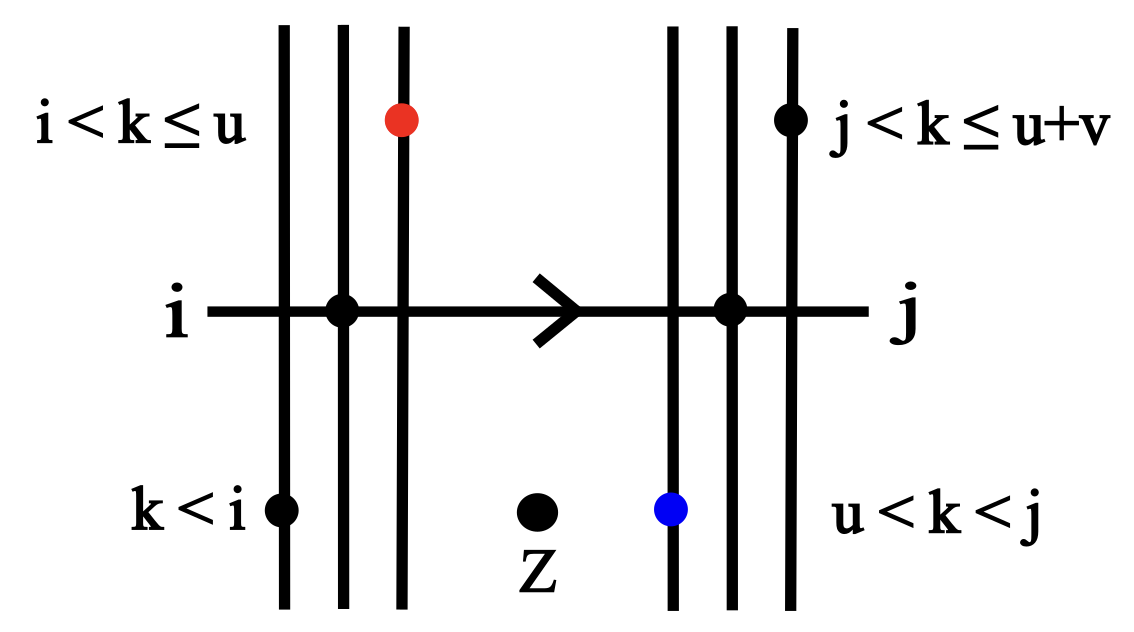}
           \caption{Geometric set-up for main case considered in proof of Lemma \ref{case1bigpf}}
           \label{bigthms1fig}
       \end{figure}
       
       Algebraically, the procedure described above demonstrates that \cite{W16} gives
       \[
       \begin{cases}
           q(\overline{d})-q(\overline{d'})=1-2\sum_{i<k\leq u}d_k'+2\sum_{u+1\leq k<j}d_k'\\
           a(\overline{d})-a(\overline{d'})=0 \qquad \qquad\qquad\qquad\qquad\qquad\qquad\qquad\qquad \text{if} \,Z\in\{X^-,Y\} \\
           t(\overline{d})-t(\overline{d'})=-1
       \end{cases}
       \]
       and 
       \[
       \begin{cases}
           q(\overline{d})-q(\overline{d'})=1-2j-2\sum_{i<k\leq u}d_k'+2\sum_{u+1\leq k<j}d_k'\\
           a(\overline{d})-a(\overline{d'})=1 \qquad \qquad\qquad\qquad\qquad\qquad\qquad\qquad\qquad \text{if} \,Z=X^+. \\
           t(\overline{d})-t(\overline{d'})=-1
       \end{cases}
       \]

Now, we need to check that our formulas from Theorem \ref{bigthm} give these grading differences as well. We will first check the case where $Z \in \{X^-,Y\}$.

According to Theorem \ref{bigthm}, we clearly get $S_i-S_j=1$, $A_i-A_j=0$, and $T_i-T_j=-1$, since $\Psi_W([\gamma_i])=\Psi_W([\gamma_j])$ for any $W\subset \{X^+,X^-,Y\}$. This also gives $Q_{ii}=Q_{jj}$. Additionally, $\Phi([\gamma_{i,j}])=0$, so $Q_{ij}=Q_{ji}=Q_{jj}=Q_{ii}$. Thus, we already have the correct $a$- and $t$-grading differences according to Equations (\ref{adiffeq}) and (\ref{tdiffeq}), and Equation (\ref{qdiffeq}) gives
\[
q(\overline{d})-q(\overline{d'})=1+2\sum_{k=1}^{u+v} d_k' (Q_{ik}-Q_{jk}).
\]
Thus, it suffices to show
\[
Q_{ik}-Q_{jk} = \begin{cases}
    0 \quad &\text{if}\, k\leq i \,\text{or}\, k\geq j \\
-1 \quad &\text{if}\, i <k \leq u \\
1 \quad &\text{if}\, u+1 \leq k <j
\end{cases}
\]
Now, we show $Q_{ik}-Q_{jk}=0$ if $k\leq i$. The argument is similar if $k\geq j$. The formula for $Q$ in Theorem \ref{bigthm} gives
\[
Q_{ik}-Q_{jk}=Q_{ii}-Q_{jj}+(\Phi-2\Psi_{X^+})([\gamma_{k,i}])-(\Phi-2\Psi_{X^+})([\gamma_{k,j}]).
\]
It is not difficult to see that $\gamma_{k,i}\sim\gamma_{k,j}$ for how they are defined as loops in both $M$ and $\text{Conf}^2(M)$, which means $\Psi_{X^+}([\gamma_{k,i}])=\Psi_{X^+}([\gamma_{k,j}])$ and $\Phi([\gamma_{k,i}])=\Phi([\gamma_{k,j}])$. Then, since $Q_{ii}=Q_{jj}$, we get $Q_{ik}-Q_{jk}=0$.

 Now, if $i < k \leq u$, we want to show $Q_{ik}-Q_{jk}=-1$. As before, it is true that in this case and in the one we will next consider, the contributions of $\Psi_{X^+}$ are the same for both loops, so we will drop these terms since they cancel. This means  
 \[
 Q_{ik}-Q_{jk}=\Phi([\gamma_{k,i}])-\Phi([\gamma_{k,j}]).
 \]
 For this case, one can check that $\gamma_{k,j}\sim \widehat{\gamma_{k,i}}$, so $\Phi([\gamma_{k,j}])=\Phi([\gamma_{k,i}])+1$, which gives $\Phi([\gamma_{k,i}])-\Phi([\gamma_{k,j}])=-1$. Thus, we get $Q_{ik}-Q_{jk}=-1$. If $\xi_i \prec \xi_j \prec \xi_k$, then the same holds true. 

 If $u+1 \leq k <j$, then we can do something similar. Here, one gets $\gamma_{k,i}\sim \widehat{\gamma_{k,j}}$, so $\Phi([\gamma_{k,i}])-\Phi([\gamma_{k,j}])=1$, so we indeed get $Q_{ik}-Q_{jk}=1$.

Next, we need to address what happens when $Z=X^+$. Much of the previous work still applies here, as everything about the differences of $\Phi$ and $\Psi_{X^+}$ values still hold true, but there are also some slight changes. In particular, $Q_{ii}$ and $Q_{jj}$ are no longer equal, and we will need to utilize the $\delta_{Z,X^+}(\delta_{i,A}\delta_{j,I}-\delta_{j,A}\delta_{i,I})$ term for off-diagonal entries of $Q$. 

One can easily check that we have $S_i-S_j=1$, $A_i-A_j=1$, and $T_i-T_j=-1$ in this case, which already accounts for the $a$- and $t$-gradings. It is also easy to see that $Q_{jj}=Q_{ii}+2$ and $Q_{ij}=Q_{jj}-1$ due to the $\delta$ terms. Thus, Equation (\ref{qdiffeq}) once again gives
\[
q(\overline{d})-q(\overline{d'})=1+2\sum_{k=1}^{u+v} d_k' (Q_{ik}-Q_{jk}),
\]
so, by our previous calculation using the methods of \cite{W16}, it suffices to show
\[
Q_{ik}-Q_{jk} = \begin{cases}
    -1 \quad &\text{if}\, k\leq i \,\text{or}\, k\geq j \\
-2 \quad &\text{if}\, i <k \leq u \\
0 \quad &\text{if}\, u+1 \leq k <j
\end{cases}
\]

Theorem \ref{bigthm} gives
\[
Q_{ik}-Q_{jk}=-2+(\Phi-2\Psi_{X^+})([\gamma_{k,i}])-(\Phi-2\Psi_{X^+})([\gamma_{k,j}]) +\delta,
\]
where $\delta$ indicates the difference of the $\delta_{Z,X^+}(\delta_{i,A}\delta_{j,I}-\delta_{j,A}\delta_{i,I})$ terms. Now, we need to show that this gives the right value for each $k$.

If $k\leq i$, then we already know $\Phi([\gamma_{k,i}])-\Phi([\gamma_{k,j}])=0$ and $\Psi_{X^+}([\gamma_{k,i}])-\Psi_{X^+}([\gamma_{k,j}])=0$. Additionally, $\delta=1$ since $\xi_i$ and $\xi_k$ are active intersections and $\xi_j$ is inactive, so
$Q_{ik}-Q_{jk}=-1$. Similarly, we can calculate $Q_{ik}-Q_{jk}$ for all other possible $k$'s, by using the same differences between $\Phi$ values that we calculated for when $Z\in\{X^-,Y\}$, and by noting that $\delta=1$ for all $k$.

Finally, observe that if we were instead to consider the case where $\vec{d}-\vec{d'}=e_i-e_j$ with $j\leq p$ and $i>p$ (i.e. sliding a point from the inactive to the active side), this just flips the signs of everything, which is what we should expect.
\end{proof}

Thus, we have completed the proof of the first case.

\subsection{Proof of Case 2}

Now, we move on to consider the second main case for the theorem, given by the middle picture in Figure \ref{3cases}.

\begin{lemmaN}
\label{case2bgpf}
    Let $\xi_i$ and $\xi_j$ be consecutive with respect to the $\prec$ ordering with both on the active or inactive side, as in the middle image in Figure \ref{3cases}. Then, if $\vec{d}-\vec{d}'=e_i-e_j$, then the formulas in Theorem \ref{bigthm} give the correct grading differences between $\overline{d}$ and $\overline{d'}$.
\end{lemmaN}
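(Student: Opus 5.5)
I would follow the template of the proof of Lemma \ref{case1bigpf}. First I compute the grading differences that \cite{W16} prescribes when one point of $\overline{d}$ at $\xi_i$ is moved to the consecutive point $\xi_j$ on the same side. In Case 2 the arc $\alpha$ joining $\xi_i$ and $\xi_j$ bends around a single puncture $W$, and the two points sit on the same vertical side (active or inactive). I would assume first that both lie on the active side with $i<j$ in the standard ordering; the inactive case and the case $j<i$ then follow by the same argument with signs flipped. The steps are as follows.

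\begin{enumerate}
\item Use rectangle moves (Figure \ref{rectangle}) to bring the moving point to a convenient vertical, namely the outermost copy adjacent to the bend. Each rectangle move contributes $\pm2$ to the $q$-grading and nothing to the $a$- or $t$-gradings.
\item Slide the point along $\alpha$ from $\xi_i$ to $\xi_j$ within that vertical. The resulting loop encloses a simple disk $D_s$ containing $W$. By the additive rules of Section \ref{compmethods}, this gives $(q,a,t)$ contributions $(2,0,-1)$ if $W\in\{X^-,Y\}$ and $(2-(4j-2),2,-1)$ if $W=X^+$, up to the sign set by orientation. The non-additive part then counts the common points $x_m$ that lie in the interior or on the boundary of $D_s$.
\item Rectangle-move the point back into nondecreasing order.
\end{enumerate}

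Collecting terms, I expect the \cite{W16} difference to have the shape $q(\overline d)-q(\overline{d'})=c_W+2\sum_k \epsilon_k d_k'$, with $a$- and $t$-differences $2\delta_{W,X^+}$ and $-1$ (up to sign). Here $c_W$ is a constant, $\epsilon_k\in\{0,\pm1,\pm2\}$, and $\epsilon_k$ depends on whether $\xi_k$ lies on the same side between $\xi_i$ and $\xi_j$ (boundary contributions from the strip of parallel verticals) or elsewhere.

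On the theorem side, $\gamma_i$ and $\gamma_j$ differ exactly by the small loop around $W$. The additivity of $\Psi$ therefore gives $S_i-S_j=\pm1$, $T_i-T_j=\mp1$ and $A_i-A_j=\pm2\delta_{W,X^+}$. The $\delta$ terms cancel because $\xi_i$ and $\xi_j$ are on the same side, so $a$ and $t$ already match. For $q$, I would use Equation (\ref{qdiffeq}) and reduce the check to the identity $Q_{ik}-Q_{jk}=\epsilon_k$ for every $k$, together with a check of the constant term $S_i-S_j+Q_{ii}+Q_{jj}-2Q_{ij}$. The diagonal difference $Q_{ii}-Q_{jj}$ comes directly from the $\Psi$ formula. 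The quantity $Q_{ij}-Q_{jj}$ equals $(\Phi-2\Psi_{X^+})([\gamma_{j,i}])$ (with Proposition \ref{Qsymprop} handling the symmetry). That loop is the basic loop around $W$ in $\mathrm{Conf}^2(M)$, so $\Phi$ of it is $\pm1$, read off the two-strand braid.

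For the off-diagonal comparison I would use $Q_{ik}-Q_{jk}=Q_{ii}-Q_{jj}+(\Phi-2\Psi_{X^+})([\gamma_{k,i}]-[\gamma_{k,j}])$. The homology difference $[\gamma_{k,i}]-[\gamma_{k,j}]$ in $\mathrm{Conf}^2(M)$ decomposes into a loop of one point around $W$ with the other point fixed at $\xi_k$, plus possibly a swap. The $\Psi_{X^+}$ part contributes $\delta_{W,X^+}$ and cancels against the diagonal difference. The $\Phi$ part is $0$ if $\xi_k$ lies outside the bend. If $\xi_k$ lies on the same vertical between the two strands, it is $\pm1$ via Lemma \ref{phihat} ($\gamma_{k,j}\sim\widehat{\gamma_{k,i}}$ or the reverse), exactly as in Case 1. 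The split is by the position of $k$: below $\xi_i$, between $\xi_i$ and $\xi_j$ on the same side, above $\xi_j$, or on the other side.

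I expect the main obstacle to be the bookkeeping when $W=X^+$. In that case the additive factor $q^{-(4j-2)}$ depends on the color $j=\sum d_k'+1$, and it must be absorbed into the $-2\Psi_{X^+}$ terms of $Q_{ik}$ summed against $d_k'$. I would handle it first by writing $1-2j=-1-2\sum_k d_k'$ and checking that each $\xi_k$ gets an extra $-2$ from $-2\Psi_{X^+}$ applied to the winding of $\gamma_{k,i}-\gamma_{k,j}$ around $X^+$. A secondary difficulty is getting the sign of the $\Phi$ contributions right in all orientation subcases. Following the proof of Lemma \ref{phihat}, I would enumerate those subcases pictorially.
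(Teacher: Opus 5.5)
Your strategy is the paper's. On Wedrich's side, a slide around $W$ plus rectangle moves gives $q(\overline d)-q(\overline{d'})=2+2\sum_{i\le k<j}d_k'$ for $W\in\{X^-,Y\}$ and $2-4c+2\sum_{i\le k<j}d_k'$ for $W=X^+$, where $c=\sum_k d_k'$ is the colour. On the theorem side, $\gamma_i-\gamma_j=\gamma_{i,j}$ settles $S,A,T$, and Equation (\ref{qdiffeq}) reduces the $q$-check to the constant term and the numbers $Q_{ik}-Q_{jk}$.

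The gap is that the two $\Phi$-computations that are the real content of this case are stated incorrectly, and with your values the check fails.

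First, $\Phi$ factors through $\mathrm{Conf}^2(\mathbb{C})$ and does not see $W$, so being ``the basic loop around $W$'' produces no crossing. For these short loops, $\Phi$ is nonzero exactly when the moving point passes through the other basepoint, so that the loop becomes a swap. In the present configuration $\Phi([\tilde\gamma_{i,j}])=0$, while $\Phi([\tilde\gamma_{j,i}])=-1$. Moreover, the theorem's formula involving $\gamma_{j,i}$ computes $Q_{ij}-Q_{ii}$, not $Q_{ij}-Q_{jj}$. The correct outcome is $Q_{ij}=Q_{jj}$ for $W\ne X^+$ and $Q_{ij}=Q_{jj}-2$ for $W=X^+$. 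Hence $S_i-S_j+Q_{ii}+Q_{jj}-2Q_{ij}=2$ in both subcases, as Wedrich requires. With $Q_{ij}-Q_{jj}=\pm1$, as you propose, you would get $2\mp2\in\{0,4\}$.

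Second, the $k$-dependence is the reverse of Case 1, so ``exactly as in Case 1'' is the wrong analogy. Because now $Q_{ii}-Q_{jj}=1$ (for $W\ne X^+$), you need $Q_{ik}-Q_{jk}=1$ for $i\le k<j$ and $0$ otherwise. So $\Phi([\gamma_{k,i}])-\Phi([\gamma_{k,j}])$ must satisfy the following:
\begin{itemize}
\item it vanishes for $i\le k<j$, where $\gamma_{k,i}\sim\gamma_{k,j}$ in $\mathrm{Conf}^2(\mathbb{C})$;
\item it equals $-1$ for every other $k$, including $k=j$ and all points on the other axis, where $\gamma_{k,i}\sim\widehat{\gamma_{k,j}}$ and Lemma \ref{phihat} applies.
\end{itemize}
Your assignment (zero outside the bend, $\pm1$ between) would give $Q_{ik}-Q_{jk}=1$ for every inactive $\xi_k$. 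That is a spurious $2d_k'$ in the $q$-difference, even though on Wedrich's side the slide never interacts with the other axis. Note also that $k=i$ belongs to the first group: the remaining $d_i'=d_i-1$ points at $\xi_i$ do contribute.

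Finally, for $W=X^+$ the additive contribution is $q^{-(4c-2)}$ with no extra $q^2$. Also $c=\sum_k d_k'$, not $\sum_k d_k'+1$, since $\vec d$ and $\vec{d'}$ have the same total. The term $-4c=-4\sum_k d_k'$ is absorbed by a uniform shift of $-2$ in every $Q_{ik}-Q_{jk}$. This shift is the change of $-4$ in $Q_{ii}-Q_{jj}$ (from $1$ to $-3$), plus $+2$ from $-2\bigl(\Psi_{X^+}([\gamma_{k,i}])-\Psi_{X^+}([\gamma_{k,j}])\bigr)=-2\cdot(-1)$. It neither cancels against the diagonal nor comes from the off-diagonal $\Psi_{X^+}$ term alone.
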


\begin{proof}
    We will look in depth at what happens when $\vec{d}-\vec{d'}=e_i-e_j$ with $1\leq i <j\leq u$. The situation where $j>i$ is the same, except with signs switched (as before), and if $u+1\leq i,j\leq u+v$, then the arguments are the exact same, but with the points on the active/inactive sides swapped.

    Once again, we may consider the sub-cases where the special point given by a dot in Figure \ref{3cases}, which we will call $W$, is $X^-$ or $Y$ at the same time, but then we must treat $W=X^+$ separately. As before, we check what the grading differences should be for $\overline{d}$ and $\overline{d'}$ according to \cite{W16}, and then we show that our formulas give this as well. Note that if $\overline{d}=(x_1,...,x_k,x_{k+1},...,x_{u+v})$, then $\overline{d'}$ satisfies $\overline{d'}=(x_1',...,x_k',x_{k+1}',...,x_{u+v}')$ with $x_l'$ related to the $x_l$ in the same way that we saw at the beginning of case 1. 

    Geometrically, we can think of going from $\overline{d}$ to $\overline{d'}$ by first sliding $x_l$, for $l=\sum_{k\leq j-1} d_k'+1$, along $\alpha$ to the point above on the same vertical Lagrangian, which gives a grading difference of $q^2/t$ (written multiplicatively) if $W\in \{X^-,Y\}$, or $a^2/tq^{4j-2}$ if $W=X^+$. After this, we need to slide the point further to the right to get it on the correct vertical for $\overline{d'}$, which contributes a $+2$ to the $q$-grading for each of the $\sum_{i\leq k < j}d_k'$ points we must sequentially slide to the left in order to do so. 

    As in the proof of Lemma \ref{case1bigpf}, one can use Figure \ref{bigthms2fig} to help visualize the procedure just described. The blue point represents the $\sum_{i<k<j}d_k'$ points between $\xi_i$ and $\xi_j$ that contribute a $+2$ to the grading difference.
    
    \begin{figure}
        \centering
        \includegraphics[height=6cm]{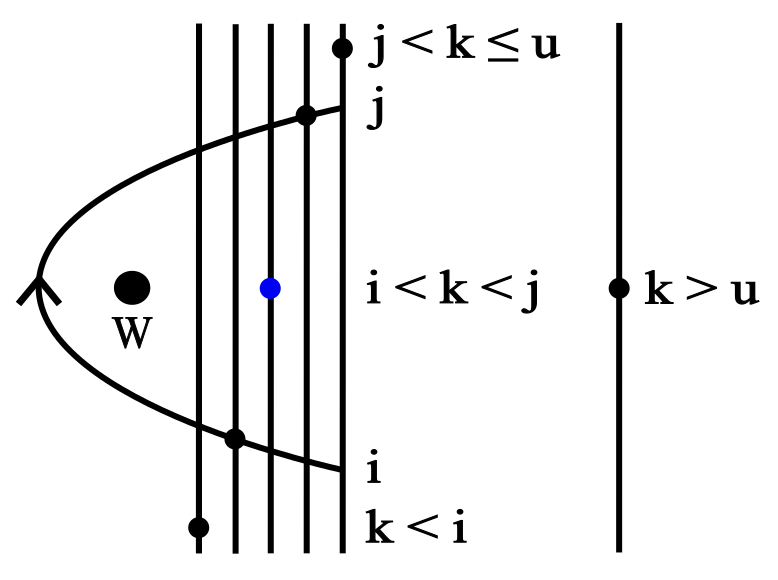}
        \caption{Geometric set-up for main case considered in proof of Lemma \ref{case2bgpf}}
        \label{bigthms2fig}
    \end{figure}
    
    This time, we get

    \[
       \begin{cases}
           q(\overline{d})-q(\overline{d'})=2+2\sum_{i\leq k<j}d_k'\\
           a(\overline{d})-a(\overline{d'})=0 \qquad \qquad\qquad\qquad\qquad\qquad\qquad\qquad\qquad \text{if}\, \,W\in\{X^-,Y\} \\
           t(\overline{d})-t(\overline{d'})=-1
       \end{cases}
       \]
       and 
       \[
       \begin{cases}
           q(\overline{d})-q(\overline{d'})=2-4j+2\sum_{i\leq k<j}d_k'\\
           a(\overline{d})-a(\overline{d'})=2 \qquad \qquad\qquad\qquad\qquad\qquad\qquad\qquad\qquad \text{if} \,\,W=X^+ \\
           t(\overline{d})-t(\overline{d'})=-1
       \end{cases}
       \]
       by following the rules from \cite{W16} for computing grading differences.
       
    Now, suppose $W\in\{X^-,Y\}$. Theorem \ref{bigthm} clearly gives $S_i-S_j=1$, $A_i-A_j=0$, and $T_i-T_j=-1$. This tells us that the $a$- and $t$-grading differences are already correct, so we need only consider the $q$-grading. We also get $Q_{ii}=Q_{jj}+1$, and then $Q_{ij}=Q_{jj}$ since $(\Phi-2\Psi_{X^+})([\gamma_{i,j}])=0$, which can easily be checked. Thus, we get
    \[
    q(\overline{d})-q(\overline{d'})= 2+2\sum_{k=1}^{u+v}d_k'(Q_{ik}-Q_{jk})
    \]
    from Equation (\ref{qdiffeq}), so what we need to show is 
    \[
Q_{ik}-Q_{jk} = \begin{cases}
    1 \quad &\text{if}\, i\leq k <j \\
0 \quad &\text{otherwise} \\
\end{cases}
\]

We have $Q_{ii}-Q_{jj}=1$ because $\Psi_W([\gamma_{i,j}])=1$, which means
\[
Q_{ik}-Q_{jk}=1+(\Phi-2\Psi_{X^+})([\gamma_{k,i}])-(\Phi-2\Psi_{X^+})([\gamma_{k,j}]).
\]
Clearly, $\Psi_{X^+}([\gamma_{k,i}])=\Psi_{X^+}([\gamma_{k,j}])$ since $W\neq X^+$, so the right-hand side reduces to $1+\Phi([\gamma_{k,i}])-\Phi([\gamma_{k,j}])$. Now, for comparing the contributions from $\Phi$, we first consider the case where $i \leq k <j$. In this case, one finds $\gamma_{k,i}\sim \gamma_{k,j}$ as 1-chains in $\text{Conf}^2(\mathbb{C})$, so $\Phi([\gamma_{k,i}])-\Phi([\gamma_{k,j}])=0$ because of how $\Phi$ factors through $\text{H}_1(\text{Conf}^2(\mathbb{C}))$. For any other $k$, it can be checked that $\gamma_{k,i} \sim \widehat{\gamma_{k,j}}$, so Lemma \ref{phihat} gives $\Phi([\gamma_{k,i}])-\Phi([\gamma_{k,j}])=-1$. Thus, we get all the correct $Q_{ik}-Q_{jk}$ values when $W \in \{X^-,Y\}$.

If $W=X^+$, all that we proved already about the contributions to $Q_{ik}-Q_{jk}$ coming from $\Phi$ still hold, but we need to check the differences coming from $\Psi_{X^+}$, which are no longer 0. First, note that Theorem \ref{bigthm} gives $S_i-S_j=1$, $A_i-A_j=2$, and $T_i-T_j=-1$. As usual, this gives us the $a$- and $t$-gradings already. However, the $\Psi_{X^+}$ contributions give $Q_{ii}=Q_{jj}-3$ and $Q_{ij}=Q_{jj}-2$, so we have
 \[
 S_i-S_j+Q_{ii}+Q_{jj}-2Q_{ij}=2.
 \]
 Thus, setting Equation (\ref{qdiffeq}) equal to the predicted value for $q(\overline{d})-q(\overline{d'})$ from \cite{W16}, we get 
\[
q(\overline{d})-q(\overline{d'})=2+2(-2j+\sum_{i\leq k<j}d_k')=2+2\sum_{k=1}^{u+v}d_k'(Q_{ik}-Q_{jk}),
\]
which means we must have 
 \[
Q_{ik}-Q_{jk} = \begin{cases}
    -1 \quad &\text{if}\, i\leq k <j \\
-2 \quad &\text{otherwise}. \\
\end{cases}
\]
if Theorem \ref{bigthm} is true. This shift by $-2$ is accounted for by the obvious fact that $\Psi_{X^+}([\gamma_{k,i}])-\Psi_{X^+}([\gamma_{k,j}])=-\Psi_{X^+}([\gamma_{i,j}])=-1$, which contributes a $+2$ to $Q_{ik}-Q_{jk}$, but $Q_{ii}-Q_{jj}$ is four less than in the case where $W\in\{X^-,Y\}$, so this still gives a shift in $Q_{ik}-Q_{jk}$ by $-2$.  
\end{proof}

\subsection{Proof of Case 3}

Finally, we address the final case, shown on the right in Figure \ref{3cases}, which follows trivially from Case 2.

\begin{lemmaN}
    Let $\xi_i$ and $\xi_j$ be consecutive with respect to the $\prec$ ordering with both on the active or inactive side, as in the right-most image in Figure \ref{3cases}. Then, if $\vec{d}-\vec{d}'=e_i-e_j$, then the formulas in Theorem \ref{bigthm} give the grading differences between $\overline{d}$ and $\overline{d'}$.
\end{lemmaN}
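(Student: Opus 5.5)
The plan is to reduce Case 3 to Case 2 by a symmetry of the local picture, rather than redo the computation. In Case 3 the arc $\alpha$ joins the consecutive points $\xi_i,\xi_j$ on the same vertical by bending around the special puncture $W$ from the other side. Equivalently, it is the mirror image of the Case 2 picture: reflect across the horizontal line through $W$, or reverse which of $\xi_i,\xi_j$ lies above the other in the standard ordering.

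I would first record what Wedrich's rules from \cite{W16} give. Sliding $x_l$ along $\alpha$ to the neighbouring intersection on the same vertical encloses the same puncture $W$, so the additive contribution is the same: $q^2/t$ if $W\in\{X^-,Y\}$, and $a^2/tq^{4j-2}$ if $W=X^+$. The only thing that changes is the rectangle-move bookkeeping. Because $\xi_j$ now sits below $\xi_i$ in the standard ordering, the $\sum d_k'$ points between them are passed on the other side, so their $\pm2$ contributions flip sign. The same thing happens in Case 2 when we interchange the roles of $i$ and $j$.

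Next I would check the formulas of Theorem \ref{bigthm}. The linear forms $S,A,T$ and the diagonal of $Q$ depend only on the winding numbers of $\gamma_i,\gamma_j$ about the punctures. These satisfy $\gamma_i\sim\gamma_{i,j}\cdot\gamma_j$ with $\Psi_W([\gamma_{i,j}])=\pm1$ exactly as in Case 2, so one reads off $S_i-S_j$, $A_i-A_j$, $T_i-T_j$, $Q_{ii}-Q_{jj}$ and $Q_{ij}$ verbatim from the proof of Lemma \ref{case2bgpf}. For the off-diagonal differences $Q_{ik}-Q_{jk}$, I would redo the comparison of $\Phi([\gamma_{k,i}])$ with $\Phi([\gamma_{k,j}])$. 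For $k$ strictly between $\xi_i$ and $\xi_j$ in the standard ordering we expect $\gamma_{k,i}\sim\gamma_{k,j}$ in $\mathrm{Conf}^2(\mathbb{C})$; otherwise one is homologous to the hatted version of the other. Lemma \ref{phihat} then gives a difference of $\pm1$. The $\Psi_{X^+}$ terms behave as in Case 2.

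The main obstacle is keeping track of which indices $k$ fall in the ``between'' range, and of the sign in Lemma \ref{phihat}, once the vertical order of $\xi_i,\xi_j$ is reversed relative to $\prec$. I would handle this by observing that the Case 3 picture is precisely the Case 2 picture with $(i,j)$ and $(\overline d,\overline{d'})$ swapped. Since Case 2 was proved for both $\vec d-\vec{d'}=\pm(e_i-e_j)$, with signs flipping, the required identities follow by relabeling. The lemma is then the immediate consequence.
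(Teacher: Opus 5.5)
The relabeling at the end of your proposal does not reach Case 3. Case 2 and Case 3 differ in which side of the vertical the cap of $\alpha$ around $W$ lies on. These are the two pictures of Figure \ref{Ydisk}, which is why the $x$'s in Lemma \ref{indpartition} sit at the bottom when $Y$ is left of $l$ and at the top when it is right of $l$.

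None of your proposed moves changes that side:
\begin{itemize}
\item Swapping $(i,j)$ together with $(\overline d,\overline{d'})$ leaves the hypothesis $\vec d-\vec{d'}=e_i-e_j$ literally unchanged.
\item Reflecting in a horizontal line, or reversing the vertical order of $\xi_i,\xi_j$, keeps the cap on the same side.
\end{itemize}
So all of these only reproduce configurations already treated inside the proof of Lemma \ref{case2bgpf}. Importing the ``signs switched'' formula from there gives wrong values.

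For example, take a right-hand cap joining adjacent points $\xi_1$ (lower) and $\xi_2$ (upper) on $l_A$, with $W\in\{X^-,Y\}$, $\vec d=2e_2$ and $\vec{d'}=e_1+e_2$. Slide the point on $w_2$ around the cap (the swept disk misses $w_1$), then do one rectangle move. This gives $q(\overline d)-q(\overline{d'})=4$ and $t(\overline d)-t(\overline{d'})=-1$. Theorem \ref{bigthm} agrees via (\ref{qdiffeq}), since here $S_2-S_1=Q_{22}-Q_{11}=1$ and $Q_{12}=Q_{11}$. The sign-switched Case 2 formula instead gives $-2$ and $+1$.

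Your second paragraph is also off. If you label so that $\xi_j$ is below $\xi_i$, the rectangle contributions do not flip sign. The correct count is $q(\overline d)-q(\overline{d'})=2+2\sum_{j<k\le i}d_k'$ for $W\in\{X^-,Y\}$. The additive term is unchanged, and the only change from Case 2 is which endpoint of the range is included.

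What is missing is the right symmetry, or else an honest recomputation. The symmetry relating the two cases is rotation by $\pi$ of the local picture, not a reflection. It has these properties:
\begin{itemize}
\item It preserves the orientation of the plane, so it preserves the $\Psi_W$ winding numbers and the $\Phi$ crossing signs. Any reflection reverses both.
\item It preserves the bottom-left-to-top-right normalization of $\overline d$ from Lemma \ref{uniquegr}.
\item It carries a left cap with $\xi_i$ below $\xi_j$ to a right cap with $\xi_i$ above $\xi_j$, with the labels moving along with the points.
\end{itemize}
Transporting the Case 2 argument this way, or simply redoing it, $S_i-S_j$, $Q_{ii}-Q_{jj}$ and $Q_{ij}=Q_{jj}$ come out as in Case 2. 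The index ranges, however, shift. One has $\Phi([\gamma_{k,i}])=\Phi([\gamma_{k,j}])$ exactly for $j<k\le i$, which includes the endpoint $k=i$ and not only $k$ strictly between. For the remaining $k$, Lemma \ref{phihat} gives $\Phi([\gamma_{k,i}])-\Phi([\gamma_{k,j}])=-1$. Hence $Q_{ik}-Q_{jk}$ is $1$ on that range and $0$ otherwise, matching the count above. The case $W=X^+$ then goes as in Case 2. This is what the paper's one-line proof (``essentially the same as Lemma \ref{case2bgpf}'') implicitly relies on.
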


\begin{proof}
    The proof for this case is essentially the same as the one for Lemma \ref{case2bgpf}.
\end{proof}

We have now proved Theorem \ref{bigthm} since we have shown that the formulas correctly predict grading differences. Next, we will see how to reduce the number of indices in the sum. 

\section{Reducing the Number of Indices}
\label{indreducesec}

\subsection{Partitions of $\mathcal{G}_{u/v}^1$}
The previous two sections were devoted to giving a geometric way to compute $\mathcal{P}(\tau_{u/v})$ in terms of a sum defined over $u+v$ indices. However, it can be advantageous to reduce this number of indices, which can be done under certain conditions at the expense of adding Pochhammer symbols. Using the language of Section \ref{quiversec}, reducing the number of indices in $\mathcal{P}(\tau_{u/v})$ corresponds to writing it in almost quiver form. In \cite{JHpII26}, the sequel to this paper, we will need to use a version of $P(\tau_{u/v})$ written in almost quiver form to prove the main theorem for rational knots, following the general proof structure of Sto\v si\'c and Wedrich in \cite{SW21}, but we will still need the geometric interpretation of the linear and quadratic forms. In this section, we will study how to reduce the number of indices in the generating function while operating within our geometric framework. We begin with the following lemma.

\begin{figure}
   
 \raisebox{0pt}{\includegraphics[height=3.5cm, angle=0]{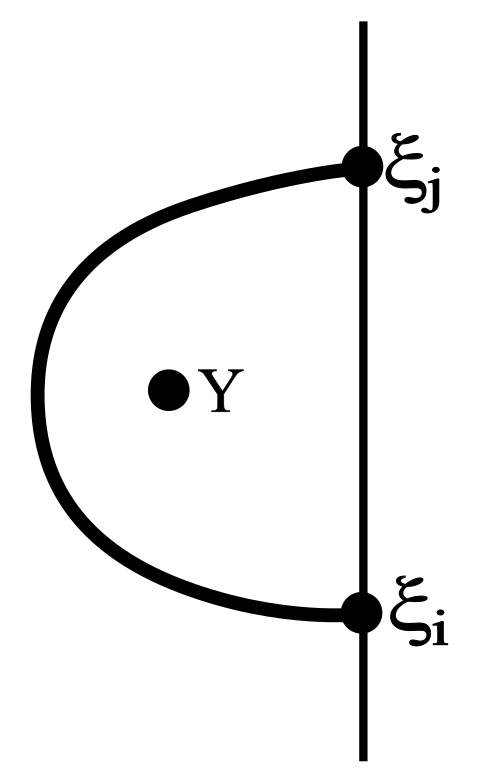}}\qquad \qquad \raisebox{0pt}{\includegraphics[height=3.5cm, angle=0]{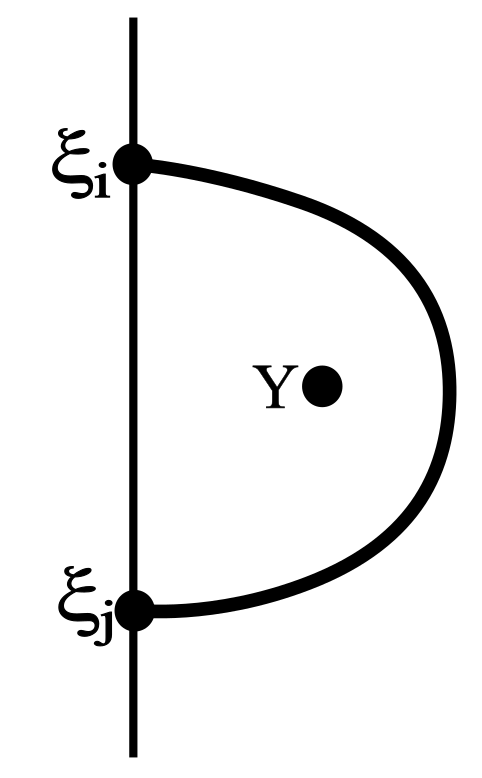}}
\caption{The two cases where consecutive $\xi_i,\xi_j\in \alpha\cap l$ (for $l\in\{l_
A,l_I\}$) wrap around $Y$.}
\label{Ydisk}
\end{figure}

\begin{lemmaN}
\label{singlepair}
    Suppose that $\xi_i,\xi_j\in \alpha \cap l$, where $l\in\{l_A,l_I\}$, such that $\xi_i$ and $\xi_j$ are related as in Figure \ref{Ydisk} (on the left or right). Then, $S_i=S_j+1$, $A_i=A_j$, $T_i=T_j-1$, and $Q_{ii}=Q_{jj}+1$.
\end{lemmaN}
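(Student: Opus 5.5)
The plan is to compute everything directly from the formulas in Theorem \ref{bigthm}, reducing each claim to a comparison of the loops $\gamma_i=\gamma_{i,\omega}$ and $\gamma_j=\gamma_{j,\omega}$ in $M$. Since $\xi_i,\xi_j$ lie on the same vertical $l$, the $\delta$-terms in $S,A,T,Q_{ii}$ depend only on whether the index is active or inactive. These agree for $i$ and $j$, so they cancel in every difference. It therefore suffices to show
\[
\Psi_{\{X^\pm,Y\}}([\gamma_i])-\Psi_{\{X^\pm,Y\}}([\gamma_j])=1,\qquad \Psi_{X^+}([\gamma_i])=\Psi_{X^+}([\gamma_j]),
\]
and that the $X^-$ contributions also agree. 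From these, $S_i-S_j=1$, $A_i-A_j=0$, and $T_i-T_j=-(S_i-S_j)=-1$ follow at once. For $Q_{ii}-Q_{jj}=(\Psi_{\{X^-,Y\}}-3\Psi_{X^+})([\gamma_i])-(\Psi_{\{X^-,Y\}}-3\Psi_{X^+})([\gamma_j])$ we then also get $1$.

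The key step is a homology computation in $H_1(M)$. First I would check that $\xi_i$ and $\xi_j$ are consecutive along $\alpha$ with respect to $\prec$, as in Figure \ref{Ydisk}: the arc between them bends around $Y$ and meets no other intersection point. This lets us compare $\gamma_i$ and $\gamma_j$ directly.

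Assume $\xi_i\prec\xi_j$ (the other ordering is handled by the mirrored picture). Then $\eta^1_{i,\omega}=\eta^1_{i,j}\cdot\eta^1_{j,\omega}$. Also $\bar\eta^1_{\omega,i}$ is homotopic to $\bar\eta^1_{\omega,j}$ followed by the segment of $l$ from $\xi_j$ to $\xi_i$; this holds both when $\xi_\omega\in l$ and after a simple slide, since both routes run along $l$. Hence
\[
[\gamma_i]=[\gamma_j]+[\eta^1_{i,j}\cdot(\text{segment of } l \text{ from }\xi_j\text{ to }\xi_i)]=[\gamma_j]+[\gamma_{i,j}],
\]
as in the arguments of Lemma \ref{case2bgpf}, where $[\gamma_i]-[\gamma_j]$ was identified with the loop around the middle puncture $W$. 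Here $\gamma_{i,j}$ is the simple loop bounding the half-disk of Figure \ref{Ydisk}, which encloses only $Y$. Therefore $\Psi_{X^+}$ and $\Psi_{X^-}$ vanish on it, and $\Psi_Y([\gamma_{i,j}])=\pm1$, since $\tfrac12\Psi^1_Y$ is the winding number around $Y$.

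The main obstacle is fixing the sign, that is, showing $\Psi_Y([\gamma_{i,j}])=+1$ rather than $-1$ in both pictures of Figure \ref{Ydisk}. I would settle this by comparing with Case 2 of the proof of Theorem \ref{bigthm}. There the consecutive configuration with a puncture $W\in\{X^-,Y\}$ between $\xi_i$ and $\xi_j$ was shown to give $\Psi_W([\gamma_{i,j}])=1$ and $S_i-S_j=1$, $Q_{ii}=Q_{jj}+1$, consistent with Wedrich's rule that sliding around such a puncture gives a shift of $q^2/t$. The two pictures of Figure \ref{Ydisk} are exactly the middle and right configurations of Figure \ref{3cases} with $W=Y$, so the orientation convention (looking from above, positive crossing) agrees. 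As a sanity check on the sign, I would use the braid drawn in Figure \ref{pathexample}, where a loop around $Y$ of this shape gives $+1$.
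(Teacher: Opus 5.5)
Your proposal is correct and follows the same route as the paper. The paper simply cites the computation in the proof of Lemma \ref{case2bgpf} (the case $W\in\{X^-,Y\}$, here with $W=Y$): the $\delta$-terms cancel because $\xi_i,\xi_j$ lie on the same vertical, and every difference reduces to $\Psi_Y([\gamma_{i,j}])=1$. Your explicit identity $[\gamma_i]=[\gamma_j]+[\gamma_{i,j}]$ in $H_1(M)$ is exactly what underlies the paper's step ``$Q_{ii}-Q_{jj}=1$ because $\Psi_W([\gamma_{i,j}])=1$,'' and the paper likewise fixes the sign by matching the pictures of Figure \ref{Ydisk} with Cases 2 and 3.
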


This lemma follows immediately from the work in the proof of Lemma \ref{case2bgpf} for the case involving the picture on the left in Figure \ref{Ydisk}, and it is similar for the picture on the right.




Now, we will extend this lemma to apply to collections of pairs of points that satisfy the properties of Lemma \ref{singlepair}. To avoid double subscripts, we will use $x$'s and $y$'s, with the $x$'s corresponding to points like $\xi_i$ and the $y$'s corresponding to points like $\xi_j$ as given above.

\begin{lemmaN}
\label{indpartition}
    Partition the set of intersection points of $\alpha$ with $l_A$ and $l_I$ as $\mathcal{G}_{u/v}^1=\mathfrak{X}\sqcup \mathfrak{Y}\sqcup \mathfrak{Z}$, where $\mathfrak{X}=\{x_1,...,x_k\}$ and $\mathfrak{Y}=\{y_1,...,y_k\}$ are such that for each $i\in\{1,...,k\}$, $x_i$ and $y_i$ satisfy the conditions of Lemma \ref{singlepair}, and $k$ is as large as possible. Then, there is an ordering of the index points in $\mathfrak{X}$ and $\mathfrak{Y}$ such that the data for $\mathcal{P}(\tau_{u/v})$ is given by
    \[X, 
    \left[\begin{array}{ c | c |c }
    S_1+1 & A_1 & T_1-1 \\
    \hline
    S_1 & A_1 & T_1 \\
    \hline
    S_2 & A_2 & T_2
  \end{array}\right],  \left[\begin{array}{ c | c |c }
    Q_{11}+1 & Q_{11}+L & Q_{12} \\
    \hline
    Q_{11}+U & Q_{11} & Q_{12} \\
    \hline
    Q_{21} & Q_{21} & Q_{22}
  \end{array}\right],
    \]
    where $U$ and $L$ denote the strictly upper and lower triangular matrices of $1$'s of the appropriate size.
\end{lemmaN}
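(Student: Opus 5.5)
We prove Lemma \ref{indpartition} by applying Lemma \ref{singlepair} to each pair and then analyzing how the loops $\gamma_{k,x_i}$ and $\gamma_{k,y_i}$ compare in $\text{Conf}^2(M)$. We order the indices so the blocks read $\mathfrak{X}$, $\mathfrak{Y}$, $\mathfrak{Z}$, with $x_i$ and $y_i$ in the same position within their blocks. The positions of pairs are ordered by $\prec$; equivalently, we order by the position of $y_i$ along $\alpha$, from $X^-$ to $X^+$. With this convention the block labeled $S_1,A_1,T_1$ refers to the $\mathfrak{Y}$ rows and the $Q_{11}$ in the middle refers to $Q|_{\mathfrak{Y}\times\mathfrak{Y}}$. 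Lemma \ref{singlepair} then immediately yields the linear-form rows $S_{x_i}=S_{y_i}+1$, $A_{x_i}=A_{y_i}$, $T_{x_i}=T_{y_i}-1$, as well as the diagonal $Q_{x_ix_i}=Q_{y_iy_i}+1$.

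For the remaining entries of $Q$, we use the formula
\[
Q_{ab}=Q_{aa}+(\Phi-2\Psi_{X^+})([\gamma_{b,a}])+\delta_{Z,X^+}(\delta_{a,A}\delta_{b,I}-\delta_{b,A}\delta_{a,I})
\]
from Theorem \ref{bigthm}, together with the symmetry established in Proposition \ref{Qsymprop}. The key geometric input is that $x_i$ and $y_i$ are consecutive on the same vertical and are joined by a short arc of $\alpha$ wrapping only around $Y$, as in Figure \ref{Ydisk}. This means we can always compare $\gamma_{k,x_i}$ with $\gamma_{k,y_i}$, exactly as in the proof of Lemma \ref{case2bgpf} with $W=Y$. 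That proof yields two facts. First, $\Psi_{X^+}$ and the $\delta$ terms agree, because $x_i$ and $y_i$ lie on the same side. Second,
\[
\Phi([\gamma_{k,x_i}])-\Phi([\gamma_{k,y_i}])\in\{0,-1\},
\]
with value $0$ exactly when $\xi_k$ lies between $x_i$ and $y_i$ in the relevant sense, and $-1$ otherwise by Lemma \ref{phihat}. Combined with $Q_{x_ix_i}=Q_{y_iy_i}+1$, this gives $Q_{x_i k}-Q_{y_i k}\in\{1,0\}$, matching the shape of the computation in Case 2.

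The block entries then follow from the above. For $k\in\mathfrak{Z}$ we must show that $Q_{x_ik}=Q_{y_ik}$; we intend to argue that no point of $\mathcal{G}^1_{u/v}$ lies strictly in the ``between'' region of a $Y$-wrapping pair. This is where maximality of $k$ enters: an intervening point would itself pair around $Y$, or would contradict the tightness of $\alpha$. The $\mathfrak{Z}$ columns therefore coincide for $x_i$ and $y_i$, which gives the $Q_{12}$ and $Q_{21}$ blocks.

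For $k=y_l$ or $k=x_l$ with $l\ne i$, we apply the same comparison twice, once in each index, and use symmetry. This yields
\[
Q_{x_iy_l}-Q_{y_iy_l}\in\{0,1\},\qquad Q_{x_ix_l}=Q_{y_iy_l}+\epsilon_{il},
\]
where the increment $\epsilon_{il}$ is determined by whether $y_l$ precedes or follows $y_i$. The chosen ordering is designed to make the $\mathfrak{X}\times\mathfrak{X}$ block equal to $Q_{11}$ off the diagonal, the $\mathfrak{X}\times\mathfrak{Y}$ block equal to $Q_{11}+L$, and the $\mathfrak{Y}\times\mathfrak{X}$ block equal to $Q_{11}+U$. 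To verify this we will check the nested configurations of pairs (for instance, several pairs stacked on the same vertical around $Y$) in figures analogous to those in Lemma \ref{1cycleslem}.

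The main obstacle is pinning down precisely which relative positions of $\xi_k$ produce the $0$ versus the $-1$ in $\Phi([\gamma_{k,x_i}])-\Phi([\gamma_{k,y_i}])$, and showing that for pairs this depends only on the induced order of the pairs. That is what forces exactly the strictly triangular patterns $U$ and $L$. I would first handle the case where all pairs lie on a single vertical around $Y$, then argue that pairs on different verticals, or pairs separated along $\alpha$ by a turn around another puncture, behave like non-interleaved pairs.
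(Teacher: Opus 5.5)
\textbf{Verdict: genuine gap, in the choice of ordering and in the stated target for the $\mathfrak{X}\times\mathfrak{X}$ block.} Your overall strategy is sound, and once corrected it is shorter than the paper's. The proof of Lemma \ref{case2bgpf} (with $W=Y$, and its mirror in Case 3) already derives a rule directly from the formulas of Theorem \ref{bigthm}: for each pair, $Q_{x_ik}-Q_{y_ik}=1$ if $\xi_k=x_i$ or $\xi_k$ lies strictly between $x_i$ and $y_i$ on their common vertical, and $0$ otherwise. The block structure then follows from this rule plus symmetry. The paper instead compares loops separately for each entry of its $4\times 4$ block (Figure \ref{connectivityfig}, Lemma \ref{phihat}).

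\textbf{First error: the ordering.} The ``relevant sense'' of betweenness is position along the vertical; this is the condition $i\le k<j$ in the standard ordering in Case 2. Hence $Q_{x_ly_i}=Q_{y_ly_i}+1$ exactly when the $Y$-cap of pair $i$ is nested inside that of pair $l$. All $Y$-caps lie on one vertical and are totally nested, since $\alpha$ is embedded. To get $Q_{11}+L$ in the $\mathfrak{X}\times\mathfrak{Y}$ block you must therefore order the pairs by nesting, innermost first; this is the paper's inside-out ordering. The order along $\alpha$ is not the same. For $\tau_{1/n}$ with $n\geq 4$, $\alpha$ is the mirror image of the spiral for $\tau_{n/1}$, but the inner endpoint is now $X^+$. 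Read from $X^-$, the arc visits the caps outside-in, so your $\prec$-ordering yields $Q_{11}+U$ in the $\mathfrak{X}\times\mathfrak{Y}$ block, contrary to the statement.

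\textbf{Second error: the $\mathfrak{X}\times\mathfrak{X}$ block.} Here ``$Q_{11}+1$'' means $1$ is added to every entry, not only the diagonal. This is forced by the cross terms $2b_{i+1}\bigl((b_1+c_1)+\dots+(b_i+c_i)\bigr)$ from Lemma \ref{algidentity} in the proof of Lemma \ref{collapsing}. It is also visible in the paper's $4\times 4$ block, whose $(x_i,x_j)$ entry is $C+1$. Your own rule gives this: for every $i\neq l$,
\[
Q_{x_ix_l}-Q_{y_iy_l}=(Q_{x_ix_l}-Q_{y_ix_l})+(Q_{x_ly_i}-Q_{y_ly_i})=1,
\]
because exactly one of the two caps lies inside the other. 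So $\epsilon_{il}\equiv 1$ and does not depend on the relative order of the pairs. The target you state (off-diagonal entries equal to those of $Q_{11}$) even contradicts your claim $Q_{x_ik}-Q_{y_ik}\in\{0,1\}$ combined with $Q_{11}+U$ in the $\mathfrak{Y}\times\mathfrak{X}$ block.

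With the inside-out ordering and the correct target, the ``main obstacle'' you flag disappears, and no case-by-case check of nested configurations is needed. A minor point: in the $\mathfrak{Z}$ step, it is points of $\mathfrak{Z}$, not of $\mathcal{G}^1_{u/v}$, that cannot lie between $x_i$ and $y_i$. Points of inner pairs do lie between, and they are exactly what produces $L$ and $U$.
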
 

It is worth noting, as suggested by Lemma \ref{singlepair}, that the blocks shown in Lemma \ref{indpartition} are ordered by $\mathfrak{X}$, then $\mathfrak{Y}$, and then $\mathfrak{Z}$.

\begin{proof}
    First, observe that some of the work is already done by Lemma \ref{singlepair}. As long as we consistently order the blocks $\mathfrak{X}$ and $\mathfrak{Y}$ (meaning $x_i$ and $y_i$ must always satisfy the property from Lemma \ref{singlepair}), then the linear forms $S, A,$ and $T$ are already accounted for. Some care must be taken, however, for dealing with $Q$. In fact, we will fix a particular ordering (distinct from the standard ordering) to ensure that we get the blocks with $L$ and $U$.

 \begin{figure}
 \raisebox{0pt}{\includegraphics[height=5cm, angle=0]{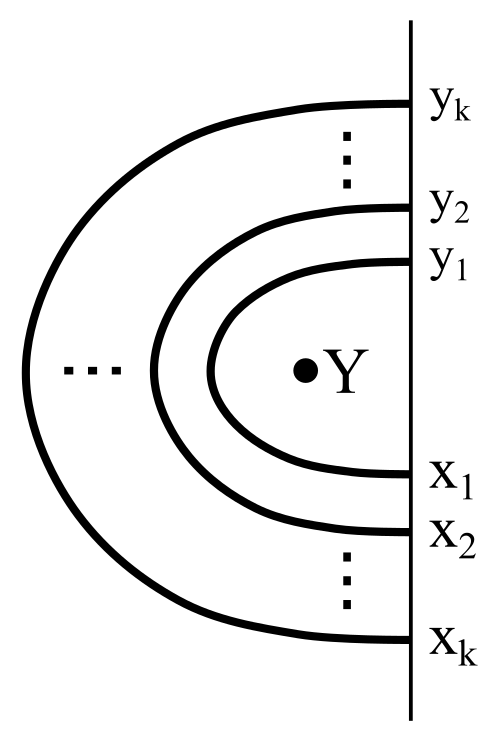}}\qquad \qquad \raisebox{0pt}{\includegraphics[height=5cm, angle=0]{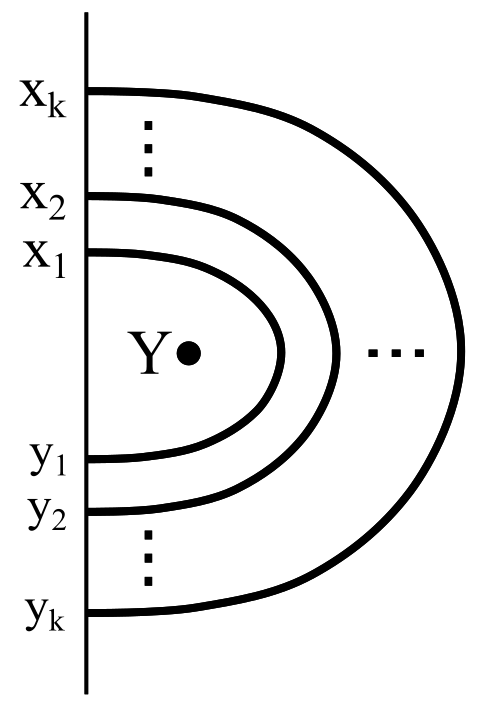}}
\caption{The desired ordering of $x_i$'s and $y_i$'s for the proof of Lemma \ref{indpartition}}
\label{xyorderfig}
\end{figure} 

As figure \ref{xyorderfig} indicates, we want to order the points from the inside-out; in the case of the puncture $Y$ being to the left of the vertical $l$, we have the $x_i's$ on the bottom, and when $Y$ is to the right of $l$, the $y_i's$ are on the bottom. We will prove the lemma explicitly for the case where $Y$ is to the left of $l$; the argument for the other case is the same.

In order to show that $Q$ has the desired structure with respect to this ordering in the top left $2\times 2$ blocks, it suffices to show that every $4\times 4$ sub-matrix coming from rows and columns indexed by some $x_i,x_j\in \mathfrak{X}$ and $y_i,y_j\in \mathfrak{Y}$ for $i<j$ have this same structure. To simplify notation, we will write $i$ instead of $x_i$, $i'$ instead of $y_i$, and similarly use $j$ and $j'$ instead of $x_j$ and $y_j$, respectively. Using this notation, the $4\times 4$ block we get from these indices should have the form 
\[
\begin{blockarray}{ccccc}
    & i & j & i' & j' \\
    \begin{block}{c[cccc]}
        i & A+1 & C+1 & A & C \\
        j & C+1 & B+1 & C+1 & B\\
        i' & A & C+1 & A & C\\
        j' & C & B & C & B\\
    \end{block}
\end{blockarray}
\]
for some $A,B,C\in \mathbb{Z}$. For the remainder of the proof, we will often use without stating explicitly that $Q$ is symmetric.

Setting $A=Q_{i'i'}$, $B=Q_{j'j'}$, and $C=Q_{i'j'}=Q_{j'i'}$, we already know $Q_{ii}=A+1$ and $Q_{jj}=B+1$ from Lemma \ref{singlepair}. Next, we confirm $Q_{ij}=Q_{ji}=C+1$. One can easily check that $\gamma_{i,j}\sim\gamma_{i',j'}$ as loops in $\text{Conf}^2(\mathbb{C})$ by checking this for the four possible ways that the intersection points can be connected by $\alpha$, which is shown in Figure \ref{connectivityfig}. In particular, $\gamma_{i,j}$ and $\gamma_{i',j'}$ agree on the dotted portions, so this part cancels in $\gamma_{i,j}-\gamma_{i',j'}$, when we think of them as 1-chains in $\text{Conf}^2(\mathbb{C})$, and then one easily checks that the remaining part is null-homologous in each case. Consequently, we have $\Phi([\gamma_{i,j}])=\Phi([\gamma_{i',j'}])$. Also, it is clear that $\Psi_{X^+}([\gamma_{i,j}])=\Psi_{X^+}([\gamma_{i',j'}])$. From here, it follows that $Q_{ij}=Q_{ji}=C+1$, or $Q_{ij}-Q_{i'j'}=1$, since $Q_{ii}-Q_{i'i'}=1$. 

\begin{figure}
    \begin{tikzpicture}
        \node at (0,0) {\includegraphics[height=3.8cm]{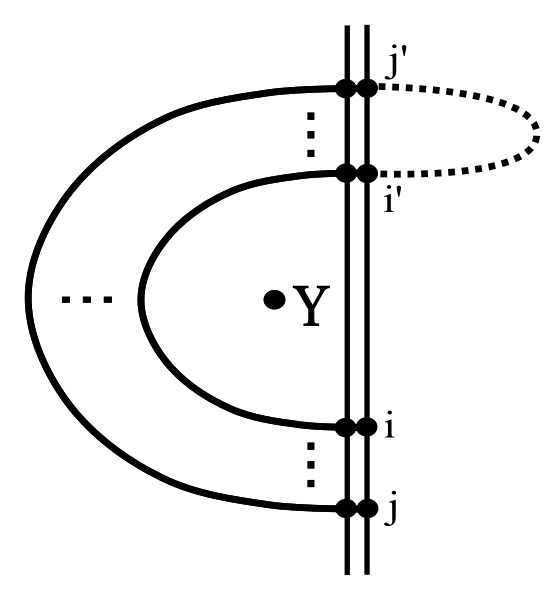}};
        \node at (4,0) {\includegraphics[height=3.8cm]{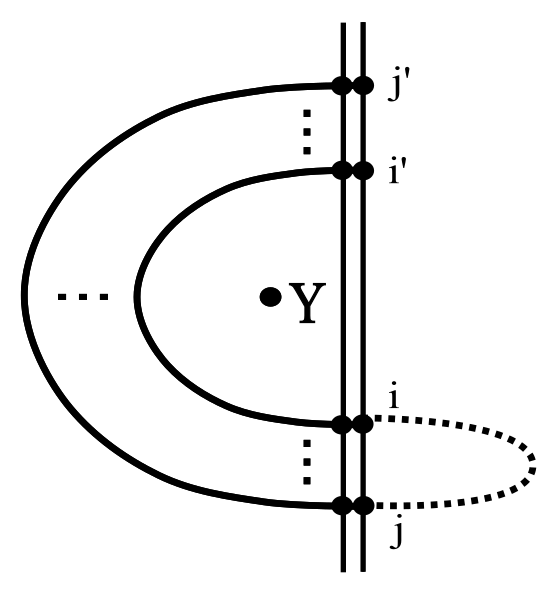}};
        \node at (8,0) {\includegraphics[height=3.8cm]{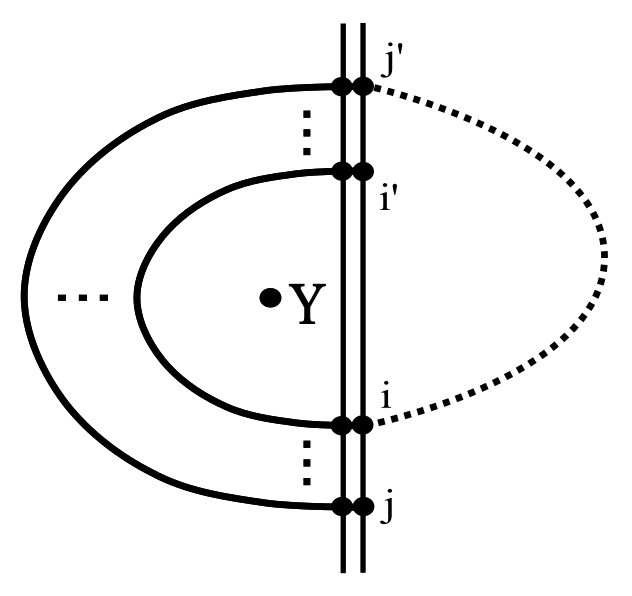}};
        \node at (12,0) {\includegraphics[height=3.8cm]{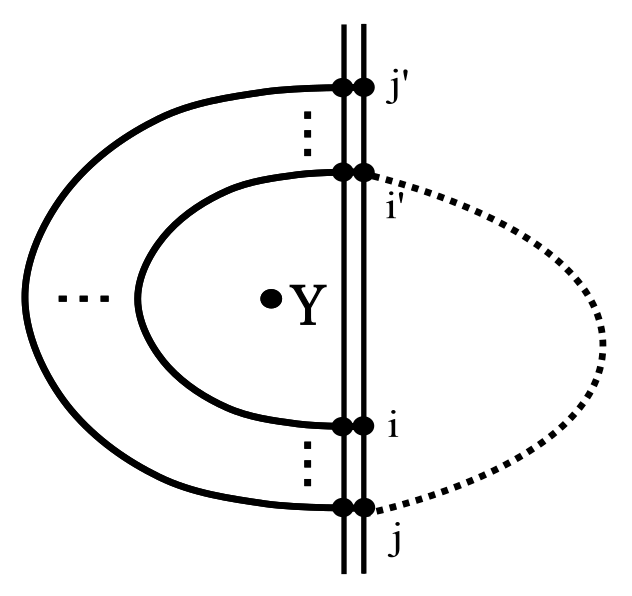}};
    \end{tikzpicture}
    \caption{The four possible ways that $\alpha$ can connect the four intersection points labeled by $i,i',j,$ and $j'$}
    \label{connectivityfig}
\end{figure}

Next, we compute $\Phi([\gamma_{i,i'}])=\Phi([\gamma_{j,j'}])=0$ and $\Psi_{X^+}([\gamma_{i,i'}])=\Psi_{X^+}([\gamma_{j,j'}])=0$, so 
\[
Q_{ii'}=Q_{i'i}=Q_{i'i'}+(\Phi-2\Psi_{X^+})([\gamma_{i,i'}])=Q_{i'i'}=A.
\]
Similarly, $Q_{jj'}=Q_{j'j}=B$. It remains to show that $Q_{ij'}=Q_{j'i}=C$ and $Q_{ji'}=Q_{i'j}=C+1$. By Theorem \ref{bigthm}, we have
\[
C=Q_{j'i'}=Q_{j'j'}+(\Phi-2\Psi_{X^+})([\gamma_{i',j'}]).
\]
Clearly, $\Psi_{X^+}([\gamma_{i,j'}])=\Psi_{X^+}([\gamma_{i',j'}])$ and $\Phi([\gamma_{i,j'}])=\Phi([\gamma_{i',j'}])$ by looking at the cases in Figure \ref{connectivityfig} or by using homology arguments. As a result, we get
\[
Q_{ij'}=Q_{j'i}=Q_{j'j'}+(\Phi-2\Psi_{X^+})([\gamma_{i,j'}])=Q_{j'i'}=C.
\]
To show $Q_{ji'}=Q_{i'j}=Q_{ji'}=C+1$, we can use a similar argument. We still have $\Psi_{X^+}([\gamma_{j,i'}])=\Psi_{X^+}([\gamma_{j',i'}])$, but now $\gamma_{j,i'}\sim \widehat{\gamma_{j',i'}}$, so Lemma \ref{phihat} gives $\Phi([\gamma_{j,i'}])=\Phi([\gamma_{j',i'}])+1$. From here, one computes
\begin{align*}
Q_{ji'}=Q_{i'j}&=Q_{i'i'}+(\Phi-2\Psi_{X^+})([\gamma_{j,i'}])\\
&= Q_{i'i'}+(\Phi-2\Psi_{X^+})([\gamma_{j',i'}])+1=Q_{j'i'}+1=C+1,
\end{align*}
which concludes the proof for the upper $2\times 2$ block of $Q$.

 To finish the proof, we need to show $Q_{iz}=Q_{i'z}$ for any $z\in \mathfrak{Z}$ and $1\leq i\leq k$. If $x_i,y_i,$ and $z$ are all on the same vertical $l\in\{l_A,l_I\}$, then we first claim that $z$ lies above or below both $x_i$ and $y_i$ in $\mathcal{D}(\tau_{u/v})$. Otherwise, since $\alpha$ is an embedded arc in $M$ and $Y$ is not an endpoint, we would get $z\in \mathfrak{X}$ or $z\in \mathfrak{Y}$, which contradicts $z\in \mathfrak{Z}$ (we assumed $k=|\mathfrak{X}|=|\mathfrak{Y}|$ was as large as possible). Now, it is trivial to see that $\Phi([\gamma_{i,z}])=\Phi([\gamma_{i',z}])$ and  $\Phi_{X^+}([\gamma_{i,z}])=\Phi_{X^+}([\gamma_{i',z}])$, so $Q_{iz}=Q_{i'z}$.

If $z$ is on $l'\neq l\in\{l_A,l_B\}$, then it is also clear that $\Phi$ and $\Psi_{X^+}$ agree on the the loops $\gamma_{i,z}$ and $\gamma_{i',z}$. Furthermore, $(\delta_{i,A}\delta_{z,I}-\delta_{i,A}\delta_{z,I})\delta_{Z,X^+}=(\delta_{i',A}\delta_{z,I}-\delta_{i',A}\delta_{z,I})\delta_{Z,X^+}$, so $Q_{iz}=Q_{i'z}$. This completes the proof that $S,A,T,$ and $Q$ can be broken into the blocks as stated in the lemma.
\end{proof}

What we want to do now is reduce the number of indices in our generating function by throwing out some of the points in $\mathcal{G}_{u/v}^1$. To do so, though, we will need to introduce the Pochhammer symbols of the form $(-t^{-1}q^2;q^2)_i$ introduced in Section \ref{qalgsec}. Now, we extend the data for the generating function by adding another linear form $K$ such that $X, [K|S|A|T], Q$ corresponds to the generating function
\begin{multline}
\label{genfuncompr}
    \sum_{\bf{d}=(d_1,...,d_{m+n})\in \mathbb{N}^{m+n}}q^{S\cdot\bf{d}}a^{A\cdot \bf{d}}q^{\bf{d}\cdot Q\cdot \bf{d}^t} t^{T\cdot\bf{d}} (-t^{-1}q^2;q^2)_{K\cdot \bf{d}}{d_1+...+d_m\brack d_1,...,d_m}{d_{m+1}+...+d_{m+n}\brack d_{m+1},...,d_{m+n}}\\ \times X[d_1+...+d_{m+n},d_1+...+d_m],
\end{multline}
where there are, of course, $m$ intersections in $\alpha\cap l_A$ contributing to the index and $n$ such intersections in $\alpha\cap l_I$, so $m\leq u$ and $n\leq v$. 

Now, we need to consider some quantum algebra to show how to get generating functions of the form shown in (\ref{genfuncompr}) from Lemma \ref{indpartition}. The following lemma is similar to Lemma 4.11 from \cite{SW21}, but adjusted to our particular situation.

\begin{lemmaN}
\label{collapsing}
    The data from
\[X, 
    \left[\begin{array}{ c| c | c |c }
    0 & S_1+1 & A_1 & T_1-1 \\
    \hline
    0 & S_1 & A_1 & T_1 \\
    \hline
    0 & S_2 & A_2 & T_2
  \end{array}\right],  \left[\begin{array}{ c | c |c }
    Q_{11}+1 & Q_{11}+L & Q_{12} \\
    \hline
    Q_{11}+U & Q_{11} & Q_{12} \\
    \hline
    Q_{21} & Q_{21} & Q_{22}
  \end{array}\right]
    \]
    and 
\[X, 
    \left[\begin{array}{ c| c | c |c }
    1 & S_1 & A_1 & T_1 \\
    \hline
    0 & S_2 & A_2 & T_2
  \end{array}\right],  \left[\begin{array}{ c |c }
    
    Q_{11} & Q_{12} \\
    \hline
    Q_{21} & Q_{22}
  \end{array}\right]
    \]
give the same generating function.
\end{lemmaN}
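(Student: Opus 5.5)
We propose to prove this by collapsing each pair $(x_i,y_i)$ to a single index carrying the summed multiplicity, and summing out the split with Lemma \ref{algidentity}. Write a summation index of the first data set as $(\boldsymbol{\alpha},\boldsymbol{\beta},\mathbf{e})$. Here $\boldsymbol{\alpha}=(\alpha_1,\dots,\alpha_k)$ indexes $\mathfrak{X}$, $\boldsymbol{\beta}=(\beta_1,\dots,\beta_k)$ indexes $\mathfrak{Y}$, and $\mathbf{e}$ indexes $\mathfrak{Z}$. We then set $d_i=\alpha_i+\beta_i$ and reorganize the sum as a sum over $(\mathbf{d},\mathbf{e})$ with an inner sum over the splittings $\alpha_i+\beta_i=d_i$. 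Since $x_i$ and $y_i$ lie on the same vertical $l$, both contribute to the same multinomial, so the web $X[\cdot,\cdot]$ depends only on $(\mathbf{d},\mathbf{e})$. The goal is to show the inner sum equals $(-t^{-1}q^2;q^2)_{d_1+\dots+d_k}$ times the term of the second data set at $(\mathbf{d},\mathbf{e})$.

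First we expand the exponents. The linear part gives $S_1\cdot\mathbf d+S_2\cdot\mathbf e+|\boldsymbol\alpha|$ in $q$, and $t^{T_1\cdot\mathbf d+T_2\cdot\mathbf e-|\boldsymbol\alpha|}$ in $t$, with $a$ unchanged. For the quadratic part we expand $(\boldsymbol\alpha,\boldsymbol\beta,\mathbf e)Q(\cdots)^t$ using the block form. The $Q_{11}$ pieces assemble to $\mathbf d Q_{11}\mathbf d^t$, and the $Q_{12},Q_{21}$ pieces give $2\mathbf dQ_{12}\mathbf e^t$. The only leftover terms are $\sum_i\alpha_i^2$ from the $+1$ diagonal, $\boldsymbol\alpha L\boldsymbol\beta^t$, and $\boldsymbol\beta U\boldsymbol\alpha^t$. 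Since $L=U^t$, these cross terms combine to $2\sum_{i>j}\alpha_i\beta_j$ (or $i<j$, depending on the convention for $L$). This extra factor is therefore
\[
(t^{-1}q)^{|\boldsymbol\alpha|}q^{\sum_i\alpha_i^2+2\sum_{i>j}\alpha_i\beta_j}.
\]

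Next we handle the multinomials. The multinomial for the side containing the pairs has the form $[\,N\,;\alpha_1,\dots,\alpha_k,\beta_1,\dots,\beta_k,\mathbf e_l\,]$. We factor it as $[\,N\,;d_1,\dots,d_k,\mathbf e_l\,]\cdot\prod_i(q^2;q^2)_{d_i}/\bigl((q^2;q^2)_{\alpha_i}(q^2;q^2)_{\beta_i}\bigr)$. If the pairs straddle both verticals, we treat each multinomial separately; the extra factor still factors over $i$ up to the cross terms. The inner sum then becomes
\[
(q^2;q^2)_{d_1}\cdots(q^2;q^2)_{d_k}\sum_{\alpha_i+\beta_i=d_i}\frac{(t^{-1}q)^{|\boldsymbol\alpha|}q^{\sum\alpha_i^2+2\sum_{i>j}\alpha_i\beta_j}}{\prod(q^2;q^2)_{\alpha_i}(q^2;q^2)_{\beta_i}}.
\]
We compare this with Lemma \ref{algidentity} at $x^2=-t^{-1}q^2$, so that $-x^2q^{-1}=t^{-1}q$. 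That identity has exponent $\alpha_{i+1}(d_1+\dots+d_i)$. Writing $d_j=\alpha_j+\beta_j$ gives
\[
2\sum_{i>j}\alpha_i\alpha_j+2\sum_{i>j}\alpha_i\beta_j.
\]
The term $\sum\alpha_i^2+2\sum_{i>j}\alpha_i\alpha_j$ equals $|\boldsymbol\alpha|^2$, and $q^{|\boldsymbol\alpha|^2}$ is the source of the $\alpha^2$ in the one-variable special case. So the right-hand side of the lemma matches our sum exactly, provided $\sum_i\alpha_i^2$ is reconciled with the lemma's own $\alpha_1^2+\dots+\alpha_m^2$ term.

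The main obstacle is this bookkeeping: making the orientation of $L$ versus $U$ agree with the ordering in Lemma \ref{algidentity}. If it comes out reversed, we will reverse the ordering of the pairs, which is exactly why Lemma \ref{indpartition} fixes the inside-out ordering. Once the orientation is fixed, the inner sum equals $(-t^{-1}q^2;q^2)_{d_1+\dots+d_k}$, and multiplying back gives the second data set with $K=(1,\dots,1,0,\dots,0)$.
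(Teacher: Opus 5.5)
Your strategy is the paper's own: collapse each pair $(x_i,y_i)$ to one index $d_i=\alpha_i+\beta_i$, factor the multinomial, and sum out the splitting with Lemma \ref{algidentity} at $x^2=-t^{-1}q^2$. The gap is in the quadratic bookkeeping, which is the whole content of the proof.

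You read the $(\mathfrak{X},\mathfrak{X})$ block $Q_{11}+1$ as $Q_{11}$ plus the identity, so that this block contributes only $\sum_i\alpha_i^2$. In fact $1$ is added to every entry of $Q_{11}$. You can see this in the $4\times 4$ block in the proof of Lemma \ref{indpartition}, whose $(x_i,x_j)$ entry is $C+1$. With that reading the block contributes $(\alpha_1+\dots+\alpha_k)^2=\sum_i\alpha_i^2+2\sum_{i>j}\alpha_i\alpha_j$. Together with your (correct) cross term $2\sum_{i>j}\alpha_i\beta_j$, the extra exponent is
\[
\sum_{i=1}^k\alpha_i^2+2\sum_{i=2}^{k}\alpha_i\,(d_1+\dots+d_{i-1}),
\]
which is literally the exponent in Lemma \ref{algidentity}, so there is nothing to reconcile.

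Under your reading the term $2\sum_{i>j}\alpha_i\alpha_j$ is missing, and your assertion that the sums nevertheless ``match exactly'' is false. Take $k=2$, $d_1=d_2=1$, and $s=t^{-1}q$. Your inner sum is $1+sq+sq^3+s^2q^2$, while $(-t^{-1}q^2;q^2)_2=(1+sq)(1+sq^3)=1+sq+sq^3+s^2q^4$.

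Conversely, what you flag as the main obstacle is harmless. Since $L=U^t$, the two cross terms always coincide. If the roles of $L$ and $U$ were reversed, you would obtain the exponent of Lemma \ref{algidentity} with the pairs listed in the opposite order. That is equally valid, because the left-hand side of that lemma is symmetric in $d_1,\dots,d_k$.

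Your treatment of the linear forms and of the multinomials is fine. Once you use the all-ones correction, the argument closes exactly as in the paper.
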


\begin{proof}
    We will start with the first set of data having three  blocks. We will adopt the notation of $\mathfrak{X}, \mathfrak{Y}$, and $\mathfrak{Z}$ for the three blocks, as in Lemma \ref{indpartition}. Assume that $|\mathfrak{X}|=|\mathfrak{Y}|=k$. Then, we can write $\vec{b}=(d_1,...,d_k)$ for the indices corresponding to $\mathfrak{X}$ and $\vec{c}=(d_{k+1},...,d_{2k})$ for the indices corresponding to $\mathfrak{Y}$ in the generating function. To make things easier to read, we will relabel $d_i$ as $b_i$ and $d_{k+i}$ as $c_i$, so that $\vec{b}=(b_1,...,b_k)$ and $\vec{c}=(c_1,...,c_k)$. Thus, the relevant part of the generating function that we need  is
    \[
    \sum_{\vec{b},\vec{c}\in \mathbb{N}^k} \frac{q^{S_1\cdot (\vec{b}+\vec{c})}a^{A_1\cdot(\vec{b}+\vec{c})}q^{(\vec{b}+\vec{c})\cdot Q_{11}\cdot(\vec{b}+\vec{c})^t}t^{T_1\cdot(\vec{b}+\vec{c})}(t^{-1}q)^{b_1+...+b_k} q^{b_1^2+...+b_k^2+2\sum_{i=1}^{k-1}b_{i+1}((b_1+c_1)+...+(b_i+c_i))}}{(q^2;q^2)_{b_1}...(q^2;q^2)_{b_k}(q^2; q^2)_{c_1}...(q^2;q^2)_{c_k}}.
    \]
    Now, Lemma \ref{algidentity} tells us that this expression is equal to 
    \[
    \sum_{\vec{d}\in \mathbb{N}^k} \frac{q^{S_1\cdot \vec{d}}a^{A_1\cdot \vec{d}}q^{\vec{d}\cdot Q_{11}\cdot \vec{d}^t}t^{T_1\cdot \vec{d}} (-t^{-1}q^2;q^2)_{d_1+...+d_k}}{(q^2;q^2)_{d_1}...(q^2;q^2)_{d_k}},
    \]
    where $\vec{d}=\vec{b}+\vec{c}$, so $d_i=b_i+c_i$. Finally, we observe that this is the corresponding part of the generating function for the second set of data, so we are done.
\end{proof}


\subsection{The Almost Quiver Form for $\mathcal{P}(\tau_{u/v})$ and $P(\tau_{u/v})$}

Combining Lemmas \ref{indpartition} and \ref{collapsing}, it follows that we can compute $S,A,T,$ and $Q$ for $\mathcal{P}(\tau_{u/v})$ geometrically by only performing the calculations from Theorem \ref{bigthm} for intersection points coming from $\mathfrak{Y}$ and $\mathfrak{Z}$. The only difference is that we have the linear form $K$ now. We codify this below with a theorem.

\begin{theoremN}
\label{aqfthm}
    Given $\tau_{u/v}$, let $\mathcal{G}_{u/v}^1=\mathfrak{X}\sqcup\mathfrak{Y}\sqcup\mathfrak{Z}$, where $\mathfrak{X},\mathfrak{Y},$ and $\mathfrak{Z}$ are defined the same way as in Lemma \ref{indpartition}. Then $\mathcal{P}(\tau_{u/v})$ can be written in almost quiver form, i.e.
    \begin{multline*}
        \mathcal{P}(\tau_{u/v})=\sum_{\textbf{d}\in \mathbb{N}^{m+n}}q^{S\cdot\textbf{d}+\textbf{d}\cdot Q\cdot \textbf{d}^t}a^{A\cdot \textbf{d}} t^{T\cdot\textbf{d}} (-t^{-1}q^2;q^2)_{K\cdot \textbf{d}}{d_1+...+d_m\brack d_1,...,d_m}{d_{m+1}+...+d_{m+n}\brack d_{m+1},...,d_{m+n}}\\ \times X[d_1+...+d_{m+n},d_1+...+d_m],
    \end{multline*}
    where $S,A,T,$ and $Q$ are computed by the same formulas as in Theorem \ref{bigthm}, but restricted to the free submodule of $\mathbb{Z}\mathcal{G}_{u/v}^1$ with basis $\mathfrak{Y}\sqcup\mathfrak{Z}$. The linear form $K$ is given by 
    \[
    K_i=
    \begin{cases}
        1,\qquad  \xi_i\in \mathfrak{Y}\\
        0, \qquad \xi_i\in \mathfrak{Z}.
    \end{cases}
    \]
\end{theoremN}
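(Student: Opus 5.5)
The plan is to combine Theorem \ref{bigthm}, Lemma \ref{indpartition} and Lemma \ref{collapsing}, with some care about block orderings and the active/inactive split. Start from the quiver form of $\mathcal{P}(\tau_{u/v})$ given by Theorem \ref{bigthm}, indexed by all of $\mathcal{G}_{u/v}^1$. Then fix the partition $\mathfrak{X}\sqcup\mathfrak{Y}\sqcup\mathfrak{Z}$ with $k$ maximal, and use the ordering from the proof of Lemma \ref{indpartition}, which orders the pairs inside-out around $Y$. In that ordering the data takes the three-block form of Lemma \ref{indpartition}. Note that $\mathfrak{Y}$ is the block carrying $S_1,A_1,T_1,Q_{11}$, i.e. the values computed by Theorem \ref{bigthm} on $\mathfrak{Y}$ itself. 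The $\mathfrak{X}$ block is determined from it by the shifts $+1,0,-1,+1$ and the triangular matrices $L,U$. Reordering indices does not change the sum over $\mathbf{d}$, but it does change the multinomials. So I first need each pair $x_i,y_i$ to lie on the same vertical $l$, which Lemma \ref{singlepair} guarantees. Then the active multinomial only involves active indices, and reordering within the active (resp. inactive) set permutes the entries of the multinomial, leaving it unchanged.

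Next I apply Lemma \ref{collapsing}. Adding a zero column $K$ to the three-block data leaves the generating function unchanged. The lemma then replaces each pair of summation variables $(b_i,c_i)$ by $d_i=b_i+c_i$, at the cost of the factor $(-t^{-1}q^2;q^2)_{d_1+\dots+d_k}$. This is exactly $(-t^{-1}q^2;q^2)_{K\cdot\mathbf d}$ with $K=1$ on $\mathfrak{Y}$ and $0$ on $\mathfrak{Z}$. The surviving data are the $\mathfrak{Y}$ and $\mathfrak{Z}$ blocks of $S,A,T,Q$, that is, the restriction of the Theorem \ref{bigthm} forms to the submodule spanned by $\mathfrak{Y}\sqcup\mathfrak{Z}$. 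Finally, $m$ and $n$ count the active and inactive points of $\mathfrak{Y}\sqcup\mathfrak{Z}$, and the web $X[\cdot,\cdot]$ is preserved because $\sum d$ and the active sum are unchanged under $d_i=b_i+c_i$.

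The main obstacle is that Lemma \ref{collapsing} is stated as if the whole $\mathfrak{X}\cup\mathfrak{Y}$ block sat inside a single multinomial. In reality, pairs may lie on $l_A$ or on $l_I$, so the collapse happens inside two separate multinomials. I would handle this by splitting $\mathfrak{X},\mathfrak{Y}$ into their active and inactive parts. For each part I would check that the factor $1/\prod(q^2;q^2)_{b_i}(q^2;q^2)_{c_i}$ coming from the multinomial, together with the cross terms $q^{2b_{i+1}(d_1+\dots+d_i)}$ produced by $L$ and $U$, matches Lemma \ref{algidentity} applied separately on each side. The quadratic cross terms between active and inactive pairs all equal the corresponding $Q_{11}$ entries. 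This holds because the off-diagonal $L,U$ pattern comes only from the $4\times4$ analysis in Lemma \ref{indpartition}, so those cross terms factor through $\mathbf d$. The product of the two Pochhammer symbols (active and inactive) does not combine into one $(-t^{-1}q^2;q^2)_{K\cdot\mathbf d}$, though. So I would need to verify that the ordering can be chosen so that all pairs sit on a single side, or else accept reading $K$ as recording that product. This is the point I would scrutinize first.
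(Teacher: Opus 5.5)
Your first two paragraphs are the paper's own proof. Theorem \ref{aqfthm} is obtained there by feeding the three-block data of Lemma \ref{indpartition} into Lemma \ref{collapsing}. Your handling of the zero $K$ column, of reordering within each side, and of the web $X[\cdot,\cdot]$ is fine. The obstacle in your last paragraph is not real, though. Your fallback of reading $K$ as a product of two Pochhammer symbols would not give the stated almost quiver form, so the proof should not be left open at that point.

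First, Lemma \ref{collapsing} does not need $\mathfrak{X}\cup\mathfrak{Y}$ to sit inside one multinomial.
\begin{itemize}
\item Lemma \ref{algidentity} is applied to the product of all $2k$ denominators $(q^2;q^2)_{b_i}(q^2;q^2)_{c_i}$ together, whichever multinomial each factor comes from.
\item The two numerators $(q^2;q^2)_{d_1+\dots+d_u}$ and $(q^2;q^2)_{d_{u+1}+\dots+d_{u+v}}$ depend only on the side totals. These are unchanged by $d_i=b_i+c_i$, because $x_i$ and $y_i$ lie on the same vertical.
\item The extra exponent $b_1^2+\dots+b_k^2+2\sum_a b_a(d_1+\dots+d_{a-1})$ is exactly what the blocks $Q_{11}+1$, $Q_{11}+L$, $Q_{11}+U$ of Lemma \ref{indpartition} contribute beyond $(\vec b+\vec c)\,Q_{11}\,(\vec b+\vec c)^t$. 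That lemma asserts this pattern across all $k$ pairs at once.
\end{itemize}
So you get the single factor $(-t^{-1}q^2;q^2)_{d_1+\dots+d_k}$. The two-Pochhammer problem only appears because you chose to collapse side by side. Relatedly, your claim that cross terms between active and inactive pairs reduce to $Q_{11}$ entries contradicts the entrywise $+1$ in the $\mathfrak{X}\mathfrak{X}$ block.

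Second, mixed pairs never occur.
\begin{itemize}
\item A segment of $\alpha$ between $\prec$-consecutive points of one vertical crosses neither vertical. It can therefore only wrap around the outer puncture on its side or the middle puncture $Z$.
\item If $Y$ is an outer puncture, every pair lies on the vertical adjacent to $Y$.
\item If $Y=Z$, suppose there were a U-turn around $Y$ from $l_A$ and another from $l_I$. Each would bound a disk in the strip between $l_A$ and $l_I$ containing $Y$. The first disk is disjoint from $l_I$ and the second from $l_A$, so neither contains the other. Hence the two segments of $\alpha$ would have to cross, contradicting embeddedness.
\end{itemize}
Thus $\mathfrak{X}\sqcup\mathfrak{Y}$ lies on a single vertical $l$. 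This is what the inside-out ordering in the proof of Lemma \ref{indpartition} tacitly uses.
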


In \cite{JHpII26}, we will primarily be working with $P(\tau_{u/v})$ rather than $\mathcal{P}(\tau_{u/v})$, so we will briefly address the corresponding result for the fully decategorified generating function. In particular, we want to consider the almost quiver form of $P(\tau_{u/v})$ from the geometric perspective.

\begin{propositionN}
    Given $\tau_{u/v}$ and $\mathcal{G}_{u/v}^1=\mathfrak{X}\sqcup\mathfrak{Y}\sqcup\mathfrak{Z}$ as in Theorem \ref{aqfthm}, $P(\tau_{u/v})$ can be written in almost quiver form, i.e.
    \begin{multline*}
        P(\tau_{u/v})=\sum_{\textbf{d}\in \mathbb{N}^{m+n}}(-q)^{S\cdot\textbf{d}}q^{\textbf{d}\cdot Q\cdot \textbf{d}^t}a^{A\cdot \textbf{d}} (q^2;q^2)_{K\cdot \textbf{d}}{d_1+...+d_m\brack d_1,...,d_m}{d_{m+1}+...+d_{m+n}\brack d_{m+1},...,d_{m+n}}\\ \times X[d_1+...+d_{m+n},d_1+...+d_m],
    \end{multline*}
    where $S,A,K,$ and $Q$ are the same as in Theorem \ref{aqfthm}.
\end{propositionN}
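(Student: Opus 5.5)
This follows from Theorem \ref{aqfthm} by specializing $t=-1$, in the same way Proposition \ref{tangleHOMFLYpoly} follows from Theorem \ref{bigthm}. The first step is to recall that $P(\tau_{u/v})=\sum_{j\geq 0}\langle\tau_{u/v}\rangle_j$ and that $\langle\tau_{u/v}\rangle_j=\langle\langle\tau_{u/v}\rangle\rangle_j|_{t\mapsto -1}$. Summing over $j$ gives $P(\tau_{u/v})=\mathcal{P}(\tau_{u/v})|_{t=-1}$.

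Next, I would substitute $t=-1$ termwise into the almost quiver form of $\mathcal{P}(\tau_{u/v})$ from Theorem \ref{aqfthm}. This is legitimate because, for each fixed $j$, only finitely many $\textbf{d}$ satisfy $d_1+\dots+d_{m+n}=j$, so each coefficient of $X[j,k]$ is a Laurent polynomial and the substitution commutes with the sum.

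Three factors need checking.
\begin{itemize}
\item The $S$- and $T$-factors: since $S,A,T,Q$ are restrictions of the forms of Theorem \ref{bigthm}, we still have $T=-S$ on the submodule spanned by $\mathfrak{Y}\sqcup\mathfrak{Z}$. Hence $q^{S\cdot\textbf{d}}t^{T\cdot\textbf{d}}$ becomes $q^{S\cdot\textbf{d}}(-1)^{-S\cdot\textbf{d}}=(-q)^{S\cdot\textbf{d}}$.
\item The Pochhammer factor: $(-t^{-1}q^2;q^2)_{K\cdot\textbf{d}}=\prod_{i=1}^{K\cdot\textbf{d}}(1+t^{-1}q^{2i})$ becomes $\prod_{i=1}^{K\cdot\textbf{d}}(1-q^{2i})=(q^2;q^2)_{K\cdot\textbf{d}}$, as noted in Section \ref{qalgsec}.
\item The remaining factors: $a^{A\cdot\textbf{d}}$, $q^{\textbf{d}\cdot Q\cdot\textbf{d}^t}$, the two quantum multinomials and the webs $X[\cdot,\cdot]$ do not involve $t$, so they are unchanged.
\end{itemize}

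Assembling these gives exactly the displayed expression, with $S,A,K,Q$ as in Theorem \ref{aqfthm}. There is no real obstacle. The only point needing care is that $T=-S$ survives the restriction to the basis $\mathfrak{Y}\sqcup\mathfrak{Z}$, and this is immediate because the relation holds coordinatewise for every $\xi_i$.
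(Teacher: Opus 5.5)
Your proposal is correct and takes the same approach as the paper. The paper gives no separate proof here: the statement follows from Theorem \ref{aqfthm} by setting $t=-1$, exactly as Proposition \ref{tangleHOMFLYpoly} follows from Theorem \ref{bigthm}, using $T=-S$ and $(-t^{-1}q^2;q^2)_i|_{t=-1}=(q^2;q^2)_i$. Your remarks on termwise substitution and on $T=-S$ surviving the restriction to $\mathfrak{Y}\sqcup\mathfrak{Z}$ are details the paper leaves implicit.
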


Finally, we will provide some examples to illustrate how Theorems \ref{bigthm} and \ref{aqfthm} can be applied to particular rational tangles.

\section{Examples}
\label{exsec}

\subsection{Tangles of the Form $\tau_{n/1}$}
\label{torustanglesex}

Now, we determine what $\mathcal{P}(\tau_{u/v})$ looks like for a particular infinite family of tangles. The tangles $\tau_{n/1}$ are the tangles whose closures give the $(2,n)$-torus knots and links. This is a particularly easy class to study, as there is a nice structure to the linear and quadratic forms. 

We will use a different labeling of the indices than the one used in Section 3. In this case, we will label them according to the $\prec$ ordering, meaning that $\xi_i\preceq \xi_j$ if and only if $i\leq j$. There is a particular symmetry in $Q$ with respect to this ordering, so we define the following notation to make this structure easier to describe symbolically.

\begin{definitionN}
    Given $k\in\mathbb{N}$, let
    \[
    \mathcal{I}_k=\{(i,j)\in\mathbb{N}^2 : i=k \, \text{and} \, j\leq k, \text{or} \, j=k \, \text{and} \, i\leq k\}.
    \]
    Thus, if $(i,j)\in \mathcal{I}_k$, then the $(i,j)$th entry of a matrix is in the $k$th row or $k$th column, but it is not in a $l$th row or column for $l>k$.
\end{definitionN}

Given this notation, we have the following way to express the data for $\mathcal{P}(\tau_{u/v})$.

\begin{exampleN}
\label{torustangleprop}
    For the tangle $\tau_{n/1}$, given the labeling of the $\xi_i$ described above, the data for $\mathcal{P}(\tau_{u/v})$ is given by
   \[
    \begin{cases}
        S_i=n-i+1\\
        A_i=0\\
        T_i=i-n-1\\
        \begin{cases}
            Q_{ij}=n-k, \qquad & \text{if} \, (i,j)\in\mathcal{I}_k, 1 \leq k \leq n\\
            Q_{ij}=0,\qquad & \text{if} \, (i,j)\in\mathcal{I}_k, k=n+1.
        \end{cases}
    \end{cases}
    \] 
\end{exampleN}

\begin{proof}
    First, observe that $\tau_{n/1}$ has state $(UP,Y|X^-|X^+)$ if $n$ is even and $(UP,X^-|Y|X^+)$ if $n$ is odd. For $\tau_{n/1}$, it is not difficult to check that $\Phi([\gamma_{i,j}])=0$ whenever $\xi_i\prec \xi_j$ and $Z$ is never $X^+$. Since $Q_{ij}=Q_{ji}$ and none of the loops used in Theorem \ref{bigthm} wind around $X^+$, it follows that if $(i,j),(i',j')\in \mathcal{I}_k$, then $Q_{ij}=Q_{i'j'}=Q_{kk}$. Thus, we only need to check what happens for the linear forms, including the diagonal of $Q$. The fact that $A_i=0$ for all $i$ follows from the observation that $\Psi_{X^+}$ is zero on all our loops; then the theorem follows from the easy observations that
    \[
    \Psi_{\{X^-,Y\}}([\gamma_i])=
    \begin{cases}
        n-i, \qquad &i\leq n\\
        0, \qquad &i=n+1
    \end{cases}
    \]
    and that the $\pm 1$ parts above come from the $\delta_{i,A}$ terms in $S$ and $T$.  
\end{proof}

\begin{example}
    Figure \ref{tau81ex} shows $\mathcal{D}(\tau_{8/1})$ with the intersection points labeled according to the convention described in this section. Then, we have 

\begin{figure}[h]
\begin{minipage}[b]{.45\textwidth}

$\left[\begin{array}{ c | c |c }
    S & A & T\end{array}\right] =
  \left[\begin{array}{ c | c |c }
    8 & 0 & -8 \\
    7 & 0 & -7 \\
    6 & 0 & -6 \\
    5 & 0 & -5 \\
    4 & 0 & -4 \\
    3 & 0 & -3 \\
    2 & 0 & -2 \\
    1 & 0 & -1 \\
    0 & 0 & 0
    \end{array}\right]$
    \end{minipage}
    \begin{minipage}[b]{.45\textwidth}
    $ Q=\begin{bmatrix}
    7 & 6 & 5 & 4 & 3 & 2 & 1 & 0 & 0 \\
    6 & 6 & 5 & 4 & 3 & 2 & 1 & 0 & 0 \\
    5 & 5 & 5 & 4 & 3 & 2 & 1 & 0 & 0 \\
    4 & 4 & 4 & 4 & 3 & 2 & 1 & 0 & 0 \\
    3 & 3 & 3 & 3 & 3 & 2 & 1 & 0 & 0 \\
    2 & 2 & 2 & 2 & 2 & 2 & 1 & 0 & 0 \\
    1 & 1 & 1 & 1 & 1 & 1 & 1 & 0 & 0 \\
    0 & 0 & 0 & 0 & 0 & 0 & 0 & 0 & 0 \\
    0 & 0 & 0 & 0 & 0 & 0 & 0 & 0 & 0 
    \end{bmatrix}$
    \end{minipage}
\end{figure}

\begin{figure}
\begin{tikzpicture}
    \node at (0,0) {$\mathcal{D}(\tau_{8/1})=$};
    \node at (3.2,0) {\includegraphics[height=4.5cm, angle=0]{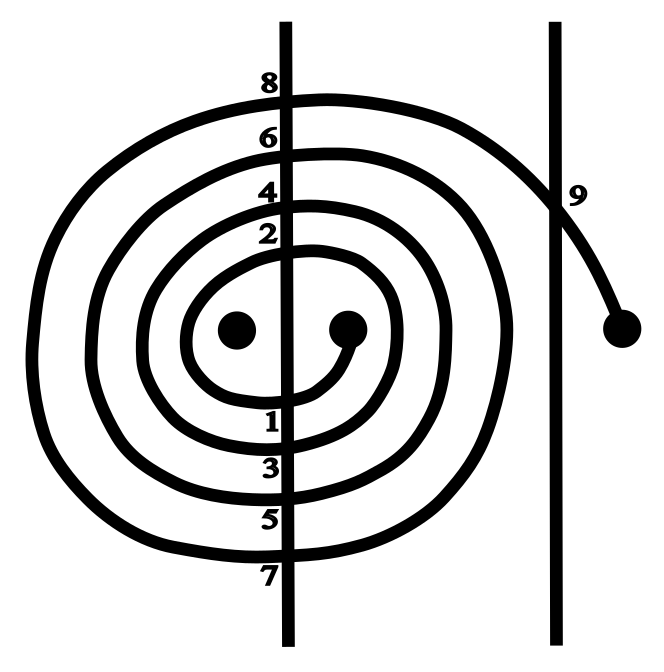}};
\end{tikzpicture}
\caption{$\mathcal{D}(\tau_{8/1})$ with $\mathcal{G}_{u/v}^1$ ordered by $\prec$}
\label{tau81ex}
\end{figure}
by Example \ref{torustangleprop}.
\end{example}

If we carefully relabel the intersection points, we can apply Theorem \ref{aqfthm} to reduce the number of indices at the expense of adding some Pochhammer symbols. It will be helpful to consider the cases where $n$ is even or odd separately. 

If $n$ is even, note that the top $n/2$ intersection points are labeled by the even numbers $2,4,...,n$ from the ``inside-out'', and the bottom points are labeled by $1,3,...,n-1$ from the inside-out as well. Our new convention is that we label the bottom $n/2$ points $1,2,...,n/2$ and the top $n/2$ points $n/2+1,n/2+2,...,n$, both from the inside-out. The bottom $n/2$ points give $\mathfrak{X}$, the top $n/2$ points give $\mathfrak{Y}$, and $\mathfrak{Z}$ is the one inactive intersection point. Then, applying Theorem \ref{aqfthm} gives the following.

\begin{exampleN}
\label{torustangleeven}
Consider $\tau_{n/1}$ with $n$ even. Then, $\mathcal{P}(\tau_{n/1})$ is given by the data below, where there are $n/2+1$ indices coming from $\mathfrak{Y}\sqcup\mathfrak{Z}$:
 \[
    \begin{cases}
        K_i = \delta_{i\leq n/2}\\
        S_i=-T_i=n-2i+1\\
        A_i=0\\
        \begin{cases}
            Q_{ij}=n-2k, \qquad & \text{if} \,\, (i,j)\in\mathcal{I}_k, \,1 \leq k \leq n/2\\
            Q_{ij}=0,\qquad & \text{if} \,\, (i,j)\in\mathcal{I}_k,\, k=n/2+1,
        \end{cases}
    \end{cases}
    \]
    and the labels for $\mathfrak{Y}\sqcup\mathfrak{Z}$ are shifted back down by $n/2$ from their original labels. 
\end{exampleN}

It should be noted that, applying the work in section 4 of \cite{SW21}, one can extract an algorithm to calculate $K,S,A,$ and $Q$ inductively for $\tau_{2k/1}$ by applying $T^2$ to $\tau_{0/1}$ in $k$ steps. Using these conventions of Sto\v si\'c and Wedrich, one would get the same formulas for $K,A,$ and $Q$, but this would lead to the formula $S_i=-2i+1$ for $S$. This has the effect of scaling each $\langle\langle\tau_{n/1}\rangle\rangle_j$ in $\mathcal{P}(\tau_{n/1})$ down by $q^{nj}$, but all relative gradings are the same.

\begin{example}
    We now consider the generating function data for the tangle $\tau_{8/1}$ again, but this time with the $K$ linear form. In this case, the active indices in $\mathfrak{Y}$ are coming from the intersection points labeled with even numbers in Figure \ref{tau81ex}. The linear and quadratic forms are given below:

    \begin{figure}[H]
    \centering
\begin{minipage}[b]{.45\textwidth}

$\left[\begin{array}{ c| c | c |c }
   K&  S & A & T\end{array}\right] =
  \left[\begin{array}{ c | c | c |c }
    1 & 7 & 0 & -7 \\
    1 & 5 & 0 & -5 \\
    1 & 3 & 0 & -3 \\
    1 & 1 & 0 & -1 \\
    0 & 0 & 0 & 0 \\
   
    \end{array}\right]$
    \end{minipage}
    \begin{minipage}[b]{.45\textwidth}
    $ Q=\begin{bmatrix}
    6 & 4 & 2 & 0 & 0 \\
    4 & 4 & 2 & 0 & 0 \\
    2 & 2 & 2 & 0 & 0 \\
    0 & 0 & 0 & 0 & 0 \\
    0 & 0 & 0 & 0 & 0 
    \end{bmatrix}$.
    \end{minipage}
\end{figure} 

\end{example}

We can do something similar for $\tau_{n/1}$ if $n$ is odd. In this case, there will be $\frac{n+1}{2}$ active indices coming from the top points on the active side. Label the intersection points as follows: assign the top $\frac{n+1}{2}$ points $1,2,...,\frac{n-1}{2},n$ and the bottom $\frac{n-1}{2}$ points $\frac{n+1}{2},\frac{n+3}{2},...,n-1$, both working from the inside-out again. Then, $\mathfrak{X}$ consists of the points labeled $1,...,\frac{n-1}{2}$, $\mathfrak{Y}$ consists of the points labeled $\frac{n+1}{2},\frac{n+3}{2},...,n-1$, and $\mathfrak{Z}$ includes the point labeled $n$ and the single inactive intersection point. Applying Theorem \ref{aqfthm} gives the following.

\begin{exampleN}
\label{taun_odd}
    Consider $\tau_{n/1}$ with $n$ odd. Then, $\mathcal{P}(\tau_{n/1})$ is given by the data below, where there are $\frac{n+3}{2}$ indices coming from $\mathfrak{Y}\sqcup\mathfrak{Z}$:
    \[
    \begin{cases}
    K_i = \delta_{i\leq \frac{n-1}{2}}\\
        \begin{cases}
        S_i=-T_i=n-2i+1, \qquad & \text{if}\, 1\leq i \leq \frac{n-1}{2}\\
        S_i= -T_i= 1, \qquad & \text{if}\, i=\frac{n+1}{2} \\
        S_i=-T_i=0, \qquad & \text{if}\, i=\frac{n+3}{2}
        \end{cases}\\
        A_i=0\\
        \begin{cases}
            Q_{ij}=n-2k, \qquad & \text{if} \, (i,j)\in\mathcal{I}_k, 1 \leq k \leq \frac{n-1}{2}\\
            Q_{ij}=0,\qquad & \text{if} \, (i,j)\in\mathcal{I}_k, k\in \{\frac{n+1}{2},\frac{n+3}{2}\},
        \end{cases}
    \end{cases}
    \]
    and the labels for $\mathfrak{Y}\sqcup\mathfrak{Z}$ are shifted back down by $\frac{n-1}{2}$ from their original labels. 
    \end{exampleN}

\begin{example}
    Next, we consider $\tau_{7/1}$ to provide a concrete example for the case where $n$ is odd. The intersection points are labeled below according to the prescribed method, so that $\mathfrak{Y}$ includes the points labeled $4,5,$ and $6$, and $\mathfrak{Z}$ includes $7$ and $8$.  
    \[
    \centering
\includegraphics[height=3.5cm, angle=0]{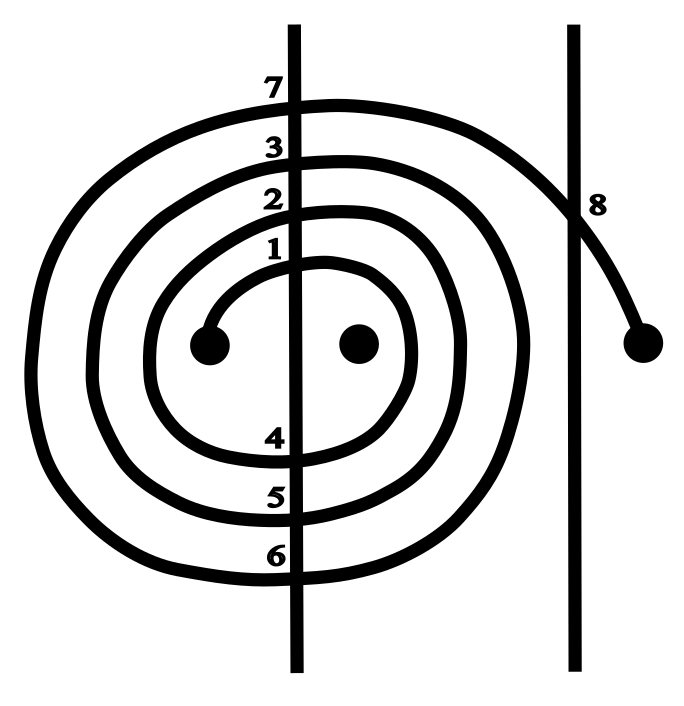}
    \]
    In this case, the generating function data are given by
   \begin{figure}[H]
    \centering
\begin{minipage}[b]{.45\textwidth}

$\left[\begin{array}{ c| c | c |c }
   K&  S & A & T\end{array}\right] =
  \left[\begin{array}{ c | c | c |c }
    1 & 6 & 0 & -6 \\
    1 & 4 & 0 & -4 \\
    1 & 2 & 0 & -2 \\
    0 & 1 & 0 & -1 \\
    0 & 0 & 0 & 0 \\
   
    \end{array}\right]$
    \end{minipage}
    \begin{minipage}[b]{.45\textwidth}
    $ Q=\begin{bmatrix}
    5 & 3 & 1 & 0 & 0 \\
    3 & 3 & 1 & 0 & 0 \\
    1 & 1 & 1 & 0 & 0 \\
    0 & 0 & 0 & 0 & 0 \\
    0 & 0 & 0 & 0 & 0 
    \end{bmatrix}.$
    \end{minipage}
\end{figure}      
\end{example}

\subsection{The Tangle $\tau_{10/3}$}
\label{T103sec}

Now, we consider the tangle $\tau_{10/3}$. Just as in the case of $\tau_{n/1}$ with $n$ even, all intersection points $\alpha \cap l_A$ pair up and we can reduce the number of indices in $\mathcal{P}(\tau_{10/3})$ by Theorem \ref{aqfthm}. In particular, we can partition the active intersections by $\mathfrak{X}$ and $\mathfrak{Y}$, where $\mathfrak{X}$ is the bottom five intersection points and $\mathfrak{Y}$ is the top five intersection points in $\alpha_{10/3}\cap l_A$. Theorem \ref{aqfthm} tells us then that we only need to consider $\mathfrak{Y}$ and $\mathfrak{Z}$, which is the set of three inactive intersections. 

Of course, there are many ways to order these eight points, but we have chosen to do so by restricting the standard ordering to $\mathfrak{Y}\sqcup\mathfrak{Z}$. See Figure \ref{Dt103fig}. We will first present the final result and then share the work behind a couple of entries. 

Here is the data for the generating function, given the specified ordering of indices:

\begin{figure}[h]
    \centering
\begin{minipage}[b]{.45\textwidth}

$\left[\begin{array}{ c| c | c |c }
   K&  S & A & T\end{array}\right] =
  \left[\begin{array}{ c | c | c |c }
    1 & 5 & 2 & -5 \\
    1 & 3 & 2 & -3 \\
    1 & 2 & 0 & -2 \\
    1 & 3 & 0 & -3 \\
    1 & 1 & 0 & -1 \\
    0 & 2 & 2 & -2 \\
    0 & 1 & 0 & -1 \\
    0 & 0 & 0 & 0
   
    \end{array}\right]$
    \end{minipage}
    \begin{minipage}[b]{.45\textwidth}
    $ Q=\begin{bmatrix}
    0 & -2 & 0 & 1 & 0 & -2 & 0 & 0 \\
    -2 & -2 & -1 & -1 & -1 & -2 & 0 & 0 \\
    0 & -1 & 1 & 1 & 0 & -1 & 1 & 0\\
    1 & -1 & 1 & 2 & 0 & -1 & 1 & 0\\
    0 & -1 & 0 & 0 & 0 & -1 & 1 & 0 \\
    -2 & -2 & -1 & -1 & 1 & -2 & -1 & -1 \\
    0 & 0 & 1 & 1 & 1 & -1 & 1 & 0 \\
    0 & 0 & 0 & 0 & 0 & -1 & 0 & 0
    \end{bmatrix}.$
    \end{minipage}
\end{figure}    

 \begin{figure}
     \begin{tikzpicture}
         \node at (0,0) {$\mathcal{D}(\tau_{10/3})=$};
         \node at (4,0) {\includegraphics[height=5cm, angle=0]{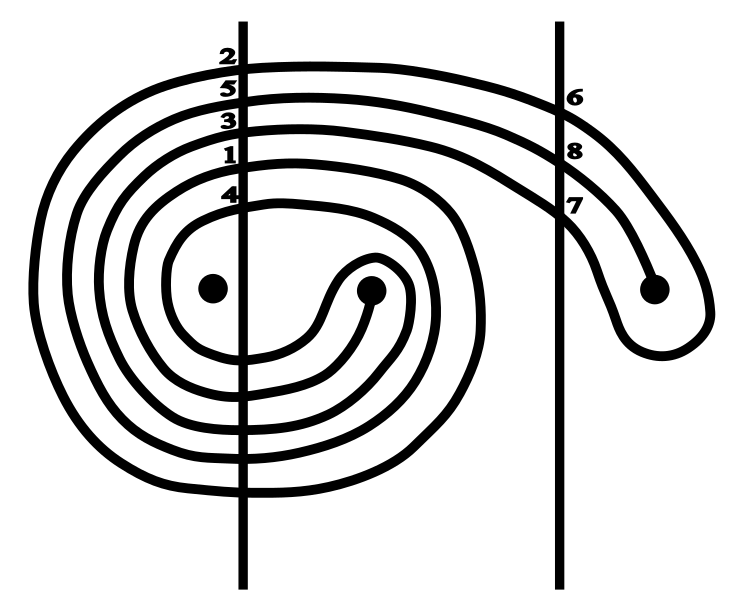}};
     \end{tikzpicture}
     \caption{$\mathcal{D}(\tau_{10/3})$ along with the labeling of the intersection points giving the active and inactive indices}
     \label{Dt103fig}
 \end{figure}

It should also be noted that $\tau_{10/3}$ has state $(UP,Y|X^-|X^+)$, which we need to know for making computations. Now, we show explicitly how to compute $Q_{44}$ and $Q_{24}$. For $Q_{44}$, we have drawn the path $\gamma_4$, which we need to compute $\Psi_{\{X^-,Y\}}([\gamma_4])$. Next to it is the braid we get from $\gamma_4$ by adding constant loops for $Y$ and $X^-$ in $\text{Conf}^3(\mathbb{C})$. We only include the strands for $X^-$ and $Y$ since it should be clear that the strand we would get from $X^+$ would be completely untangled from the others, which means $\Psi_{X^+}([\gamma_4])=0$. Also, since $Z=X^-$, it follows that $Q_{44}=\Phi_{\{X^-,Y\}}([\gamma_4])$, which we easily determine to be 2 from the braid picture.

\[
\vcenter{\hbox{\includegraphics[height=5cm,angle=0]{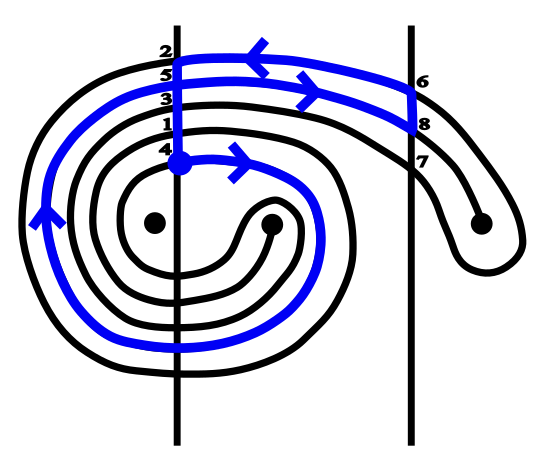}}} \qquad \qquad 
\vcenter{\hbox{\includegraphics[height=4cm,angle=0]{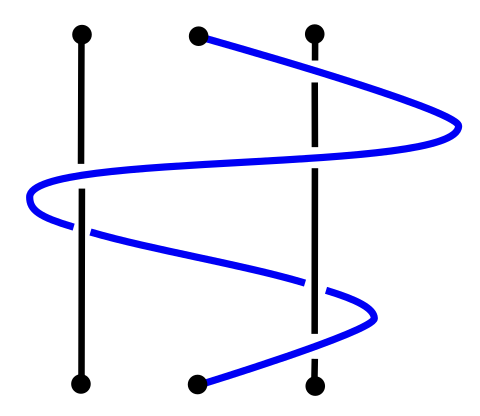}}}
\]

For computing $Q_{42}$, we have drawn the loop $\gamma_{2,4} \subset \text{Conf}^2(M)$ used to compute $\Phi([\gamma_{2,4}])$, along with the braid we get from the path in $\text{Conf}^2(M)$. From this braid, we can easily tell that $\Phi([\gamma_{2,4}])=-1$, as there are two negative crossings and 1 positive crossing.

\[
\vcenter{\hbox{\includegraphics[height=5cm,angle=0]{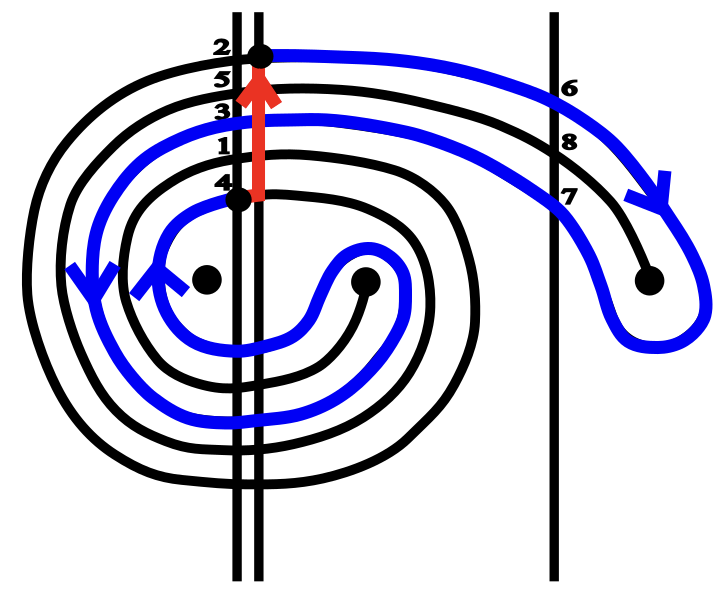}}} \qquad \qquad 
\vcenter{\hbox{\includegraphics[height=3.5cm,angle=0]{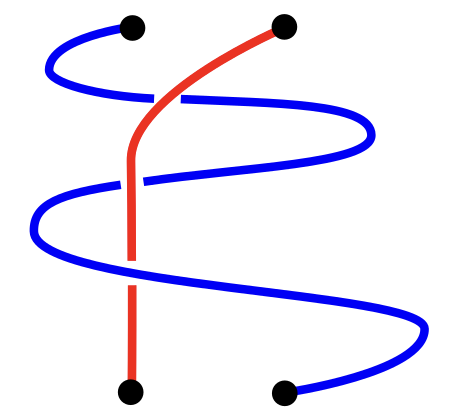}}}
\]

It is also not difficult to see that $\Psi_{X^+}([\gamma_{2,4}])=1$, so we get
\[
Q_{42}=Q_{44}+(\Phi-2\Psi_{X^+})([\gamma_{2,4}])= Q_{44}-3=-1.
\]

\printbibliography
\end{document}